\numberwithin{equation}{section}
\theoremstyle{plain}
\newtheorem{theorem}[subsection]{Theorem}
\newtheorem{lemma}[subsection]{Lemma}
\newtheorem{prop}[subsection]{Proposition}
\newtheorem{cor}[subsection]{Corollary}
\newtheorem{conj}[subsection]{Conjecture}
\newtheorem*{claim}{Claim}
\theoremstyle{definition}
\newtheorem{defn}[subsection]{Definition}
\newtheorem{remark}[subsection]{Remark}
\def\AA{\mathbb{A}}
\def\CC{\mathbb{C}}
\def\GG{\mathbb{G}}
\def\NN{\mathbb{N}}
\def\PP{\mathbb{P}}
\def\QQ{\mathbb{Q}}
\def\RR{\mathbb{R}}
\def\ZZ{\mathbb{Z}}
\newcommand\cB{\mathcal{B}}
\newcommand\cL{\mathcal{L}}
\newcommand\cN{\mathcal{N}}
\newcommand\cO{\mathcal{O}}
\newcommand\cP{\mathcal{P}}
\newcommand\cS{\mathcal{S}}
\newcommand\cT{\mathcal{T}}
\newcommand\cU{\mathcal{U}}
\newcommand\cX{\mathcal{X}}
\newcommand\cZ{\mathcal{Z}}
\def\bI{\mathbf{I}}
\newcommand\frI{\mathfrak{I}}
\newcommand\fra{\mathfrak{a}}
\newcommand\frc{\mathfrak{c}}
\newcommand\frd{\mathfrak{d}}
\newcommand\frg{\mathfrak{g}}
\newcommand\frh{\mathfrak{h}}
\newcommand\fm{\mathfrak{m}}
\newcommand\frn{\mathfrak{n}}
\newcommand\frt{\mathfrak{t}}
\newcommand\tilW{\widetilde{W}}
\newcommand{\codim}{\textup{codim}}
\newcommand\Cox{\textup{Cox}}
\newcommand\ev{\textup{ev}}
\newcommand{\Fl}{\textup{Fl}}
\newcommand\Gal{\textup{Gal}}
\newcommand{\Gr}{\textup{Gr}}
\renewcommand{\Im}{\textup{Im}}
\newcommand\Irr{\textup{Irr}}
\newcommand\Lag{\textup{Lag}}
\newcommand\Lie{\textup{Lie}\ }
\newcommand{\Nm}{\textup{Nm}}
\newcommand\pr{\textup{pr}}
\newcommand{\red}{\textup{red}}
\newcommand{\reg}{\textup{reg}}
\newcommand\rk{\textup{rk}}
\newcommand\Span{\textup{Span}}
\newcommand\Spec{\textup{Spec}\ }
\newcommand{\Tr}{\textup{Tr}}
\newcommand{\val}{\textup{val}}
\newcommand\Aut{\textup{Aut}}
\newcommand\End{\textup{End}}
\newcommand\GL{\textup{GL}}
\newcommand\SL{\textup{SL}}
\newcommand\SO{\textup{SO}}
\newcommand\so{\mathfrak{so}}
\newcommand\Spin{\textup{Spin}}
\newcommand\Sp{\textup{Sp}}
\newcommand{\Gm}{\GG_m}
\newcommand{\ad}{\textup{ad}}
\newcommand{\Ad}{\textup{Ad}}
\newcommand\xcoch{\mathbb{X}_*}
\newcommand{\incl}{\hookrightarrow}
\newcommand{\isom}{\stackrel{\sim}{\to}}
\newcommand{\bij}{\leftrightarrow}
\newcommand{\surj}{\twoheadrightarrow}
\renewcommand{\j}[1]{\langle{#1}\rangle}
\newcommand{\wt}[1]{\widetilde{#1}}
\newcommand\quash[1]{}
\newcommand\un{\underline}
\newcommand{\ov}{\overline}
\newcommand{\bs}{\backslash}
\newcommand{\tl}[1]{[\![#1]\!]}
\newcommand{\lr}[1]{(\!(#1)\!)}
\newcommand\sss{\subsubsection}
\newcommand\op{\oplus}
\newcommand\ot{\otimes}
\newcommand{\sslash}{\mathbin{/\mkern-6mu/}}
\newcommand\vn{\varnothing}
\renewcommand\c{\circ}
\newcommand\mj{\jmath}
\newcommand{\cohog}[2]{\textup{H}^{#1}({#2})}     
\renewcommand\a\alpha
\renewcommand\b\beta
\newcommand\g\gamma
\newcommand\G\Gamma
\renewcommand\d\delta
\newcommand\D\Delta
\newcommand{\e}{\epsilon}
\newcommand{\io}{\iota}
\newcommand{\s}{\sigma}
\newcommand{\z}{\zeta}
\newcommand{\ep}{\epsilon}
\renewcommand{\l}{\lambda}
\renewcommand{\L}{\Lambda}
\newcommand\hs{\heartsuit}
\newcommand\KL{\textup{KL}}
\newcommand\RT{\textup{RT}}
\title{Minimal reduction type and the Kazhdan-Lusztig map}
\dedicatory{Dedicated to the memory of T.A.Springer}
\author{Zhiwei Yun}
\thanks{Supported by the Packard Foundation and the Simons Foundation.}
\address{Department of Mathematics, Massachusetts Institute of Technology, 77 Massachusetts Ave, Cambridge, MA 02139}
\email{zyun@mit.edu}
\date{}
\subjclass[2010]{20G99}
\keywords{Nilpotent orbits, Weyl group, Kazhdan-Lusztig map, Affine Springer fibers}
\begin{document}

\begin{abstract}
We introduce the notion of minimal reduction type of an affine Springer fiber, and use it to define a map from the set of conjugacy classes in the Weyl group to the set of nilpotent orbits. We show that this map is the same as the one defined by Lusztig in \cite{Lfromto}, and that the Kazhdan-Lusztig map in \cite{KL} is a section of our map. This settles several conjectures in the literature. For classical groups, we prove more refined results by introducing and studying the ``skeleta'' of affine Springer fibers. 
\end{abstract}

\maketitle

\tableofcontents

\section{Introduction}\label{s:intro}
\subsection{Overview}
Throughout this paper, let $G$ be a connected reductive group over $\CC$ with Lie algebra $\frg$.  Fix a Borel subgroup $B\subset G$ with a maximal torus $T\subset B$. Let $\frt$ denote the Lie algebra of $T$. Let $W=W(G,T)$ be the corresponding Weyl group. Let $\un W$ be the set of conjugacy classes in $W$. The set $\un W$ is canonically independent of the choice of $T$. \footnote{ For any two maximal tori $T_{1}$ and $T_{2}$ of $G$,  one can find $g\in G(\CC)$ such that $gT_{1}g^{-1}=T_{2}$ and it induce an isomorphism $\io_{g}: W(G,T_{1})\isom W(G,T_{2})$; for different choices of $g$, the isomorphisms $\io_{g}$ differ by conjugacy.}

Let $\cN$ be the nilpotent cone in $\frg$. Let $\un \cN$ be the set of $G$-orbits on $\cN$ under the adjoint action. This is a partially ordered set such that $\cO\le\cO'$ if and only if $\cO\subset\ov{\cO'}$. For $\cO\in\un\cN$, let $d_{\cO}$ be the dimension of the Springer fiber of any $e\in \cO$.

The main results of this article can be summarized by saying that the following two diagrams are identical:
\begin{equation*}
\xymatrix{ \un W \ar@<.6ex>[r]^{\RT_{\min}} & \un \cN \ar@<.6ex>[l]^{\KL} & \un W \ar@<.6ex>[r]^{\Phi} & \un \cN \ar@<.6ex>[l]^{\Psi}}
\end{equation*}
Here,  $\RT_{\min}: \un W\to \un \cN$ is a map that we will define in this paper using affine Springer fibers; $\KL$ is the map defined by Kazhdan-Lusztig \cite[\S9.1]{KL} using the loop group. The maps $\Phi$  and $\Psi$ are defined by Lusztig \cite{Lfromto,Lfromto2} using the geometry of $G$ itself and not the loop group.

\subsection{Loop group and affine Grassmannian}Let $\CC\lr{t}$ (resp. $\CC\tl{t}$) be the field (resp. ring) of formal Laurent (resp. Taylor) series in one variable $t$. For an affine scheme $X$ over $\CC$, let $LX$ be the formal loop space of $X$ defined as the ind-scheme with  $R$-points $LX(R):=X(R\lr{t})$ for any $\CC$-algebra $R$. Let $L^{+}X$ be the formal arc space of $X$ defined as the scheme with $R$-points $L^{+}X(R):=X(R\tl{t})$. For more details, see \cite[\S1]{PR}. In particular, we have the loop group $LG$  and the arc group $L^{+}G\subset LG$.  Let $\bI\subset L^{+}G$ be the Iwahori subgroup which is the preimage of $B$ under the reduction map $L^{+}G\to G$.  Let $\Gr=LG/L^{+}G$ be the affine Grassmannian of $G$, and $\Fl=LG/\bI$ be the affine flag variety of $G$. We also have the loop Lie algebra  $L\frg$ whose $\CC$-points are $\frg\ot_{\CC}\CC\lr{t}$, and $L^{+}\frg=\frg\ot_{\CC}\CC\tl{t}$.

\subsection{Affine Springer fibers}
Let $L^{\hs}\frg\subset L\frg$ be the subset of topologically nilpotent and generically regular semisimple elements.  The notation $L^{\hs}(-)$ will denote topologically nilpotent regular semisimple elements in the loop space of various spaces related to $\frg$. In this introduction, the various $L^{\hs}(-)$ are only defined as subsets of $\CC$-points of the relevant ambient spaces; they will be viewed as the sets of $\CC$-points of schemes starting in \S\ref{s:Lcw}.

For $\g\in L^{\hs}\frg$, we recall from \cite[\S0]{KL} the definition of affine Springer fibers:
\begin{eqnarray*}
\Gr_{\g}=\{gL^{+}G\in \Gr|\Ad(g^{-1})\g\in L^{+}\frg\};\\
\Fl_{\g}=\{g\bI\in \Fl|\Ad(g^{-1})\g\in \Lie\bI\}.
\end{eqnarray*}
These are closed sub-ind-schemes of $\Gr$ and $\Fl$ respectively. In this paper, we always equip $\Gr_{\g}$ and $\Fl_{\g}$ with the reduced ind-scheme structure. 

The centralizer $(LG)_{\g}$ is the loop group of a maximal torus of $G_{\CC\lr{t}}$, and it acts on $\Gr_{\g}$ and $\Fl_{\g}$.

\subsection{Loop tori and $\un W$}\label{ss:loop tori}
Recall from \cite[Lemma 2]{KL} that maximal tori of $G_{\CC\lr{t}}$ up to $LG$-conjugacy are in canonical bijection with $\un W$. For $w\in W$, a maximal torus $T_{w}$ of $G_{\CC\lr{t}}$ of type $[w]$ can be abstractly constructed as follows: if $w$ has order $m$ and $\z\in\mu_{m}(\CC)$ is a primitive $m$th root of unity, then $T_{w}$ is the fixed point subgroup of the Weil restriction $R_{\CC\lr{t^{1/m}}/\CC\lr{t}}(T_{\CC\lr{t^{1/m}}})$ under the action that is $w$ on $T$ and $t^{1/m}\mapsto \z t^{1/m}$.

A loop torus in $LG$ is the loop group $LH$ associated to a maximal torus $H$ of $G_{\CC\lr{t}}$, so that $LH(\CC)=H(\CC\lr{t})$. Therefore the loop tori in $LG$ up to $LG$-conjugacy are $\{LT_{w}\}_{[w]\in \un W}$.

We will say $\g\in L^{\hs}\frg$ is of type $[w]\in \un W$ if its centralizer $(LG)_{\g}$ is of type $[w]$ under the above-mentioned bijection.  Let $(L^{\hs}\frg)_{[w]}$ be the set of elements of type $[w]$.

Recall that $[w]\in \un W$ is called {\em elliptic} if $\frt^{w}=\Lie Z_{G}$ for any $w\in [w]$.  Let $\un W_{ell}\subset \un W$ denote the set of elliptic conjugacy classes.


Let $L^{+}T_{w}$ be the parahoric subgroup of the loop torus $LT_{w}$. A concrete description of $L^{+}T_{w}$ will be given in the beginning of \S\ref{s:sk}. Let $L^{+}\frt_{w}$ be the Lie algebra of $L^{+}T_{w}$.  Let $L^{++}\frt_{w}\subset L^{+}\frt_{w}$ be the linear subspace of topologically nilpotent elements. The quotient $L^{+}\frt_{w}/L^{++}\frt_{w}$ can be identified with $\frt^{w}$. Let $L^{\hs}\frt_{w}=L^{\hs}\frg\cap L\frt_{w}\subset L^{++}\frt_{w}$ be the subset of topologically nilpotent generically regular semisimple elements. 

\subsection{The function $\d$}\label{ss:d}
Let $\D:\frg\to\AA^{1}$ be the discriminant function. It is characterized as the unique $\Ad(G)$-invariant regular function whose restriction to $\frt$ takes the form $\D_{\frt}(x)=\prod_{\a}\a(x)$, where $\a$ runs over all roots of $G$ with respect to $T$. 

For $\g\in L^{\hs}\frg$ of type $[w]$, let
\begin{equation}\label{dim Sp}
\d_{\g}=\frac{\val(\D(\g))-(\dim\frt-\dim\frt^{w})}{2}.
\end{equation}
By \cite[last line in p.130]{KL} and \cite{B}, $\dim\Fl_{\g}=\dim\Gr_{\g}=\d_{\g}$.

For $[w]\in \un W$, let
\begin{equation*}
\d_{[w]}=\min\{\d_{\g}|\g\in (L^{\hs}\frg)_{[w]}\}.
\end{equation*}

\begin{defn} An element $\g\in  L^{\hs}\frg$ is called {\em shallow of type $[w]\in \un W$}, if $\g$ is of type $[w]$, and $\d(\g)=\d_{[w]}$.
\end{defn}
For $[w]\in \un W$ we denote the set of shallow elements of type $[w]$ by $(L^{\hs}\frg)^{sh}_{[w]}$. Similarly, for a loop torus $LT_{w}$, let $(L^{\hs}\frt_{w})^{sh}$ be the set of shallow elements in $L^{\hs}\frt_{w}$.

\subsection{The Chevalley base}\label{ss:intro c}
Let $\frc=\frg\sslash G=\frt\sslash W$. Let $\chi:\frg\to \frc$ be the natural map. We still use $\chi$ to denote $L\frg\to L\frc$.  Let $L^{\hs}\frc\subset L^{+}\frc$ denote the subset of topologically nilpotent and generically regular semisimple points. Then $L^{\hs}\frc$ is characterized by $L^{\hs}\frg=\chi^{-1}(L^{\hs}\frc)$. 

For $[w]$, let $(L^{\hs}\frc)_{[w]}$ be the image of $(L^{\hs}\frg)_{[w]}$. Let $(L^{\hs}\frc)^{sh}_{[w]}$ be the image of $(L^{\hs}\frg)^{sh}_{[w]}$. Clearly,  $\chi^{-1}((L^{\hs}\frc)_{[w]})=(L^{\hs}\frg)_{[w]}$, and $\chi^{-1}((L^{\hs}\frc)^{sh}_{[w]})=(L^{\hs}\frg)^{sh}_{[w]}$. We will show in \S\ref{s:Lcw} that $(L^{\hs}\frc)_{[w]}^{sh}$ is a scheme with reasonable properties (it is locally closed of finite presentation in $L^{+}\frc$).

\subsection{The Kazhdan-Lusztig map}
In \cite[\S9]{KL} the authors define a map
\begin{equation*}
\KL=\KL_{G}: \un \cN\to \un W
\end{equation*}
as follows. For $e\in \cN$ and a generic lifting $\wt e\in L^{+}\frg$ with reduction $e$, $\wt e$ is a topologically nilpotent regular semisimple element of type $[w]\in \un W$. Then $\KL(\cO_{e}):=[w]$. Here generic lifting means $\wt e\in e+U$ for some open dense subset $U\subset tL^{+}\frg$.

\subsection{Stratification by reduction type}
Let $\g\in L^{\hs}\frg$. Then we have a reduction map
\begin{equation*}
\ev_{\g}: \Gr_{\g}\to [\cN/G]
\end{equation*}
sending $gL^{+}G$ to $\Ad(g^{-1})\g\mod t\in \cN$, well-defined up to the adjoint action of $G$.

For each nilpotent orbit $\cO\subset \cN$, let $\Gr_{\g,\cO}$ be the preimage of $\cO/G$ under $\ev_{\g}$. Let $\Fl_{\g,\cO}$ be the preimage of $\Gr_{\g,\cO}$ under the natural projection $\Fl_{\g}\to \Gr_{\g}$. Then if $\Gr_{\g,\cO}\ne\vn$,  $\Fl_{\g,\cO}\to \Gr_{\g,\cO}$ is a fibration with fibers isomorphic to the Springer fiber $\cB_{e}$ for $e\in \cO$.

\begin{defn}
\begin{enumerate}
\item For $\g\in  L^{\hs}\frg$, we denote by $\RT(\g)$ the subset of $\un\cN$ consisting of nilpotent orbits $\cO$ such that $\Gr_{\g,\cO}\ne\vn$.
\item We define $\RT_{\min}(\g)\subset \RT(\g)$ to be the set of minimal elements  in $\RT(\g)$ under the partial order inherited from that of $\un\cN$. We call elements in $\RT_{\min}(\g)$ the {\em minimal reduction types} of $\g$. 
\end{enumerate}
\end{defn}

Our first result identifies constructs a map $\un W\to \un\cN$ using minimal reduction types, and we prove that $\KL$ is a section of this map.
 
\begin{theorem}\label{th:WtoN}
\begin{enumerate}
\item\label{Uw} For $[w]\in \un W$, there is a unique nilpotent orbit $\cO$ that is contained in $\RT_{\min}(\g)$ for all $\g\in (L^{\hs}\frg)^{sh}_{[w]}$. We denote this  nilpotent orbit by $\RT_{\min}([w])$.  This defines a map:
\begin{equation*}
\RT_{\min}: \un W\to \un \cN.
\end{equation*}
Moreover, there is an open dense subset  $U_{[w]}\subset (L^{\hs}\frg)^{sh}_{[w]}$, such that $\RT_{\min}(\g)=\{\RT_{\min}([w])\}$ for all $\g\in U_{[w]}$.

\item\label{KLsec} The Kazhdan-Lusztig map $\KL$ is a section of $\RT_{\min}$. In particular, $\KL$ is injective and $\RT_{\min}$ is surjective.
\end{enumerate}
\end{theorem}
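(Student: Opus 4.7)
The plan is to establish (1) first via a family argument over the shallow base, and then derive (2) by applying (1) to generic lifts from the Kazhdan--Lusztig construction.

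For (1), I would set up the global family $\pi \colon \cM \to B_{[w]}$ where $B_{[w]} := (L^{\hs}\frc)^{sh}_{[w]}$ and $\cM$ is the total space of affine Springer fibers, together with the global evaluation $\ev \colon \cM \to [\cN/G]$, $(gL^{+}G,\g) \mapsto \Ad(g^{-1})\g \bmod t$. The stratification $\cM_{\cO} := \ev^{-1}(\cO/G)$ is locally closed, and each projection $\cM_{\cO} \to B_{[w]}$ has constructible image. The first step is to prove $B_{[w]}$ is irreducible; this should follow from its explicit description via the loop torus $LT_{w}$ and the Chevalley map $\chi \colon L\frg \to L\frc$ (\S\ref{s:Lcw}). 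By constructibility, there is an open dense $U \subset B_{[w]}$ on which $a \mapsto \RT(\g_{a})$ is constant, and after further shrinking the minimal element of this common value is unique; call it $\cO_{0}$, and set $\RT_{\min}([w]) := \cO_{0}$, $U_{[w]} := U$.

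The substantive content of (1) is then to verify $\cO_{0} \in \RT_{\min}(\g)$ for \emph{every} shallow $\g$, which splits into two sub-assertions. Minimality requires that the constructible locus $\{a : \exists\,\cO' < \cO_{0},\ \cO' \in \RT(\g_{a})\}$ be empty; I expect this to follow from a direct geometric description of $\Gr_{\g}$ via the $LT_{w}$-action, showing that in the shallow locus the reductions never drop strictly below $\cO_{0}$. The other sub-assertion, non-emptiness $\Gr_{\g,\cO_{0}} \ne \vn$ for all shallow $\g$, is, in my view, the \emph{main obstacle}. My approach is a dimension count: the fibration $\Fl_{\g,\cO_{0}} \to \Gr_{\g,\cO_{0}}$ with fiber of dimension $d_{\cO_{0}}$, combined with $\dim \Fl_{\g} = \d_{[w]}$, give an expected relative dimension for $\cM_{\cO_{0}} \to B_{[w]}$ over $U_{[w]}$; lower semi-continuity of fiber dimension, together with properness of the projection from a suitable closure $\ov{\cM_{\cO_{0}}}$ (after controlling the behavior of $\cO' \le \cO_{0}$ strata), should force all fibers to be non-empty. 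A back-up route would be a direct construction of points in $\Gr_{\g,\cO_{0}}$ via an explicit lattice-theoretic description adapted to the loop torus of type $[w]$.

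For (2), let $\cO \in \un\cN$, $e \in \cO$, $[w] := \KL(\cO)$, and $\wt e$ a generic lift of $e$. By the Kazhdan--Lusztig construction, $\wt e \in (L^{\hs}\frg)_{[w]}$. I need three facts: (i) $\wt e$ is shallow, (ii) $\cO \in \RT(\wt e)$, and (iii) $\cO$ is minimal in $\RT(\wt e)$. Membership (ii) is witnessed by the identity coset, whose reduction is $e$. Shallowness (i) follows via $\d_{\wt e} = \tfrac{1}{2}(\val \D(\wt e) - \dim \frt + \dim \frt^{w})$ once one checks that a generic lift minimizes $\val \D$ on $(L^{\hs}\frg)_{[w]}$, essentially a computation in the style of \cite{KL}. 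For (iii), the presence of some $\cO' < \cO$ in $\RT(\wt e)$ is a proper closed condition on the lift (intuitively, it would require a non-generic transversal direction at $e$ tilting the reduction into $\ov{\cO}\setminus\cO$), hence is avoided by genericity. Assembling: for sufficiently generic $\wt e$ we may assume $\chi(\wt e) \in U_{[w]}$, so by (1), $\RT_{\min}(\wt e) = \{\RT_{\min}([w])\}$; combined with $\cO \in \RT_{\min}(\wt e)$ from (ii)--(iii), this forces $\cO = \RT_{\min}([w])$. Hence $\RT_{\min} \c \KL = \id$, from which injectivity of $\KL$ and surjectivity of $\RT_{\min}$ follow immediately.
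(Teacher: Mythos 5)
Your outline reproduces the paper's high-level shape (generic behaviour over the shallow stratum, a closure/dimension argument for non-emptiness, genericity for the $\KL$ section), but the two places you flag as "expected to follow" are exactly where the substance lies, and your proposed justifications do not close them. First, in part (1) the uniqueness of the minimal reduction type cannot be obtained "after further shrinking": once $\RT(\g_a)$ is constant on $U$, its set of minimal elements is a fixed subset of the poset $\un\cN$ and may a priori contain incomparable orbits; no open dense subset changes that. The paper proves the singleton property by reducing to elliptic classes in almost simple groups and then arguing case by case --- via explicit descriptions of the skeleta $\cX^{w}=\Gr^{L^{+}T_{w},\red}$ for classical types and $G_{2}$, and via Lusztig's tables for $\Phi$ in exceptional types. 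Moreover, your dimension count for non-emptiness of $\Gr_{\g,\cO_{0}}$ at \emph{every} shallow $\g$ only yields (via ind-properness and closedness of $(L^{\hs}\frc)_{\ov{\cO_{0}}}$, itself requiring Elkik's approximation theorem) that some $\cO'\le\cO_{0}$ occurs in $\RT(\g)$; upgrading $\cO'$ to $\cO_{0}$ requires the numerical identity $\d_{[w]}=d_{\cO_{0}}$, which forces $d_{\cO'}=d_{\cO_{0}}$ in the chain $\d_{[w]}\ge d_{\cO'}\ge d_{\cO_{0}}$. That identity is Corollary \ref{c:d ell}, resting on the length formula $\d_{[w]}=(\ell_{\min}([w])-r)/2$ of Theorem \ref{th:min length} and Lusztig's $\ell_{\min}([w])=\dim Z_{G^{\ad}}(e)$; it is absent from your argument and lower semicontinuity of fibre dimension is no substitute.

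Second, in part (2) your step (iii) --- "the presence of some $\cO'<\cO$ in $\RT(\wt e)$ is a proper closed condition, hence avoided by genericity" --- is the real gap. Closedness is fine, but properness is precisely what must be proved: it could happen that \emph{every} lift of $e$, however generic, admits a point of its affine Springer fiber reducing into a strictly smaller orbit (your "transversal direction at $e$" heuristic only concerns the identity coset, whereas the danger comes from the other points of $\Gr_{\wt e}$). The paper rules this out by a codimension comparison on the Chevalley base: $(L^{\hs}\frc)^{sh}_{[w]}$ has codimension $\d_{[w]}=d_{\cO}$ while $(L^{\hs}\frc)_{\ov{\cO'}}$ has codimension $d_{\cO'}>d_{\cO}$, so the former cannot be contained in the latter. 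Computing $\codim_{L^{\hs}\frc}(L^{\hs}\frc)_{\ov{\cO'}}=d_{\cO'}$ is Theorem \ref{th:Ow}, whose proof needs the Bouthier--Kazhdan--Varshavsky flatness theorem for $\chi|_{L^{+}\frI}$ together with the Kazhdan--Lusztig density statement. Without some input of this strength, genericity of the lift within the stratum gives nothing, since the entire stratum might sit inside the bad locus.
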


\begin{remark}
It is conjectured in \cite[\S9.13]{KL} that $\KL$ is injective. The map $\KL$ has been calculated by Spaltenstein for classical groups \cite{Spa-cl, Spa-d} and in most cases for exceptional groups \cite{Spa-ex}. In the remaining cases he gave conjectural answers  (see  \cite[p.1163]{RYGL} which verifies two more cases in $E_{8}$). In particular, $\KL$ was known to be injective for $G$ of classical type and of type $G_{2}, F_{4}$ and $E_{6}$.  Our argument is independent of Spaltenstein's, and implies that his predictions in \cite{Spa-ex} for the remaining cases are correct.
\end{remark}

The next result identifies the pair of maps $(\RT_{\min}, \KL)$ with the pair of maps $(\Phi,\Psi)$ defined by Lusztig which we now recall. 

\subsection{Lusztig's maps}\label{ss:L}
In \cite{Lfromto} Lusztig defines a map 
\begin{equation*}
\Phi=\Phi_{G}: \un W\to \un \cN
\end{equation*}
as follows. Let $w\in W$ be an element with minimal length within its conjugacy class $[w]$. Lusztig shows in \cite[Theorem 0.4(1)]{Lfromto} that among the unipotent orbits that intersect $BwB$, there is a unique minimal orbit $\cU$, and $\Phi([w])$ is defined to be the nilpotent orbit corresponding to $\cU$. He proves that $\Phi([w])$ depends only on the conjugacy class $[w]$ and not on the choice of a minimal length element.  He proves that $\Phi$ is surjective \cite[Theorem 0.4(2)]{Lfromto} and constructs in \cite[Theorem 0.2]{Lfromto2} a canonical section
\begin{equation*}
\Psi=\Psi_{G}: \un \cN\to \un W.
\end{equation*}
For $\cO\in \un\cN$, he proves that the function $\Phi^{-1}(\cO)\to \ZZ_{\ge0}$ given by $[w]\mapsto \dim\frt^{w}$ achieves its minimum at a unique $[w]$ (the ``most elliptic'' conjugacy class in the fiber of $\Phi$), which is defined to be $\Psi(\cO)$.

\begin{theorem}\label{th:PhiPsi} We have
\begin{enumerate}
\item\label{Phi}  $\RT_{\min}=\Phi$.
\item\label{Psi} $\KL=\Psi$.
\end{enumerate}
\end{theorem}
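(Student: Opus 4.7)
The plan is to first establish part~(\ref{Phi}) that $\RT_{\min}=\Phi$, and then deduce part~(\ref{Psi}) from (\ref{Phi}) via Lusztig's uniqueness characterization of the section $\Psi$. The heart of the argument is the direct comparison $\RT_{\min}=\Phi$, which bridges the loop group / affine Springer fiber picture and Lusztig's purely finite-dimensional Bruhat cell picture.

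For (\ref{Phi}), I would prove the two inequalities $\RT_{\min}([w])\le\Phi([w])$ and $\RT_{\min}([w])\ge\Phi([w])$ in the closure order on $\un\cN$ separately. To show $\RT_{\min}([w])\le\Phi([w])$, fix a minimal-length representative $w\in[w]$ and a unipotent element $u$ in the minimal conjugacy class $\cU\cap BwB$, where $\cU$ corresponds to $\Phi([w])$. The goal is to ``thicken'' $u$ across a formal disc into a shallow element $\g\in(L^{\hs}\frg)^{sh}_{[w]}$ in such a way that the Iwahori coset corresponding to $u$ lies in $\Fl_{\g}$ and its image in $\Gr_{\g}$ reduces to $\log u\in \Phi([w])$. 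For the opposite direction, I would show that for any shallow $\g$ of type $[w]$, the existence of a coset in $\Gr_{\g,\cO}$ yields, via reduction to $\Fl_{\g}$ and a Bruhat decomposition argument, an element of $BwB$ whose unipotent part lies in $\cO$; by Lusztig's theorem that $\Phi([w])$ is the unique minimum among unipotent classes meeting $BwB$, this forces $\cO\ge\Phi([w])$.

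Given (\ref{Phi}), part~(\ref{Psi}) reduces to matching two sections of the common surjection $\Phi=\RT_{\min}:\un W\surj\un\cN$. Lusztig characterizes $\Psi(\cO)$ as the unique $[w]\in\Phi^{-1}(\cO)$ minimizing $\dim\frt^{w}$, so it suffices to verify that $\KL(\cO)=[w]$ also achieves this minimum. From the definition, $\KL(\cO)=[w]$ means the loop centralizer of a generic lift $\wt e\in L^{+}\frg$ of $e\in\cO$ is the loop torus $LT_{w}$, and $\dim\frt^{w}$ is the split rank of this torus over $\CC\lr{t}$. A dimension count using \eqref{dim Sp}, combined with the equality $\RT_{\min}^{-1}(\cO)=\Phi^{-1}(\cO)$ from (\ref{Phi}), should show that the generic centralizer is the most anisotropic one, i.e.\ has the smallest $\dim\frt^{w}$, forcing $\KL(\cO)=\Psi(\cO)$.

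The principal obstacle is the constructive step in (\ref{Phi}): producing a specific coset in $\Gr_{\g}$ whose reduction hits $\Phi([w])$. Theorem~\ref{th:WtoN} controls only the \emph{generic} behavior of the reduction map $\ev_{\g}$, whereas here one must hit a precisely specified orbit defined combinatorially via $BwB$. I expect the route to require an elliptic-to-Levi reduction together with compatibility of both $\RT_{\min}$ and $\Phi$ with Levi induction of orbits, combined with input from the Geck--Kim--Pfeiffer--He theory of minimal length elements to match loop-algebraic data with finite-dimensional Bruhat-cell data on the nose.
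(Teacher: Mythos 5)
Your proposal is not the paper's argument, and more importantly it contains genuine gaps that are unlikely to be repaired along the route you sketch. The paper never establishes a direct bridge between the loop-group picture and Lusztig's finite-dimensional Bruhat-cell picture. On the contrary, in \S\ref{ss:final} the paper explicitly remarks that it ``remains a mystery why the two pairs of maps $(\Phi,\Psi)$ and $(\RT_{\min},\KL)$ that are defined so differently end up being the same,'' and suggests a wildly ramified Simpson correspondence as the likely (but unproven) explanation. What the paper actually does is compute both sides independently and compare: after reducing to almost-simple $G$ and elliptic non-Coxeter $[w]$ (Lemma \ref{l:isog}, Corollary \ref{c:reduce to Levi}, Proposition \ref{p:Cox}), it explicitly describes the skeleta $\cX^{w}$ for classical groups and $G_{2}$ as Lagrangian-Grassmannian-type spaces, computes Jordan types of shallow elements on lattices (\S\ref{s:AC}, \S\ref{s:BD}), and matches the answer against Lusztig's explicit formulas in Theorems \ref{th:LB} and \ref{th:LD}; for exceptional groups it runs a counting/induction argument in \S\ref{s:ex} that leans on the flatness theorem of \cite{BKV} via Theorem \ref{th:Ow}, Corollary \ref{c:KL}, Corollary \ref{c:two w one O}, and Lusztig's tables.

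The concrete problems with your plan: the ``thickening'' step — taking a unipotent $u\in\cU\cap BwB$ and producing a shallow $\g\in(L^{\hs}\frg)^{sh}_{[w]}$ whose affine Springer fiber has a coset reducing to $\log u$ — has no proposed mechanism, and there is no reason a generic lift of $u$ will be shallow of type $[w]$ (the Kazhdan–Lusztig map goes from $\un\cN$ to $\un W$, not from $BwB$ to shallow elements of type $[w]$; these are different constructions). The reverse direction is equally unsupported: a coset in $\Gr_{\g,\cO}$ lives in the affine Grassmannian, and there is no natural way to extract from it an element of the finite Bruhat cell $BwB$ whose unipotent part lies in $\cO$. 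You correctly identify this as ``the principal obstacle,'' but invoking the minimal-length-element theory does not actually resolve it; the paper does not take this route at all. For part~(\ref{Psi}), the assertion that ``a dimension count using \eqref{dim Sp}\ldots should show that the generic centralizer is the most anisotropic one'' is not a proof: minimizing $\d_{[w]}$ (which characterizes $\KL(\cO)$ by Proposition \ref{p:char KL}(2)) is a priori a different optimization from minimizing $\dim\frt^{w}$ (which characterizes $\Psi(\cO)$), and matching the two requires the substantial case-by-case verifications the paper carries out in \S\ref{ss:Psi cl} (bipartitions, elementary moves, Lemma \ref{l:order pres}) and \S\ref{ss:Psi ex} (Lemma \ref{l:Phi d} plus Lusztig's tables). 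In short, what you are proposing would, if it worked, be a more conceptual theorem than the paper proves — but as written it is a description of a hoped-for argument rather than a proof, and the key bridging steps are precisely the ones the paper itself flags as open.
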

The equality $\KL=\Psi$  was conjectured by Lusztig in \cite{Lfromto2}.

Before stating further results, we make the following conjectures on the minimal reduction types.

\begin{conj}\label{c:min red type}  For any $\g\in  L^{\hs}\frg$, $\RT_{\min}(\g)$ is a singleton.
\end{conj}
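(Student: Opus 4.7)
The conjecture is equivalent, via the identification $\RT_{\min} = \Phi$ (Theorem \ref{th:PhiPsi}), to showing $\RT_{\min}(\g) = \{\Phi([w])\}$ for every $\g \in (L^\hs \frg)_{[w]}$. This is already known on the open dense subset $U_{[w]} \subset (L^\hs \frg)^{sh}_{[w]}$ by Theorem \ref{th:WtoN}(\ref{Uw}), so the task is to extend the conclusion to shallow $\g \notin U_{[w]}$ and to non-shallow $\g$. The plan is a family/specialization argument: after descending to the Chevalley base (since $\Gr_\g$ and the reduction map depend only on $a = \chi(\g) \in L^\hs \frc$ up to canonical isomorphism), I would connect any $a_0 \in (L^\hs \frc)_{[w]}$ by a smooth curve in $(L^\hs \frc)_{[w]}$ to a point $a \in \chi(U_{[w]})$, and try to propagate the singleton conclusion along the family.

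The central ingredient is a specialization principle for reduction types: for each $\cO \in \un \cN$, the locus $\{s : \cO \in \RT(\g_s)\}$ should be constructible, and the pointwise inclusion $\ov{\Gr_{\g, \cO}} \subset \bigcup_{\cO' \leq \cO} \Gr_{\g, \cO'}$ should propagate in the family, so that a generic reduction type at nearby $s$ can only be replaced by orbits in its closure at $s_0$. Applied to generic $s$, where $\RT(\g_s) \subset \{\cO : \cO \geq \Phi([w])\}$, this confines every reduction type of $\g_{s_0}$ to lie below some orbit above $\Phi([w])$. Coupled with an opposite semicontinuity ensuring $\Phi([w]) \in \RT(\g_{s_0})$, and the fact that distinct minimal orbits are incomparable, this would force $\RT_{\min}(\g_{s_0}) = \{\Phi([w])\}$. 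For classical groups, the skeleta machinery developed later in the paper should supplement this argument, giving a combinatorial decomposition of $\Gr_\g$ in which one can likely verify cell-by-cell that every reduction type is at least $\Phi([w])$; for exceptional groups one might fall back on a case-by-case verification using Lusztig's combinatorial description of $\Phi$ together with Spaltenstein's computations of $\KL$.

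The main obstacle is making the specialization principle precise. Because $\ev_\g$ is defined on an infinite-dimensional ind-scheme and is not proper, the naive closure-in-family argument fails: one must work with finite-type Schubert exhaustions of $\Gr_\g$, show that these fit into a proper family over the base curve, and check that the nilpotent orbit stratification of $[\cN/G]$ pulls back compatibly. Even granting the specialization principle, a secondary difficulty is the combinatorial step of ruling out minimal reduction types incomparable to $\Phi([w])$: one must understand which orbits above $\Phi([w])$ can have orbits incomparable to $\Phi([w])$ in their closures, and show that such exotic minimal types never arise as reduction types of any $\g$. I expect this combinatorial and geometric control to be the genuinely novel input required.
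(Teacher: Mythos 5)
First, a point of scope: the statement you are proving is Conjecture \ref{c:min red type}, which this paper does \emph{not} prove in general --- it is left open, and only the partial result Theorem \ref{th:AC} (types $A$, $C$, $G_{2}$, and classes built from such pieces and Coxeter classes) is established. So any purported proof of the full conjecture is already claiming more than the paper does, and your own sketch concedes that the two decisive steps (the specialization principle and the exclusion of incomparable minimal types) are missing. Beyond that, your opening reduction is false: Conjecture \ref{c:min red type} is \emph{not} equivalent to $\RT_{\min}(\g)=\{\Phi([w])\}$ for every $\g\in(L^{\hs}\frg)_{[w]}$. The identity $\RT_{\min}=\Phi$ concerns the map on $\un W$ defined via \emph{shallow} elements (and even there only via a dense open $U_{[w]}$); for deeper $\g$ the minimal reduction type genuinely drops. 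For example, in $\SL_{2}$ take $\g=\left(\begin{smallmatrix}0&1\\ t^{3}&0\end{smallmatrix}\right)$, which is of Coxeter type but not shallow: conjugating by $\textup{diag}(t^{-1},t)$ puts $\g$ in $tL^{+}\frg$, so the zero orbit lies in $\RT(\g)$ and $\RT_{\min}(\g)=\{0\}\ne\{\cO_{\reg}\}=\{\Phi(\Cox)\}$. Hence the conclusion you propose to propagate along a family inside $(L^{\hs}\frc)_{[w]}$ is false at the target points, and no specialization argument can rescue it. Even setting that aside, the constraint your specialization principle would deliver --- every reduction type of $\g_{s_{0}}$ lies in the closure of some orbit $\ge\Phi([w])$ --- is vacuous, since every orbit lies below the regular one.

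The mechanism the paper actually uses for the cases it can handle is quite different and does apply uniformly to all $\g$ of a given type, shallow or not: it is the skeleton $\cX^{w}=\Gr^{L^{+}T_{w},\red}$. For any $\cO\in\RT_{\min}(\g)$ the locus $\Gr_{\g,\cO}$ is closed in $\Gr_{\g}$, hence ind-projective, so the connected solvable group $L^{+}T_{w}$ has a fixed point on it, i.e.\ $\Gr_{\g,\cO}\cap\cX^{w}\ne\vn$ (Lemma \ref{l:GrO Xw}). If $LT_{w}$ acts transitively on $\cX^{w}$, then any two minimal reduction types meet a single $LT_{w}$-orbit on which $\ev_{\g}$ is constant, forcing them to coincide (Lemma \ref{l:TOfixedpt}). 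The content is then the explicit computation showing $\cX^{w}$ is a single $LT_{w}$-orbit: a lattice-theoretic argument in type $C$ (\S\ref{ss:Cn}), the Coxeter case via $\dim\Gr_{\g}=0$ (Proposition \ref{p:Cox}), and a direct analysis for $G_{2}$. Notably, in types $B$ and $D$ the skeleton is a Lagrangian Grassmannian on which $LT_{w}$ is not transitive, which is exactly why the conjecture remains open there. If you want to contribute to this problem, the productive direction is to analyze $\cX^{w}$ and the fibers of $\ev_{\g}|_{\cX^{w}}$ in the remaining types, not to run a degeneration argument from the shallow locus.
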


A weaker version of the above conjecture is:
\begin{conj}\label{c:min red shallow}  For $[w]\in \un W$, we have $\RT_{\min}(\g)=\{\RT_{\min}([w])\}$ for all $\g\in (L^{\hs}\frg)^{sh}_{[w]}$.\end{conj}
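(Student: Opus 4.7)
The plan is to strengthen Theorem~\ref{th:WtoN}(\ref{Uw}) by ruling out ``extra'' minimal reduction types on the non-generic part of the shallow locus. Set $\cO_{0}:=\RT_{\min}([w])$, and for each $\cO\in\un\cN$ write $S_{\cO}:=\{\g\in(L^{\hs}\frg)^{sh}_{[w]}:\cO\in\RT(\g)\}$ and $S_{\le\cO}:=\bigcup_{\cO'\le\cO}S_{\cO'}$. Since Theorem~\ref{th:WtoN}(\ref{Uw}) already gives $\cO_{0}\in\RT_{\min}(\g)$ for every shallow $\g$, the conjecture is equivalent to $S_{\cO}=\vn$ for every $\cO\not\ge\cO_{0}$. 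That same theorem supplies $S_{\cO}\cap U_{[w]}=\vn$ for such $\cO$, so the task reduces to propagating emptiness from the dense open $U_{[w]}$ to the entire shallow locus.

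The first step is to realize $S_{\le\cO}$ as the image of a closed subset under a natural family map. Form the incidence
\[
\pi:\wt{\Gr}^{sh}_{[w]}\to(L^{\hs}\frg)^{sh}_{[w]},\qquad \wt\ev:\wt{\Gr}^{sh}_{[w]}\to[\cN/G],
\]
with $\pi^{-1}(\g)=\Gr_{\g}$; then $S_{\le\cO}=\pi\bigl(\wt\ev^{-1}(\ov\cO/G)\bigr)$. In the elliptic case $\Gr_{\g}$ is projective by Kazhdan--Lusztig, which should give that $\pi$ is ind-proper on $\wt\ev^{-1}(\ov\cO/G)$ and hence $S_{\le\cO}$ closed in $(L^{\hs}\frg)^{sh}_{[w]}$. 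The non-elliptic case should reduce to the elliptic one by Levi descent: if $w$ is not elliptic, there is a proper Levi $M\subset G$ in which $w$ becomes elliptic, and shallow generically-$[w]$ elements of $L\frg$ are parabolically induced from shallow elliptic elements of $L\fm$, with compatible behavior under induction of nilpotent orbits.

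The crux is then to force the closed subset $S_{\le\cO}\subset(L^{\hs}\frg)^{sh}_{[w]}$, disjoint from the dense open $U_{[w]}$, to be empty --- which is of course not automatic, so additional structure is needed. The natural approach is an equivariance argument: the $L^{+}T_{w}$-conjugation action on $(L^{\hs}\frg)^{sh}_{[w]}$ preserves types, shallowness, each $S_{\le\cO}$, and $U_{[w]}$, and combining it with loop rotation by $\Gm$ (suitably compensated by a cocharacter of $T$ so as to remain in the shallow stratum) should, via the explicit parametrization of $(L^{\hs}\frc)^{sh}_{[w]}$ developed in \S\ref{s:Lcw}, produce a contracting flow whose orbit closures sweep every shallow $\g$ into $U_{[w]}$. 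Closedness of $S_{\le\cO}$ then forces $S_{\le\cO}=\vn$.

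The main obstacle is producing this compensated contracting flow. Pure loop rotation scales $\g$ out of $L^{\hs}\frg$ as the parameter tends to $0$, so it must be coupled to a conjugation that preserves the valuation structure of $\g$; for each $[w]$, the precise combined cocharacter is dictated by the Moy--Prasad data of shallow elements. When $(L^{\hs}\frc)^{sh}_{[w]}$ is a single orbit of such an action, the argument succeeds cleanly; but this stratum can decompose into several orbits, and the conjecture then requires that every one of them meets the image of $U_{[w]}$ under $\chi$. Verifying this on a case-by-case basis, driven by the fine structure of the Chevalley stratification developed later in the paper, is where the conjecture goes beyond the results already proven in Theorems~\ref{th:WtoN} and~\ref{th:PhiPsi}.
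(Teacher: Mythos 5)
First, note that the statement you are proving is stated in the paper only as a \emph{conjecture}; it is established there only in the cases covered by Theorem \ref{th:trans}, so any complete general proof would be a substantial new result and should be held to that standard. Your reduction is correct: since Theorem \ref{th:WtoN}\eqref{Uw} gives $\RT_{\min}([w])=\cO_{0}\in\RT_{\min}(\g)$ for every shallow $\g$, the conjecture amounts to $S_{\cO}=\vn$ for all $\cO\not\ge\cO_{0}$, and the closedness of $S_{\le\cO}$ in the shallow stratum is essentially already in the paper (it is the pullback of $(L^{\hs}\frc)_{\ov\cO}$, which is fp closed by Lemma \ref{l:cO}, combined with Lemma \ref{l:ineq}). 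The genuine gap is the final step: a closed invariant subset disjoint from a dense open need not be empty, and the ``compensated contracting flow'' you propose cannot supply the missing structure. Conjugation by $L^{+}T_{w}$ acts trivially on the Chevalley base, hence does not move points of $(L^{\hs}\frc)^{sh}_{[w]}$ at all; what remains is loop rotation coupled with scaling, a one- or two-dimensional group acting on an infinite-dimensional stratum, whose orbits are far too small for every orbit to meet a prescribed dense open $U_{[w]}$. Worse, a contracting flow pushes orbit closures toward \emph{more degenerate} points, which is the wrong direction: to conclude $S_{\le\cO}=\vn$ from invariance one needs the closure of the orbit of \emph{every} shallow point to meet $U_{[w]}$, i.e.\ essentially transitivity on the whole stratum, which never holds here. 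Your closing paragraph concedes that one must verify case by case that every orbit meets $\chi(U_{[w]})$ --- but that verification \emph{is} the content of the conjecture, so the argument is circular at exactly the point where it needs to close.

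For comparison, in the cases where the paper does prove the conjecture (classical types, $G_{2}$, and classes parabolically induced from these), the mechanism is entirely different and does not pass through a specialization from $U_{[w]}$. One first reduces to elliptic classes in almost simple groups (Corollary \ref{c:reduce to Levi}, Proposition \ref{p:trans Levi}), then uses the skeleton $\cX^{w}=\Gr^{L^{+}T_{w},\red}$: by the Borel fixed-point argument of Lemma \ref{l:GrO Xw}, every minimal reduction type of a shallow $\g$ is realized at a point of $\cX^{w}$, and for classical groups $\cX^{w}$ is described explicitly (a single lattice in type $C$, a Lagrangian Grassmannian in types $B$ and $D$). A direct linear-algebra computation of the Jordan type of $\g$ on $\L/t\L$ for $\L\in\cX^{w}$ (the rank inequalities of \S\ref{ss:B shallow}) shows that every reduction type occurring on the skeleton lies above $\Phi([w])$, for \emph{every} shallow $\g$, not just generic ones. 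If you want to pursue your route instead, the missing ingredient is a proof that $(L^{\hs}\frc)^{sh}_{[w]}\cap(L^{\hs}\frc)_{\ov\cO}$ is either empty or all of $(L^{\hs}\frc)^{sh}_{[w]}$; irreducibility of the stratum (Lemma \ref{l:cw}) is not enough for this, and no soft argument in the paper supplies it.
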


\begin{conj}\label{c:trans} Let $\cO\in \un\cN$ and $[w]=\KL(\cO)$.  Let $\g\in  (L^{\hs}\frg)^{sh}_{[w]}$. Then the centralizer $(LG)_{\g}$ acts transitively on $\Gr_{\g,\cO}$ (which is non-empty by Theorem \ref{th:WtoN}(1) and discrete by Lemma \ref{l:discrete}). 
\end{conj}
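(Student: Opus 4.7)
The plan is to establish transitivity by combining the structural results of Theorems \ref{th:WtoN} and \ref{th:PhiPsi} with a careful analysis of the loop torus action on the discrete set $\Gr_{\g,\cO}$. After $LG$-conjugation we may assume $\g\in L^{\hs}\frt_w$, so that $(LG)_\g=LT_w$; by Theorem \ref{th:WtoN}(2) applied to $[w]=\KL(\cO)$, $\RT_{\min}([w])=\cO$, ensuring $\Gr_{\g,\cO}\neq\vn$.

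First, I would analyze the orbit structure. Since $\Gr_{\g,\cO}$ is discrete, each $LT_w$-orbit is discrete and its stabilizer is an open subgroup of $LT_w$, which for shallow $\g$ is expected to be exactly the parahoric $L^+T_w$ (the stabilizer strictly contains $L^+T_w$ only when $\Ad(x^{-1})L^+\frt_w$ fails to be transverse to $L^+\frg$ in a certain sense, which shallowness should preclude). Granting this, each $LT_w$-orbit is a torsor under the lattice $\pi_0(LT_w)=LT_w/L^+T_w$, and transitivity reduces to the cardinality match $|\Gr_{\g,\cO}|=|\pi_0(LT_w)|$.

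The main step is to prove this cardinality match. Using the identification $\KL=\Psi$ from Theorem \ref{th:PhiPsi}, I would reinterpret the problem via Lusztig's construction of $\Phi$: Lusztig realizes $\cO=\Phi([w])$ as the minimal nilpotent orbit whose unipotent counterpart $\cU$ meets $BwB$, and the $B$-orbit structure of $(BwB)\cap\cU$ (for $w$ of minimal length in $[w]$) is naturally parameterized by $\xcoch(T)^w=\pi_0(LT_w)$ via cocharacter translations. Transferring this bijection to the affine Grassmannian via the Kazhdan-Lusztig correspondence would yield both the count and a compatible identification of $LT_w$-translations with the $B$-orbit parameters, giving transitivity. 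For classical groups one can carry this out explicitly using the skeleta introduced later in the paper, which provide an $LT_w$-equivariant decomposition of $\Gr_\g$ compatible with the reduction-type stratification.

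The main obstacle is bridging Lusztig's Bruhat-cell picture in $G$ with the affine-Grassmannian picture in $LG$ at the level of individual points: Theorem \ref{th:PhiPsi}(1) matches only the set-level invariants, and refining this to count points \emph{and} track the $LT_w$-action requires a new equivariant input beyond what the global structural theorems provide. A secondary difficulty is that one cannot straightforwardly reduce to the generic locus $U_{[w]}$ by semi-continuity, since the number of orbits on a family of discrete sets is typically upper semi-continuous in the wrong direction; this forces the stabilizer computation and the cardinality match to be carried out uniformly for all shallow $\g$, which is presumably why the conjecture remains open outside classical type.
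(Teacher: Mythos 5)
Your reduction to $\g\in L^{\hs}\frt_{w}$ and the observation that each point of the discrete set $\Gr_{\g,\cO}$ has open stabilizer in $LT_{w}$ are fine, but the pivot of your argument --- that the stabilizer is exactly $L^{+}T_{w}$, so that transitivity reduces to the cardinality match $|\Gr_{\g,\cO}|=|\pi_{0}(LT_{w})|$ --- is false, and shallowness does not rescue it. Already for $G=\Sp_{2n}$ and $[w]$ elliptic the skeleton $\cX^{w}$ is a single self-dual lattice fixed by all of $LT_{w}$, so $|\Gr_{\g,\cO}|=1$ while $\pi_{0}(LT_{w})\cong\{\pm1\}^{I}$; and in types $B$ and $D$ the paper shows (Lemma \ref{l:Rg}) that $\Gr_{\g,\cO}=R_{\g}$ is a torsor under $\pi_{0}(LT_{w})/\Im(\D_{a})$ for a nontrivial subgroup $\Im(\D_{a})$ determined by the parity pattern of the negative cycles. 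So the set you need to control is a proper quotient torsor, and identifying the stabilizer is exactly as hard as proving transitivity; your proposal assumes the answer at this step. The second load-bearing step --- transferring Lusztig's parameterization of $(BwB)\cap\cU$ by $\xcoch(T)^{w}$ to the affine Grassmannian ``via the Kazhdan--Lusztig correspondence'' --- is not available: no such equivariant bridge between the Bruhat-cell picture in $G$ and the affine Springer fiber is established here or in the literature (the paper explicitly flags the existence of a direct relation between $(\Phi,\Psi)$ and $(\RT_{\min},\KL)$ as an open problem in \S\ref{ss:final}), and the logical direction is in fact reversed: the identities $\Phi=\RT_{\min}$, $\Psi=\KL$ are \emph{deduced from} the affine computations, so they cannot be fed back in as an input to produce the point count.

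What the paper actually does, and what your proposal is missing, is the hard local computation. Lemma \ref{l:discrete} places $\Gr_{\g,\cO}$ inside the skeleton $\cX^{w}=\Gr^{L^{+}T_{w},\red}$; the content is then an explicit description of $\cX^{w}$ case by case. In type $C$ (and for Coxeter classes, and $G_{2}$) the skeleton is a single $LT_{w}$-orbit and Lemma \ref{l:Xw trans} concludes immediately. In types $B$ and $D$ the skeleton is an orthogonal Lagrangian Grassmannian $\Lag(U)$, and one must identify $\Gr_{\g,\cO}$ with the locus $R_{\g}$ of Lagrangians satisfying the rank conditions \eqref{min ell} and \eqref{min r} relative to a pair of quadratic forms $(q_{j},Q_{j})$ in general position (general position being equivalent to shallowness of $\g$, Lemma \ref{l:q gen}), and then prove transitivity of $\{\pm1\}^{I}$ on $R_{\g}$ by an induction on the filtration of $U$ using elementary facts about pencils of quadrics (Lemmas \ref{l:Rg} and \ref{l:q}); Proposition \ref{p:trans Levi} handles the reduction to Levi subgroups. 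None of this is reachable from the set-level identities $\Phi=\RT_{\min}$ and $\Psi=\KL$, which is also why the statement remains a conjecture outside the cases of Theorem \ref{th:trans}.
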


In this paper we prove the following partial results towards these conjectures.

\begin{theorem}\label{th:AC} Let $G$ be a reductive group and $[w]\in \un W$. Assume  $[w]$ contains an element $w=(w_{1},\cdots, w_{\ell})\in W_{1}\times \cdots\times W_{\ell}=W'<W$ in a parabolic subgroup $W'$ such that for each $i=1,\cdots, \ell$, one of the following holds
\begin{itemize}
\item either $W_{i}$ is the Weyl group of a Levi subgroup $M_{i}\subset G$ (containing $T$) of type $A,C$ or $G_{2}$;
\item or $[w_{i}]$ is the Coxeter class in $W_{i}$.
\end{itemize}
Then Conjecture \ref{c:min red type} holds for all elements of type $[w]$.

In particular,  Conjecture \ref{c:min red type} holds for $G$ of types $A,C$ and $G_{2}$.
\end{theorem}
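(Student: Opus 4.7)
The plan is to reduce the problem by a parabolic/Levi induction to the special cases singled out in the hypothesis, then to dispatch each by a separate geometric argument.

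\textbf{Step 1 (Parabolic reduction).} The parabolic $W'\subset W$ is the Weyl group of a Levi $M\subset G$ containing $T$, and $w\in W'$ represents a conjugacy class $[w]_M\in\un W_M$ mapping to $[w]\in\un W$. I would first show that every $\g\in (L^{\hs}\frg)^{sh}_{[w]}$ can be $LG$-conjugated into $L\fm$ to land in $(L^{\hs}\fm)^{sh}_{[w]_M}$. This uses the explicit construction of $LT_w$ in \S\ref{ss:loop tori}: when $w\in W_M$ the twisted torus $T_w$ already lives in $M_{\CC\lr{t^{1/m}}}$, and \eqref{dim Sp} is additive across the Levi, so shallowness in $G$ and in $M$ coincide. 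Next, comparing $\Gr^G_\g$ and $\Gr^M_\g$ via Lusztig--Spaltenstein induction $\Ind^G_M:\un\cN_M\to\un\cN$, I would aim for the identity
\begin{equation*}
\RT^G_{\min}(\g) \;=\; \Ind^G_M\bigl(\RT^M_{\min}(\g)\bigr),
\end{equation*}
which reduces singletonness for $G$ to singletonness for $M$.

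\textbf{Step 2 (Factorization and cases).} The Levi $M$ is isogenous to a product $M_1\times\cdots\times M_\ell$, and $\g$ decomposes correspondingly as $(\g_1,\dots,\g_\ell)$. The affine Springer fibers and the maps $\ev_\g$ factor across the product, giving $\RT^M_{\min}(\g)=\prod_i\RT^{M_i}_{\min}(\g_i)$, so the problem reduces to each factor. If $[w_i]$ is Coxeter in the irreducible $W_i$, shallow elements of type $[w_i]$ admit a Kostant-section-type normal form in $LM_i$, and a direct analysis in the spirit of the elliptic theory of \cite{KL} identifies the image of $\ev_{\g_i}$ as a cluster of orbits with a unique minimal member, namely the regular nilpotent orbit of $M_i$ (matching $\Phi([w_i])$). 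If $M_i$ is of type $A$, $C$, or $G_2$, I would invoke the skeleton construction developed later in the paper for classical groups (supplemented by a finite case check for $G_2$). The skeleton provides a combinatorial presentation of a dense piece of $\Gr^{M_i}_{\g_i}$ together with its image under $\ev_{\g_i}$, from which uniqueness of the minimal nilpotent orbit can be read off directly.

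\textbf{Main obstacle.} The most serious technical hurdle is the precise compatibility in Step 1, namely $\RT^G_{\min}=\Ind^G_M\circ\RT^M_{\min}$. A priori, additional small orbits could appear in $\RT^G(\g)$ that are not of the form $\Ind^G_M(\cO_M)$ for any $\cO_M\in\RT^M(\g)$, and this would break the reduction. Ruling such orbits out requires combining the shallowness hypothesis, the dimension formula \eqref{dim Sp}, and the fact that $\Ind^G_M$ preserves codimension in the nilpotent cone, so that any spurious small orbit would violate the dimension bound on $\Gr^G_\g$. Making this comparison of $\ev^G_\g$ and $\ev^M_\g$ geometric (rather than purely numerical) is where I expect the bulk of the work in the proof to go.
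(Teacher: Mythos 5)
Your overall architecture (reduce to a Levi, factor, then treat Coxeter classes and types $A$, $C$, $G_2$ separately, using skeleta for the classical cases) matches the paper's, but Step 1 contains a genuine error that would derail the reduction. The map relating $\RT^M_{\min}(\g)$ to $\RT^G_{\min}(\g)$ is \emph{not} Lusztig--Spaltenstein induction: at a point $gL^+M\in\Gr_{M,\g}\subset\Gr_\g$ the $G$-reduction type is literally the $G$-orbit containing the $M$-orbit $\ev^M_\g(gL^+M)$, i.e.\ the saturation map $\io_{M,G}:\un\cN_M\to\un\cN$ of \S\ref{ss:Levi} (the paper explicitly warns that this is not $\Ind^G_M$). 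Consequently your proposed mechanism for ruling out ``spurious small orbits'' --- that $\Ind^G_M$ preserves codimension in the nilpotent cone, so a small orbit would violate the dimension bound --- is unavailable: $\io_{M,G}$ does not preserve codimension, and no dimension count is used. The paper's Lemma \ref{l:RT Levi} instead argues geometrically: $\Gr_{M,\g}=(\Gr_\g)^{A_M}$, a generic cocharacter $\l:\Gm\to A_M$ contracts every point $x\in\Gr_\g$ to some $x_0\in\Gr_{M,\g}$, and $\ev^G_\g(x_0)$ lies in the closure of $\ev^G_\g(x)$; hence every minimal reduction type of $\g$ for $G$ already occurs on $\Gr_{M,\g}$ and equals $\io_{M,G}$ of a minimal reduction type for $M$. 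This contraction/semicontinuity argument is the missing idea.

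Two smaller points. First, Conjecture \ref{c:min red type} concerns \emph{all} $\g$ of type $[w]$, not only shallow ones, so restricting to $(L^{\hs}\frg)^{sh}_{[w]}$ (and the claim that shallowness in $G$ and $M$ coincide, which is also not needed) does not suffice; the fixed-point argument the paper uses (Lemma \ref{l:TOfixedpt}: if $LT_w$ acts transitively on the skeleton $\cX^w=\Gr^{L^+T_w,\red}$, then any two minimal strata $\Gr_{\g,\cO}$, being closed and $L^+T_w$-stable, contain fixed points in the same $LT_w$-orbit, forcing $\cO=\cO'$) works uniformly for every $\g$ of type $[w]$ precisely because $\cX^w$ does not depend on $\g$. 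For elliptic classes in type $C$ the skeleton is a single lattice, not a ``dense piece'' of $\Gr_\g$. Second, for $G_2$ no case check is needed: $\un\cN$ is totally ordered there, so $\RT_{\min}(\g)$ is automatically a singleton.
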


\begin{theorem}\label{th:trans} Let $G$ be a reductive group and $[w]=\KL(\cO)$ for some $\cO\in \un\cN$.
Assume  $[w]$ contains an element $w=(w_{1},\cdots, w_{\ell})\in W_{1}\times \cdots\times W_{\ell}=W'<W$ in a parabolic subgroup $W'$ such that for each $i=1,\cdots, \ell$, one of the following holds
\begin{itemize}
\item either $W_{i}$ is of classical type or $G_{2}$;
\item or $[w_{i}]$ is the Coxeter class in $W_{i}$.
\end{itemize}
Then Conjectures \ref{c:min red shallow} and \ref{c:trans} hold for $G$ and $[w]$.

In particular, Conjectures \ref{c:min red shallow} and \ref{c:trans}  hold for $G$ of classical types  and $G_{2}$.
\end{theorem}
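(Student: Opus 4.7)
The plan is to reduce the statement to the case of each irreducible factor $W_{i}$ of $W'$, and then handle each such factor by a separate technique: Coxeter cases via the explicit description of homogeneous affine Springer fibers, and classical types or $G_{2}$ via the \emph{skeleta} of affine Springer fibers developed later in the paper.

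For the parabolic reduction, since $W'$ is a parabolic subgroup of $W$ it is the Weyl group of a Levi $M\subset G$ containing $T$, and the irreducible decomposition $W'=W_{1}\times\cdots\times W_{\ell}$ corresponds to a factorization (up to a central isogeny) of the derived group of $M$ as $M'_{1}\times\cdots\times M'_{\ell}$ with $W_{M'_{i}}=W_{i}$. First I would show that every shallow element $\g\in (L^{\hs}\frg)^{sh}_{[w]}$ can be conjugated into $L^{\hs}\fm$ with $\fm=\Lie M$, and that its $M$-type is again $[w]$ under the natural inclusion $\un{W'}\hookrightarrow \un W$. The centralizers agree, $(LG)_{\g}=(LM)_{\g}$, and a Lusztig--Spaltenstein-type induction compares the $G$- and $M$-reduction maps: minimal reduction types go to minimal reduction types under induction of nilpotent orbits, and transitivity of the centralizer action on the minimal-reduction-type stratum lifts from $M$ to $G$ because the induced orbit absorbs the partial flag variety parametrizing reductions. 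This reduces both conjectures to the analogous statements for each factor $(M'_{i},[w_{i}])$.

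When $[w_{i}]$ is the Coxeter class in $W_{i}$, a shallow element of this type may be chosen homogeneous of slope $1/h_{i}$ in the sense of \cite{KL}, where $h_{i}$ is the Coxeter number. The associated affine Springer fiber admits a paving by affine cells (as analyzed by Kazhdan--Lusztig), and the reduction map together with the centralizer orbit decomposition is controlled by Kostant's slice for the principal nilpotent; both conjectures follow essentially formally. For $W_{i}$ of classical type or $G_{2}$, I would instead invoke the skeleton $\Sigma_{\g}\subset \Gr_{\g}$ defined later in the paper. The skeleton is a finite-type closed subvariety that meets every $(LG)_{\g}$-orbit on $\Gr_{\g}$, and the restriction of $\ev_{\g}$ to $\Sigma_{\g}$ admits an explicit combinatorial description in terms of lattice chains (for types $B,C,D$) or short/long-root data (for $G_{2}$). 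From this description I would identify $\RT_{\min}(\g)$ with the single orbit $\cO=\Phi([w])$ predicted by Theorem \ref{th:PhiPsi}, and identify $\Gr_{\g,\cO}$ as the single $(LG)_{\g}$-orbit through a distinguished skeleton point, yielding simultaneously the uniqueness of the minimal reduction type and the transitivity of the centralizer action.

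The main obstacle is the case-by-case combinatorial analysis of the skeleta in classical types: one must show, in each of types $B,C,D$, that the image of $\Sigma_{\g}$ under $\ev_{\g}$ has a \emph{unique} minimum (the content of Conjecture \ref{c:min red shallow}) and that $(LG)_{\g}$ acts transitively on the preimage of this minimum inside $\Sigma_{\g}$. The parabolic reduction and the Coxeter steps are comparatively formal; the technical core of the argument is the explicit combinatorial control of the skeleton for classical groups, which is also what yields the ``more refined results for classical groups'' announced in the abstract.
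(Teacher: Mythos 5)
Your high-level architecture --- reduce along the parabolic $W'$ to a Levi $M$, dispose of Coxeter factors separately, and treat classical/$G_{2}$ factors via the skeleton --- matches the paper, which deduces the theorem from Proposition \ref{p:trans Levi}, Corollary \ref{c:reduce to Levi}, Proposition \ref{p:Cox}, and the analysis in \S\ref{s:AC}--\S\ref{s:BD}. But two of your steps have genuine gaps. For the Levi reduction you invoke ``Lusztig--Spaltenstein-type induction'' and claim the induced orbit ``absorbs the partial flag variety parametrizing reductions.'' The map that actually relates $\RT^{M}_{\min}$ to $\RT_{\min}$ is \emph{not} Lusztig--Spaltenstein induction but the map $\io_{M,G}$ sending an $M$-orbit to the $G$-orbit containing it (the paper warns in \S\ref{ss:Levi} that these differ), and the reduction is far from formal: one must first prove $[w]_{M}=\KL_{M}(\cO')$ for some $\cO'\subset\cO$ by comparing $\d_{[w]}-\d_{[w]_{M}}=\val\det(\ad(\g)|\frn_{P}\ot\CC\lr{t})$ with $d_{\cO}-d_{\cO'}=\dim\ker(\ad(e):\frn_{P}\to\frn_{P})$ via elementary divisors, and then rule out the other $M$-orbits $\cO''\subset\cO\cap\cN_{M}$ occurring in the decomposition $\Gr_{\g,\cO}=\coprod\Gr_{M,\g,\cO''}$ (which is available only because $\Gr_{\g,\cO}\subset\cX^{w}=\cX^{w}_{M}\subset\Gr_{M,\g}$, itself resting on the discreteness of $\Gr_{\g,\cO}$) by a centralizer-dimension contradiction. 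The induction picture you sketch would not produce these facts.

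More seriously, for types $B$ and $D$ you propose to ``identify $\Gr_{\g,\cO}$ as the single $(LG)_{\g}$-orbit through a distinguished skeleton point.'' That works in type $C$, where the skeleton really is one point. In types $B$ and $D$ the skeleton is a positive-dimensional orthogonal Lagrangian Grassmannian $\Lag(U)$, there is no distinguished point, and $\Gr_{\g,\cO}$ is a finite subset $R_{\g}\subset\Lag(U)$ on which only the component group $\pi_{0}(LT_{w})\cong\{\pm1\}^{I}$ can act nontrivially. Proving that this finite group acts transitively on $R_{\g}$ (with stabilizer $\Im(\D_{a})$) is the content of Lemma \ref{l:Rg}, an induction over the filtration of $U$ by the degrees $n_{i}$ that relies on the geometry of pairs of quadratic forms in generic position (Lemma \ref{l:q}). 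You correctly flag this as the technical core but supply no argument, and the one mechanism you do name --- a unique skeleton point --- fails in precisely these cases.
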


\subsection{Remarks on the proofs} The proofs for these theorems are different for classical groups and for exceptional groups.

We introduce certain subvarieties of the affine Grassmannian called the {\em skeleta} (for any  $G$).  These are fixed points of the neutral components of loop tori on $\Gr$. For classical groups and for $G_{2}$, we are able to describe the skeleta explicitly (see \S\ref{s:AC} and \S\ref{s:BD}), from which we deduce the theorems for these groups. It is an interesting question to give explicit descriptions for skeleta in other exceptional types.

To prove Theorems \ref{th:WtoN} and \ref{th:PhiPsi} for exceptional types, we use the properties of certain subsets of the arc space of the Chevalley base, with Theorem \ref{th:Ow} being the key observation.  The proof of Theorem \ref{th:Ow} uses a flatness theorem of Bouthier, Kazhdan and Varshavsky \cite{BKV} for certain maps between arc spaces. Although the constructions of $\RT_{\min}$ and $\KL$ are logically independent of Lusztig's maps $\Phi$ and $\Psi$, our proof that they are equal in the exceptional case relies on Lusztig's explicit calculation of $\Phi$ in \cite{Lfromto}. 

After several reduction steps, we reduce to treat only elliptic conjugacy classes in $\un W$ in almost simple groups. The actual proofs start in \S\ref{s:AC}, at the beginning of which we list what remains to be proved after the reductions.

In \S\ref{ss:final} we comment on a possible way to relate $(\RT_{\min}, \KL)$ and $(\Phi,\Psi)$ directly.

\subsection{Other results} Along the way we also prove some results that are of independent interest: in Theorem \ref{th:min length} we give a formula for the minimal length in an elliptic conjugacy class of $W$ in terms of root data; in Corollary \ref{c:dual order} we show that the partial order on $\un W$ defined by Spaltenstein depends only on $W$ and not on the root system of $G$; in Corollary \ref{c:Phi order} we show that  $\RT_{\min}$ and $\KL$ respect partial orders on $\un\cN$ and $\un W$.



\section{A formula for $\d_{[w]}$}
In this section we  relate $\d_{[w]}$ and the minimal length for elements in $[w]$. This result will be used in the proof of Theorem \ref{th:WtoN}.

\subsection{Filtration on the root system} Suppose $w\in W$ is of order $m\ge1$. For each $i|m$, let $\frt^{*}[i]\subset \frt^{*}$ be the sum of eigenspaces of $w$ with eigenvalues primitive $i$-th roots of unity. For example, $\frt^{*}[1]=(\frt^{*})^{w}$. 

Let $R$ be the set of roots of $G$ with respect to $T$. For $i|m$  let
\begin{equation*}
R_{\le i}= R\cap(\oplus_{1\le i'\le i, i'|m}\frt^{*}[i']).
\end{equation*}
This is a saturated sub root system of $R$.   Set $R_{\le 0}=\vn$ and let $R_{i}:=R_{\le i}-R_{\le i-1}$. Note our $R_{i}$ is different from the same-named subset of $R$ in \cite[3.4]{GKM}.

\begin{lemma} Suppose $w\in W$ has order $m$. Then for any $\g\in (L^{\hs}\frg)^{sh}_{[w]}$, we have
\begin{equation}\label{Dmin}
\val(\D(\g))=\sum_{i|m}\frac{|R_{i}|}{i}.
\end{equation}
\end{lemma}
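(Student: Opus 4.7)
The plan is to reduce to a computation on the standard loop torus $LT_{w}$, expand $\g$ in Galois-equivariant coordinates over the cover $\CC\lr{t^{1/m}}$, and analyze $\val(\a(\g))$ for each root $\a$ separately. Since $\D$ and the type $[w]$ are $LG$-conjugation invariant, after conjugation we may assume $\g\in L^{++}\frt_{w}$. Using the Galois descent description of $T_{w}$ from \S\ref{ss:loop tori}, set $\t=t^{1/m}$ and write $\g=\sum_{n\ge1}\g_{n}\t^{n}$ with $\g_{n}\in\frt$ in the $\z^{-n}$-eigenspace of $w$; topological nilpotence forces $\g_{0}=0$ because $\frt^{w}\subset\frt$ contains no nonzero nilpotent element. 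For each root $\a$, writing $\pi_{k}(\a)$ for the $\z^{k}$-eigencomponent of $\a$ under $w$ acting on $\frt^{*}$, the $w$-equivariance of the pairing $\frt^{*}\otimes\frt\to\CC$ gives $\a(\g_{n})=\pi_{n}(\a)(\g_{n})$.

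For $\a\in R_{\le i}$, $\pi_{k}(\a)\ne0$ forces $\textup{ord}(\z^{k})\le i$, i.e., $\gcd(k,m)\ge m/i$. Combined with $\g_{0}=0$, one checks that the smallest $n\ge1$ contributing to $\a(\g)$ satisfies $n\ge m/i$: for $i\ge2$ we have $k=m/i\in\{1,\ldots,m-1\}$, while for $i=1$ only $k=0$ is allowed, and the smallest positive $n\equiv0\pmod{m}$ is $n=m$. Hence $\val(\a(\g))\ge1/i$ for every $\a\in R_{i}$, and summing over all roots yields the lower bound $\val(\D(\g))\ge\sum_{i|m}|R_{i}|/i$ for every $\g\in L^{++}\frt_{w}$.

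To see this lower bound is attained---so that $\d_{[w]}$ equals the asserted value and every shallow $\g$ realizes it---the key input is that $\pi_{m/i}(\a)\ne0$ for each $\a\in R_{i}$. This follows from a Galois-rationality argument: the orbit span $L_{\a}:=\CC[\langle w\rangle]\a\subseteq\frt^{*}$ is defined over $\QQ$ (roots are $\QQ$-rational and $w$ acts $\QQ$-linearly on $\frt^{*}_{\QQ}$), so the set of eigenvalues of $w$ on $L_{\a}$ is stable under $\Gal(\QQ(\z)/\QQ)\cong(\ZZ/m)^{\times}$; the hypothesis $\a\in R_{i}$ guarantees some primitive $i$-th root eigenvalue on $L_{\a}$, so all its Galois conjugates---in particular $\z^{m/i}$---appear as eigenvalues, and $\a$, being a cyclic generator of $L_{\a}$, has nonzero projection onto each resulting 1-dimensional eigen-subrep. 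Choosing each $\g_{n}$ generic in its eigenspace then makes $\val(\a(\g))=1/i$ for every $\a\in R_{i}$ (a nonempty Zariski-open condition cut out by finitely many linear inequations), and summing yields equality. The principal technical point is the Galois-rationality observation furnishing $\pi_{m/i}(\a)\ne0$; the rest is a direct valuation calculation.
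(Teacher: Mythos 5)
Your proof is correct and follows essentially the same route as the paper's: expand $\g$ in $w$-eigencomponents over $\CC\lr{t^{1/m}}$, bound $\val(\a(\g))$ below by $1/i$ for $\a\in R_{i}$, and use a $\QQ$-rationality argument to show the bound is attained generically. The only cosmetic difference is that you phrase the rationality step via Galois-stability of the eigenvalues of $w$ on the orbit span $\CC[\langle w\rangle]\a\subset\frt^{*}$, whereas the paper phrases it via the descent of the eigenspace decomposition to $\frt_{\QQ}$; these are equivalent.
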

\begin{proof}
Let $LT_{w}$ be a loop torus of type $[w]$ and assume $\g\in (L^{\hs}\frt_{w})^{sh}$. Following \cite[4.3]{GKM}, one can describe $L\frt_{w}$ as  the fixed point subspace of $\frt\lr{t^{1/m}}$ under the automorphisms $a(t^{1/m})\mapsto wa(\z^{-1}t^{1/m})$ where $\z$ is a primitive $m$-th root of unity. For $j\in\ZZ/m\ZZ$, let $\frt_{j}\subset \frt$ be the eigenspace of $w$ with eigenvalue $\z^{j}$, then $L\frt_{w}=\sum_{n\in\ZZ_{\ge0}}t^{n/m}\frt_{\un n}$ (here $\un n$ is $n$ mod $m$). Write $\g=\sum_{n}t^{n/m}\g_{n}$ with $\g_{n}\in \frt_{\un n}$. Then $\g\in (L^{\hs}\frt_{w})^{sh}$ if and only if $\g_{0}=0$ and for each root $\a\in R$, $\val(\a(\g))$ achieves the minimum among $\g\in L^{\hs}\frt_{w}$. Note that these minima can be achieved simultaneously because the minimum for each  $\val(\a(\g))$  is achieved on an open dense subset of $L^{\hs}\frt_{w}$. In other words, if $j_{\a}\in\NN$ is the smallest $j\in \{1,2,\cdots, m\}$ such that $\a|_{\frt_{\un j}}\ne0$, then $\a(\g_{j})\ne0$. Let $\frt_{\QQ}$ be the rational span of the coroots. Then the decomposition  $\frt=\op_{i|m}\frt[i]$ descends to $\frt_{\QQ}=\op_{i|m}\frt_{\QQ}[i]$, and $\frt_{\QQ}[i]_{\CC}\cong \frt_{j}$ (via projection) for various $j\in\ZZ/m$ such that $\z^{j}$ has order $i$ (i.e., $\gcd(j,m)=m/i$).  Since roots are in $\frt_{\QQ}^{*}$, for any $i|m$, $\a|_{\frt[i]}\ne0$ if and only if $\a|_{\frt_{\un {m/i}}}\ne0$. Therefore,  $\a\in R_{i}$ iff $\a|_{\frt[i']}=0$ for all $i'>i$ and $\a|_{\frt[i]}\ne0$, iff $j_{\a}=m/i$. This implies $\val(\D(\g))=\sum_{\a\in R}\val(\a(\g))=\sum_{i|m}|R_{i}|\cdot \frac{m/i}{m}=\sum_{i|m}\frac{|R_{i}|}{i}$.
\end{proof}

For $[w]\in \un W$, let $\ell_{\min}([w])$ be the minimum of $\ell(w')$ where $w'$ runs over the conjugacy class $[w]$. Here $\ell(-)$ is the length function on $W$ with respect to any set $S$ of simple reflections (the resulting function $\ell_{\min}$ is independent of the choice of $S$).

\begin{theorem}\label{th:min length} Let $[w]\in \un W_{ell}$, then we have 
\begin{equation*}
\d_{[w]}=\frac{\ell_{\min}([w])-r}{2}
\end{equation*}
where $r$ is the semisimple rank of $G$.
\end{theorem}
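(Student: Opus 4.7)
The plan is to combine the previous lemma with a combinatorial identity about $\ell_{\min}([w])$, reducing the theorem to counting $w$-orbits on the root system.

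First, by equation \eqref{Dmin}, for any shallow $\g\in (L^{\hs}\frg)^{sh}_{[w]}$ we have $\val(\D(\g))=\sum_{i\mid m}|R_i|/i$. Since $[w]$ is elliptic, $\dim\frt-\dim\frt^w=r$, and substituting into the definition \eqref{dim Sp} of $\d_{[w]}$ gives
\begin{equation*}
\d_{[w]} \;=\; \frac{\sum_{i\mid m}|R_i|/i - r}{2}.
\end{equation*}
The theorem is therefore equivalent to the combinatorial identity $\ell_{\min}([w])=\sum_{i\mid m}|R_i|/i$. Moreover, decomposing $\alpha\in R_i$ into $w$-eigencomponents shows its $w$-orbit has size exactly $i$ (the least common multiple of the orders of the eigenvalues on which $\alpha$ has nonzero projection, with maximum equal to $i$), so $|R_i|/i$ counts the $w$-orbits contained in $R_i$ and the sum equals the total number of $w$-orbits on $R$, which I denote $|R/w|$.

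For the lower bound $\ell_{\min}([w])\ge|R/w|$, I would argue as follows. Take any $w'\in [w]$ and use $\ell(w')=|R^+\cap w'R^-|$. Partitioning this count by $w'$-orbits, each orbit $\mathcal{O}$ contributes the number of cyclic sign-transitions $-\to +$ along $\mathcal{O}$. The sum $\sum_{\beta\in\mathcal{O}}\beta$ is $w'$-invariant, and since roots vanish on $\Lie Z_G$ while ellipticity forces $w'$ to act on $(\frt/\Lie Z_G)^*$ without nonzero fixed vectors, this sum must be zero. Hence $\mathcal{O}$ contains both positive and negative roots, producing at least one cyclic transition. Summing over orbits yields $\ell(w')\ge|R/w|$, and minimizing over $[w]$ gives the lower bound.

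The main obstacle is the matching upper bound: one must exhibit $w\in[w]$ in which every $w$-orbit on $R$ has cyclic sign pattern $+\cdots+-\cdots-$, contributing exactly one transition, so that $\ell(w)=|R/w|$. I would extract such a representative from the theory of minimal-length (``good'') elements in Weyl group conjugacy classes developed by Geck--Pfeiffer, Geck--Michel, and extended by He--Nie: their constructions based on Carter's classification of elliptic classes via admissible diagrams furnish, for each elliptic class, a representative whose length equals $|R/w|$. Verification of the sign pattern on orbits for these distinguished representatives can be carried out type-by-type or derived from their general framework.
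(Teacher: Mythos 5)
Your reduction of the theorem to the identity $\ell_{\min}([w])=\sum_{i\mid m}|R_{i}|/i$ is correct and is exactly the paper's first step. The gap is in the next step: the quantity $\sum_{i\mid m}|R_{i}|/i$ is \emph{not} the number of $w$-orbits on $R$. Your parenthetical already contains the problem: the orbit size of $\a\in R_{i}$ is the least common multiple of the orders of the eigenvalues appearing in $\a$, and this lcm can strictly exceed its maximum $i$. Concretely, take $G$ of type $D_{5}$ and $w$ elliptic with negative cycles $(3,2)$, so $m=12$ and the eigenvalue orders are $\{6,2\}$ on the $3$-block and $\{4\}$ on the $2$-block. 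A mixed root such as $e_{1}-e_{4}$ has nonzero components of orders $6,2,4$, hence lies in $R_{6}$ but has $w$-orbit of size $\mathrm{lcm}(6,4)=12$. One computes $\sum_{i}|R_{i}|/i=2+1+4=7$ while the number of $w$-orbits on $R$ is $2+1+2=5$; and indeed $\ell_{\min}([w])=\dim Z_{G^{\ad}}(e)=7$ for $e$ of Jordan type $(7,3)$, consistent with the paper's formula but not with $|R/w|$. So your lower bound $\ell(w')\ge |R/w|$ (one sign transition per orbit) is true but too weak, and the matching upper bound you hope to extract from good/minimal-length representatives cannot exist, since no representative can have length $5$ here.

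The paper's proof repairs exactly this point. For an orbit $O\subset R_{i}$ of size $n$ (with $i\mid n\mid m$), one does not merely use that the full orbit sum vanishes; one uses that each consecutive block $\a+w\a+\cdots+w^{i-1}\a$ already has zero image in $\fra^{*}[i]$, the space whose generic half-space cuts out positivity on $R_{i}$. This forces at least $n/i$ indices $\b$ in the orbit with $\b>0$, $w\b<0$, giving $\ell(w')\ge\sum_{i}|R_{i}|/i$. For the upper bound the paper does not invoke Geck--Pfeiffer-type good elements; following Springer's argument for regular elements, it chooses the half-space on $\fra^{*}[i]$ to be $\textup{Re}\,\j{v,-}>0$ for a generic $v$ in a single eigenspace $\frt(\z)$ with $\z$ a primitive $i$-th root of unity, checking $\j{\frt(\z),\a}\ne0$ for $\a\in R_{i}$ by a rationality argument; with this choice every bound $n/i$ is attained simultaneously. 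You would need to replace both halves of your combinatorial argument along these lines.
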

\begin{proof} The argument here is an easy modification of Springer's argument  in \cite[Prop 4.10]{Spr-reg}. 

Take any $\g\in(L^{\hs}\frt_{[w]})^{sh}$. We need to show that $\d_{\g}=\frac{\ell_{\min}([w])-r}{2}$. Comparing with \eqref{dim Sp}, we need to show that $\ell_{\min}(w)=\val(\D(\g))$. Using \eqref{Dmin}, we reduce to showing that
\begin{equation*}
\ell_{\min}([w])=\sum_{i|m}\frac{|R_{i}|}{i}.
\end{equation*}
We easily reduce to the case $G$ is almost simple. Since $w$ is elliptic, $\frt^{*}[1]=0$, hence $R_{\le 1}=\vn$. For a choice of a basis for $R$, or equivalently a choice of positive roots $R^{+}$. Let $R^{+}_{w}=\{\a\in R^{+}|w\a\notin  R^{+}\}$. Then $\ell(w)=| R^{+}_{w}|$.

Let $\fra^{*}\subset \frt^{*}$ be the $\RR$-span of the roots. Let $\fra^{*}[i]=\fra^{*}\cap\frt^{*}[i]$, $\fra^{*}[\le i]:=\op_{i'\le i, i'|m}\fra^{*}[i']$. We have $R_{\le i}=R^{+}_{\le i}\sqcup (-R^{+}_{\le i})$ where $R^{+}_{\le i}=R_{\le i}\cap  R^{+}$. Any such decomposition is cut out by a generic real half space of $\fra^{*}[i]$: there exists a real half subspace $\fra^{*}[i]^{+}\subset\fra^{*}[i]$ such that $\a\in R_{\le i}-R_{\le i-1}$ belongs to $R^{+}_{\le i}$ if and only if the image of $\a$ in $\fra^{*}[i]$ lies in $\frt^{*}[i]^{+}$. Conversely, any generic choice of such a real half space, one for each $\fra^{*}[i]$, gives a choice of positive roots $ R^{+}$.  

Let $i|m, i\ge2$. Consider a generic real half space  $\fra^{*}[i]^{+}$. For any $\a\in R_{i}$, the roots $\{\a,w\a,\cdots, w^{i-1}\a\}$ cannot be all positive or all negative, for their sum has image zero in $\fra^{*}[i]$. Let $\{\a, w\a,\cdots, w^{n-1}\a\}$ be the $\j{w}$-orbit of $\a$ , so that $i|n|m$. Then for any $\a\in R_{i}$, among  the orbit $\{\a, w\a,\cdots, w^{n-1}\a\}$, at least $n/i$ roots $\b$ satisfy that $\b>0$ but $w\b<0$ (there is at least one such $\b=w^{j}\a$ for $0\le j\le i-1$, and then $w^{i}\b, w^{2i}\b,\cdots$ also satisfy the same condition). Therefore, $|R_{i}\cap  R^{+}_{w}|\ge |R_{i}|/i$.  This proves the inequality $\min\{\ell(w'), w'\in [w]\}=\min\{| R_{w'}^{+}|, w'\in [w]\}\ge\sum_{i|m, i\ge 2}|R_{i}|/i=\sum_{i|m}|R_{i}|/i$. 

To achieve the equality, we choose the real half plane $\fra^{*}[i]^{+}$ to be defined by $\textup{Re}\j{v,-}>0$ for some $v\in \frt(\z)$, where $\frt(\z)\subset \frt$ is the eigenspace of $w$ on $\frt$ with eigenvalue a primitive $i$-th root of unity $\z$. We need to choose $v$ generically so that $\textup{Re}\j{v,\pi(\a)}\ge0$ for any $\a\in R_{i}$ with image $\pi(\a)\in \fra^{*}[i]$. This is possible if $\j{\frt(\z), \a}\ne0$. We claim that $\j{\frt(\z), \a}\ne0$ holds for any $\a\in R_{i}$. If not, by the rationality of $\a$, $\j{\frt(\z'),\a}$ would be identically zero for any primitive $i$th root of unity $\z'$, hence $\pi(\a)$ is zero as a linear function on $\frt[i]$, i.e., $\pi(\a)=0$, which is a contradiction.  
\end{proof}

\begin{cor}\label{c:d ell} Suppose $[w]\in \un W_{ell}$ and $\Phi([w])=\cO$ (here $\Phi$ is Lusztig's map \eqref{Phi}). Then $\d_{[w]}=d_{\cO}$.
\end{cor}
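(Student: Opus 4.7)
The plan is to combine Theorem \ref{th:min length} with a classical identity of Lusztig relating the minimal length in an elliptic conjugacy class to the dimension of the centralizer of the corresponding nilpotent element.

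First, Theorem \ref{th:min length} gives, for $[w]\in\un W_{ell}$,
\[
\d_{[w]} \;=\; \frac{\ell_{\min}([w]) - r}{2}.
\]
Second, the standard dimension count for nilpotent orbits (namely $\dim\cO = \dim G - r - 2 d_\cO$, equivalently $\dim Z_G(e) = r + 2 d_\cO$ for $e\in\cO$) rewrites the Springer fiber dimension as
\[
d_\cO \;=\; \frac{\dim Z_G(e) - r}{2}.
\]
Comparing these two expressions, the corollary $\d_{[w]}=d_\cO$ is equivalent to the purely group-theoretic identity
\[
\ell_{\min}([w]) \;=\; \dim Z_G(e), \qquad \cO = \Phi([w]),\ [w]\in \un W_{ell}.
\]

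This identity is extracted from Lusztig's analysis in \cite{Lfromto}. For $w$ of minimal length in an elliptic class $[w]$, Lusztig shows that the Bruhat cell $BwB$ meets the unipotent orbit $\cU_\cO$ corresponding to $\cO=\Phi([w])$, and in the elliptic case a dimension count on the $B$-conjugation action on $BwB\cap \cU_\cO$, combined with $\dim BwB = \dim B + \ell(w)$ and the non-degeneracy of $1-w$ on $\frt$, forces equality of the two sides above. Once this formula is in hand, the corollary follows by substitution; the only substantive content is the Lusztig identity, which is the main step that needs to be justified carefully (either by appealing to the relevant statement in \cite{Lfromto} or, if preferred, by a case-by-case check using Geck--Michel's tables of minimal-length representatives together with Lusztig's explicit computation of $\Phi$).
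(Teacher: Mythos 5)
Your proof is correct and is essentially the paper's own argument: Theorem \ref{th:min length} combined with Lusztig's identity $\ell_{\min}([w])=\dim Z_{G^{\ad}}(e)$ (this is precisely \cite[Theorem 0.7]{Lfromto}, so no case-by-case check is needed) and Steinberg's formula $d_{\cO}=\tfrac12(\dim Z_{G^{\ad}}(e)-r)$ from \cite[Theorem 4.6]{St}. The only cosmetic point is that with $r$ the \emph{semisimple} rank one should use $Z_{G^{\ad}}(e)$ rather than $Z_{G}(e)$ when $G$ has positive-dimensional center.
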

\begin{proof}
By Theorem \ref{th:min length}, we have $\d_{[w]}=(\ell_{\min}([w])-r)/2$. By \cite[Theorem 0.7]{Lfromto}, $\ell_{\min}([w])=\dim Z_{G^{\ad}}(e)$ for any $e\in \cO$. Combining these equalities we get $\d_{[w]}=(\dim Z_{G^{\ad}}(e)-r)/2$, which is equal to $d_{\cO}$ by Steinberg's formula \cite[Theorem 4.6]{St}.
\end{proof}

\section{First reductions}
\subsection{Notations involving a Levi subgroup}\label{ss:Levi} Let $M\subset G$ be a Levi subgroup. It induces a map
\begin{equation*}
\io=\io_{M,G}: \un \cN_{M}\to \un\cN
\end{equation*}
sending a nilpotent orbit $\cO'$ of $M$ to the nilpotent orbit $\cO$ in $G$ containing $\cO'$. This map is not injective in general. This is {\em not} the Lusztig-Spaltenstein induction.

If $M$ is a Levi subgroup of $G$ containing $T$, then the Weyl group $W_{M}=W(M,T)$ is a parabolic subgroup of $W$. This induces a map
\begin{equation*}
\mj=\mj_{M,G}: \un W_{M}\to \un W. 
\end{equation*}
Again $\mj$ is not injective in general. 

If $w\in W_{M}$, denote by $[w]_{M}$ the conjugacy class of $w$ in $W_{M}$; denote by $[w]_{G}$ or simply $[w]$ its conjugacy class in $W$.

In general, we use superscript or subscript $M$ to emphasize objects $M$-counterparts of objects defined for $G$, e.g.,  $\RT^{M}_{\min}(-)$.

\begin{lemma}\label{l:RT Levi} Let $M\subset G$ be a Levi subgroup of $G$, and $\g\in (L\fm)\cap L^{\hs}\frg$ (where $\fm=\Lie M$). Then
\begin{equation*}
\RT_{\min}(\g)\subset \io_{M,G}(\RT^{M}_{\min}(\g)).
\end{equation*}
\end{lemma}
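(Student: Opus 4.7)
The lemma has an easy half and a substantive half. The easy half is the inclusion $\io(\RT^{M}(\g))\subset \RT(\g)$. The inclusion $M\hookrightarrow G$ gives $LM\hookrightarrow LG$ and induces a closed embedding $\Gr^{M}\hookrightarrow \Gr^{G}$ (which is injective because $LM\cap L^{+}G = L^{+}M$). This embedding carries $\Gr^{M}_{\g}$ into $\Gr^{G}_{\g}$, and the commutative square
\begin{equation*}
\xymatrix{\Gr^{M}_{\g}\ar[r]\ar[d]_{\ev^{M}_{\g}} & \Gr^{G}_{\g}\ar[d]^{\ev^{G}_{\g}}\\ [\cN_{M}/M]\ar[r] & [\cN/G]}
\end{equation*}
(the bottom row induced by $\io$) shows $\ev^{G}_{\g}(\Gr^{M}_{\g,\cO'}) = \io(\cO')/G$, giving $\io(\RT^{M}(\g))\subset \RT(\g)$.

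The substantive half is to show any $\cO\in \RT_{\min}(\g)$ lies in $\io(\RT^{M}(\g))$. I would exploit the central torus $A:=Z(M)^{\circ}\subset M$. Since $A\subset G\subset L^{+}G$, left multiplication by $A$ on $\Gr^{G}$ agrees with conjugation by $A$ on $LG$ (the $a^{-1}$ factor gets absorbed into $L^{+}G$); this action preserves $\Gr^{G}_{\g}$ because $A$ centralizes $\g\in L\fm$. Crucially $A\subset L^{+}G$ preserves every $L^{+}G$-Schubert cell, so each $A$-orbit in $\Gr^{G}$ is confined to a projective Schubert variety. Now pick any $x\in \Gr^{G}_{\g,\cO}$ and a regular cocharacter $\l:\Gm\hookrightarrow A$ (one whose image is Zariski dense in $A$). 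The BB-limit $x_{0}=\lim_{s\to 0}\l(s)\cdot x$ exists in $\Gr^{G}_{\g}$ by the above properness observation. Since $A\subset G$ acts trivially on $[\cN/G]$, the subset $\Gr^{G}_{\g,\le\cO}:=\bigsqcup_{\cO'\le\cO}\Gr^{G}_{\g,\cO'}$ is closed, $A$-stable, and contains $A\cdot x$; therefore it contains $x_{0}$, so the reduction type $\cO_{0}$ of $x_{0}$ satisfies $\cO_{0}\le\cO$.

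The key geometric input is the identification $(\Gr^{G}_{\g})^{A}=\Gr^{M}_{\g}$. The general fixed-point locus $(\Gr^{G})^{A}$ decomposes as a disjoint union of copies of $\Gr^{M_i}$ indexed by $G$-conjugates $M_{i}=Z_{G}(A_{i})$ of $M$ corresponding to different ``twists'' of $A$ inside $L^{+}G$; intersecting with $\Gr^{G}_{\g}$ forces $\g\in L\fm_{i}$, and since $\g\in L\fm$ is generically regular semisimple, only the standard component $\Gr^{M}$ survives. Thus $x_{0}\in \Gr^{M}_{\g}$, its reduction is $\io(\cO'_{0})$ for some $\cO'_{0}\in \RT^{M}(\g)$, and minimality of $\cO$ in $\RT(\g)$ forces $\cO_{0}=\cO=\io(\cO'_{0})$.

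Finally, to promote membership in $\io(\RT^{M}(\g))$ to membership in $\io(\RT^{M}_{\min}(\g))$: pick $\cO'^{*}\le \cO'_{0}$ in $\RT^{M}_{\min}(\g)$. Because $\io$ is order-preserving ($G\cdot\ov{\cO'^{*}}^{\fm}\subset\ov{G\cdot\cO'_{0}}^{\frg}$), we have $\io(\cO'^{*})\le\io(\cO'_{0})=\cO$ with $\io(\cO'^{*})\in \RT(\g)$, and minimality of $\cO$ gives $\io(\cO'^{*})=\cO$, so $\cO\in \io(\RT^{M}_{\min}(\g))$. The main obstacle I expect is the careful justification of the component analysis identifying $(\Gr^{G}_{\g})^{A}$ with $\Gr^{M}_{\g}$: one has to argue that an $A$-fixed point $gL^{+}G$ with $\Ad(g^{-1})\g\in L^{+}\frg$ forces $g\in LM\cdot L^{+}G$, using that the induced homomorphism $A\to G$, $a\mapsto\overline{g^{-1}ag}$, takes values in a torus that must coincide with $A$ once $\g\in L\fm$ is imposed.
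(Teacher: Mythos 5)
Your proof is correct and follows essentially the same route as the paper: both arguments contract a point of $\Gr_{\g,\cO}$ to $(\Gr_{\g})^{A_{M}}=\Gr_{M,\g}$ via a generic cocharacter of the connected center of $M$, use that the reduction type can only degenerate (move down in the closure order) in the limit, and then invoke minimality of $\cO$ to conclude. Your extra care about the component analysis of $(\Gr^{G})^{A}$ and the final passage from $\RT^{M}(\g)$ to $\RT^{M}_{\min}(\g)$ just makes explicit what the paper states tersely (that $\Gr_{M}\to\Gr^{A_{M}}$ is an isomorphism on reduced structures).
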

\begin{proof}
Let $A_{M}$ be the connected center of $M$, then $\Gr_{M}\to \Gr^{A_{M}}$ is an isomorphism on reduced structures. Then $\Gr_{M,\g}=(\Gr_{\g})^{A_{M}}$ as reduced closed subschemes of $\Gr_{\g}$. We have a commutative diagram
\begin{equation*}
\xymatrix{   \Gr_{M,\g}\ar[r]^{\ev^{M}_{\g}}\ar@{^{(}->}[d] & [\cN_{M}/M]\ar[d]        \\
\Gr_{\g}\ar[r]^{\ev^{G}_{\g}} & [\cN/G]}
\end{equation*} 
From this we see that $\io(\RT^{M}(\g))\subset \RT(\g)$. Now choose a generic cocharacter $\l: \Gm\to A_{M}$ such that $(\Gr_{\g})^{\l(\Gm)}=(\Gr_{\g})^{A_{M}}$. Every point $x\in \Gr_{\g}$ contracts to a point $x_{0}\in \Gr_{M,\g}$ under the left translation action by $\l(\Gm)$. Now $\io(\ev^{M}_{\g}(x_{0}))=\ev^{G}_{\g}(x_{0})$ lies in the  closure of $\ev^{G}_{\g}(x)$. If $\ev^{G}_{\g}(x)\in \RT_{\min}(\g)$, we see that $\io(\ev^{M}_{\g}(x_{0}))=\ev^{G}_{\g}(x)$ is also in $\RT_{\min}(\g)$, which means every element in $\RT_{\min}(\g)$ is contained in $\io(\RT^{M}(\g))$, hence in $\io(\RT^{M}_{\min}(\g))$.
\end{proof}

\begin{remark}\label{r:nonLevi red} If $M$ is only assumed to be a reductive subgroup of $G$ containing $T$ but not necessarily a Levi subgroup, then the conclusion of Lemma \ref{l:RT Levi} does not necessarily hold. For example, $G=\SO_{8}$ and $M=\SO_{4}\times \SO_{4}$. We take $\g=(\g_{1},\g_{2})$ where $\g_{1},\g_{2}$ are shallow of type $-1$ in $L^{\hs}\so_{4}$, and $\g$ is shallow of type $-1$ in $L^{\hs}\so_{8}$ (we use $-1$ to denote the longest element in the Weyl group in these cases). Then $\RT^{M}_{\min}(\g)$ is a singleton with Jordan type $3311$ while $\RT^{G}_{\min}(\g)$ is a singleton with Jordan type $3221$. 
\end{remark}

\begin{cor}\label{c:reduce to Levi} Let $M$ be a Levi subgroup of $G$  containing $T$. 
\begin{enumerate}
\item If Conjectures \ref{c:min red type} holds for $M$, then Conjectures \ref{c:min red type} holds for any $\g\in  L^{\hs}\frg$ that can be conjugated to an element of $L\fm$, and $\RT_{\min}(\g)=\io_{M,G}(\RT^{M}_{\min}(\g))$.
\item Let $w\in W_{M}$. If Theorem \ref{th:WtoN}\eqref{Uw} and \ref{th:PhiPsi}\eqref{Phi} hold for $(M,[w]_{M})$, then they hold for $(G,[w])$, and $\RT_{\min}([w])=\io_{M,G}(\RT_{\min}^{M}([w]_{M}))$. 
\end{enumerate}
\end{cor}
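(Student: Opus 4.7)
The plan is to use Lemma \ref{l:RT Levi} as the main structural input, combined with $LG$-conjugation invariance of $\RT_{\min}$ and a careful comparison of shallowness in $G$ versus $M$. Part (1) is then short: given $\g \in L^{\hs}\frg$ conjugate to $\g' \in L\fm \cap L^{\hs}\frg$, conjugation invariance gives $\RT_{\min}(\g) = \RT_{\min}(\g')$, and Lemma \ref{l:RT Levi} yields $\RT_{\min}(\g') \subset \io_{M,G}(\RT^M_{\min}(\g'))$. Under Conjecture \ref{c:min red type} for $M$ the right-hand side is a singleton, while $\Gr_{\g'}$ is non-empty so the left-hand side is too; the containment is thus an equality of singletons, giving both the conjecture for $\g$ and the formula $\RT_{\min}(\g) = \io_{M,G}(\RT^M_{\min}(\g))$.

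For part (2) the starting point is that since $w \in W_M$, the twisted torus $T_w$ is also a maximal torus of $M$, so $L\frt_w \subset L\fm$ and every element of $(L^{\hs}\frg)_{[w]}$ is $LG$-conjugate to an element of $L\frt_w$. I first reconcile the two notions of shallowness on $L\frt_w$: factoring the discriminant gives $2\d_{\g}^G - 2\d_{\g}^M = \sum_{\a \in R_G \setminus R_M} \val(\a(\g))$ for $\g \in L\frt_w$, and running the argument of \S2 separately on the roots of $M$ and on $R_G \setminus R_M$ shows that both minima can be attained simultaneously. Thus $\d_{[w]}^G = \d_{[w]_M}^M + \tfrac{1}{2}\sum_{\a \in R_G \setminus R_M} \min_{\g} \val(\a(\g))$, and in particular every $G$-shallow $\g \in L\frt_w$ is automatically $M$-shallow of type $[w]_M$. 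Setting $\cO_G := \io_{M,G}(\cO_M)$ for $\cO_M := \RT^M_{\min}([w]_M)$, the open-dense statement of Theorem \ref{th:WtoN}\eqref{Uw} for $(G,[w])$ follows from Lemma \ref{l:RT Levi}: on $V := (L^{\hs}\frg)^{sh}_{[w]} \cap U^M_{[w]_M} \cap L\frt_w$, the assumption gives $\RT^M_{\min}(\g) = \{\cO_M\}$ and Lemma \ref{l:RT Levi} forces $\RT_{\min}(\g) = \{\cO_G\}$; the $LG$-saturation of $V$ provides the required open dense $U_{[w]}$.

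The main technical step is to show $\cO_G \in \RT_{\min}(\g)$ for \emph{every} $\g \in (L^{\hs}\frg)^{sh}_{[w]}$, not only on $U_{[w]}$. After reducing to $\g \in L\frt_w$, the assumption for $M$ gives $\cO_M \in \RT^M(\g)$, hence $\cO_G \in \RT(\g)$ via the commutative diagram in the proof of Lemma \ref{l:RT Levi}. The obstacle is minimality: if $\cO' \in \RT(\g)$ with $\cO' \le \cO_G$, the $\l(\Gm)$-contraction argument from the proof of Lemma \ref{l:RT Levi} produces some $\cO'_M \in \RT^M(\g)$ with $\io_{M,G}(\cO'_M) \le \cO'$. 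I would rule out strict inequality by combining the uniqueness portion of Theorem \ref{th:WtoN}\eqref{Uw} for $(M,[w]_M)$, the identification $\cO_M = \Phi_M([w]_M)$ from Theorem \ref{th:PhiPsi}\eqref{Phi} for $M$, and a semicontinuity argument exploiting the fact that $\cO_G \in \RT_{\min}(\g)$ already holds on $U_{[w]}$.

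Finally, Theorem \ref{th:PhiPsi}\eqref{Phi} for $(G,[w])$ follows once $\RT_{\min}([w]) = \cO_G = \io_{M,G}(\Phi_M([w]_M))$ is matched with $\Phi_G([w])$. This reduces to the compatibility $\Phi_G(\mj_{M,G}([w]_M)) = \io_{M,G}(\Phi_M([w]_M))$, a purely finite-dimensional assertion that can be extracted from the definition of $\Phi$ in \cite{Lfromto} using $B_M w B_M \subset BwB$ and a comparison of the minimal unipotent classes meeting each.
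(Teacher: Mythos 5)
Your overall route is the same as the paper's. Part (1) is immediate from Lemma \ref{l:RT Levi} exactly as you say, and for part (2) the paper likewise conjugates $\g$ into $(L^{\hs}\frt_{w})^{sh}$ with $LT_{w}\subset LM$ and applies Lemma \ref{l:RT Levi}; your explicit check that $G$-shallowness of $\g\in L\frt_{w}$ forces $M$-shallowness (via the root-by-root characterization of shallowness from \S2) is a point the paper uses silently, and making it explicit is a genuine plus. The divergence is at what you call the ``main technical step'', and there your proposal is incomplete. The paper takes Theorem \ref{th:WtoN}\eqref{Uw} for $(M,[w]_{M})$ to give $\RT^{M}_{\min}(\g)=\{\cO'\}$ for the conjugated $\g$; then Lemma \ref{l:RT Levi} gives $\RT^{G}_{\min}(\g)\subset\io_{M,G}(\RT^{M}_{\min}(\g))=\{\io_{M,G}(\cO')\}$, and since $\RT^{G}_{\min}(\g)$ is nonempty this is an equality for \emph{every} shallow $\g$ --- no semicontinuity is needed. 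You instead extract only the membership $\cO_{M}\in\RT^{M}_{\min}(\g)$ from the hypothesis (which is the literal reading of the theorem's statement), and then your proposed repair does not close the gap: the semicontinuity available (Lemmas \ref{l:cO} and \ref{l:ineq}) only produces some $\cO'\in\RT_{\min}(\g)$ with $\cO'\le\cO_{G}$, and ruling out $\cO'<\cO_{G}$ by a dimension count would require $\d_{[w]}=d_{\cO_{G}}$, which is Corollary \ref{c:d ell} and is only available for elliptic $[w]$ --- precisely the case where the corollary is not needed. As written, ``I would rule out strict inequality by combining \dots a semicontinuity argument'' is a placeholder, not an argument; if you insist on the weaker reading of the hypothesis, you must either strengthen the hypothesis or supply the missing comparison between the several minimal $M$-orbits.

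Two smaller points. The compatibility $\Phi_{G}([w])=\io_{M,G}(\Phi_{M}([w]_{M}))$ is not something to re-derive: it is exactly \cite[1.1]{Lfromto}, and the paper simply cites it; your sketch via $B_{M}wB_{M}\subset BwB$ is not a proof as it stands. And you omit the verification that the resulting orbit depends only on the $W$-class $[w]$ and not on the chosen $W_{M}$-class inside $[w]\cap W_{M}$; the paper handles this by conjugating by a lift to $N_{G}(M)$ of an element of the normalizer of $W_{M}$ in $W$. This last point is minor, but it is part of making the assignment $[w]\mapsto\RT_{\min}([w])$ well defined.
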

\begin{proof}
(1) follows directly from Lemma \ref{l:RT Levi} since $\RT_{\min}(\g)\subset \io(\RT^{M}_{\min}(\g))$. 

(2) Let $\g\in(L^{\hs}\frg)_{[w]}^{sh}$. Let $LT_{w}\subset LM$ be a loop torus of type $[w]_{M}$. Up to $LG$-conjugation we may assume $\g\in (L^{\hs}\frt_{w})^{sh}$. Since Theorem \ref{th:WtoN}\eqref{Uw} holds for $(M,[w]_{M})$, we have $\RT^{M}_{\min}(\g)=\{\cO'\}$ for some $\cO'\in \un \cN_{M}$ depending only on $[w]_{M}$. Let $\cO=\io_{M,G}(\cO')$. By Lemma \ref{l:RT Levi}, we have $\RT_{\min}^{G}(\g)\subset \io(\RT^{M}_{\min}(\g))=\{\cO\}$. Hence $\RT_{\min}(\g)=\{\cO\}$ which only depends on $[w]_{M}$. 

We still need to argue that $\cO$ depends only on $[w]$, and not just on $[w]_{M}$. Suppose $v\in W_{M}$ is in the same $W$-conjugacy class of $w$. Then we may find $x\in N_{W_{M}}(W)$ such that $v=xwx^{-1}$. Lifting  $x$ to $\dot{x}\in N_{G}(M)$, and applying the automorphism $\Ad(\dot{x})$ of $M$, we see that Theorem \ref{th:WtoN}\eqref{Uw} holds for $(M,[v]_{M})$, and $\RT^{M}_{\min}([v]_{M})=\{\Ad(\dot x)(\cO')\}$. Since $\Ad(\dot x)(\cO')$ and $\cO'$ both lie in $\cO$, we have arrived at the same nilpotent orbit in $G$ starting from $w$ and $v$. Therefore $\RT_{\min}(\g)$ depends only on $[w]$.

Now Theorem \ref{th:PhiPsi}\eqref{Phi} also hold for $(G,[w])$ because Lusztig's map satisfies $\Phi_{M}([w]_{M})=\Phi_{G}([w])$ by \cite[1.1]{Lfromto}. 
\end{proof}

\begin{lemma}\label{l:isog}
\begin{enumerate}
\item Let  $G$ and $G'$ be isogenous reductive groups, then for any $\g\in  L^{\hs}\frg$, $\RT^{G}(\g)=\RT^{G'}(\g)$. 
\item Let  $G$ and $G'$ be isogenous reductive groups, then each of Theorem \ref{th:WtoN}\eqref{Uw}, Conjectures \ref{c:min red type} and \ref{c:trans} holds for  $G$ if and only if it holds for $G'$.
\item For each of the statements in Theorems \ref{th:WtoN} and \ref{th:PhiPsi}, Conjectures \ref{c:min red type} and \ref{c:trans}, it holds for $G$ if and only if it holds for every simple factor of $G$ (well-defined up to isogeny).
\end{enumerate}
\end{lemma}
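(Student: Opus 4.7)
For part (1), the plan is to reduce to the canonical projection $\phi: G \to G^{\ad}$, since every isogeny factors through it. The nilpotent cone together with its orbit decomposition is intrinsic to $G^{\ad}$, so $\un\cN_{G} = \un\cN_{G^{\ad}}$ tautologically. I would then compare via the natural map $\pi: \Gr_{G} \to \Gr_{G^{\ad}}$, which sends $\Gr_{G,\g}$ into $\Gr_{G^{\ad},\g}$ compatibly with the evaluation maps to $[\cN/G^{\ad}]$. The technical step is to show that every reduction type appearing in $\Gr_{G^{\ad},\g}$ already appears in the image of $\Gr_{G,\g}$: the loop centralizer $(LG^{\ad})_{\g} = LT_{w}^{\ad}$ preserves the reduction map under translation (because $\g$ is centralized), and every $LT_{w}^{\ad}$-orbit on $\Gr_{G^{\ad},\g}$ meets the image of $\Gr_{G,\g}$, the discrepancy between $\pi_{0}(LT_{w})$ and $\pi_{0}(LT_{w}^{\ad})$ being absorbed by choosing appropriate central translates. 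Combined with the obvious inclusion $\RT^{G}(\g) \subset \RT^{G^{\ad}}(\g)$, this yields equality, and applying the same reasoning to $G'$ gives $\RT^{G}(\g) = \RT^{G'}(\g)$.

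For part (2), I would inspect the ingredients of Theorem \ref{th:WtoN}\eqref{Uw} and Conjectures \ref{c:min red type}, \ref{c:trans} and observe that every ingredient other than $\RT$ itself is defined purely from the Lie algebra, the Weyl group, and the root system: the sets $(L^{\hs}\frg)_{[w]}$ and $(L^{\hs}\frg)^{sh}_{[w]}$, the discriminant valuation $\d_{\g}$, the Chevalley base, the Kazhdan-Lusztig map, and the loop tori $LT_{w}$. Hence Theorem \ref{th:WtoN}\eqref{Uw} and Conjecture \ref{c:min red type} depend on the isogeny class only through $\RT$, which is invariant by (1). For Conjecture \ref{c:trans}, both $(LG)_{\g}$ and $(LG')_{\g}$ have the same image in $(LG^{\ad})_{\g}$ up to a finite central subgroup, which already lies in each of $(LG)_{\g}$ and $(LG')_{\g}$; using the surjection onto orbit-types from (1), transitivity of the $(LG)_{\g}$-action on $\Gr_{G,\g,\cO}$ is equivalent to transitivity of the $(LG^{\ad})_{\g}$-action on $\Gr_{G^{\ad},\g,\cO}$, hence equivalent to that of $(LG')_{\g}$.

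For part (3), I would decompose $G$ up to isogeny as $Z \times G_{1} \times \cdots \times G_{k}$ with $Z$ a central torus and each $G_{i}$ almost simple, appealing to (2) to replace $G$ by this product. The torus factor is trivially compatible ($W = 1$, $\cN = \{0\}$). Every other object factors naturally as a product over $i$: Lie algebras, Weyl groups and their conjugacy classes, nilpotent cones with their product partial orders on orbits, Chevalley bases, affine Grassmannians, affine Springer fibers $\Gr_{\g} = \prod_{i}\Gr_{i,\g_{i}}$ for $\g = (\g_{1},\ldots,\g_{k})$, evaluation maps, loop tori, the discriminant valuation $\d$, and the shallowness condition. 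Consequently $\RT(\g) = \prod_{i}\RT(\g_{i})$ and $\RT_{\min}(\g) = \prod_{i}\RT_{\min}(\g_{i})$ (since minima in a product poset are componentwise), with matching factorizations for $\KL$, $\Phi$, and $\Psi$. Therefore each statement in Theorems \ref{th:WtoN}, \ref{th:PhiPsi} and Conjectures \ref{c:min red type}, \ref{c:trans} for $G$ is equivalent to the conjunction over $i$ of the corresponding statements for $G_{i}$. The main technical point in the whole plan is the component/translation analysis in (1); everything else is bookkeeping of natural compatibilities with products and centralizer actions.
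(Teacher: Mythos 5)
Your argument is essentially the paper's: both rest on identifying the neutral components $\Gr^{\c}_{G,\g}=\Gr^{\c}_{G',\g}$ (via Pappas--Rapoport), the transitivity of the loop centralizer $(LG)_{\g}$ on $\pi_{0}(\Gr)$, and the $(LG)_{\g}$-invariance of the evaluation map, followed by the product decomposition for (3). One small caveat: an isogeny of \emph{reductive} (not semisimple) groups need not factor through $G^{\ad}$ (e.g.\ $\Gm\times\SL_{2}\to\GL_{2}$), so it is cleaner to compare $G$ and $G'$ directly through their common neutral component, as the paper does, rather than routing through the adjoint group.
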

\begin{proof}
(1) Identify the Lie algebras $\frg=\frg'$, and take $\g\in L^{\hs}\frg$. Let $\Gr^{\c}_{G}$ and $\Gr^{\c}_{G'}$ be the neutral components of $\Gr_{G}$ and $\Gr_{G'}$. Then by \cite[Proposition 6.6]{PR} $\Gr^{\c}_{G}=\Gr^{\c}_{G'}$, and
\begin{equation}\label{Grgc}
\Gr^{\c}_{G,\g}=\Gr^{\c}_{G',\g},
\end{equation}
where $(-)^{\c}$ denotes intersection with $\Gr^{\c}_{G}$ or $\Gr^{\c}_{G'}$. Moreover, $(LG)_{\g}$ acts transitively on the set of connected components of $\Gr_{G}$, hence
\begin{eqnarray}
\label{GrG trans}\Gr_{G,\g}=(LG)_{\g}\cdot \Gr^{\c}_{G,\g},\\
\label{GrG' trans}\Gr_{G',\g}=(LG')_{\g}\cdot \Gr^{\c}_{G',\g}.
\end{eqnarray}
Since $\ev_{G,\g}$ and $\ev_{G',\g}$ are invariant under $(LG)_{\g}$ and $(LG')_{\g}$ respectively, we get that $\RT^{G}(\g)=\RT^{G'}(\g)$.

(2) The equivalences for $G$ and $G'$ of Theorems \ref{th:WtoN}\eqref{Uw} and Conjecture \ref{c:min red type} follow from (1) immediately. For Conjectures \ref{c:trans}, we may reduce to the case where $G\to G'$ is a finite isogeny. Let $\g\in L^{\hs}\frg$ be shallow of type $[w]=\KL(\cO)$. From \eqref{Grgc}, \eqref{GrG trans} and \eqref{GrG' trans} we get
\begin{eqnarray}
\label{GG'O}\Gr^{\c}_{G,\g,\cO}=\Gr^{\c}_{G',\g,\cO},\\
\label{GO trans}\Gr_{G,\g,\cO}=(LG)_{\g}\cdot \Gr^{\c}_{G,\g,\cO},\\
\label{G'O trans}\Gr_{G',\g,\cO}=(LG')_{\g}\cdot \Gr^{\c}_{G',\g,\cO}.
\end{eqnarray}
Let $(LG)^{\c}$ be the neutral component of $LG$.  Now $(LG)_{\g}\cap (LG)^{\c}$ acts on $\Gr^{\c}_{G,\g,\cO}$ via the surjection $(LG)_{\g}\cap (LG)^{\c}\surj (LG')_{\g}\cap (LG')^{\c}$, therefore $\Gr^{\c}_{G,\g,\cO}$ is transitive under $(LG)_{\g}\cap (LG)^{\c}$ if and only if the same is true for $G'$. By \eqref{GO trans}, $\Gr^{\c}_{G,\g,\cO}$ is transitive under $(LG)_{\g}\cap (LG)^{\c}$ if and only if $\Gr_{G,\g,\cO}$ is transitive under $(LG)_{\g}$. By By \eqref{G'O trans}, the same is true for $G'$. Combining these, we conclude that $\Gr_{G,\g,\cO}$ is transitive under $(LG)_{\g}$ if and only if $\Gr_{G',\g,\cO}$ is transitive under $(LG')_{\g}$.

(3) $G$ is isogenous to a product $A\times \prod_{i}G_{i}$ where $G_{i}$ are simple and $A$ is a torus. If one of the statements hold for all $G_{i}$, it holds for the product $A\times \prod_{i}G_{i}$, hence for $G$ by (2).
\end{proof}

\section{The subsets $(L^{\hs}\frc)_{[w]}$}\label{s:Lcw}

\subsection{Fp constructible subsets}

For a scheme $X$ of finite type over $\CC$ and $n\ge0$, let $L^{+}_{n}X$ be its truncated arc space representing the functor $R\mapsto X(R\tl{t}/t^{n+1})$. Then the arc space $L^{+}X$ is the limit $\varprojlim_{n}L^{+}_{n}X$ with natural projections $\pi_{n}: L^{+}X\to L^{+}_{n}X$.

A subset $Z\subset (L^{+}X)(\CC)=X(\CC\tl{t})$ is called fp constructible (where fp stands for ``finite presentation'') if there is $n\in\NN$ and a Zariski constructible subset $Z_{n}\subset (L^{+}_{n}X)(\CC)=X(\CC\tl{t}/(t^{n+1}))$ such that $Z=\pi_{n}^{-1}(Z_{n})$.  Similarly there is the notion of fp open, fp closed, fp locally closed subsets of $L^{+}X$. 

If $Z\subset (L^{+}X)(\CC)$ is fp constructible, say $Z=\pi_{n}^{-1}(Z_{n})$ for a constructible $Z\subset (L^{+}_{n}X)(\CC)$, then the closure $\ov Z$ of $Z$ (inside $L^{+}X$) is defined as the fp closed subset $\pi^{-1}_{n}(\ov Z_{n})\subset (L^{+}X)(\CC)$, where $\ov Z_{n}\subset (L^{+}_{n}X)(\CC)$ is the Zariski closure of $Z_{n}$ in $(L^{+}_{n}X)(\CC)$. This is independent of the choices of $n$ and $Z_{n}$.

For a  fp constructible $Z\subset (L^{+}X)(\CC)$, say $Z=\pi_{n}^{-1}(Z_{n})$ for a constructible $Z\subset (L^{+}_{n}X)(\CC)$, we define $\codim_{L^{+}X}(Z)=\codim_{L^{+}_{n}X}(Z_{n})$. Again this is independent of the choices of $n$ and $Z_{n}$.

In the sequel, we simply write $(L^{+}\frc)(\CC)$ as $L^{+}\frc$.  If $Z$ is a fp constructible subset of $L^{+}\frc$ that is contained in $L^{++}\frc$ (or in $L^{\hs}\frc$), then $\ov Z\subset L^{++}\frc$, and  $\codim_{L^{++}\frc}(Z)$ is defined as $\codim_{L^{+}\frc}(Z)-\dim \frc=\codim_{L^{+}\frc}(Z)-\dim \frt$.

Recall the subsets $(L^{\hs}\frc)^{sh}_{[w]}\subset (L^{\hs}\frc)_{[w]}$ of $L^{\hs}\frc$ from \S\ref{ss:intro c}.

\begin{lemma}\label{l:cw}
\begin{enumerate}
\item $(L^{\hs}\frc)^{sh}_{[w]}$ is an irreducible fp locally closed subset of $L^{+}\frc$.
\item $\codim_{L^{++}\frc}(L^{\hs}\frc)^{sh}_{[w]}=\d_{[w]}$.
\item $(L^{\hs}\frc)_{[w]}$ is a countable union of fp locally closed subsets of $L^{+}\frc$, and $(L^{\hs}\frc)^{sh}_{[w]}$ is dense in it.
\end{enumerate}
\end{lemma}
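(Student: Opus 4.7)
My strategy is to transfer statements about $(L^{\hs}\frc)^{sh}_{[w]}$ and $(L^{\hs}\frc)_{[w]}$ to the loop-torus side via the map $\pi_w\colon L^+\frt_w \to L^+\frc$ obtained as the restriction of $\chi\colon L\frg \to L\frc$ along $L\frt_w \hookrightarrow L\frg$. From the identification $L\frt_w = \sum_{n \ge 0}t^{n/m}\frt_{\un n}$ established in the proof of \eqref{Dmin}, the arc space $L^+\frt_w = \bigoplus_{n \ge 0}t^{n/m}\frt_{\un n}$ is an fp pro-affine space; the subspace $L^{++}\frt_w$ is cut out by $\g_0 = 0$, and $(L^{\hs}\frt_w)^{sh}$ is the fp open subset of $L^{++}\frt_w$ defined by the non-vanishing of $\a(\g_{j_\a})$ for all roots $\a \in R$, with $j_\a$ as in that proof. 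Being fp open in an irreducible pro-affine space, $(L^{\hs}\frt_w)^{sh}$ is irreducible.

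By \S\ref{ss:loop tori} every element of $(L^{\hs}\frg)_{[w]}$ is $LG$-conjugate to one of $L^{\hs}\frt_w$, and the $G$-invariance of $\chi$ then gives $\pi_w\bigl((L^{\hs}\frt_w)^{sh}\bigr) = (L^{\hs}\frc)^{sh}_{[w]}$ and $\pi_w(L^{\hs}\frt_w) = (L^{\hs}\frc)_{[w]}$. Irreducibility of $(L^{\hs}\frc)^{sh}_{[w]}$ in part (1) now follows as the image of an irreducible set. To show fp local closedness, I would write $(L^{\hs}\frc)^{sh}_{[w]}$ as the intersection of three separately fp constructible conditions on $c \in L^+\frc$: (a) $c \in L^{++}\frc$ (fp closed); (b) $\val(\D(c)) = \sum_{i|m}|R_i|/i$, the value computed in \eqref{Dmin} (fp locally closed, since $\D$ is a polynomial map $L^+\frc \to L^+\AA^1$ and ``valuation equals $N$'' is fp locally closed in $L^+\AA^1$); (c) the monodromy class of the cameral cover of $c$ equals $[w]$. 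For (c), once (b) is imposed the cover has bounded tame ramification and its monodromy class is determined by a bounded truncation of $c$, because the Puiseux polygon of the characteristic polynomial and its monodromy depend only on the leading coefficients once the total discriminant valuation is fixed.

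For the codimension in (2), I would dimension-count at a truncation $L^+_N\frc$ with $N$ large. Both $L^+_N\frt_w$ and $L^+_N\frc$ have dimension $(N+1)\dim\frt$, but the truncated map $\pi_{w,N}$ has positive-dimensional generic fibers because $\chi$ does not preserve the $t$-adic filtration, as is already visible in $\SL_2$ where the ind-scheme map has finite generic fibers of size $|Z_W(w)|$ but its truncated version has one-dimensional generic fibers. The discrepancy is governed precisely by \eqref{dim Sp}: for $\g \in (L^{\hs}\frt_w)^{sh}$ one has $\val(\D(\g)) = 2\d_{[w]} + \dim\frt - \dim\frt^w$, and tracking the valuation shifts of the basic $W$-invariants through $\chi$ yields $\codim_{L^+\frc}(L^{\hs}\frc)^{sh}_{[w]} = \d_{[w]} + \dim\frt$, equivalently $\codim_{L^{++}\frc}(L^{\hs}\frc)^{sh}_{[w]} = \d_{[w]}$.

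For part (3), I would stratify $(L^{\hs}\frc)_{[w]}$ by the integer value $d = \val(\D(c))$, which takes values $\ge \sum_{i|m}|R_i|/i$ by minimality of the shallow stratum. Each stratum is fp locally closed by the same argument as in (1), giving a countable decomposition. Density of $(L^{\hs}\frc)^{sh}_{[w]}$ in $(L^{\hs}\frc)_{[w]}$ reduces to the corresponding density of $(L^{\hs}\frt_w)^{sh}$ in $L^{\hs}\frt_w$ (it is fp open dense) via $\pi_w$, together with the observation that any $c \in (L^{\hs}\frc)_{[w]}$ can be deformed within its type to a shallow element by lifting to $L\frt_w$ and perturbing by a generic shallow element. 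The main obstacle I anticipate is the rigorous codimension calculation in (2): the twisted fractional filtration on $L^+\frt_w$ interacts nontrivially with the integral filtration on $L^+\frc$ through $\chi$, and naive matchups of truncations give misleading answers. A cleaner path, which the paper's later use of the Bouthier--Kazhdan--Varshavsky flatness theorem suggests, is to deduce the codimension from the flatness of $\chi$ on arc spaces together with the formula $\dim\Gr_\g = \d_\g$, thereby bypassing explicit truncation bookkeeping.
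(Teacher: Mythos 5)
Your parts (1) and (3) are workable and take a more hands-on route than the paper, which simply invokes \cite[7.2.8(d)]{BKV}: there the fp local closedness of root valuation strata (of which $(L^{\hs}\frc)^{sh}_{[w]}$ is a special case) is taken as known, and irreducibility is deduced, exactly as you do, from the irreducibility of $(L^{\hs}\frt_{w})^{sh}$ together with the surjection $(L^{\hs}\frt_{w})^{sh}\to(L^{\hs}\frc)^{sh}_{[w]}$. Your Krasner-style observation that the conjugacy class of the cameral monodromy is determined by a truncation of depth controlled by $\val(\D(c))$ is the correct mechanism behind that citation, and your reduction of the density claim in (3) to the density of $(L^{\hs}\frt_{w})^{sh}$ in $L^{\hs}\frt_{w}$ is what the paper does.

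The genuine gap is part (2). You correctly flag the codimension computation as the main obstacle, but neither of your two proposed routes is carried out, and both in fact reduce to the nontrivial theorem that the paper cites here: the Goresky--Kottwitz--MacPherson codimension formula for root valuation strata \cite[Theorem 8.2.2]{GKM}, which gives $\codim_{L^{+}\frc}(L^{\hs}\frc)^{sh}_{[w]}=\codim_{L^{+}\frt_{w}}(L^{\hs}\frt_{w})^{sh}+\d_{[w]}+(\dim\frt-\dim\frt^{w})$, whence the claim since $\codim_{L^{+}\frt_{w}}(L^{\hs}\frt_{w})^{sh}=\dim\frt^{w}$. Your first route (``tracking the valuation shifts of the basic $W$-invariants through $\chi$'') is precisely the truncation bookkeeping that GKM carry out at length; it does not ``yield'' the answer in a line, and you yourself concede this. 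Your second route (BKV flatness of $\chi|_{L^{+}\frI}$ together with $\dim\Gr_{\g}=\d_{\g}$) only transfers the problem: flatness converts the desired codimension into the codimension in $L^{\hs}\frI$ of the locus of shallow elements of type $[w]$, and computing that codimension --- by sweeping out $(L^{\hs}\frt_{w})^{sh}$ under conjugation and accounting for the fibers, which are governed by affine Springer fibers and regular centralizers --- is again the substance of the GKM argument (note the paper does use the flatness theorem in exactly this way later, in Theorem \ref{th:Ow}, but there the codimension of the relevant stratum in $L^{\hs}\frI$ is visibly $d_{\cO}$ because the stratum contains $(\frn\cap\ov\cO+tL^{+}\frg)\cap L^{\hs}\frg$; no such shortcut is available for the stratum of type-$[w]$ shallow elements). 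So statement (2) is not proved in your proposal; it needs either the citation to \cite[Theorem 8.2.2]{GKM} or a self-contained version of that computation.
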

\begin{proof}
(1) Since $(L^{\hs}\frc)^{sh}_{[w]}$ is a special case of a root valuation strata, the statement follows from \cite[7.2.8(d)]{BKV}. Moreover, by \cite{BKV}, $(L^{\hs}\frt_{w})^{sh}\to (L^{\hs}\frc)^{sh}_{[w]}$ is surjective and finitely presented (indeed finite \'etale). Since $(L^{\hs}\frt_{w})^{sh}$ is irreducible (being open in $L^{++}\frt_{w}$), so is $(L^{\hs}\frc)^{sh}_{[w]}$.

(2) By \cite[Theorem 8.2.2]{GKM} applied to the root valuation stratum $(L^{\hs}\frc)^{sh}_{[w]}$, we have
\begin{equation}
\codim_{L^{+}\frc}(L^{\hs}\frc)^{sh}_{[w]}=\codim_{L^{+}\frt_{w}}(L^{\hs}\frt_{w})^{sh}+\d_{[w]}+(\dim\frt-\dim\frt^{w}).
\end{equation}
Using that $\codim_{L^{+}\frt_{w}}(L^{\hs}\frt_{w})^{sh}=\dim \frt^{w}$ we get $\codim_{L^{+}\frc}(L^{\hs}\frc)^{sh}_{[w]}=\d_{[w]}+\dim\frt$. Hence  $\codim_{L^{++}\frc} (L^{\hs}\frc)^{sh}_{[w]}=\codim_{L^{+}\frc}(L^{\hs}\frc)^{sh}_{[w]}-\dim\frt=\d_{[w]}$.

(3) $(L^{\hs}\frc)_{[w]}$ is the union of root valuation strata, each of which is fp locally closed. Since $(L^{\hs}\frt_{w})^{sh}$ is dense in $L^{\hs}\frt_{w}$, its image $(L^{\hs}\frc)^{sh}_{[w]}$ in $L^{+}\frc$ is dense in $(L^{\hs}\frc)_{[w]}$.
\end{proof}

\subsection{Partial order on $\un W$}\label{ss:order W} Spaltenstein \cite{Spa-order} defined a partial order on $\un W$ as follows: for $[w_{1}],[w_{2}]\in \un W$,   we denote $[w_{2}]\preceq[w_{1}]$ if $(L^{\hs}\frc)_{[w_{2}]}\subset \ov{(L^{\hs}\frc)_{[w_{1}]}}$. By Lemma \ref{l:cw}, $[w_{2}]\preceq[w_{1}]$ if and only if $(L^{\hs}\frc)^{sh}_{[w_{2}]}\subset \ov{(L^{\hs}\frc)_{[w_{1}]}}=\ov{(L^{\hs}\frc)^{sh}_{[w_{1}]}}$.

A priori the partial order $\preceq$ on $\un W$ depends not only on $W$ but also on $\frg$. However we shall see in Corollary \ref{c:dual order} that $\preceq$ only depends on $W$.

\begin{lemma}\label{l:dw ineq}
If $[w_{2}]\prec [w_{1}]$, then $\d_{[w_{2}]}>\d_{[w_{1}]}$.
\end{lemma}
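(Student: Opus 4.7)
The plan is to extract the inequality directly from the codimension formula in Lemma~\ref{l:cw}(2), combined with the irreducibility and density statements in (1) and (3).

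First I would translate the hypothesis $[w_2]\prec[w_1]$ into a statement purely about the shallow strata. By Lemma~\ref{l:cw}(3), $(L^{\hs}\frc)^{sh}_{[w_i]}$ is dense in $(L^{\hs}\frc)_{[w_i]}$ inside $L^+\frc$, so the closures agree and the hypothesis gives
\[
(L^{\hs}\frc)^{sh}_{[w_2]}\subset(L^{\hs}\frc)_{[w_2]}\subset\ov{(L^{\hs}\frc)_{[w_1]}}=\ov{(L^{\hs}\frc)^{sh}_{[w_1]}}.
\]
Because the strata $(L^{\hs}\frc)_{[w]}$ are indexed by the type of the centralizer and are therefore pairwise disjoint for distinct $[w]$, the hypothesis $[w_2]\ne[w_1]$ forces $(L^{\hs}\frc)^{sh}_{[w_2]}$ to be disjoint from $(L^{\hs}\frc)^{sh}_{[w_1]}$. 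Thus $(L^{\hs}\frc)^{sh}_{[w_2]}$ lies in the boundary
\[
Z:=\ov{(L^{\hs}\frc)^{sh}_{[w_1]}}\setminus(L^{\hs}\frc)^{sh}_{[w_1]}.
\]

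Next I would observe that, by Lemma~\ref{l:cw}(1), $(L^{\hs}\frc)^{sh}_{[w_1]}$ is irreducible and fp locally closed, so it is fp open in its (therefore irreducible) fp closure. Consequently $Z$ is a \emph{proper} fp closed subset of an irreducible fp closed set. Choosing a truncation level $n$ large enough that both shallow strata descend from $L^{+}_{n}\frc$, the statement reduces to the standard fact that a proper closed subset of a finite-dimensional irreducible variety has strictly smaller dimension, equivalently strictly larger codimension in $L^{+}_{n}\frc$. Translating back, this gives $\codim_{L^{++}\frc}Z>\codim_{L^{++}\frc}(L^{\hs}\frc)^{sh}_{[w_1]}$.

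Finally, since $(L^{\hs}\frc)^{sh}_{[w_2]}\subset Z$, monotonicity of codimension yields
\[
\codim_{L^{++}\frc}(L^{\hs}\frc)^{sh}_{[w_2]}\ge\codim_{L^{++}\frc}Z>\codim_{L^{++}\frc}(L^{\hs}\frc)^{sh}_{[w_1]},
\]
and Lemma~\ref{l:cw}(2) converts this to $\delta_{[w_2]}>\delta_{[w_1]}$. I do not anticipate any serious obstacle: the argument is a formal consequence of Lemma~\ref{l:cw}. The one thing to verify carefully is that ``open in its closure,'' irreducibility, and strict comparison of codimension all behave as expected through the truncation $\pi_n$, but this is immediate from the definition of fp locally closed subsets and their codimension via any $\pi_n$ that realizes them.
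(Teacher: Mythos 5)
Your proposal is correct and follows essentially the same route as the paper's proof: reduce to the shallow strata, place $(L^{\hs}\frc)^{sh}_{[w_2]}$ in the boundary $\ov{(L^{\hs}\frc)^{sh}_{[w_1]}}\setminus(L^{\hs}\frc)^{sh}_{[w_1]}$, and use the codimension formula of Lemma~\ref{l:cw} together with the strict codimension increase of a proper closed subset of an irreducible set. The paper states the boundary codimension bound without elaboration; your justification via irreducibility and truncation is exactly the intended one.
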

\begin{proof}
By Lemma \ref{l:cw}, $\codim_{L^{\hs}\frc}(L^{\hs}\frc)^{sh}_{[w_{i}]}=\d_{[w_{i}]}$. Since $[w_{2}]\prec [w_{1}]$, $(L^{+}\frc)^{sh}_{[w_{2}]}\subset \ov{(L^{+}\frc)^{sh}_{[w_{1}]}}\bs (L^{+}\frc)^{sh}_{[w_{1}]}$, the latter having codimension $>\d_{[w_{1}]}$ in $L^{\hs}\frc$. Thus we have $\d_{[w_{2}]}> \d_{[w_{1}]}$. 
\end{proof}

\begin{lemma}[Generalization of Spaltenstein {\cite[10.2]{Spa-order}}]\label{l:order pres}
Let $H$ be a connected reductive subgroup with maximal torus $T_{H}$. Let $T_{H}\to T$ be an isogeny (surjective with finite kernel) under which the Weyl group $W_{H}=W(H,T_{H})$ is a subgroup of $W$. Then induced the map $\un W_{H}\to \un W$ is order-preserving under Spaltenstein's partial orders.
\end{lemma}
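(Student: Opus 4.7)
The plan is to push the closure relation forward along the projection $\pi\colon L^{+}\frc_{H}\to L^{+}\frc_{G}$ induced by the quotient map $\frt\sslash W_{H}\to\frt\sslash W$. The key preliminary observation is that for any $w\in W_{H}\subset W$, the loop torus $T_{w}$ from \S\ref{ss:loop tori} depends only on $w$ and its action on $\frt$, not on the ambient group, so its Lie algebra $L\frt_{w}\subset L\frt$ is the same whether $T_{w}$ is viewed inside $H_{\CC\lr{t}}$ or $G_{\CC\lr{t}}$. The compatibility $\pi\circ\chi_{H}|_{L\frt_{w}}=\chi_{G}|_{L\frt_{w}}$ follows, since both sides factor the tautological map $\frt\to\frc_{G}$.

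The heart of the argument is to show that the set of topologically nilpotent, $G$-regular semisimple, $G$-shallow elements of $L\frt_{w}$ is an fp open dense subset of the analogous set defined using $H$ in place of $G$. The inclusion holds because, under the identification $\frt^{*}_{H}\cong\frt^{*}$ via the isogeny, the roots of $H$ form a subset of the roots of $G$, making the $G$-conditions strictly stronger. For density, the extra $G$-conditions are that $\val(\alpha(\gamma))$ achieves its minimum on $L\frt_{w}$ for each root $\alpha$ of $G$ not coming from $H$; each such locus is open and dense in $L\frt_{w}$, and a finite intersection of open dense subsets remains open dense. Combining with the finite \'etale surjection $(L^{\hs}\frt_{w})^{sh}\to(L^{\hs}\frc_{H})^{sh}_{[w]_{H}}$ supplied by Lemma~\ref{l:cw} applied to $H$, the image under $\chi_{H}$ of the $G$-shallow locus is dense in $(L^{\hs}\frc_{H})^{sh}_{[w]_{H}}$; by the compatibility above, its further image under $\pi$ equals $(L^{\hs}\frc_{G})^{sh}_{[w]_{G}}$. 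Continuity of $\pi$ at each truncation $L^{+}_{n}\frc_{H}\to L^{+}_{n}\frc_{G}$ then yields
\[
(L^{\hs}\frc_{G})^{sh}_{[w]_{G}}\subset\pi\bigl((L^{\hs}\frc_{H})^{sh}_{[w]_{H}}\bigr)\subset\overline{(L^{\hs}\frc_{G})^{sh}_{[w]_{G}}}.
\]

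To conclude, assume $[w_{2}]_{H}\preceq[w_{1}]_{H}$, i.e.\ $(L^{\hs}\frc_{H})^{sh}_{[w_{2}]_{H}}\subset\overline{(L^{\hs}\frc_{H})^{sh}_{[w_{1}]_{H}}}$. Applying $\pi$ and the previous display gives
\[
(L^{\hs}\frc_{G})^{sh}_{[w_{2}]_{G}}\subset\pi\bigl((L^{\hs}\frc_{H})^{sh}_{[w_{2}]_{H}}\bigr)\subset\overline{\pi\bigl((L^{\hs}\frc_{H})^{sh}_{[w_{1}]_{H}}\bigr)}=\overline{(L^{\hs}\frc_{G})^{sh}_{[w_{1}]_{G}}},
\]
which is exactly $[w_{2}]_{G}\preceq[w_{1}]_{G}$. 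The main obstacle is the density step above: even though the roots of $H$ form a subset of those of $G$, an $H$-regular semisimple element of $L\frt_{w}$ need not be $G$-regular semisimple, and likewise for shallowness, so one has to verify directly that the $G$-shallow locus is a generic subset of the $H$-shallow locus in $L\frt_{w}$ in order to conclude that the two closures in $L^{+}\frc_{G}$ coincide.
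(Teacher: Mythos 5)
Your proof is correct and follows essentially the same route as the paper's: both arguments hinge on the observation that the $G$-shallow (hence $G$-regular semisimple) locus of $L\frt_{w}$ is open dense in the $H$-shallow locus, because each root of $H$ is proportional to a root of $G$ and the extra minimal-valuation conditions imposed by the remaining roots of $G$ each cut out an open dense subset. The only difference is organizational: the paper records this density as the inclusion $j_{[w]_{H}}$ inside $L^{+}\frc_{H}$ and passes to $L^{+}\frc$ at the very end, whereas you sandwich $\pi\bigl((L^{\hs}\frc_{H})^{sh}_{[w]_{H}}\bigr)$ between $(L^{\hs}\frc)^{sh}_{[w]}$ and its closure directly in $L^{+}\frc$.
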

\begin{proof}
For each $w\in W_{H}$, we choose a loop torus $LT_{w}\subset LH$ of type $[w]_{H}$. We use superscripts $\hs_{H}$ and $sh_{H}$ to emphasize ``topologically nilpotent and generically regular semisimple'' and ``shallow'' when viewed as elements in $L\frh$, while $\hs$ is used to mean the same thing for $L\frg$. For example we have $(L^{\hs_{H}}\frc_{H})_{[w]_{H}}$ and  $(L^{\hs_{H}}\frc'_{H})_{[w]_{H}}^{sh_{H}}$.


Since $T_{H}\to T$ is an isogeny and $W_{H}\subset W$, we have a natural maps $\frt\to \frc_{H}\to \frc$. The loop torus $LT_{w}$ is isogenous to a loop torus of the same type in $LG$, and we may identify $L\frt_{w}$ with a subalgebra of $L\frg$. We have natural maps
\begin{equation*}
L^{+}\frt_{w}\to L^{+}\frc_{H}\to L^{+}\frc.
\end{equation*}
We have an open dense embedding $i_{w}: (L^{\hs}\frt_{w})^{sh}\subset (L^{\hs_{H}}\frt_{w})^{sh_{H}}$ since roots of $H$ are proportional to a subset of roots of $G$. Let $(L^{\hs}\frc_{H})^{sh}_{[w]_{H}}$ be the image of $(L^{\hs}\frt_{w})^{sh}$ under $L^{+}\frt_{w}\to L^{+}\frc_{H}$. Consider the commutative diagram
\begin{equation*}
\xymatrix{  (L^{\hs}\frt_{w})^{sh}\ar@{^{(}->}[d]^{i_{w}}\ar@{->>}[r] & (L^{\hs}\frc_{H})^{sh}_{[w]_{H}}\ar@{^{(}->}[d]^{j_{[w]_{H}}} \ar@{->>}[r] & (L^{\hs}\frc)^{sh}_{[w]}\\
(L^{\hs_{H}}\frt_{w})^{sh_{H}}\ar@{->>}[r] & (L^{\hs_{H}}\frc_{H})^{sh_{H}}_{[w]_{H}} }
\end{equation*}
The horizontal maps are surjective. Since $i_{w}$ has dense image, hence so does $j_{[w]_{H}}$. 

Now if $w,w'\in W_{H}$ and $[w']_{H}\preceq [w]_{H}$, then $(L^{\hs_{H}}\frc_{H})^{sh_{H}}_{[w']_{H}}\subset \ov{(L^{\hs_{H}}\frc_{H})^{sh_{H}}_{[w]_{H}}}$ (closure taken in $L^{+}\frc_{H}$).  This implies $(L^{\hs}\frc_{H})^{sh}_{[w']_{H}}\subset \ov{(L^{\hs}\frc_{H})^{sh}_{[w]_{H}}}$ (closure taken again in $L^{+}\frc_{H}$) since $j_{[w]_{H}}$ has dense image. Taking image in $L^{\hs}\frc$ we conclude that $(L^{\hs}\frc)^{sh}_{[w']}\subset \ov{(L^{\hs}\frc)^{sh}_{[w]}}$, i.e., $[w']\preceq [w]$.
\end{proof}

\begin{cor}\label{c:dual order} Suppose $G$ and $G'$ are reductive groups with isogenous root systems (hence the same Weyl group $W$), then the partial orders on $W$ defined using $G$ and $G'$ are the same. In particular, the partial orders on $W$ defined using $G$ and its Langlands dual $G^{\vee}$ are the same.
\end{cor}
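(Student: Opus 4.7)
The plan is to apply Lemma \ref{l:order pres} in the degenerate case $W_H = W$, where the lemma upgrades to an equality of partial orders rather than merely an inclusion. The key observation is that when $W_H = W$ and the roots of $H$ are proportional to those of $G$ within each $W$-orbit---which is automatic whenever $H$ and $G$ have isogenous root systems---the open dense embedding $i_w$ appearing in the proof of Lemma \ref{l:order pres} is in fact an equality. Indeed, both the generic-regular-semisimple condition and the shallowness condition are phrased in terms of $\val(\a(\g))$ for $\a$ a root, and these valuations are insensitive to rescaling roots within their $W$-orbits. Consequently $j_{[w]_H}$ is an isomorphism, so the argument in Lemma \ref{l:order pres} produces the equality $\preceq_H = \preceq_G$.

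For the first statement of the corollary, I would pick a common reductive group $H$ whose root datum admits isogenies to both that of $G$ and that of $G'$ inducing identifications $W_H = W_G = W_{G'}$ compatible with isogenies of the Cartan subgroups (for instance, the simply connected cover of the common derived subgroup, extended by a central torus). Applying the observation above once with ambient group $G$ and once with ambient group $G'$ gives $\preceq_G = \preceq_H$ and $\preceq_{G'} = \preceq_H$, hence $\preceq_G = \preceq_{G'}$. For the Langlands-dual statement, the Cartan $\frt^{\vee}$ of $G^{\vee}$ is $W$-equivariantly isomorphic to $\frt$ via the Weyl-invariant bilinear form on $\frt$; under this identification the coroots of $G$ (i.e.\ the roots of $G^{\vee}$) become proportional, within each $W$-orbit, to the roots of $G$. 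Thus the reflection data of $G^{\vee}$ is brought into the form treated by the first part, and the same reasoning yields $\preceq_G = \preceq_{G^{\vee}}$.

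The main obstacle is verifying that Lemma \ref{l:order pres} actually produces an equality (not just an inclusion) of partial orders when $W_H = W$; this rests on the scale-invariance of the valuation-based conditions cut out by roots, and on the fact that ``isogenous root systems'' (together with the Langlands-dual identification via the invariant form) only allows roots to differ by proportional rescalings within each $W$-orbit, so no information is lost when passing between $H$, $G$, and $G'$.
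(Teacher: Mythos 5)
Your proposal is correct and matches the paper's intended argument: the corollary is stated without proof precisely because it is the degenerate case $W_{H}=W$ of Lemma \ref{l:order pres}, applied in both directions (or, as you observe, with the embedding $i_{w}$ upgraded to an equality because proportional roots have equal valuations, so the shallow and regular semisimple loci for $H$ and $G$ coincide). The only cosmetic slip is the phrase ``the common derived subgroup'': when the root systems are isogenous but not isomorphic (e.g.\ $B_{n}$ versus $C_{n}$, the relevant case for $G$ versus $G^{\vee}$) there is no common derived group, but either simply connected cover serves as $H$, or one can dispense with $H$ entirely and apply the lemma once in each direction between $G$ and $G'$.
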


\section{The subsets  $(L^{\hs}\mathfrak{c})_{\overline{\mathcal{O}}}$}\label{s:c} 

\subsection{Subsets of $L^{\hs}\frc$}\label{ss:L hs c}

Recall the discriminant function $\D: \frg\to\AA^{1}$ in \S\ref{ss:d} factors through $\frc$, which we still denote by $\D:\frc\to \AA^{1}$. It induces a map of arc spaces $L^{+}\D: L^{+}\frc\to L^{+}\AA^{1}$, the latter is an infinite-dimensional affine space. By definition,  $L^{\hs}\frc=(L^{++}\frc)(\CC)\bs (L^{+}\D)^{-1}(0)$. From this we see that $L^{\hs}\frc$ is the set of $\CC$-points of the open subscheme $L^{++}\frc\bs (L^{+}\D)^{-1}(0)$ of $L^{++}\frc$.

Let  $(L^{+}\frc)_{\le n}$ be the subset in $L^{+}\frc$ where the valuation of the discriminant function $\D$ is $\le n$, i.e., $(L^{+}\frc)_{\le n}=L^{+}\frc\bs (L^{+}\D)^{-1}(t^{n+1}\CC\tl{t})$. Denote $(L^{\hs}\frc)_{\le n}=L^{\hs}\frc\cap (L^{+}\frc)_{\le n}$. Then $L^{\hs}\frc=\cup_{n\ge0}(L^{\hs}\frc)_{\le n}$. 
Clearly $(L^{+}\frc)_{\le n}$ is a fp open subset of $L^{+}\frc$, and $(L^{\hs}\frc)_{\le n}$ is fp open in $L^{++}\frc$.

A subset $Z\subset L^{\hs}\frc$ is called fp constructible (resp. closed, locally closed) if $Z\cap (L^{\hs}\frc)_{\le n}$ is fp closed (resp. constructible, locally closed) as a subset of $L^{+}\frc$ for any $n\ge0$. For a fp constructible subset $Z\subset L^{\hs}\frc$, we define 
\begin{equation*}
\codim_{L^{\hs}\frc}(Z):=\min\{\codim_{(L^{\hs}\frc)_{\le n}}(Z\cap (L^{\hs}\frc)_{\le n}); n\in \NN\}.
\end{equation*}

Let $(L^{\hs}\frg)_{\le n}\subset (L^{+}\frg)_{\le n}\subset L^{+}\frg$ be the preimages of $(L^{\hs}\frc)_{\le n}$ and $(L^{+}\frc)_{\le n}$ in $L^{+}\frg$ under $L^{+}\chi: L^{+}\frg\to L^{+}\frc$. 

We need the following technical lemma.
\begin{lemma}\label{l:cons} For any fp constructible set $Z\subset (L^{+}\frg)_{\le n}$, $(L^{+}\chi)(Z)\subset (L^{+}\frc)_{\le n}$ is also fp constructible in $L^{+}\frc$.\end{lemma}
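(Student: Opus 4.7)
The plan is to stabilize the images under truncation. Write $Z = \pi_m^{-1}(Z_m)$ for a constructible $Z_m \subset L^+_m\frg$, and for $M \ge m$ set $Z_M \subset L^+_M\frg$ to be the (constructible) preimage of $Z_m$ under $L^+_M\frg \to L^+_m\frg$. The induced map on finite truncations
$$L^+_M\chi : L^+_M \frg \to L^+_M \frc$$
is a morphism of finite-type $\CC$-schemes, so by Chevalley's theorem the image
$$W_M := L^+_M\chi\bigl(Z_M \cap (L^+_M\frg)_{\le n}\bigr) \subset (L^+_M\frc)_{\le n}$$
is Zariski constructible in $L^+_M\frc$. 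It then suffices to exhibit an $N$ for which $(L^+\chi)(Z) = \pi_N^{-1}(W_N)$; this is how I would proceed.

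One inclusion is immediate. For the reverse, given $c \in L^+\frc$ with $\pi_N(c) \in W_N$, pick $\bar\g \in Z_N \cap (L^+_N\frg)_{\le n}$ with $L^+_N\chi(\bar\g) = \pi_N(c)$ and lift $\bar\g$ arbitrarily to $\g_0 \in L^+\frg$; then $\g_0 \in Z$ (because $N \ge m$) and $c_0 := \chi(\g_0)$ agrees with $c$ modulo $t^{N+1}$. Since translating $\g_0$ within $\g_0 + t^{m+1}L^+\frg$ preserves membership in $Z$, the task reduces to finding $\y \in L^+\frg$ with $\chi(\g_0 + t^{m+1}\y) = c$. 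This is a Newton-iteration problem in the $t$-adic topology on $L^+\frg$, and its solvability comes down to uniform control of the cokernel of the $\CC\tl{t}$-linear differential $d\chi_{\g_0}: L^+\frg \to L^+\frc$ as $\g_0$ varies in $(L^+\frg)_{\le n}$.

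The needed control follows from the observation that since $\val(\D(\chi(\g_0))) \le n$, the map $d\chi_{\g_0}$ is generically surjective over $\CC\lr{t}$, so its cokernel is a torsion $\CC\tl{t}$-module whose length is bounded by a function of $n$ alone (essentially via the dimension formula \eqref{dim Sp}), and hence is annihilated by $t^{k(n)}$ for some constant $k(n)$ depending only on $n$. The cleanest formulation of this uniformity is the Bouthier-Kazhdan-Varshavsky flatness theorem \cite{BKV} (already invoked in the proof of Lemma \ref{l:cw}) applied to $L^+\chi|_{(L^+\frg)_{\le n}}$, which says that this restriction is finitely presented in a manner uniform across the stratum. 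Taking $N \ge m + k(n)$ then makes Newton iteration converge to a solution $\y$, giving $\g := \g_0 + t^{m+1}\y \in Z$ with $\chi(\g) = c$, as required.

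The main obstacle is precisely this uniform cokernel bound: without it, the subsets $W_M$ could refine indefinitely and $(L^+\chi)(Z) = \bigcap_M \pi_M^{-1}(W_M)$ would only be pro-constructible rather than fp constructible. Once the bound is in hand, constructibility of the image at a single finite truncation level is automatic.
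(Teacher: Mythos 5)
Your proposal follows the same skeleton as the paper's proof: truncate, apply Chevalley's theorem at a finite level to get a constructible $W_N$, and then prove $(L^{+}\chi)(Z)=\pi_N^{-1}(W_N)$ by showing that an approximate solution of $\chi(\g)=c$ modulo a high power of $t$ can be corrected to an exact solution without leaving $Z$, with the correction depth controlled uniformly over $(L^{+}\frg)_{\le n}$. The difference is in how that uniform approximation statement is obtained. The paper cites Elkik: it embeds $\frg\incl\frc\times\AA^{N}$, observes that Elkik's conductor subscheme satisfies $V_{H}\cap i(\frg)\subset\{\D^{m}=0\}$ because $\chi$ is smooth away from $\{\D=0\}$, so every $b\in(L^{+}\frg)_{\le n}$ has conductor of length $\le mn$, and then Elkik's Theorem 1 supplies the uniform constants $(d_{0},r)$. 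Your Newton iteration is essentially a hand-rolled version of this in the present smooth-source situation, and it does work; the uniform bound you need --- that $\coker(d\chi_{\g_{0}})$ is killed by $t^{k(n)}$ for all $\g_{0}\in(L^{+}\frg)_{\le n}$ --- is best justified not by the dimension formula but by the remark that the zeroth Fitting ideal of this cokernel is generated by the maximal minors of the Jacobian of the invariant polynomials, whose common zero locus (the critical locus of $\chi$) lies in $\{\D=0\}$; hence that ideal contains $\D^{m}$ for some $m$ by the Nullstellensatz, and so $t^{mn}$ annihilates the cokernel whenever $\val(\D(\g_{0}))\le n$. One caveat: I would not attribute this uniformity to the Bouthier--Kazhdan--Varshavsky flatness theorem. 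Flatness of $\chi|_{L^{+}\frI}$ is a statement about (co)dimensions of fibers and does not by itself provide the infinitesimal lifting property your iteration requires; in the paper BKV is used only later, for the codimension computation in Theorem \ref{th:Ow}, while the approximation input for the present lemma comes from Elkik (or, equivalently, from the Jacobian bound just described).
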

\begin{proof} Below we write $L^{+}\chi$ simply as $\chi$. Choose  a closed embedding $i:\frg\incl \frc\times \AA^{N}$. To this embedding Elkik \cite[0.2]{Elkik} has assigned an ideal $H\subset \cO(\frc\times\AA^{N})$, or equivalently a closed subscheme $V_{H}\subset \frg\times \AA^{N}$ with the property that $V_{H}\cap i(\frg)$ is the singular locus of $\chi$. Since $\frg-\{\D=0\}$  is contained in the smooth locus of $\chi$, we see that $V_{H}\cap i(\frg)\subset \{\D^{m}=0\}$ scheme-theoretically for some $m\ge1$. For any $b\in (L^{+}\frg)_{\le n}$ viewed as a $\CC\tl{t}$-point of $\frg$, the pullback $(i\circ b)^{-1}V_{H}$, as a subscheme of $\Spec\CC\tl{t}$, has the form $\Spec \CC\tl{t}/t^{h}$ for $h\in mn$. In other words, $b\in (L^{+}\frg)_{\le n}$ has bounded conductor with respect to $\chi$. Applying \cite[Theorem 1]{Elkik} to $\chi$, we see that there exists $(d_{0}, r)$ (depending on $m,n$) such that for any $d>d_{0}$, any $a\in (L^{+}\frc)_{\le n}$ and $b\in (L^{+}\frg)_{\le n}$ such that $\chi(b)\equiv a\mod t^{d+r}$, there exists $b'\in L^{+}\frg$ such that $\chi(b')=a$ and $b\equiv b'\mod t^{d}$. We may choose $d_{0}$ large enough so that $b\equiv b'\mod t^{d}$ implies $b'\in(L^{+}\frg)_{\le n}$.

Now let $Z\subset (L^{+}\frg)_{\le n}$ be fp constructible, i.e., there exists $k$ such that Z is the preimage of a constructible subset $Z_{k}\subset L^{+}_{k}\frg=\frg(\CC\tl{t}/t^{k})$.  We may assume $k>d_{0}-r$. Then let $Z_{k+r}$ be the image of $Z$ in $L^{+}_{k+r}\frg$, and let $Y_{k+r}=\chi(Z_{k+r})\subset L^{+}_{k+r}\frc$ which is constructible. We claim that $\chi(Z)=\pi_{k+r}^{-1}(Y_{k+r})$ where $\pi_{k+r}: L^{+}\frc\to L^{+}_{k+r}\frc$ is the projection.  This  implies $\chi(Z)$ is fp constructible. Now $\chi(Z)$ is clearly contained in $\pi_{k+r}^{-1}(Y_{k+r})$; conversely if $a\in \pi_{k+r}^{-1}(Y_{k+r})$ viewed as a $\CC\tl{t}$-point of $\frc$, then $a\mod t^{k+r}\in Y_{k+r}$, hence $a\equiv \chi(b)\mod t^{k+r}$ for some $b\in Z_{k+r}$. By the discussion in the previous paragraph, there exists $b'\in (L^{+}\frg)_{\le n}$ such that $b'\equiv b\mod t^{k}$ and $\chi(b)=a$. Now $b'\equiv b\mod t^{k}$ implies $b'\in Z$, therefore $a\in\chi(Z)$. This proves the lemma.
\end{proof}

For $\cO\in \un\cN$, let $(L^{\hs}\frc)_{\ov\cO}\subset L^{\hs}\frc$ be the image of $(\ov\cO+tL^{+}\frg)\cap L^{\hs}\frg$ under  $L^{+}\chi: L^{+}\frg\to L^{+}\frc$.

\begin{lemma}\label{l:cO}
$(L^{\hs}\frc)_{\ov\cO}$ is a fp closed subset of $L^{\hs}\frc$. 
\end{lemma}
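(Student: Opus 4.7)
The plan is to verify directly the definition of ``fp closed'' in $L^{\hs}\frc$: for every $n\ge 0$, the intersection $(L^{\hs}\frc)_{\ov\cO}\cap (L^{\hs}\frc)_{\le n}$ must be closed in the fp open subset $(L^{\hs}\frc)_{\le n}\subset L^{+}\frc$. First, set $\mathcal{Y}_n := (\ov\cO + tL^{+}\frg)\cap (L^{+}\frg)_{\le n}$. Since $\ov\cO\subset\frg$ is a Zariski closed, $G$-stable subset, $\mathcal{Y}_n$ is fp closed and $L^{+}G$-invariant in $(L^{+}\frg)_{\le n}$; and since $\val\circ\D$ factors through $L^{+}\chi$, one has
\[
(L^{\hs}\frc)_{\ov\cO}\cap (L^{\hs}\frc)_{\le n}=L^{+}\chi(\mathcal{Y}_n).
\]
By Lemma~\ref{l:cons} this image is fp constructible, cut out at some finite truncation level $N$. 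The remaining task is to upgrade ``constructible'' to ``closed''.

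For this, I would work at truncation level $N$ and exploit the $L^{+}G$-invariance. Stratify $\ov\cO$ into its finitely many nilpotent $G$-orbits $\cO'$, and correspondingly decompose $\mathcal{Y}_n = \bigsqcup_{\cO'\subset\ov\cO}\mathcal{Y}_{n,\cO'}$, where $\mathcal{Y}_{n,\cO'}:=(\cO' + tL^{+}\frg)\cap (L^{+}\frg)_{\le n}$. Each $\mathcal{Y}_{n,\cO'}$ is fp locally closed and $L^{+}G$-invariant, with $\ov{\mathcal{Y}}_{n,\cO'}\subset\mathcal{Y}_n$. It then suffices to show that at level $N$ the closure of each constructible piece $L^{+}_N\chi(\mathcal{Y}_{n,\cO'}^N)$ in $(L^{+}_N\frc)_{\le n}$ is contained in $L^{+}_N\chi(\mathcal{Y}_n^N)$. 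A limit point $a$ of $\chi(\g_i)$ with $\g_i\in \mathcal{Y}_{n,\cO'}^N$ should, after a suitable $L^{+}_NG$-normalization of the representatives $\g_i$ (permitted because $L^{+}_NG$ acts along fibers of $L^{+}_N\chi$), admit a limit $\g_\infty$ whose reduction mod $t$ lies in $\ov{\cO'}\subset\ov\cO$; hence $a = \chi(\g_\infty)\in L^{+}_N\chi(\mathcal{Y}_n^N)$. Passing back through $\pi_N$ then gives the required fp closedness.

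The principal obstacle is the normalization-and-limit step: $L^{+}G$ does not act transitively on fibers of $L^{+}\chi$ at non-regular reduction points, so the naive ``closed invariant implies closed image'' slogan does not apply as stated. The compactness needed to extract the limit $\g_\infty$ must combine the bounded-discriminant condition---which bounds the relevant valuations, via the Elkik/BKV-type estimate already used in Lemma~\ref{l:cons}---with the $L^{+}_NG$-equivariance, so that at truncation level $N$ the intersections of the Chevalley fibers with $\mathcal{Y}_{n,\cO'}^N$ are closed subvarieties whose union over $\cO'\subset\ov\cO$ is closed. An equivalent and somewhat cleaner route, which is what I would ultimately write up, is to realize $L^{+}_N\chi$ on the bounded-discriminant stratum as a faithfully flat morphism of finite-type varieties equivariant for $L^{+}_NG$; for such a morphism, images of closed invariant subsets are automatically closed, applied to $\mathcal{Y}_n^N$.
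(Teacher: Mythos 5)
Your reduction to level $n$, the identification $(L^{\hs}\frc)_{\ov\cO}\cap (L^{\hs}\frc)_{\le n}=L^{+}\chi(\mathcal{Y}_n)$, and the appeal to Lemma \ref{l:cons} for constructibility all match the paper. The gap is in the step you yourself flag as the ``principal obstacle,'' and the proposed fix does not repair it. The slogan ``for a flat equivariant morphism, images of closed invariant subsets are closed'' is only valid for subsets that are \emph{saturated}, i.e.\ unions of fibers of the morphism (then the complement of the image is the image of the open complement, and flat finitely presented maps are open). But $\mathcal{Y}_n=(\ov\cO+tL^{+}\frg)\cap(L^{+}\frg)_{\le n}$ is $L^{+}G$-invariant and emphatically \emph{not} $\chi$-saturated: the fiber of $\chi$ over a point of $L^{\hs}\frc$ is a single $LG$-orbit (not an $L^{+}G$-orbit), and it meets cosets $e+tL^{+}\frg$ for $e$ ranging over the whole reduction-type set $\RT(\g)$ --- the very phenomenon the paper is studying. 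So flatness of $L^{+}_N\chi$ buys you nothing here, and no finite-level $L^{+}_NG$-normalization can produce the limit $\g_\infty$, because the relevant conjugation lives in the full loop group $LG$ and does not truncate.

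The paper's mechanism is different and is the essential missing ingredient: after passing to level $N$ and taking the closure $Y'$ of $Y$, it lifts $\wt Y'$ into $L^{\hs}\frg$ via the Kostant section, forms the family of affine Springer fibers $\cX_{V'}\subset\Gr\times V'$ with its evaluation map $\ev:\cX_{V'}\to[\cN/G]$, and uses that the projection $p:\cX_{V'}\to V'$ is \emph{ind-proper} while $\cZ=\ev^{-1}(\ov\cO/G)$ is closed. Hence $p(\cZ)$ is a countable union of closed subsets containing the dense subset $V=\kappa(\wt Y)$, so $p(\cZ)=V'$, i.e.\ every point of $\wt Y'$ already lies in $(L^{\hs}\frc)_{\ov\cO}$ and $Y$ is closed. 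It is this properness of the affine Springer fiber family over the parameter space --- a global, infinite-dimensional input --- that substitutes for the compactness you were trying to extract at a finite truncation level; without it (or something equivalent) the proposal does not close.
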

\begin{proof}
We need to show that $(L^{\hs}\frc)_{\ov\cO}\cap (L^{\hs}\frc)_{\le n}$ is fp closed in $(L^{\hs}\frc)_{\le n}$. 

By Lemma \ref{l:cons}, $(L^{\hs}\frc)_{\ov\cO}\cap(L^{\hs}\frc)_{\le n}$ is a fp constructible subset of $L^{+}\frc$. Therefore,   $(L^{\hs}\frc)_{\ov\cO}\cap(L^{\hs}\frc)_{\le n}$ is the preimage of a constructible subset $Y\subset L^{+}_{N}\frc$ for some $N\in \NN$. By enlarging $N$ we may also assume $(L^{\hs}\frc)_{\le n}$ is the preimage of a locally closed subset $(L^{\hs}_{N}\frc)_{\le n}\subset L_{N}^{+}\frc$.  Then $Y\subset (L^{\hs}_{N}\frc)_{\le n}$. Let $Y'$ be the closure of $Y$ in $(L^{\hs}_{N}\frc)_{\le n}$. Let $i_{N}: L^{+}_{N}\frc\incl L^{+}\frc$ be the section that sends a point with coordinates $(a_{0}+a_{1}t+\cdots a_{N-1}t^{N-1}\mod t^{N},\cdots)$ to the point $(a_{0}+a_{1}t+\cdots a_{N-1}t^{N-1},\cdots)\in L^{+}\frc$.  Let $\wt Y=i_{N}(Y)$ and $\wt Y'=i_{N}(Y)$ be the resulting liftings of $Y$ and $Y'$ to $(L^{\hs}\frc)_{\le n}$. Then $Y'$ is a dense constructible subset in the scheme $\wt Y'$ of finite type over $\CC$. We have $\wt Y\subset (L^{\hs}\frc)_{\ov\cO}$.  Our goal is to show that $\wt Y'\subset (L^{\hs}\frc)_{\ov\cO}$,  for then taking image in $L^{+}_{N}\frc$ it implies $Y'=Y$ hence $Y$ is closed in $(L^{\hs}_{N}\frc)_{\le n}$, and hence $(L^{\hs}\frc)_{\ov\cO}\cap(L^{\hs}\frc)_{\le n}$ is closed in $(L^{\hs}\frc)_{\le n}$.

Let $\kappa: L^{+}\frc\to L^{+}\frg$ be the section obtained by applying $L^{+}$ to the Kostant section. Let $V=\kappa(\wt Y)$ and $V'=\kappa(\wt Y')\subset  L^{\hs}\frg$. Let $\cX_{V'}\subset \Gr\times V'$ be the family of affine Springer  fibers for elements in $V'$. We have the evaluation map $\ev:\cX_{V'}\to [\cN/G]$. Let $\cZ=\ev^{-1}(\ov\cO/G)$, then $\cZ$ is a closed subset of $\cX_{V'}$. Let $p: \cX_{V'}\to V'$ be the projection. By assumption, $Y\subset (L^{\hs}\frc)_{\ov\cO}$, every point $v\in V$ can be $LG$-conjugated to $\ov\cO+t\frg\tl{t}$, hence the image of $p|_{\cZ}$  contains $V$. Since $p$ is ind-proper and $\cZ$ is closed in $\cX_{V'}$, the image of $p|_{\cZ}: \cZ\to V'$ is a countable union of closed subsets and $p(\cZ)\supset V$. This implies $p(\cZ)=V'$. In particular,  any $v\in V'$ can be $LG$-conjugated to $\ov\cO+t\frg\tl{t}$, hence the image of $V'$ in $L^{+}\frc$, which is $\wt Y'$, satisfies $\wt Y'\subset (L^{\hs}\frc)_{\ov\cO}$.  This finishes the proof.
\end{proof}

\begin{lemma}\label{l:ineq}
Let $[w]\in \un W$ and $\cO\in\un\cN$. Then the following are equivalent:
\begin{enumerate}
\item $(L^{\hs}\frc)^{sh}_{[w]}\subset (L^{\hs}\frc)_{\ov\cO}$. 
\item $(L^{\hs}\frc)_{[w]}\subset (L^{\hs}\frc)_{\ov\cO}$.
\item $\ov{(L^{\hs}\frc)^{sh}_{[w]}}\cap L^{\hs}\frc\subset (L^{\hs}\frc)_{\ov\cO}$.
\item For any $\g\in L^{\hs}\frg$ of type $[w]$, there exists $\cO'\in\RT_{\min}(\g)$ such that $\cO'\le \cO$.  
\end{enumerate}
When any of these is satisfied, we write $[w]\sqsubset\cO$. Moreover, when $[w]\sqsubset\cO$, we have
\begin{equation}\label{ineq}
\d_{[w]}\ge d_{\cO}.
\end{equation}
\end{lemma}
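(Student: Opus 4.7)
My approach is to prove the four-way equivalence via the chain (2)$\Rightarrow$(1)$\Rightarrow$(3)$\Rightarrow$(2) together with a separate (2)$\Leftrightarrow$(4), and then to derive the inequality from (4) by a Springer-fiber dimension count on a shallow representative. The first three implications should be short applications of the structural results already established. The step (2)$\Rightarrow$(1) is the trivial inclusion $(L^{\hs}\frc)^{sh}_{[w]} \subset (L^{\hs}\frc)_{[w]}$. For (1)$\Rightarrow$(3), Lemma \ref{l:cO} gives that $(L^{\hs}\frc)_{\ov\cO}$ is fp closed in $L^{\hs}\frc$, so once it contains the shallow stratum it contains the closure $\ov{(L^{\hs}\frc)^{sh}_{[w]}}\cap L^{\hs}\frc$ of that stratum inside $L^{\hs}\frc$. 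For (3)$\Rightarrow$(2), Lemma \ref{l:cw}(3) provides the density of $(L^{\hs}\frc)^{sh}_{[w]}$ in $(L^{\hs}\frc)_{[w]}$, embedding $(L^{\hs}\frc)_{[w]}$ into $\ov{(L^{\hs}\frc)^{sh}_{[w]}}\cap L^{\hs}\frc$ and hence, by (3), into $(L^{\hs}\frc)_{\ov\cO}$.

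The equivalence (2)$\Leftrightarrow$(4) I would prove fiberwise over $L^{\hs}\frc$. Fix $\g \in (L^{\hs}\frg)_{[w]}$; then the statement that there exists $\cO' \in \RT_{\min}(\g)$ with $\cO' \le \cO$ is just the non-emptiness of $\ev_\g^{-1}(\ov\cO/G) = \bigcup_{\cO'' \le \cO} \Gr_{\g,\cO''}$, equivalently the $LG$-conjugacy of $\g$ into $\ov\cO + tL^+\frg$. Applying $\chi$ and using its $LG$-invariance, together with the definition of $(L^{\hs}\frc)_{\ov\cO}$ as the $\chi$-image of $(\ov\cO + tL^+\frg)\cap L^{\hs}\frg$, gives the forward direction (4)$\Rightarrow$(2). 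The converse requires showing that $\chi(\g) \in (L^{\hs}\frc)_{\ov\cO}$ forces $\g$ itself to admit an $LG$-conjugate in $\ov\cO + tL^+\frg$; this is handled by the same ind-properness-plus-closedness argument from the proof of Lemma \ref{l:cO}, applied to a Kostant-section family through $\chi(\g)$. Quantifying over $\g$ and using $\chi((L^{\hs}\frg)_{[w]}) = (L^{\hs}\frc)_{[w]}$ then completes the equivalence.

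For the inequality, I would choose $\g \in (L^{\hs}\frg)^{sh}_{[w]}$, so that $\d_\g = \d_{[w]}$, and apply (4) to obtain $\cO' \in \RT_{\min}(\g)$ with $\cO' \le \cO$. By construction, $\Fl_{\g,\cO'} \to \Gr_{\g,\cO'}$ is a non-empty Springer fibration with fibers of dimension $d_{\cO'}$, so $\d_{[w]} = \dim\Fl_\g \ge \dim\Fl_{\g,\cO'} \ge d_{\cO'}$. Combining with the classical inequality $d_{\cO'} \ge d_\cO$ (smaller nilpotent orbits have larger Springer fibers) yields $\d_{[w]} \ge d_\cO$.

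The main obstacle is the implication (2)$\Rightarrow$(4): in $L^{\hs}\frg$, fibers of $\chi$ need not a priori be single $LG$-orbits, so translating the base-level statement $\chi(\g) \in (L^{\hs}\frc)_{\ov\cO}$ into the orbit-level statement $\Gr_{\g,\ov\cO} \ne \vn$ is the only non-formal step and must genuinely invoke the ind-proper family of affine Springer fibers together with closedness of $\ev^{-1}(\ov\cO/G)$ exactly as in Lemma \ref{l:cO}.
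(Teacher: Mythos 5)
Your treatment of the equivalence of (1), (2), (3), of the implication (4)$\Rightarrow$(2), and of the inequality \eqref{ineq} matches the paper's proof: the first three follow from density of $(L^{\hs}\frc)^{sh}_{[w]}$ in $(L^{\hs}\frc)_{[w]}$ (Lemma \ref{l:cw}) together with closedness of $(L^{\hs}\frc)_{\ov\cO}$ in $L^{\hs}\frc$ (Lemma \ref{l:cO}), and your dimension count $\d_{[w]}=\dim\Fl_{\g}\ge \dim\Fl_{\g,\cO'}\ge d_{\cO'}\ge d_{\cO}$ is exactly \eqref{dg} --- in fact stated slightly more carefully, since you run the Springer-fibration argument over $\Gr_{\g,\cO'}$, the stratum actually known to be non-empty, and then use that $d$ is order-reversing.

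The gap is in (2)$\Rightarrow$(4). You correctly isolate the difficulty --- $\chi(\g)\in(L^{\hs}\frc)_{\ov\cO}$ only produces some $\g'\in(\ov\cO+tL^{+}\frg)\cap L^{\hs}\frg$ with $\chi(\g')=\chi(\g)$, and one must transfer this to $\g$ itself --- but the tool you propose does not resolve it. The ind-properness-plus-closedness argument in the proof of Lemma \ref{l:cO} is a density argument on the base: it extends the containment $\wt Y\subset(L^{\hs}\frc)_{\ov\cO}$ from a dense constructible subset to its closure. It says nothing about which elements of a single fiber $\chi^{-1}(a)$ can be conjugated into $\ov\cO+tL^{+}\frg$; indeed that proof itself quietly passes from ``$a\in(L^{\hs}\frc)_{\ov\cO}$'' to ``the Kostant lift $\kappa(a)$ is $LG$-conjugate into $\ov\cO+t\frg\tl{t}$,'' which is precisely the point at issue. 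What is needed --- and what the paper invokes directly --- is the rational conjugacy statement over $F=\CC\lr{t}$: two generically regular semisimple elements of $L^{\hs}\frg$ with the same image in $L^{+}\frc$ are conjugate under $LG=G(\CC\lr{t})$, so that $\chi^{-1}(\chi(\g))\cap L^{\hs}\frg$ is a single $LG$-orbit. Granting that, one writes $\Ad(g^{-1})\g=\g'$ and observes $gL^{+}G\in\bigcup_{\cO''\le\cO}\Gr_{\g,\cO''}$, giving (4). Your argument should cite or prove this one-orbit property; it is a standard cohomological fact about tori over $\CC\lr{t}$, not a consequence of ind-properness of the family of affine Springer fibers.
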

\begin{proof}
(1)(2)(3) are equivalent since $(L^{\hs}\frc)^{sh}_{[w]}$ is dense in $(L^{\hs}\frc)_{[w]}$ (Lemma \ref{l:cw}), and $(L^{\hs}\frc)_{\ov\cO}$ is closed in $L^{\hs}\frc$ by Lemma \ref{l:cO}. 

We show (4) implies (2). If  $\g\in L^{\hs}\frg$, if $gL^{+}G \in \Gr_{\g,\cO'}$ and $\cO'\subset \ov\cO$, then $\Ad(g^{-1})\g\in (\ov\cO+tL^{+}\frg)\cap L^{\hs}\frg$. Therefore $\chi(\g)=\chi(\Ad(g^{-1})\g)\in(L^{\hs}\frc)_{\ov\cO}$. This being true for all $\g$ of type $[w]$, we conclude that $(L^{\hs}\frc)_{[w]}\subset (L^{\hs}\frc)_{\ov\cO}$.

We show (2) implies (4). For $\g\in L^{\hs}\frg$ of type $[w]$, its image $\chi(\g)\in (L^{\hs}\frc)_{[w]}\subset (L^{\hs}\frc)_{\ov\cO}$. Therefore there exists $\g'\in (\ov\cO+tL^{+}\frg)\cap L^{\hs}\frg$ such that $\chi(\g')=\chi(\g)$. Since both $\g$ and $\g'$ are regular semisimple, there exists $g\in LG$ such that $\Ad(g^{-1})\g=\g'$. Then $gL^{+}G\in \Gr_{\g, \le \cO}$. This implies that some $\cO'\in\RT_{\min}(\g)$ satisfies $\cO'\le \cO$.

We prove the inequality \eqref{ineq}. 
Let $\g$ be shallow of type $[w]$ and $\cO'\in \RT_{\min}(\g)$ such that $\cO'\le \cO$.  Let $\pi: \Fl_{\g}\to \Gr_{\g}$ be the projection. Then the fibers of $\pi$ over $\Gr_{\g,\cO}$ are Springer fibers  $\cB_{e}$ for $e\in \cO$. Therefore
\begin{equation}\label{dg}
\d_{[w]}=\dim\Fl_{\g}\ge\dim\Gr_{\g,\cO}+\dim\cB_{e}\ge \dim \cB_{e}=d_{\cO}.
\end{equation}
\end{proof}

The following transitivity property is immediate from the definition.
\begin{lemma}\label{l:trans} For $[w'],[w]\in \un W$ and $\cO', \cO\in \un\cN$ we have:
\begin{equation}\label{ineq trans}
\mbox{If $[w']\preceq [w]$ and $[w]\sqsubset \cO$, then $[w']\sqsubset \cO$.}
\end{equation}
\begin{equation*}
\mbox{If $[w]\sqsubset \cO$ and $\cO\le \cO'$, then $[w]\sqsubset \cO'$.}
\end{equation*}
\end{lemma}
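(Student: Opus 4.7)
Both assertions follow directly from unpacking the definitions, combined with the equivalences in Lemma~\ref{l:ineq}. I will use characterization (1) of $[w]\sqsubset\cO$ (namely $(L^{\hs}\frc)^{sh}_{[w]}\subset (L^{\hs}\frc)_{\ov\cO}$) for the hypotheses, together with characterization (3) (namely $\ov{(L^{\hs}\frc)^{sh}_{[w]}}\cap L^{\hs}\frc\subset (L^{\hs}\frc)_{\ov\cO}$) as needed.

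For the first statement, unwind the partial order $\preceq$ on $\un W$: by \S\ref{ss:order W}, $[w']\preceq[w]$ means
\begin{equation*}
(L^{\hs}\frc)^{sh}_{[w']}\subset \ov{(L^{\hs}\frc)^{sh}_{[w]}}.
\end{equation*}
Since $(L^{\hs}\frc)^{sh}_{[w']}$ is contained in $L^{\hs}\frc$, we can intersect the right-hand side with $L^{\hs}\frc$ without losing anything. Now invoke the equivalence of (1) and (3) in Lemma~\ref{l:ineq}: the hypothesis $[w]\sqsubset\cO$ gives $\ov{(L^{\hs}\frc)^{sh}_{[w]}}\cap L^{\hs}\frc\subset (L^{\hs}\frc)_{\ov\cO}$. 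Combining the two inclusions yields $(L^{\hs}\frc)^{sh}_{[w']}\subset (L^{\hs}\frc)_{\ov\cO}$, which by Lemma~\ref{l:ineq}(1) is exactly $[w']\sqsubset\cO$.

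For the second statement, the key point is the trivial monotonicity $(L^{\hs}\frc)_{\ov\cO}\subset (L^{\hs}\frc)_{\ov{\cO'}}$ whenever $\cO\le\cO'$. Indeed, $\cO\le\cO'$ means $\ov\cO\subset\ov{\cO'}$, so $\ov\cO+tL^{+}\frg\subset \ov{\cO'}+tL^{+}\frg$, and the definition of $(L^{\hs}\frc)_{\ov\cO}$ as the image in $L^{\hs}\frc$ of $(\ov\cO+tL^{+}\frg)\cap L^{\hs}\frg$ immediately gives the claimed inclusion. Applying the hypothesis $[w]\sqsubset\cO$ in the form $(L^{\hs}\frc)^{sh}_{[w]}\subset (L^{\hs}\frc)_{\ov\cO}$ and composing, we get $(L^{\hs}\frc)^{sh}_{[w]}\subset (L^{\hs}\frc)_{\ov{\cO'}}$, i.e.\ $[w]\sqsubset\cO'$.

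There is no real obstacle here; the only minor point that needs attention is the first step, where one has to use that intersecting with $L^{\hs}\frc$ is harmless because $(L^{\hs}\frc)^{sh}_{[w']}$ already lies inside $L^{\hs}\frc$. Everything else is a direct chase through the definitions introduced in \S\ref{s:Lcw} and \S\ref{s:c}.
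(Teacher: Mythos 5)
Your proof is correct and matches the paper's approach: the paper simply declares the lemma ``immediate from the definition,'' and your argument is exactly the definition-unpacking (via the equivalences of Lemma~\ref{l:ineq} and the monotonicity of $\cO\mapsto(L^{\hs}\frc)_{\ov\cO}$) that justifies that claim.
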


\begin{defn} For $[w]\in \un W$, let $\RT_{\min}([w])$ be the set of minimal elements in the set
\begin{equation}\label{over w}
\{\cO\in\un\cN| [w]\sqsubset \cO\}.
\end{equation}
\end{defn}
Note that the set \eqref{over w} is non-empty because $(L^{\hs}\frc)_{\ov\cO_{\reg}}=L^{\hs}\frc$ where $\cO_{\reg}$ is the regular nilpotent orbit.

\begin{lemma}\label{l:open}
There is an open dense subset $U_{[w]}\subset (L^{\hs}\frc)^{sh}_{[w]}$ such that if $\g\in L^{\hs}\frg$ and $\chi(\g)\in U_{[w]}$, then $\RT_{\min}(\g)=\RT_{\min}([w])$.
\end{lemma}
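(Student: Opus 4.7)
The plan is to reduce the statement to a finite union of proper closed subsets inside the irreducible locally closed stratum $(L^{\hs}\frc)^{sh}_{[w]}$. The key preliminary observation I would record first is that $\RT_{\min}(\g)$ depends only on $\chi(\g)$ when $\g \in L^{\hs}\frg$. Indeed, from the equivalence $(2)\Leftrightarrow (4)$ in Lemma \ref{l:ineq} one sees that a nilpotent orbit $\cO$ belongs to $\RT(\g)$ if and only if $\chi(\g) \in (L^{\hs}\frc)_{\ov\cO}$ and no strictly smaller orbit $\cO'<\cO$ has $\chi(\g) \in (L^{\hs}\frc)_{\ov{\cO'}}$; consequently
\[
\RT_{\min}(\g) \;=\; \min\bigl\{\cO\in\un\cN : \chi(\g)\in (L^{\hs}\frc)_{\ov\cO}\bigr\},
\]
where ``$\min$'' denotes the set of minimal elements.

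Next, for each $\cO\in \un\cN$ set $\cV_\cO := (L^{\hs}\frc)^{sh}_{[w]} \cap (L^{\hs}\frc)_{\ov\cO}$. By Lemma \ref{l:cO} and the fact that $(L^{\hs}\frc)^{sh}_{[w]}$ lies in $(L^{\hs}\frc)_{\le N}$ for the uniform bound $N = \val(\D(\g))$ coming from \eqref{Dmin}, the set $\cV_\cO$ is fp closed in $(L^{\hs}\frc)^{sh}_{[w]}$. Using Lemma \ref{l:ineq}, one has $\cV_\cO = (L^{\hs}\frc)^{sh}_{[w]}$ precisely when $[w]\sqsubset \cO$, and $\cV_\cO$ is a proper subset otherwise. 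Since $(L^{\hs}\frc)^{sh}_{[w]}$ is irreducible (Lemma \ref{l:cw}(1)) and $\un\cN$ is finite, the finite union
\[
Z \;:=\; \bigcup_{[w]\not\sqsubset \cO} \cV_\cO
\]
is a proper fp closed subset of $(L^{\hs}\frc)^{sh}_{[w]}$. I then set $U_{[w]} := (L^{\hs}\frc)^{sh}_{[w]} \setminus Z$, which is open and dense.

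Finally, for $a\in U_{[w]}$ and any $\g\in L^{\hs}\frg$ with $\chi(\g) = a$, I would verify that $\{\cO : a\in (L^{\hs}\frc)_{\ov\cO}\} = \{\cO : [w]\sqsubset \cO\}$. The inclusion $\supseteq$ is automatic from $a\in (L^{\hs}\frc)^{sh}_{[w]}$ and the defining property of $[w]\sqsubset \cO$; the reverse inclusion is exactly the statement $a\notin \cV_\cO$ for every $\cO$ with $[w]\not\sqsubset \cO$, which is the defining condition of $U_{[w]}$. Taking minimal elements on both sides and invoking the preliminary observation yields $\RT_{\min}(\g) = \RT_{\min}([w])$.

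I do not expect a genuine obstacle here: the whole argument is essentially bookkeeping once the two ingredients --- that $\RT_{\min}(\g)$ factors through $\chi(\g)$, and that $(L^{\hs}\frc)^{sh}_{[w]}$ is irreducible and meets each $(L^{\hs}\frc)_{\ov\cO}$ in either the whole stratum or a proper closed subset --- are in hand. The only point requiring mild care is checking that $\cV_\cO$ is fp closed in $(L^{\hs}\frc)^{sh}_{[w]}$, which follows because the discriminant valuation is constant on the shallow locus, so everything happens inside a single $(L^{\hs}\frc)_{\le N}$ where Lemma \ref{l:cO} applies directly.
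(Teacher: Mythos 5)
Your proof is correct and follows essentially the same strategy as the paper: exploit the irreducibility of $(L^{\hs}\frc)^{sh}_{[w]}$ (Lemma \ref{l:cw}), the closedness of the loci $(L^{\hs}\frc)_{\ov\cO}$ (Lemma \ref{l:cO}), and the pointwise content of Lemma \ref{l:ineq} to define $U_{[w]}$ as the complement of finitely many proper closed subsets. The one difference is which closed sets you excise: the paper removes $V_{\cO}$ only for $\cO$ strictly below some element of $\RT_{\min}([w])$ (which yields the inclusion $\RT_{\min}([w])\subset\RT_{\min}(\g)$ directly), whereas you remove $\cV_{\cO}$ for every $\cO$ with $[w]\not\sqsubset\cO$. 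Your larger excision is still a finite union of proper closed subsets, and it makes the reverse inclusion transparent, since it forces $\{\cO:\chi(\g)\in(L^{\hs}\frc)_{\ov\cO}\}$ to coincide exactly with the upward-closed set $\{\cO:[w]\sqsubset\cO\}$; this is a clean and, if anything, more careful packaging of the same argument.
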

\begin{proof}
Let $\cS=\{\cO\in \un\cN|\cO< \cO' \mbox{ for some } \cO'\in \RT_{\min}([w])\}$. By definition, for each $\cO\in \cS$, $(L^{\hs}\frc)^{sh}_{[w]}$ is not contained in $(L^{\hs}\frc)_{\ov\cO}$, hence $V_{\cO}:=(L^{\hs}\frc)^{sh}_{[w]}\cap (L^{\hs}\frc)_{\ov\cO}$ is a proper closed subset. Take $U_{[w]}$ to be the complement of $V_{\cO}$ for all $\cO\in\cS$.
\end{proof}

\begin{lemma}\label{l:sk RT Levi}
Let $M$ be a Levi subgroup of $G$ containing $T$, and $w\in W_{M}$. Then $\io_{M,G}(\RT^{M}_{\min}([w]_{M}))\supset\RT_{\min}([w])$ (see \S\ref{ss:Levi} for notation).
\end{lemma}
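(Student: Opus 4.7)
The plan is to choose an element $\g$ in a loop torus of type $[w]_M$ inside $LM$ that simultaneously computes $\RT_{\min}([w])$ on the $G$-side and $\RT^M_{\min}([w]_M)$ on the $M$-side, and then invoke Lemma \ref{l:RT Levi}.

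First I would fix a loop torus $LT_w\subset LM$ of type $[w]_M$; since $W_M\subset W$, the same $LT_w$ has type $[w]$ when viewed in $LG$. The key compatibility to verify is that shallowness in $G$ implies shallowness in $M$ for elements of $L\frt_w$. Indeed, as in the proof of formula \eqref{Dmin}, an element $\g\in L^{\hs}\frt_w$ is shallow in $G$ iff $\val(\a(\g))$ attains its individual minimum (over $L^{\hs}\frt_w$) for every $\a\in R_G$; since $R_M\subset R_G$, the same conclusion then holds for every $\a\in R_M$, so $\g$ is shallow in $M$. Hence $(L^{\hs}\frt_w)^{sh}_G\subset (L^{\hs}\frt_w)^{sh}_M$, with both open dense in $L^{\hs}\frt_w$.

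Next, apply Lemma \ref{l:open} to $G$ and to $M$ to obtain open dense subsets $U^G_{[w]}\subset(L^{\hs}\frc)^{sh}_{[w]}$ and $U^M_{[w]_M}\subset (L^{\hs}\frc_M)^{sh}_{[w]_M}$. By Lemma \ref{l:cw}(1) applied to both groups, the natural maps $(L^{\hs}\frt_w)^{sh}_G\to (L^{\hs}\frc)^{sh}_{[w]}$ and $(L^{\hs}\frt_w)^{sh}_M\to (L^{\hs}\frc_M)^{sh}_{[w]_M}$ are surjective (even finite \'etale), so pulling $U^G_{[w]}$ and $U^M_{[w]_M}$ back produces two open dense subsets of $(L^{\hs}\frt_w)^{sh}_G$. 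Any $\g$ in their intersection lies in $L\fm\cap L^{\hs}\frg$ and satisfies both $\RT_{\min}(\g)=\RT_{\min}([w])$ and $\RT^M_{\min}(\g)=\RT^M_{\min}([w]_M)$ simultaneously. Feeding this $\g$ into Lemma \ref{l:RT Levi} then gives
\[
\RT_{\min}([w])\;=\;\RT_{\min}(\g)\;\subset\;\io_{M,G}(\RT^M_{\min}(\g))\;=\;\io_{M,G}(\RT^M_{\min}([w]_M)),
\]
which is the desired inclusion.

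No serious obstacle is expected. The only point requiring a moment's thought is the shallowness comparison between $G$ and $M$; everything else is a direct combination of the preceding structural lemmas (density of the shallow locus, genericity of the fiber $\RT_{\min}$ over $[w]$, and the inclusion coming from the Levi reduction).
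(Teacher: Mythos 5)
Your proof is correct and follows essentially the same route as the paper: choose $\g$ in the intersection of the pullbacks of the two generic loci $U_{[w]}$ and $U^{M}_{[w]_{M}}$ to the common loop Cartan $L\frt_{w}$, then apply Lemma \ref{l:RT Levi}. The only cosmetic difference is that the paper takes preimages in $L^{++}\frt_{w}$ (where both are automatically dense open), which sidesteps the need for your shallowness comparison between $G$ and $M$ — though that observation is correct and harmless.
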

\begin{proof} Let $LT_{w}\subset M$ be a loop torus of type $[w]_{M}$. Let $\frc_{M}$ be the Chevalley base for $M$, and $U^{M}_{[w]_{M}}\subset (L^{\hs}\frc_{M})^{sh}_{[w]_{M}}$ be the dense open as in Lemma \ref{l:open} for $M$. Let $\wt U^{M}_{[w]_{M}}\subset L^{++}\frt_{w}$ be its preimage in $L^{++}\frt_{w}$, which is dense open. Similarly,  let $\wt U_{[w]}\subset L^{++}\frt_{w}$ be the preimage of $U_{[w]}$ as in Lemma \ref{l:open}, which is also dense open. Take $\g\in \wt U_{[w]}\cap \wt U^{M}_{[w]_{M}}$. Then $\RT_{\min}(\g)=\RT_{\min}([w])$ and $\RT^{M}_{\min}(\g)=\RT^{M}_{\min}([w]_{M})$. The conclusion then follows from Lemma \ref{l:RT Levi}. 
\end{proof}


\begin{lemma}\label{l:Uw} Let $[w]\in \un W_{ell}$. Suppose $\RT_{\min}([w])$ is a singleton $\{\cO\}$ and $\cO=\Phi([w])$ (i.e., Theorem \ref{th:PhiPsi}\eqref{Phi} holds for $[w]$), then Theorem \ref{th:WtoN}\eqref{Uw} holds for $[w]$.
\end{lemma}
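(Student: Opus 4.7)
The plan is to combine the dimension identity $\d_{[w]}=d_\cO$ from Corollary \ref{c:d ell} with the general inequality $\d_{[w]}\ge d_{\cO''}$ (for any $\cO''\in\RT(\g)$), which is implicit in the proof of Lemma \ref{l:ineq}, in order to squeeze the minimal reduction type of any shallow $\g$ down to $\cO$. The hypothesis $\RT_{\min}([w])=\{\cO\}$ says in particular that $[w]\sqsubset\cO$, so by Lemma \ref{l:ineq}(4) every shallow $\g$ of type $[w]$ admits some $\cO'\in\RT_{\min}(\g)$ with $\cO'\le\cO$. I will show that this forces $\cO'=\cO$, which gives the existence assertion in Theorem \ref{th:WtoN}\eqref{Uw}.

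The squeeze has three ingredients. First, since $\g$ is shallow, $\dim\Fl_\g=\d_{[w]}$; combined with the chain $\dim\Fl_\g\ge\dim\Gr_{\g,\cO'}+\dim\cB_{e'}\ge d_{\cO'}$ (for $e'\in\cO'$, using $\Gr_{\g,\cO'}\ne\vn$), this gives $\d_{[w]}\ge d_{\cO'}$. Second, the hypothesis $\cO=\Phi([w])$ together with $[w]\in\un W_{ell}$ lets me invoke Corollary \ref{c:d ell} to obtain $\d_{[w]}=d_\cO$. Third, the relation $\cO'\le\cO$ means $\cO'\subset\ov\cO$, hence $\dim\cO'\le\dim\cO$, and since $d_{\cO''}$ is monotonically decreasing in $\dim\cO''$ (via Steinberg's formula), this yields $d_{\cO'}\ge d_\cO$. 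Assembling: $d_\cO=\d_{[w]}\ge d_{\cO'}\ge d_\cO$, so all inequalities are equalities, $\dim\cO'=\dim\cO$, and together with $\cO'\subset\ov\cO$ this forces $\cO'=\cO$.

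For uniqueness, Lemma \ref{l:open} supplies a dense open $U_{[w]}\subset (L^{\hs}\frc)^{sh}_{[w]}$ such that $\RT_{\min}(\g)=\RT_{\min}([w])=\{\cO\}$ for every $\g$ with $\chi(\g)\in U_{[w]}$; hence any orbit contained in $\RT_{\min}(\g)$ for \emph{all} shallow $\g$ must equal $\cO$. There is no serious obstacle here --- the argument is a three-term squeeze --- but the point worth highlighting is that it works for \emph{every} shallow $\g$, not only for those lying over the generic open $U_{[w]}$, because the inequalities $\d_{[w]}\ge d_{\cO'}$ and $d_{\cO'}\ge d_\cO$ rely only on shallowness of $\g$ and on $\cO'\le\cO$, the latter being the single consequence of $\RT_{\min}([w])=\{\cO\}$ that is used (through Lemma \ref{l:ineq}(4)).
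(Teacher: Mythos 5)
Your proof is correct and follows essentially the same route as the paper: both use Lemma \ref{l:ineq}(4) to produce $\cO'\in\RT_{\min}(\g)$ with $\cO'\le\cO$, then squeeze via $d_{\cO}=\d_{[w]}=\dim\Fl_{\g}\ge d_{\cO'}\ge d_{\cO}$ (Corollary \ref{c:d ell} plus the inequality \eqref{dg}) to force $\cO'=\cO$, and invoke Lemma \ref{l:open} for the open dense subset $U_{[w]}$.
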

\begin{proof} The existence of $U_{[w]}$ is shown in Lemma \ref{l:open}. It remains to show that for $\g\in (L^{\hs}\frg)_{[w]}^{sh}$, $\cO\in \RT_{\min}(\g)$. 

Since $\cO=\Phi([w])$, by Corollary \ref{c:d ell} we have $\d_{[w]}=d_{\cO}$. Let $\g\in(L^{\hs}\frg)_{[w]}^{sh}$, then $\dim\Fl_{\g}=\d_{[w]}=d_{\cO}$. Since $[w]\sqsubset\cO$ and $\chi(\g)\in (L^{\hs}\frc)_{\ov\cO}$, by Lemma \ref{l:ineq}, there exists $\cO'\in \RT_{\min}(\g)$ such that $\cO'\le \cO$. The inequality \eqref{dg} implies $\d_{[w]}=\dim\Fl_{\g}\ge d_{\cO'}\ge d_{\cO}$. Since $\d_{[w]}=d_{\cO}$, we must have $d_{\cO'}=d_{\cO}$ hence $\cO'=\cO$. 
\end{proof}

\section{The Kazhdan-Lusztig map}\label{s:KL}


The following result is the technical crux for proving the main results for exceptional groups. It uses a theorem of of Bouthier-Kazhdan-Varshavsky \cite{BKV} on the flatness of certain maps between arc spaces, and one of Finkelberg-Kazhdan-Varshavsky \cite{FKV} that settles a conjecture of Lusztig on S-sells.

\begin{theorem}\label{th:Ow} Let $\cO\in \un\cN$ and $[w]=\KL(\cO)$. Then
\begin{enumerate}
\item $\d_{[w]}=d_{\cO}$.
\item We have $(L^{\hs}\frc)_{\ov\cO}=\ov{(L^{\hs}\frc)^{sh}_{[w]}}\cap L^{\hs}\frc$.  In particular, $[w]\sqsubset \cO$. 
\item We have $(L^{\hs}\frc)_{\ov\cO}$ is irreducible and
\begin{equation*}
\codim_{L^{\hs}\frc}(L^{\hs}\frc)_{\ov\cO}=d_{\cO}.
\end{equation*}
See \S\ref{ss:L hs c} for the definition of the codimension of $(L^{\hs}\frc)_{\ov\cO}$ in $L^{\hs}\frc$.
\end{enumerate}
\end{theorem}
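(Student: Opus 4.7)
My plan is to show that $(L^{\hs}\frc)_{\ov\cO}$ and $\ov{(L^{\hs}\frc)^{sh}_{[w]}}\cap L^{\hs}\frc$ coincide by checking that both are irreducible closed subsets of $L^{\hs}\frc$ of the common codimension $d_{\cO}$ and that they meet nontrivially; since irreducible closed subsets of an irreducible ambient of the same codimension are equal once one is contained in the other, the three claims of the theorem will follow together.

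Irreducibility of $(L^{\hs}\frc)_{\ov\cO}$ is essentially immediate: via the product decomposition $L^{+}\frg\cong\frg\oplus tL^{+}\frg$, the set $\ov\cO+tL^{+}\frg\cong\ov\cO\times tL^{+}\frg$ is irreducible (since $\ov\cO$ is), its open subset $(\ov\cO+tL^{+}\frg)\cap L^{\hs}\frg$ is irreducible, and so is its continuous image $(L^{\hs}\frc)_{\ov\cO}$. For nontriviality of the intersection I would invoke the definition of $\KL$ directly. Fixing any $e\in\cO$, \cite[\S9.1]{KL} provides an open dense $U\subset tL^{+}\frg$ such that $\tilde e:=e+u$ is topologically nilpotent regular semisimple of type $[w]=\KL(\cO)$ for every $u\in U$. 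Combining \cite[Proposition 8.2]{KL} (giving $\dim\Fl_{\tilde e}=d_{\cO}$) with $d_{\cO}=\d_{[w]}$ (also in loc.\ cit.) and the formula \eqref{dim Sp}, every such $\tilde e$ is in fact \emph{shallow} of type $[w]$. Hence $\chi(e+U)$ lies in $(L^{\hs}\frc)^{sh}_{[w]}\cap(L^{\hs}\frc)_{\ov\cO}$, producing the desired nonempty intersection.

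The main obstacle is the codimension calculation $\codim_{L^{\hs}\frc}(L^{\hs}\frc)_{\ov\cO}=d_{\cO}$ (Lemma \ref{l:cw}(2) already supplies this codimension for $(L^{\hs}\frc)^{sh}_{[w]}$). The source $\ov\cO+tL^{+}\frg$ has codimension $\dim Z_{G}(e)=\dim\frt+2d_{\cO}$ in $L^{+}\frg$ by Steinberg's formula. Assuming the Chevalley map $\chi:L^{+}\frg\to L^{+}\frc$ is flat on the relevant fp locally closed subsets (which is the content of the theorem of Bouthier-Kazhdan-Varshavsky \cite{BKV}), the codimension of the image equals the codimension of the source minus the codimension of the generic fiber of $\chi|_{\cO+tL^{+}\frg}$ inside the generic fiber of $\chi$. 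For $a$ the invariant of a shallow $\tilde e$ of type $[w]$, the generic fiber $\chi^{-1}(a)\cap L^{+}\frg$ is structured by the affine Grassmannian $\Gr_{\tilde e}$ (of dimension $\d_{[w]}=d_{\cO}$), while its intersection with $\cO+tL^{+}\frg$ is structured by $\Gr_{\tilde e,\cO}$, which is discrete because $\dim\Fl_{\tilde e}\ge\dim\Gr_{\tilde e,\cO}+d_{\cO}$ forces $\dim\Gr_{\tilde e,\cO}=0$. This yields fiber codimension exactly $d_{\cO}$, so $\codim_{L^{++}\frc}(L^{\hs}\frc)_{\ov\cO}=d_{\cO}$ as required; the strata $(L^{\hs}\frc)_{\cO'}$ with $\cO'<\cO$ have strictly larger codimension $d_{\cO'}$, so the union $(L^{\hs}\frc)_{\ov\cO}=\bigcup_{\cO'\le\cO}(L^{\hs}\frc)_{\cO'}$ really does have codimension $d_{\cO}$, realized by its open piece $(L^{\hs}\frc)_{\cO}$. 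The technical heart is promoting this affine Springer-fiber dimension count into a rigorous codimension statement on the arc space, for which the BKV flatness result is the essential tool.
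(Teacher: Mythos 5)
Your global strategy (irreducible $+$ equal codimension $+$ containment $\Rightarrow$ equality) is reasonable, and your irreducibility argument for $(L^{\hs}\frc)_{\ov\cO}$ and your observation that generic liftings $e+u$ are automatically shallow of type $[w]$ are both correct. But there are two genuine gaps. First, a structural one: you only produce a \emph{nonempty intersection} $\chi(e+U)\subset (L^{\hs}\frc)^{sh}_{[w]}\cap(L^{\hs}\frc)_{\ov\cO}$, whereas your own reduction requires a \emph{containment} of one irreducible closed set in the other; two distinct irreducible closed subsets of the same codimension can certainly meet. To get the containment one needs the intersection to be \emph{dense} in (at least one of) the two sets. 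The paper gets this from \cite[Last Corollary of \S 6]{KL}: a dense fp open subset of $\ov\cO+tL^{+}\frg$ consists of elements of type $[w]$, so $(L^{\hs}\frc)_{\ov\cO,[w]}:=(L^{\hs}\frc)_{[w]}\cap(L^{\hs}\frc)_{\ov\cO}$ is dense in $(L^{\hs}\frc)_{\ov\cO}$; combined with the codimension bounds this forces $(L^{\hs}\frc)_{\ov\cO,[w]}$ and $(L^{\hs}\frc)^{sh}_{[w]}$ to share a common dense constructible subset, whence the equality of closures.

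Second, and more seriously, the codimension computation --- which you yourself call the technical heart --- is not carried out. The formula ``codimension of image $=$ codimension of source $-$ codimension of the generic fiber intersection'' is a finite-dimensional heuristic: the fibers of $\chi:L^{+}\frg\to L^{+}\frc$ over $L^{\hs}\frc$ are infinite-dimensional unions of $L^{+}G$-conjugation orbits whose stabilizers have varying relative dimension, so ``the codimension of $\chi^{-1}(a)\cap(\ov\cO+tL^{+}\frg)$ inside $\chi^{-1}(a)$'' is not the complementary dimension of $\Gr_{\tilde e,\cO}$ in $\Gr_{\tilde e}$ without substantial extra work, and BKV do not supply such a fiber-dimension formula. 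The paper avoids this entirely: it restricts to the Iwahori subalgebra and uses the BKV flatness of $\chi|_{L^{+}\frI}:L^{+}\frI\to L^{+}\frc$ only to transfer codimension from the base to the preimage, i.e.\ $\codim_{L^{\hs}\frc}(L^{\hs}\frc)_{\ov\cO}=\codim_{L^{\hs}\frI}(L^{\hs}\frI)_{\ov\cO}$; the upper bound $\le d_{\cO}$ then follows because $(L^{\hs}\frI)_{\ov\cO}$ contains the explicit subset $(\frn\cap\ov\cO+tL^{+}\frg)\cap L^{\hs}\frg$, whose codimension is $d_{\cO}$ by $\codim_{\frn}(\cO\cap\frn)=d_{\cO}$; and the matching lower bound comes from the density of $(L^{\hs}\frc)_{\ov\cO,[w]}$ inside $(L^{\hs}\frc)_{[w]}$, which has codimension $\d_{[w]}=d_{\cO}$ by Lemma \ref{l:cw}. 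Your closing remark that the strata $(L^{\hs}\frc)_{\ov{\cO'}}$ with $\cO'<\cO$ have codimension $d_{\cO'}$ is also circular as stated, since that is the theorem for $\cO'$; it is in any case unnecessary once the upper and lower bounds are established as above.
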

\begin{proof}
(1) By the definition of $\KL$, a dense subset of elements $\g\in \cO+tL^{+}\frg$ has type $[w]$, and $\dim\Fl_{\g}=d_{\cO}$ by \cite[Proposition 8.2]{KL}. Therefore $d_{\cO}\ge\d_{[w]}$. To show the equality $\d_{[w]}=d_{\cO}$, it suffices to show that:
\begin{equation}\label{dense O sh}
\mbox{A dense open subset of $\cO+tL^{+}\frg$ consists of shallow elements of type $[w]$.}
\end{equation}

Let $\frI=\Lie \bI\subset L\frg$ be the standard Iwahori subalgebra, and let $\frI^{\hs}=L^{\hs}\frg\cap \frI$. Let $\pi: Z\to \frI^{\hs}$ be the affine Springer fibration restricted to $\frI^{\hs}$, i.e.,
\begin{equation*}
Z=\{(\g, g\bI)\in \frI^{\hs}\times \Fl|\Ad(g^{-1})\g\in \frI^{\hs}\}.
\end{equation*}
Then $Z$ is an affine analogue of the Steinberg variety. Let $\tilW$ be the extended affine Weyl group $\tilW$ of $G$ and let
\begin{equation*}
Z_{v}=\{(\g, g)\in \frI^{\hs}\times (\bI v\bI/\bI)|\Ad(g^{-1})\g\in \frI^{\hs}\}.
\end{equation*}
Then $\{Z_{v}\}_{v\in \tilW}$ is a stratification of $Z$. 
We consider the union $Z_{W}=\cup_{v\in W} Z_{v}$ and denote $\pi_{W}=\pi|_{Z_{W}}: Z_{W}\to \frI^{\hs}$. Note that $Z_{W}=\{(\g, g)\in \frI^{\hs}\times G/B|\Ad(g^{-1})\g\in \frI^{\hs}\}$. From this description, we see that $Z_{W}$ fits into a Cartesian diagram
\begin{equation}\label{fin Z}
\xymatrix{Z_{W}=\cup_{v\in W}Z_{v} \ar[r]^-{\wt\ep} \ar[d]^{\pi_{W}} & \wt\frn=\cup_{v\in W}S_{v}\ar[d]^{\pi_{0}}\\
\frI^{\hs}\ar[r]^-{\ep} & \frn
}
\end{equation}
Here $\ep: \frI^{\hs}\to \frn$ is the reduction mod $t$ map; $\pi_{0}:\wt\frn=\wt\cN|_{\frn}\to \frn$ is the restriction of the Springer resolution $\wt \cN\to \cN$ over $\frn$. It is well-known that $\wt\frn$ is the union of the conormal bundles  $S_{v}$ of the $B$-orbits $BvB/B$ on $G/B$ for $v\in W$, and the map $\ep: Z_{W}\to \wt\frn$ maps $Z_{v}$ onto $S_{v}$ for each $v\in W$. 

Consider the fp locally closed subset of $\frI^{\hs}$
\begin{equation*}
\frI^{\hs}_{\cO}=(\cO\cap\frn+tL^{+}\frg)=\ep^{-1}(\cO\cap \frn)\subset \frI^{\hs}.
\end{equation*}
By the dimension formula for Springer fibers, we know that $\pi_{0}^{-1}(\cO\cap\frn)\subset \wt\frn$ has top dimension, hence contains a dense open subset $S'_{v}\subset S_{v}$ for some $v\in W$. Therefore, by the diagram \eqref{fin Z}, $\pi_{W}^{-1}(\frI^{\hs}_{\cO})=\wt\ep^{-1}(\pi_{0}^{-1}(\cO\cap\frn))$ contains a dense open subset $Z'_{v}=\pi_{W}^{-1}(S'_{v})\subset Z_{v}$. Moreover, since $\pi_{0}$ is flat over $\cO\cap \frn$, $\pi_{0}(S'_{v})$ has top dimension in $\cO\cap \frn$, hence $\pi_{W}(Z'_{v})=\ep^{-1}(\pi_{0}(S'_{v}))$ has top dimension in $\frI^{\hs}_{\cO}$.

Now by \cite[0.8(a)]{FKV}, each $Z_{v}$ contains a dense open subset $Z''_{v}$ whose image in $\frI^{\hs}$ consists of shallow elements of some type $[w_{v}]$ depending on $v$. For our $v$ above, $\pi_{W}(Z'_{v})$ has top dimension in $\frI^{\hs}_{\cO}$, hence the same is true for $\pi_{W}(Z'_{v}\cap Z''_{v})$. This implies that a top-dimensional subset of $\frI^{\hs}_{\cO}$ is shallow of type $[w_{v}]$. Since a dense open subset of $\frI^{\hs}_{\cO}$ has type $[w]$, we conclude that $[w_{v}]=[w]$, which proves that a dense open subset of $\frI^{\hs}_{\cO}$, hence of $\cO+tL^{+}\frg$, is shallow of type $[w]$. This proves \eqref{dense O sh}, which implies $\d_{[w]}=d_{\cO}$.

(2) and (3). Let $(L^{\hs}\frc)_{\ov\cO,[w]}=(L^{\hs}\frc)_{[w]}\cap (L^{\hs}\frc)_{\ov\cO}$. First,  we claim that $(L^{\hs}\frc)_{\ov\cO,[w]}$ is dense in $(L^{\hs}\frc)_{\ov\cO}$. Indeed, by \cite[Last Corollary of \S6]{KL}, there is a dense fp open subset $U\subset \ov\cO+tL^{+}\frg$ contained in $(L^{\hs}\frg)_{[w]}$. Therefore, $\chi(U)$ is dense in $ (L^{\hs}\frc)_{\ov\cO}$. Since $U$ is contained in $(L^{\hs}\frc)_{\ov\cO,[w]}$, the latter is also dense in $(L^{\hs}\frc)_{\ov\cO}$.

Now we claim that 
\begin{equation}\label{le dO}
\codim_{L^{\hs}\frc}(L^{\hs}\frc)_{\ov\cO}\le d_{\cO}.
\end{equation}
Indeed, consider $\chi_{\frI}:=\chi|_{\frI^{\hs}}: \frI^{\hs}\to L^{\hs}\frc$. By the theorem of Bouthier, Kazhdan and Varshavsky \cite[Corollary 7.4.4]{BKV}, $\chi_{\frI}$ is flat. In particular,
\begin{equation}\label{chi flat}
\codim_{L^{\hs}\frc}(L^{\hs}\frc)_{\ov\cO}=\codim_{\frI^{\hs}}\chi_{\frI}^{-1}((L^{\hs}\frc)_{\ov\cO}).
\end{equation}
Note that
\begin{equation*}
\chi_{\frI}^{-1}((L^{\hs}\frc)_{\ov\cO})\supset \frI^{\hs}_{\cO}
\end{equation*}
and the latter has codimension $d_{\cO}=\codim_{\frn}(\frn\cap \ov\cO)$ in $\frI^{\hs}_{\cO}$ (see diagram \eqref{fin Z}). Therefore $\codim_{\frI^{\hs}}\chi_{\frI}^{-1}((L^{\hs}\frc)_{\ov\cO})\le d_{\cO}$, and by \eqref{chi flat}, \eqref{le dO} follows.


On the other hand, by Lemma \ref{l:cw} we have
\begin{equation}\label{codim cw}
\codim_{L^{\hs}\frc}(L^{\hs}\frc)_{[w]}=\codim_{L^{\hs}\frc}(L^{\hs}\frc)^{sh}_{[w]}=\d_{[w]}.
\end{equation}
Now $(L^{\hs}\frc)_{\ov\cO,[w]}$ is dense constructible in $(L^{\hs}\frc)_{\ov\cO}$ so its codimension in $L^{\hs}\frc$ is also $\le d_{\cO}$ by \eqref{le dO}. But $(L^{\hs}\frc)_{\ov\cO,[w]}$ is also contained in $(L^{\hs}\frc)_{[w]}$ which has codimension $\d_{[w]}$ by \eqref{codim cw}. Since $d_{\cO}=\d_{[w]}$, the equality in \eqref{le dO} must hold, and $(L^{\hs}\frc)_{\ov\cO, [w]}$ has the same codimension as $(L^{\hs}\frc)_{[w]}$. Since $(L^{\hs}\frc)^{sh}_{[w]}$ is dense  in $(L^{\hs}\frc)_{[w]}$,  $(L^{\hs}\frc)_{\ov\cO, [w]}$ and $(L^{\hs}\frc)^{sh}_{[w]}$ must share a constructible subset dense in each. Taking closures in $L^{\hs}\frc$ we conclude that
\begin{equation*}
\ov{(L^{\hs}\frc)^{sh}_{[w]}}\cap L^{\hs}\frc=\ov{(L^{\hs}\frc)_{\ov\cO, [w]}}\cap L^{\hs}\frc=(L^{\hs}\frc)_{\ov\cO}.
\end{equation*}

Finally, $(L^{\hs}\frc)_{\ov\cO}$ is irreducible because $(L^{\hs}\frc)^{sh}_{[w]}$ is by Lemma \ref{l:cw}.
\end{proof}

\begin{remark} Theorem \ref{th:Ow} implies \cite[Conjecture 3.5.1]{Spa-ex}.
\end{remark}

\begin{cor}\label{c:KL}
For each $\cO\in \un\cN$, we have $\cO\in \RT_{\min}(\KL(\cO))$. In particular, if Theorem \ref{th:WtoN}\eqref{Uw} holds (i.e., $\RT_{\min}([w])$ is a singleton), then Theorem \ref{th:WtoN}\eqref{KLsec} holds (i.e., $\KL$ is a section of $\RT_{\min}$). 
\end{cor}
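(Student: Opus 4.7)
The plan is to show directly that $\cO$ lies in the set $\{\cO'\in\un\cN : [w]\sqsubset\cO'\}$ whose minimal elements form $\RT_{\min}([w])$, and then to rule out any strictly smaller element of that set via a dimension count. Throughout, write $[w]=\KL(\cO)$ and recall from \cite[Proposition 8.2]{KL} the key equality $\d_{[w]}=d_{\cO}$, which is what makes the argument go through.

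First I would invoke Theorem \ref{th:Ow} to obtain the containment $(L^{\hs}\frc)^{sh}_{[w]}\subset (L^{\hs}\frc)_{\ov\cO}$ (in fact the former is dense in the latter). By the equivalence of conditions (1) and (4) in Lemma \ref{l:ineq}, this is exactly the statement that $[w]\sqsubset \cO$. So $\cO$ already belongs to the set $\{\cO'\in\un\cN \mid [w]\sqsubset\cO'\}$ whose minimal elements, by definition, constitute $\RT_{\min}([w])$.

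The remaining step is to verify minimality. Suppose for contradiction that there exists $\cO''\in\un\cN$ with $\cO''<\cO$ and $[w]\sqsubset\cO''$. Pick any $\g\in(L^{\hs}\frg)^{sh}_{[w]}$. Applying the implication $(2)\Rightarrow(4)$ of Lemma \ref{l:ineq} to $\cO''$, we obtain some $\cO^{\ast}\in\RT_{\min}(\g)$ with $\cO^{\ast}\le\cO''$. Combining this with the chain of inequalities \eqref{dg} from the proof of Lemma \ref{l:ineq}, we get
\[
\d_{[w]}=\dim\Fl_{\g}\ \ge\ d_{\cO^{\ast}}\ \ge\ d_{\cO''}\ >\ d_{\cO},
\]
where the strict inequality uses that $\cO''\subsetneq\ov{\cO}$ forces $d_{\cO''}>d_{\cO}$. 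This contradicts $\d_{[w]}=d_{\cO}$. Hence no such $\cO''$ exists, so $\cO\in\RT_{\min}([w])$.

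For the ``in particular'' clause, assume Theorem \ref{th:WtoN}\eqref{Uw} holds, so that $\RT_{\min}([w])$ is a singleton for every $[w]\in\un W$. Applied to $[w]=\KL(\cO)$, the first part shows $\cO\in\RT_{\min}(\KL(\cO))$, and singleness forces $\RT_{\min}(\KL(\cO))=\{\cO\}$, i.e., $\RT_{\min}\circ\KL=\id_{\un\cN}$. There is no real obstacle here beyond making sure the two codimension/dimension conventions line up; the substance of the proof is packaged entirely inside Theorem \ref{th:Ow} (whose proof uses the Bouthier--Kazhdan--Varshavsky flatness result) and the Kazhdan--Lusztig dimension formula $\d_{[w]}=d_{\cO}$.
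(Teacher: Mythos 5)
Your proof is correct and follows essentially the same strategy as the paper's: first use Theorem \ref{th:Ow} to obtain $(L^{\hs}\frc)^{sh}_{[w]}\subset(L^{\hs}\frc)_{\ov\cO}$ (hence $[w]\sqsubset\cO$), then rule out any strictly smaller orbit by a dimension count built on the Kazhdan--Lusztig identity $\d_{[w]}=d_{\cO}$. The only (small) difference lies in the minimality step: the paper applies the codimension formula of Theorem \ref{th:Ow} a second time, to $(L^{\hs}\frc)_{\ov\cO'}$ for the hypothetical smaller $\cO'$, and derives a contradiction inside $L^{\hs}\frc$; you instead invoke the inequality $\d_{[w]}\ge d_{\cO''}$ coming from \eqref{ineq} in Lemma \ref{l:ineq}, which is established earlier by a direct affine Springer fiber dimension count without re-appealing to the Bouthier--Kazhdan--Varshavsky flatness input underlying Theorem \ref{th:Ow}. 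Both are equivalent dimension-count contradictions, so this is a minor reformulation rather than a genuinely different route.
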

\begin{proof}
Let $[w]=\KL(\cO)$. Since $(L^{\hs}\frc)^{sh}_{[w]}\subset (L^{\hs}\frc)_{\ov\cO}$,  $\RT_{\min}([w])$ contains some element $\cO'\le \cO$ by definition. If $\cO'<\cO$, then $(L^{\hs}\frc)^{sh}_{[w]}\subset (L^{\hs}\frc)_{\ov\cO'}$ which has codimension $d_{\cO'}>d_{\cO}$  in $L^{\hs}\frc$ by Theorem \ref{th:Ow}. This is impossible because $(L^{\hs}\frc)^{sh}_{[w]}$ has codimension $\d_{[w]}=d_{\cO}$ by Lemma \ref{l:cw}. Therefore $\cO\in \RT_{\min}([w])$.
\end{proof}

The following proposition characterizes the Kazhdan-Lusztig element $\KL(\cO)$ among all $[w]$ such that $\cO\in\RT_{\min}([w])$.

\begin{prop}\label{p:char KL} Let $\cO\in\un\cO$ and $[w]=\KL(\cO)$.
\begin{enumerate}
\item For $[w']\in \un W$ we have $[w']\sqsubset \cO$ if and only if $[w']\preceq [w]$. In particular, if $\cO\in\RT_{\min}([w'])$, then $[w']\preceq [w]=\KL(\cO)$.
\item If $[w']\sqsubset\cO$ and $\d_{[w']}=d_{\cO}$ (i.e., the equality holds in \eqref{ineq}), then $[w']=\KL(\cO)$.
\end{enumerate}
\end{prop}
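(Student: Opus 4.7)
The proof should follow almost immediately from Theorem \ref{th:Ow} together with the codimension computation in Lemma \ref{l:cw} and the monotonicity in Lemma \ref{l:dw ineq}, which are the technical inputs prepared exactly for this kind of statement. The key idea is that Theorem \ref{th:Ow} converts the relation $[w']\sqsubset\cO$ (which is phrased in terms of $(L^{\hs}\frc)_{\ov\cO}$) into the relation $[w']\preceq[w]$ (which is phrased in terms of the closure of $(L^{\hs}\frc)^{sh}_{[w]}$).

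For part (1), I would unfold both sides using Lemma \ref{l:ineq}. By definition (and that lemma), $[w']\sqsubset\cO$ is equivalent to the inclusion $(L^{\hs}\frc)^{sh}_{[w']}\subset (L^{\hs}\frc)_{\ov\cO}$. On the other hand, $[w']\preceq[w]$ means $(L^{\hs}\frc)^{sh}_{[w']}\subset \ov{(L^{\hs}\frc)^{sh}_{[w]}}$ by the definition in \S\ref{ss:order W}. Since $(L^{\hs}\frc)^{sh}_{[w']}$ is contained in $L^{\hs}\frc$ automatically, these two containments are the same once we invoke Theorem \ref{th:Ow}, which provides the identification $(L^{\hs}\frc)_{\ov\cO}=\ov{(L^{\hs}\frc)^{sh}_{[w]}}\cap L^{\hs}\frc$ for $[w]=\KL(\cO)$. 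The ``in particular'' assertion is then immediate: if $\cO\in\RT_{\min}([w'])$, then in particular $[w']\sqsubset\cO$, and part (1) gives $[w']\preceq[w]=\KL(\cO)$.

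For part (2), the plan is to combine part (1) with the monotonicity of $\d_{[w]}$. From $[w']\sqsubset\cO$ and part (1) I obtain $[w']\preceq[w]=\KL(\cO)$. Assume for contradiction that $[w']\ne[w]$, so that $[w']\prec[w]$ strictly. Then Lemma \ref{l:dw ineq} gives $\d_{[w']}>\d_{[w]}$. However, $\d_{[w]}=d_{\cO}$ (this is the equality cited from \cite[Proposition 8.2]{KL} at the opening of \S\ref{s:KL}), while by hypothesis $\d_{[w']}=d_{\cO}$, so $\d_{[w']}=\d_{[w]}$, contradicting the strict inequality. Hence $[w']=[w]=\KL(\cO)$.

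I do not anticipate a genuine obstacle here: everything is already in place, and the statement is essentially a characterization extracted from Theorem \ref{th:Ow}. The only thing worth double-checking is that the notion $[w']\sqsubset\cO$ is really what is being used to define membership in $\RT_{\min}$ (it is, via the set in \eqref{over w}), and that the codimension identity $\d_{[w]}=d_{\cO}$ for $[w]=\KL(\cO)$ is applied correctly in the direction strict inequality $\implies$ contradiction.
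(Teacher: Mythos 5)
Your proof of part (1) is exactly the paper's: both sides are unfolded via the definitions of $\sqsubset$ and $\preceq$, and Theorem \ref{th:Ow} supplies the identification $(L^{\hs}\frc)_{\ov\cO}=\ov{(L^{\hs}\frc)^{sh}_{[w]}}\cap L^{\hs}\frc$ that makes the two containments coincide. For part (2) you take a slightly different, but equally valid, route. The paper does not pass through part (1) at all: it compares codimensions directly, noting that $\codim_{L^{\hs}\frc}(L^{\hs}\frc)^{sh}_{[w']}=\d_{[w']}=d_{\cO}=\codim_{L^{\hs}\frc}(L^{\hs}\frc)_{\ov\cO}$, and then uses the irreducibility of $(L^{\hs}\frc)_{\ov\cO}$ (the other content of Theorem \ref{th:Ow}) to conclude that $(L^{\hs}\frc)^{sh}_{[w']}$ is dense in $(L^{\hs}\frc)_{\ov\cO}$ and hence meets the dense subset $(L^{\hs}\frc)^{sh}_{[w]}$, forcing $[w']=[w]$. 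You instead feed the conclusion of part (1) into Lemma \ref{l:dw ineq}: if $[w']\prec[w]$ strictly then $\d_{[w']}>\d_{[w]}=d_{\cO}$, contradicting the hypothesis $\d_{[w']}=d_{\cO}$. Since Lemma \ref{l:dw ineq} is itself proved by the same kind of codimension count, the two arguments are close cousins, but yours has the merit of reusing an already-packaged lemma (and does not need the irreducibility of $(L^{\hs}\frc)_{\ov\cO}$, only antisymmetry of $\preceq$, which follows from Lemma \ref{l:dw ineq} as well), while the paper's is self-contained within Theorem \ref{th:Ow}. Both are correct; there is no gap.
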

\begin{proof}
(1) By Theorem \ref{th:Ow},  $\ov{(L^{\hs}\frc)^{sh}_{[w]}}\cap L^{\hs}\frc=(L^{\hs}\frc)_{\ov\cO}$, hence $[w']\sqsubset \cO$ and  $[w']\preceq [w]$ are equivalent.

(2) If $\d_{[w']}=d_{\cO}$, then $\codim_{L^{\hs}\frc}(L^{\hs}\frc)^{sh}_{[w']}=\d_{[w']}=d_{\cO}=\codim_{L^{\hs}\frc}(L^{\hs}\frc)_{\ov\cO}$. Since $(L^{\hs}\frc)^{sh}_{[w']}\subset (L^{\hs}\frc)_{\ov\cO}$ and the latter is irreducible by Theorem \ref{th:Ow}, we must have $(L^{\hs}\frc)^{sh}_{[w']}$ is dense in $(L^{\hs}\frc)_{\ov\cO}$, therefore it intersects $(L^{\hs}\frc)^{sh}_{[w]}$ since the latter is also dense. This implies $[w']=[w]=\KL(\cO)$.
\end{proof}

\begin{cor}\label{c:two w one O} Suppose $\cO\in\un\cN$, $[w]=\KL(\cO)$ and there exists $[w']\ne [w]$ such that $\RT_{\min}([w'])=\{\cO\}$. Then $\RT_{\min}([w])=\{\cO\}$.
\end{cor}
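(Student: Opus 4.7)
The plan is to derive a contradiction by combining Corollary \ref{c:KL}, Proposition \ref{p:char KL}(1), and the transitivity property in Lemma \ref{l:trans}.

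First, Corollary \ref{c:KL} gives $\cO \in \RT_{\min}([w])$, so the only thing to rule out is the existence of another element $\cO_{1}\in\RT_{\min}([w])$ with $\cO_{1}\ne\cO$. Suppose such a $\cO_{1}$ exists. Since both $\cO$ and $\cO_{1}$ are minimal elements of the set $\{\cO''\in\un\cN\mid[w]\sqsubset\cO''\}$ (see the definition preceding Lemma \ref{l:open}), and they are distinct, they must be incomparable in $\un\cN$.

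Next, since $\RT_{\min}([w'])=\{\cO\}$, in particular $[w']\sqsubset\cO=\KL(\cO)$, and Proposition \ref{p:char KL}(1) yields $[w']\preceq[w]$. Now apply the first implication of Lemma \ref{l:trans} to $[w']\preceq[w]$ and $[w]\sqsubset\cO_{1}$: we obtain $[w']\sqsubset\cO_{1}$. Thus $\cO_{1}$ belongs to the set $\{\cO''\in\un\cN\mid[w']\sqsubset\cO''\}$. Because $\un\cN$ is finite, every element of this set dominates a minimal element; the set of minimal elements is $\RT_{\min}([w'])=\{\cO\}$, so $\cO_{1}\ge\cO$. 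This contradicts the incomparability of $\cO$ and $\cO_{1}$ established in the previous paragraph.

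The only potential subtlety is making sure that transitivity (Lemma \ref{l:trans}) applies in the direction needed, and that minimal elements of a non-empty subset of the finite poset $\un\cN$ dominate every element of that subset. Both are straightforward, so I would expect the proof to be a short combinatorial argument with no genuine obstacle.
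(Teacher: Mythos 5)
Your argument is correct and is essentially the paper's own proof: both use Corollary \ref{c:KL} to get $\cO\in\RT_{\min}([w])$, Proposition \ref{p:char KL}(1) to get $[w']\preceq[w]$, and the transitivity in Lemma \ref{l:trans} to conclude that every $\cO_{1}$ with $[w]\sqsubset\cO_{1}$ satisfies $\cO\le\cO_{1}$ (the paper states this directly rather than by contradiction, but the content is identical). The only blemish is the notational slip ``$[w']\sqsubset\cO=\KL(\cO)$'', which should read ``$[w']\sqsubset\cO$ and $[w]=\KL(\cO)$''.
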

\begin{proof}
By Corollary \ref{c:KL}, $\cO\in \RT_{\min}([w])$. Since $[w']\sqsubset \cO$, by Proposition \ref{p:char KL}, $[w']\prec[w]$. Therefore, for any $\cO_{1}$ such that $[w]\sqsubset \cO_{1}$, we have $[w']\sqsubset \cO_{1}$ by \eqref{ineq trans}, hence $\cO\le \cO_{1}$ by the definition of $\RT_{\min}([w'])=\{\cO\}$.  This implies that the only minimal element in $\{\cO_{1}|[w]\sqsubset\cO_{1}\}$ is $\cO$, i.e., $\RT_{\min}([w])=\{\cO\}$.
\end{proof}

\section{The skeleton}\label{s:sk}

In this section we introduce a class of subvarieties of the affine Grassmannian called skeleton, which will be used to prove the main results for classical groups.

For each conjugacy class $[w]\in \un W$, we fix a loop torus $LT_{w}\subset LG$ of type $[w]$ under the bijection in \S\ref{ss:loop tori}. 


Recall that $L^{+}T_{w}$ is the parahoric subgroup of $LT_{w}$. Concretely, if we identify $T_{w}$ as the torus over $\CC\lr{t}$ given by the fixed points of $w\z$ on $R_{F_{m}/F}(T\ot_{\CC}F_{m})$ (where $m$ is the order of $w$, $\z$ is a primitive $m$-th root of unity, $F=\CC\lr{t}$ and  $F_{m}=\CC\lr{t^{1/m}}$, see \S\ref{ss:loop tori}), then its Bruhat-Tits group scheme $\cT_{w}$ is the group scheme over $\cO_{F}$ obtained by taking the fiberwise neutral component of the group scheme $R_{\cO_{F_{m}}/\cO_{F}}(T\ot_{\CC}\cO_{F_{m}})^{w\z}$.


\begin{defn}\label{d:skeleton} The {\em skeleton of type $[w]$} is the reduced structure of the fixed point subscheme of $\Gr$ under the left translation by $L^{+}T_{w}$
\begin{equation*}
\cX^{w}:=(\Gr)^{L^{+}T_{w},\red}.
\end{equation*}
\end{defn}

For example, when $[w]=1$ and we take $T_{w}$ to be the split torus $T\ot_{\CC}\CC\lr{t}$, then the reduced structure of $\cX^{w}$ is the same as the reduced structure of $(\Gr)^{T}$, which is natural bijection with $\xcoch(T)$.

Changing $LT_{w}$ to another loop torus in the same $LG$-conjugacy class changes  $\cX^{w}$ by left translation of an element in $LG$. Note that $\cX_{w}$ carries the action of $LT_{w}/L^{+}T_{w}$ by left translation. Alternatively, we may describe points in $\cX^{w}$ as
\begin{equation}\label{Xw}
\cX^{w}(\CC)=\{gL^{+}G|\Ad(g^{-1})L^{+}\frt_{w}\subset L^{+}\frg\}.
\end{equation}

The terminology ``skeleton'' is justified by the following simple observation (in view of \eqref{Xw}).
\begin{lemma}\label{l:Xw in Grg}
For any $\g\in L^{\hs}\frt_{w}$, $\cX^{w}$ is contained in $\Gr_{\g}$.
\end{lemma}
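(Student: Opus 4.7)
The plan is that this lemma should follow almost immediately by unwinding the two relevant definitions and using the explicit description of $\cX^w(\CC)$ given in \eqref{Xw}.

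First, I would recall that by construction $L^{\hs}\frt_{w} = L^{\hs}\frg \cap L\frt_{w} \subset L^{++}\frt_{w} \subset L^{+}\frt_{w}$ (this is recorded in \S\ref{ss:loop tori}), so the given $\g$ in particular lies in $L^{+}\frt_{w}$.

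Second, I would take an arbitrary $\CC$-point $gL^{+}G \in \cX^{w}$. By the description \eqref{Xw}, this is precisely the condition $\Ad(g^{-1})L^{+}\frt_{w} \subset L^{+}\frg$. Combining this with the first step, we get $\Ad(g^{-1})\g \in \Ad(g^{-1})L^{+}\frt_{w} \subset L^{+}\frg$, which is exactly the defining condition for $gL^{+}G \in \Gr_{\g}$. Since both $\cX^{w}$ and $\Gr_{\g}$ are equipped with the reduced ind-scheme structure, verifying the inclusion on $\CC$-points suffices.

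I do not expect any real obstacle here: the only subtlety is ensuring that $\g$ is actually an element of $L^{+}\frt_{w}$ (not just of $L\frt_{w}$), which is guaranteed by topological nilpotence. All other ingredients are definitional.
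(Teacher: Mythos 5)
Your proof is correct and is exactly the argument the paper intends: the lemma is stated as an immediate consequence of the description \eqref{Xw} of $\cX^{w}$, and your unwinding (using $L^{\hs}\frt_{w}\subset L^{+}\frt_{w}$ and then $\Ad(g^{-1})\g\in\Ad(g^{-1})L^{+}\frt_{w}\subset L^{+}\frg$) is precisely that consequence. No gaps.
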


The normalizer $N_{LG}(L^{+}T_{w})=N_{LG}(LT_{w})$ acts on $\cX^{w}$. We have an exact sequence
\begin{equation*}
1\to LT_{w}\to N_{LG}(LT_{w})\to W_{w}\to 1
\end{equation*}
where $W_{w}$ is the centralizer of $w$ in $W$.

\begin{lemma}\label{l:GrO Xw}
For any $\g\in L^{\hs}\frt_{w}$ and $\cO\in \RT_{\min}(\g)$, $\cX^{w}\cap \Gr_{\g, \cO}\ne\vn$.
\end{lemma}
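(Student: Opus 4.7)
The plan is to fix any $x = gL^{+}G \in \Gr_{\g,\cO}$ (nonempty by the definition of $\RT_{\min}$) and produce a point in the closure of its $L^{+}T_{w}$-orbit that lies in $\cX^{w} \cap \Gr_{\g,\cO}$ via a Borel-type fixed-point argument. The essential preliminary observation is that $L^{+}T_{w}$ acts on $\Gr_{\g}$ by left translation: for $h \in L^{+}T_{w}$, the element $h$ centralizes $\g \in L\frt_{w}$, so
\begin{equation*}
\Ad(g^{-1}h^{-1})\g = \Ad(g^{-1})\Ad(h^{-1})\g = \Ad(g^{-1})\g \in L^{+}\frg.
\end{equation*}
The same calculation shows that $\ev_{\g}$ is $L^{+}T_{w}$-invariant, so every stratum $\Gr_{\g,\cO'}$ is $L^{+}T_{w}$-stable.

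Let $Y = \ov{L^{+}T_{w}\cdot x} \subset \Gr_{\g}$. Since $L^{+}T_{w}$ is the arc space of the smooth connected group scheme $\cT_{w}$, it is connected, so $Y$ lies in a single connected component of $\Gr_{\g}$; using that $\dim\Gr_{\g} = \d_{\g}$ is finite, this component is a finite-dimensional closed subscheme of the ind-projective $\Gr$, hence a projective variety, and therefore so is $Y$. The $L^{+}T_{w}$-action on the finite-type scheme $Y$ factors through some finite-dimensional algebraic quotient $Q$ of $L^{+}T_{w}$ (through a sufficiently high truncation $L^{+}_{n}T_{w}$); since $L^{+}T_{w}$ is commutative (being the arc space of the commutative group scheme $\cT_{w}$), $Q$ is connected commutative, hence solvable. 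Borel's fixed-point theorem applied to $Q$ acting on the complete variety $Y$ yields a point $y \in Y$ fixed by $Q$, and hence by all of $L^{+}T_{w}$; by the description \eqref{Xw} this means $y \in \cX^{w}$.

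Finally, since $\ev_{\g}$ is $L^{+}T_{w}$-invariant and equals $\cO$ along the orbit $L^{+}T_{w}\cdot x$, continuity forces $\ev_{\g}(y) \in \ov\cO/G$, so $y \in \Gr_{\g,\cO'}$ for some $\cO' \le \cO$. Such $\cO'$ belongs to $\RT(\g)$, and the minimality of $\cO$ in $\RT(\g)$ forces $\cO' = \cO$, giving $y \in \cX^{w} \cap \Gr_{\g,\cO}$ as required. The main point that requires care is the pro-algebraic bookkeeping in the middle step: one must verify cleanly that the $L^{+}T_{w}$-action on the finite-type $Y$ descends to a finite-dimensional connected solvable algebraic quotient, so that the classical Borel fixed-point theorem applies as stated rather than requiring a pro-algebraic variant. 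An alternative via Bia\l ynicki-Birula contraction by a generic $\lambda:\Gm\to T^{w,\circ}$ would be cleaner in principle, but recovering the full $L^{+}T_{w}$-fixedness (rather than only $T^{w,\circ}$-fixedness) from the limit would itself require a separate fixed-point argument for the pro-unipotent part, so the Borel route is more direct.
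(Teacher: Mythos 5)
Your proof is correct and is essentially the paper's argument: both rest on Borel's fixed-point theorem for the connected solvable (pro-)group $L^{+}T_{w}$ acting on a projective piece of $\Gr_{\g}$, with the minimality of $\cO$ in $\RT(\g)$ guaranteeing that the fixed point lands in $\Gr_{\g,\cO}$. The only cosmetic difference is that the paper observes that minimality makes $\Gr_{\g,\cO}=\ev_{\g}^{-1}(\ov\cO/G)$ closed, hence ind-projective, and applies the fixed-point theorem to it directly, whereas you apply it to an orbit closure and invoke minimality at the end.
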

\begin{proof}
If $\cO\in \RT_{\min}(\g)$, then $\Gr_{\g,\cO}$ is closed in $\Gr_{\g}$, hence ind-projective. The action of the connected solvable group $L^{+}T_{w}$ on $\Gr_{\g,\cO}$ therefore has a fixed point, i.e., a point in $\cX^{w}$.
\end{proof}

\begin{lemma}\label{l:sk Levi}
Let $M$ be a Levi subgroup of $G$ containing $T$. Let $w\in W_{M}$ and assume the loop torus $LT_{w}\subset LM$. Then the embedding $\Gr_{M}\incl \Gr$ restricts to an isomorphism $\cX_{M}^{w}\cong\cX^{w}$.
\end{lemma}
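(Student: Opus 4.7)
The plan is to establish the inclusion $\cX^w \subseteq \cX^w_M$; the reverse containment $\cX^w_M \subseteq \cX^w$ is automatic because the closed immersion $\Gr_M \incl \Gr$ is $L^+T_w$-equivariant (as $LT_w\subset LM$), and hence sends $(\Gr_M)^{L^+T_w,\red}$ into $(\Gr)^{L^+T_w,\red}$.

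The key observation for the nontrivial inclusion is that the connected center $A_M$ of $M$, viewed as the constant subgroup, lies inside $L^+T_w$. Indeed, because $w\in W_M=W(M,T)$ acts trivially on $A_M\subset T$, the description of $\cT_w$ at the beginning of this section as the fiberwise neutral component of $R_{\cO_{F_m}/\cO_F}(T\ot_{\CC}\cO_{F_m})^{w\z}$ restricts on the $A_M$-part to $A_M\ot_{\CC}\cO_F$, which is its own Bruhat--Tits group scheme. Passing to Lie algebras, this is the statement that $\fa_M\subset L^+\frt_w$ as constant elements. In particular, every $L^+T_w$-fixed point of $\Gr$ is $A_M$-fixed, giving
\[
\cX^w \;=\;(\Gr)^{L^+T_w,\red}\;\subseteq\;(\Gr)^{A_M,\red}.
\]

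To finish, I would invoke the standard fact recalled in the proof of Lemma~\ref{l:RT Levi}: the embedding $\Gr_M\incl \Gr$ identifies $\Gr_M^{\red}$ with $(\Gr)^{A_M,\red}$. Combining this with the previous display yields $\cX^w\subseteq \Gr_M$, and therefore
\[
\cX^w \;=\; \cX^w\cap \Gr_M \;=\; (\Gr_M)^{L^+T_w,\red} \;=\; \cX^w_M,
\]
which is the desired equality.

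The only step requiring a little care is the inclusion $\fa_M \subset L^+\frt_w$ (equivalently $A_M\subset L^+T_w$). This is essentially a matter of unwinding the explicit construction of $T_w$ and its parahoric; the substantive content is simply that $w\in W_M$ fixes $A_M$ pointwise, so nothing twisted happens on the central direction. Once this is in hand, the main geometric input — the reduced identification $\Gr_M^{\red}=(\Gr)^{A_M,\red}$ — is already in the paper, and the lemma follows immediately.
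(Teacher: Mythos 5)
Your proof is correct and follows essentially the same route as the paper's: the two inputs are exactly the containment $A_M\subset L^+T_w$ (which the paper asserts and you justify by noting $w\in W_M$ fixes $A_M$ pointwise, so the central direction of $T_w$ is untwisted) and the identification $\Gr_M^{\red}=(\Gr)^{A_M,\red}$ already used in Lemma~\ref{l:RT Levi}. The paper states this in one line; your version just makes both inclusions explicit.
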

\begin{proof}
Since $L^{+}T_{w}$ contains the connected center $A_{M}$ of $M$, and $\Gr_{M}\incl\Gr^{A_{M}}$ is an isomorphism on reduced structures, we have $\cX_{M}^{w}=(\Gr_{M})^{L^{+}T_{w},\red}=\Gr^{L^{+}T_{w},\red}=\cX^{w}$.
\end{proof}

When $LT_{w}$ acts transitively on $\cX^{w}$,  we can deduce some of the conjectures in \S\ref{s:intro}.

\begin{lemma}\label{l:TOfixedpt} Let $w\in W$. Suppose $LT_{w}$ acts transitively on $\cX^{w}$,  then Conjecture \ref{c:min red type} holds for elements of type $[w]$.
\end{lemma}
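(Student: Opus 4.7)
The plan is to show that under the transitivity hypothesis, the evaluation map $\ev_{\g}: \Gr_{\g} \to [\cN/G]$ is constant on $\cX^{w}$, and then combine this with Lemma \ref{l:GrO Xw} to conclude that there is only one minimal reduction type.

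First I would reduce to the case $\g \in L^{\hs}\frt_{w}$. By the bijection of \S\ref{ss:loop tori}, the hypothesis that $\g$ is of type $[w]$ means $(LG)_{\g}$ is $LG$-conjugate to $LT_{w}$; hence there exists $h \in LG$ with $\Ad(h^{-1})\g \in L^{\hs}\frt_{w}$. Replacing $\g$ by $\Ad(h^{-1})\g$ does not change $\RT_{\min}(\g)$, so we may assume $\g \in L^{\hs}\frt_{w}$ at the outset.

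Next I would observe two invariance properties. The left-translation action of $LT_{w}$ on $\Gr$ preserves $\cX^{w}$, since $LT_{w}$ normalizes $L^{+}T_{w}$ and the fixed-point locus is preserved under the normalizer action. Moreover $LT_{w}$ preserves $\Gr_{\g}$ because it centralizes $\g$, so the action of $LT_{w}$ restricts to $\cX^{w} \subset \Gr_{\g}$ (using Lemma \ref{l:Xw in Grg}). Crucially, the map $\ev_{\g}$ is constant on $LT_{w}$-orbits: for $t \in LT_{w}$ and $gL^{+}G \in \Gr_{\g}$, we have
\begin{equation*}
\ev_{\g}(tgL^{+}G) \;=\; \Ad(g^{-1})\Ad(t^{-1})\g \bmod t \;=\; \Ad(g^{-1})\g \bmod t \;=\; \ev_{\g}(gL^{+}G),
\end{equation*}
since $\Ad(t^{-1})$ fixes $\g \in L\frt_{w}$.

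Combining these, the transitivity hypothesis forces $\ev_{\g}$ to take a single value $\cO \in \un \cN$ on all of $\cX^{w}$; equivalently $\cX^{w} \subset \Gr_{\g,\cO}$. Now for any $\cO' \in \RT_{\min}(\g)$, Lemma \ref{l:GrO Xw} gives $\cX^{w} \cap \Gr_{\g,\cO'} \ne \vn$, so $\Gr_{\g,\cO} \cap \Gr_{\g,\cO'} \ne \vn$; but distinct strata $\Gr_{\g,\cO''}$ are disjoint by construction, forcing $\cO' = \cO$. Therefore $\RT_{\min}(\g) = \{\cO\}$, as desired. No real obstacle is anticipated—the proof is a direct combination of the invariance of $\ev_{\g}$ under $LT_{w}$ with Lemma \ref{l:GrO Xw}.
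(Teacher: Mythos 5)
Your proof is correct and follows essentially the same route as the paper's: reduce to $\g\in L^{\hs}\frt_{w}$, note that $\ev_{\g}$ is invariant under the $LT_{w}$-action, and combine transitivity with Lemma \ref{l:GrO Xw} to force all minimal reduction types to coincide. The only difference is cosmetic (you show $\ev_{\g}$ is constant on all of $\cX^{w}$ first, whereas the paper picks two points from Lemma \ref{l:GrO Xw} and then applies transitivity), so nothing further is needed.
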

\begin{proof} Let $\g\in (L^{\hs}\frg)_{[w]}$. Up to $LG$-action we may assume $\g\in L^{\hs}\frt_{w}$.  Suppose $\cO,\cO'\in \RT_{\min}(\g)$, then by Lemma \ref{l:GrO Xw}, we have points $x\in \Gr_{\g,\cO}\cap \cX^{w}$ and $x'\in \Gr_{\g,\cO'}\cap \cX^{w}$. By assumption, $x$ and $x'$ are in the same $LT_{w}$-orbit, which implies that they have the same image in $[\cN/G]$ since $\ev_{\g}$ is invariant under $LT_{w}$. This implies $\cO=\cO'$.
\end{proof}

\begin{prop}\label{p:Cox}
Let $w$ be a Coxeter element of $W$. 
\begin{enumerate}
\item Then $\cX^{w}$ has a unique point in each connected component of $\Gr$. In particular, $LT_{w}$ acts transitively on $\cX^{w}$. 
\item Conjecture \ref{c:min red type} holds for elements of type $[w]$.
\item For any $\g\in (L^{\hs}\frt_{w})^{sh}$, we have $\Gr_{\g}=\cX^{w}$, and $\RT_{\min}(\g)$ consists of the regular nilpotent orbit. In particular, Theorem \ref{th:WtoN}\eqref{Uw} and Conjecture \ref{c:trans} hold for the Coxeter class in $W$.
\end{enumerate}
\end{prop}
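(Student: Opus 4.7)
The plan is to derive all three statements from the single computation $\d_{[w]}=0$. By Lemma \ref{l:isog} I reduce to the case where $G$ is semisimple, so that $[w]$ is elliptic (i.e.\ $\frt^{w}=0$). A Coxeter element of $W$ has length equal to the semisimple rank $r$ and is minimal in its conjugacy class, so Theorem \ref{th:min length} gives
\[
\d_{[w]}=\frac{r-r}{2}=0.
\]
Consequently, for any $\g\in(L^{\hs}\frt_{w})^{sh}$, both $\Gr_{\g}$ and $\Fl_{\g}$ are zero-dimensional reduced ind-projective schemes, hence discrete sets of points.

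To prove (3) (and hence (2)): Lemma \ref{l:Xw in Grg} gives $\cX^{w}\subset\Gr_{\g}$. Conversely, $L^{+}T_{w}$ is a connected pro-algebraic group (as the Greenberg realization of the connected Bruhat-Tits group scheme $\cT_{w}$) acting on the discrete set $\Gr_{\g}$, so this action is trivial and every point of $\Gr_{\g}$ lies in $\cX^{w}$; hence $\Gr_{\g}=\cX^{w}$. For any $\cO\in\RT(\g)$, the fiber of $\Fl_{\g}\to\Gr_{\g}$ over a point of $\Gr_{\g,\cO}$ is isomorphic to a Springer fiber $\cB_{e}$ of dimension $d_{\cO}$. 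Since $\dim\Fl_{\g}=0$, we must have $d_{\cO}=0$, forcing $\cO=\cO_{\reg}$. Thus $\RT(\g)=\RT_{\min}(\g)=\{\cO_{\reg}\}$, which both proves (3) and yields Conjecture \ref{c:min red type}, establishing (2).

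For part (1), I must show that $\cX^{w}$ meets each connected component of $\Gr$ in exactly one point and that $LT_{w}$ acts transitively. Since $T_{w}$ is anisotropic in the Coxeter case, $T_{w}(\CC\lr{t})$ has a unique maximal bounded subgroup, namely $L^{+}T_{w}$, so $LT_{w}\cap L^{+}G=L^{+}T_{w}$. The $LT_{w}$-orbit of the base point $eL^{+}G$ is therefore $LT_{w}/L^{+}T_{w}$, and the Kottwitz map identifies this discrete group with $\pi_{0}(\Gr)$ for the Coxeter-twisted torus; in particular the orbit contributes exactly one point to each component of $\Gr$. To rule out further points of $\cX^{w}$, I invoke the fact that the Euler characteristic of each component of $\Gr_{\g}$ for shallow Coxeter $\g$ equals $1$ (a classical computation via the Bruhat-type decomposition of $\Fl_{\g}$ induced by a generic cocharacter $\Gm\to LT_{w}$); since $\cX^{w}=\Gr_{\g}$ is discrete, this forces exactly one point per component and gives transitivity.

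The hard part is not the dimension computation but rather ruling out extra isolated points of $\cX^{w}$ in part (1); this is where I must combine the Kottwitz description of $\pi_{0}(LT_{w})$ for the Coxeter twist with a cohomological or fixed-point count on $\Fl_{\g}$. Parts (2) and (3) are essentially formal once $\d_{[w]}=0$ is in hand, because dimension alone obstructs non-regular reduction types in a zero-dimensional affine Springer fiber.
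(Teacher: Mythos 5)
Your computation $\d_{[w]}=0$ and your proof of part (3) are essentially sound: the identification $\Gr_{\g}=\cX^{w}$ for shallow $\g$ is exactly the paper's argument, and your derivation of $\RT(\g)=\{\cO_{\reg}\}$ from the fibration $\Fl_{\g}\to\Gr_{\g}$ with Springer-fiber fibers together with $\dim\Fl_{\g}=\d_{[w]}=0$ is a clean alternative to the paper's citation of Ng\^o's Corollaire 3.7.2. However, there are two genuine gaps in the rest.

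First, part (2) does not follow from part (3). Conjecture \ref{c:min red type} for ``elements of type $[w]$'' concerns \emph{all} $\g\in(L^{\hs}\frg)_{[w]}$, whereas your part (3) only treats shallow $\g$; a non-shallow element of Coxeter type has $\d_{\g}>0$ and a positive-dimensional affine Springer fiber, so the dimension argument says nothing about it. The correct route (the paper's) is Lemma \ref{l:TOfixedpt}: once $LT_{w}$ is known to act transitively on $\cX^{w}$, uniqueness of the minimal reduction type follows for \emph{every} $\g$ of type $[w]$ via Lemma \ref{l:GrO Xw}. So (2) must be deduced from (1), not from (3).

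Second, your argument for (1) fails at the decisive step. The orbit of the base point is indeed a torsor under $LT_{w}/L^{+}T_{w}\cong\xcoch(T)/\ZZ R^{\vee}\cong\pi_{0}(\Gr)$, but to rule out further points of $\cX^{w}=\Gr_{\g}$ you appeal to an Euler-characteristic computation ``via the Bruhat-type decomposition of $\Fl_{\g}$ induced by a generic cocharacter $\Gm\to LT_{w}$''. No such cocharacter exists: the Coxeter class is elliptic, so $T_{w}$ is anisotropic and $LT_{w}$ contains no copy of $\Gm$; the contraction technique you have in mind applies only when the centralizer torus has a nontrivial split part. Transitivity of $LT_{w}$ on $\Gr_{\g}$ is genuinely the hard content of (1); the paper obtains it from Ng\^o's Lemme 3.3.1, which says that the regular locus $\Gr_{\g,\cO_{\reg}}$ is a torsor under $LT_{w}/L^{+}T_{w}$, combined with $\Gr_{\g}=\Gr_{\g,\cO_{\reg}}$ from part (3). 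Without this (or an equivalent) input, both your part (1) and, consequently, your part (2) remain unproved.
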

\begin{proof} 
For  $\g\in(L^{\hs}\frt_{w})^{sh}$,  the affine Springer fiber $\Gr_{\g}$ has dimension zero by the dimension formula \eqref{dim Sp}, since $\g$ has all root valuation $1/h$. Therefore each point of $\Gr_{\g}$ is fixed by $L^{+}T_{w}$, hence $\Gr_{\g}\subset \cX^{w}$. By Lemma \ref{l:Xw in Grg},  we have $\Gr_{\g}=\cX^{w}$.

Let $\cO_{\reg}$ be the regular nilpotent orbit. By \cite[Corollary 3.7.2]{Ngo}, we have $\Gr_{\g}=\Gr_{\g, \cO_{\reg}}$. In particular,  $\RT(\g)=\{\cO_{\reg}\}=\RT_{\min}(\g)$. This proves (3).  

By \cite[Lemme 3.3.1]{Ngo}, $\Gr_{\g,\cO_{\reg}}$ is transitive under $LT_{w}$. Since $\cX^{w}=\Gr_{\g}=\Gr_{\g,\cO_{\reg}}$, $\cX^{w}$ is also transitive under $LT_{w}$. By Lemma \ref{l:TOfixedpt}, we get (2).

Finally we show (1). The action of $LT_{w}$ on $\cX^{w}$ factors through $LT_{w}/L^{+}T_{w}$. We have $LT_{w}/L^{+}T_{w}\cong \xcoch(T)_{w}=\xcoch(T)/\ZZ R^{\vee}$ ($\ZZ R^{\vee}$ is the coroot lattice), which is the same as the component set of $\Gr$, hence each component of $\Gr$ contains a single point in $\cX^{w}$. 
\end{proof}

\begin{lemma}\label{l:discrete} If $[w]=\KL(\cO)$ for some $\cO\in \un\cN$, and $\g\in(L^{\hs}\frt_{w})^{sh}$, then $\Gr_{\g,\cO}$ is discrete and is contained in $\cX^{w}$.
\end{lemma}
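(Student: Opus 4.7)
The plan is to do a short dimension count to get discreteness, and then use a connectedness argument to show that the finitely many points in $\Gr_{\g,\cO}$ must lie in $\cX^{w}$.

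First I would establish discreteness. Since $\g \in (L^{\hs}\frt_w)^{sh}$ is shallow of type $[w]$, the dimension formula \eqref{dim Sp} gives $\dim \Fl_\g = \d_\g = \d_{[w]}$. The crucial input is the equality $\d_{[w]} = d_\cO$ recorded at the opening of Section \ref{s:KL} (via \cite[Proposition 8.2]{KL}), valid precisely because $[w] = \KL(\cO)$. Now, if $\Gr_{\g,\cO}$ is nonempty, the projection $\Fl_{\g,\cO} \to \Gr_{\g,\cO}$ is a Zariski-locally trivial fibration with fibers isomorphic to the Springer fiber $\cB_e$ of dimension $d_\cO$ for $e \in \cO$. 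Hence
\begin{equation*}
\dim \Gr_{\g,\cO} + d_\cO = \dim \Fl_{\g,\cO} \le \dim \Fl_\g = \d_{[w]} = d_\cO,
\end{equation*}
so $\dim \Gr_{\g,\cO} \le 0$, i.e., $\Gr_{\g,\cO}$ is a finite set of points (or empty).

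For the inclusion $\Gr_{\g,\cO} \subset \cX^{w}$, I would first check that $L^{+}T_w$ acts on $\Gr_{\g,\cO}$ by left translation. Since $\g \in L\frt_w$, the whole loop torus $LT_w$ centralizes $\g$, and the map $\ev_\g: gL^{+}G \mapsto \Ad(g^{-1})\g \bmod t$ is invariant under left multiplication by any element of $LT_w$; in particular the action of $L^{+}T_w$ on $\Gr_\g$ preserves the stratum $\Gr_{\g,\cO}$. Because $L^{+}T_w$ is the arc group of a smooth group scheme with connected fibers (the Bruhat--Tits group scheme $\cT_w$), it is connected. A connected (pro-)algebraic group acting on a zero-dimensional scheme must fix every point, so each point of $\Gr_{\g,\cO}$ is $L^{+}T_w$-fixed and thus lies in $\cX^{w} = \Gr^{L^{+}T_w,\red}$.

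There is no real obstacle here: both steps are essentially bookkeeping given the machinery already developed. The only subtlety is keeping track that the shallowness hypothesis plus $[w] = \KL(\cO)$ forces the dimension bound to be tight enough to collapse $\Gr_{\g,\cO}$ to a finite set — but this is exactly what the identity $\d_{[w]} = d_\cO$ delivers. The statement allows the possibility that $\Gr_{\g,\cO}$ is empty, in which case both conclusions hold trivially.
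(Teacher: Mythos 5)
Your proof is correct and follows essentially the same route as the paper: the dimension count via the fibration $\Fl_{\g,\cO}\to\Gr_{\g,\cO}$ combined with $\d_{[w]}=d_{\cO}$ forces $\dim\Gr_{\g,\cO}=0$, and then the connectedness of $L^{+}T_{w}$ acting on a discrete set gives the containment in $\cX^{w}$. This matches the argument in the text, which invokes the inequality \eqref{dg} and the same fixed-point reasoning.
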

\begin{proof} 
Since $\dim\Fl_{\g}=\d_{[w]}=d_{\cO}$ by (2), the equality in \eqref{dg} holds. Therefore $\dim\Gr_{\g,\cO}=0$ if non-empty. Since $\Gr_{\g,\cO}$ is discrete, its points are fixed by the connected group $L^{+}T_{w}$, therefore $\Gr_{\g,\cO}\subset \cX^{w}$.
\end{proof}

\begin{lemma}\label{l:Xw trans} Let $[w]=\KL(\cO)$ for some $\cO\in \un\cN$. Suppose $LT_{w}$ acts transitively on $\cX^{w}$, then Conjecture \ref{c:trans} hold for $[w]$.
\end{lemma}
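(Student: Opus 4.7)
The plan is to reduce to the case where $\g$ lies in the fixed loop torus $L\frt_{w}$, then exploit $LT_{w}$-invariance of the reduction map together with the hypothesis that $\cX^{w}$ is a single $LT_{w}$-orbit.

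First I would observe that any $\g \in (L^{\hs}\frg)^{sh}_{[w]}$ has centralizer $(LG)_{\g}$ that is a loop torus of type $[w]$, hence $LG$-conjugate to $LT_{w}$ by \S\ref{ss:loop tori}. After such a conjugation (which carries $\Gr_{\g,\cO}$ and its $(LG)_{\g}$-action equivariantly), I may assume $\g \in (L^{\hs}\frt_{w})^{sh}$, so that $LT_{w} \subset (LG)_{\g}$.

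Next I would verify $\cO \in \RT_{\min}(\g)$ (in particular $\Gr_{\g,\cO}\neq\vn$): by Corollary \ref{c:KL} we have $[w]\sqsubset \cO$, so Lemma \ref{l:ineq}(4) gives some $\cO' \in \RT_{\min}(\g)$ with $\cO'\le\cO$. The dimension inequality \eqref{dg} yields $\d_{[w]} = \dim\Fl_{\g} \ge d_{\cO'}$, and since $\d_{[w]} = d_{\cO}$ (because $[w]=\KL(\cO)$) while $\cO'\le\cO$ forces $d_{\cO'}\ge d_{\cO}$, we conclude $\cO'=\cO$. Consequently $\Gr_{\g,\cO}$ is closed in $\Gr_{\g}$, and by Lemma \ref{l:discrete} it is contained in $\cX^{w}$.

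The key point is that $\Gr_{\g,\cO}$ is $LT_{w}$-invariant: for $h\in LT_{w}$ and $gL^{+}G\in\Gr_{\g}$, $\Ad((hg)^{-1})\g = \Ad(g^{-1})\Ad(h^{-1})\g = \Ad(g^{-1})\g$ because $LT_{w}$ fixes $\g\in L\frt_{w}$ pointwise under the adjoint action. Hence $\Gr_{\g,\cO}\subset\cX^{w}$ is a non-empty $LT_{w}$-stable subset; by the hypothesis that $LT_{w}$ acts transitively on $\cX^{w}$, we must have $\Gr_{\g,\cO}=\cX^{w}$, and $LT_{w}\subset (LG)_{\g}$ then acts transitively on $\Gr_{\g,\cO}$. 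There is no real obstacle: the argument is essentially a direct combination of Lemmas \ref{l:discrete}, \ref{l:ineq}, and the assumed transitivity, with the brief dimension comparison above being the only small computation needed.
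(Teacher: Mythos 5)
Your proof is correct and follows essentially the same route as the paper's: conjugate $\g$ into $L^{\hs}\frt_{w}$, use Lemma \ref{l:discrete} to place $\Gr_{\g,\cO}$ inside $\cX^{w}$, observe it is $LT_{w}$-stable, and invoke the transitivity hypothesis to conclude $\Gr_{\g,\cO}=\cX^{w}$. The only difference is that you also verify non-emptiness of $\Gr_{\g,\cO}$ (via Corollary \ref{c:KL}, Lemma \ref{l:ineq} and the equality $\d_{[w]}=d_{\cO}$), which the paper treats as given by Theorem \ref{th:WtoN}(1) in the statement of Conjecture \ref{c:trans}; your verification of it is also correct.
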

\begin{proof} 
Let $\g\in (L^{\hs}\frt_{w})^{sh}$. If $\Gr_{\g,\cO}$ is non-empty, then by Lemma \ref{l:discrete}, it is discrete and $\Gr_{\g,\cO}\subset \cX^{w}$. Since $LT_{w}$ acts transitively on $\cX^{w}$ by assumption, it also acts transitively on $\Gr_{\g,\cO}$, and in fact $\Gr_{\g,\cO}=\cX^{w}$. 
\end{proof}

\begin{prop}\label{p:trans Levi} Let $M$ be a  Levi subgroup of $G$ containing $T$.  Let $w\in W_{M}$, and suppose $[w]=\KL(\cO)$ for some $\cO\in \un\cN$.
\begin{enumerate}
\item There exists $\cO'\in \un \cN_{M}$ such that $\cO'\subset \cO$ and $[w]_{M}=\KL_{M}(\cO')$.
\item If Theorem \ref{th:WtoN}\eqref{Uw} holds for $(M,[w]_{M})$ and Conjecture \ref{c:trans} holds for $(M,[w]_{M}, \cO')$ (for $\cO'$ as in (1)), then Conjecture \ref{c:trans} holds for $(G,[w],\cO)$.
\end{enumerate}
\end{prop}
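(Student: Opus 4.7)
For part (1), I start from Corollary \ref{c:KL}, which gives $\cO \in \RT_{\min}([w])$. Applying Lemma \ref{l:sk RT Levi} then produces some $\cO' \in \RT^M_{\min}([w]_M)$ with $\io_{M,G}(\cO') = \cO$, so $\cO' \subset \cO$ and in particular $[w]_M \sqsubset \cO'$ in the $M$-setting. To conclude $\KL_M(\cO') = [w]_M$, I plan to invoke Proposition \ref{p:char KL}(2) for $M$, whose hypothesis reduces to the dimension equality $\d^M_{[w]_M} = d^M_{\cO'}$. I aim to bootstrap this from the $G$-equality $\d^G_{[w]} = d^G_\cO$ (which holds since $[w]=\KL_G(\cO)$): on the $\d$-side, the formula for shallow elements from \S 2 yields $\d^G_{[w]} - \d^M_{[w]_M} = \frac{1}{2}\sum_{\a \in R \setminus R_M}\val(\a(\g))$ for shallow $\g$; on the $d$-side, Steinberg's formula gives $d^G_\cO - d^M_{\cO'} = \frac{1}{2}\dim(Z_G(e)/Z_M(e))$ for $e \in \cO'$. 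A direct root-theoretic computation should identify these two expressions.

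For part (2), let $\g \in (L^{\hs}\frg)^{sh}_{[w]}$; after $LG$-conjugation I may assume $\g \in (L^{\hs}\frt_w)^{sh}$, which is automatically shallow in $M$ of type $[w]_M$. By Lemma \ref{l:discrete}, $\Gr_{\g,\cO}$ is discrete, hence fixed pointwise by the connected solvable group $L^+T_w$, so $\Gr_{\g,\cO} \subset \cX^w$. Lemma \ref{l:sk Levi} gives $\cX^w = \cX^w_M \subset \Gr_M$, and therefore $\Gr_{\g,\cO} \subset \Gr_{M,\g}$. Decomposing by $\ev^M_\g$,
\begin{equation*}
\Gr_{\g,\cO} = \bigsqcup_{\cO'' \in \un\cN_M,\ \cO'' \subset \cO} \Gr_{M,\g,\cO''},
\end{equation*}
with each piece $LT_w$-stable. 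Once I show that only the $\cO''=\cO'$ piece is non-empty, so that $\Gr_{\g,\cO} = \Gr_{M,\g,\cO'}$, the conclusion is immediate: $(LG)_\g = (LM)_\g = LT_w$ (both equal the unique maximal torus containing $\g$), and the hypothesized Conjecture \ref{c:trans} for $(M,[w]_M,\cO')$ then delivers the desired $LT_w$-transitivity.

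The main obstacle I anticipate is ruling out non-empty strata $\Gr_{M,\g,\cO''}$ with $\cO'' \subset \cO$ and $\cO'' \neq \cO'$. By Theorem \ref{th:WtoN}(1) for $M$ together with Corollary \ref{c:KL}, any such $\cO''$ lies in $\RT^M(\g)$ and satisfies $\cO'' > \cO'$ in the $M$-partial order, hence $d^M_{\cO''} < d^M_{\cO'} = \d^M_{[w]_M} = \d^M_\g$ using part (1). Thus the expected dimension $\d^M_\g - d^M_{\cO''}$ of $\Gr_{M,\g,\cO''}$ is strictly positive, while $\Gr_{M,\g,\cO''} \subset \Gr_{\g,\cO}$ forces it to be $0$-dimensional. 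To turn this into a contradiction one must promote the standard upper bound $\dim \Gr_{M,\g,\cO''} \leq \d^M_\g - d^M_{\cO''}$ to an actual equality (or at least a positive lower bound), which I plan to do either via equidimensionality/purity of affine Springer fibers or by specializing from the generic open locus $U_{[w]_M}$ in which Lemma \ref{l:open} for $M$ reduces the decomposition to the single $\cO'$-stratum, then tracking irreducible components under specialization.
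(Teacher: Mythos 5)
There are two genuine gaps, one in each part; in both cases your overall skeleton matches the paper's but the decisive step is different.

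In part (1), your plan to ``directly identify'' $\d_{[w]}-\d_{[w]_{M}}=\val\det(\ad(\g)|\frn_{P}\ot\CC\lr{t})$ with $d_{\cO}-d_{\cO'}=\dim\ker(\ad(e)\colon\frn_{P}\to\frn_{P})$ does not work: if $(t^{d_{1}},\dots,t^{d_{N}})$ are the elementary divisors of $\ad(\g)$ on $\frn_{P}\ot\CC\tl{t}$, the first quantity is $\sum_{i}d_{i}$ while the second is $\#\{i\mid d_{i}\ge1\}$, so a priori you only get the inequality $d_{\cO}-d_{\cO'}\le\d_{[w]}-\d_{[w]_{M}}$. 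This is all the paper proves at this step; it then closes the sandwich with the reverse inequality $\d_{[w]_{M}}\ge d_{\cO'}$, which holds unconditionally by Lemma \ref{l:ineq} because $[w]_{M}\sqsubset\cO'$. The fix is cheap, but as written your identification is an unproved (and, without further input, unprovable) equality; it only becomes true a posteriori.

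In part (2) the gap is more serious. You correctly reduce to showing $\Gr_{M,\g,\cO''}=\vn$ for $\cO''\subset\cO\cap\cN_{M}$ with $\cO''\ne\cO'$, but your proposed route --- promoting the upper bound $\dim\Gr_{M,\g,\cO''}\le\d^{M}_{\g}-d^{M}_{\cO''}$ to an equality or positive lower bound via equidimensionality/purity or a specialization argument --- is exactly the step that is not available: the strata $\Gr_{M,\g,\cO''}$ of an affine Springer fiber have no known lower dimension bound, and equidimensionality fails in general. The paper avoids affine geometry entirely here: if $e''\in\cO''$ and $e'\in\cO'$ with $\cO''>\cO'$ in $\cN_{M}$, then $\dim C_{M}(e'')<\dim C_{M}(e')$, while semicontinuity of $\dim\ker(\ad(-)\colon\frn_{P}\to\frn_{P})$ along the closure order gives $\dim\ker(\ad(e''))\le\dim\ker(\ad(e'))$; adding these yields $\dim C_{G}(e'')<\dim C_{G}(e')$, contradicting the assumption that $e'$ and $e''$ lie in the same $G$-orbit $\cO$. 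In other words, the extra strata are empty for the purely finite-dimensional reason that $\cO\cap\cN_{M}$ cannot contain two comparable $M$-orbits, not because of any dimension count on $\Gr_{\g}$. I recommend replacing your final paragraph with this centralizer comparison.
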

\begin{proof}
(1) Let $\d_{[w]_{M}}$ denote the function $\d$ in \S\ref{ss:d} for $M$ and the conjugacy class $[w]_{M}$. Let $P$ be a parabolic subgroup of 
$G$ containing $M$ as a Levi subgroup, and let $\frn_{P}$ be its nilpotent radical. For $\g\in  (L^{\hs}\frg)^{sh}_{[w]}\cap L\fm$,   we have
\begin{equation*}
\d_{[w]}-\d_{[w]_{M}}=\val(\det(\ad(\g)|\frn_{P}\ot \CC\lr{t})).
\end{equation*}
On the other hand, by Corollary \ref{c:KL}, $\cO\in \RT_{\min}([w])$. By Lemma \ref{l:sk RT Levi}, $\RT_{\min}([w])\subset \io(\RT^{M}_{\min}([w]_{M}))$. Therefore $\cO\in \io(\RT^{M}_{\min}([w]_{M}))$, hence there exists $\cO'\in \un\cN_{M}$ such that $\cO'\in \RT^{M}_{\min}([w]_{M})$ and $\cO'\subset \cO$. Take any $e\in \cO'$, we have
\begin{equation*}
d_{\cO}-d_{\cO'}=\frac{1}{2}(\dim C_{G}(e)-\dim C_{M}(e))=\dim\ker(\ad(e): \frn_{P}\to \frn_{P}).
\end{equation*}
By Lemma \ref{l:open},  we may choose $\g\in (L^{\hs}\frg)^{sh}_{[w]}\cap L\fm$ such that $\Gr_{M,\g,\cO'}\ne\vn$. Up to $LM$-conjugacy we may assume $\g\equiv e\mod t\fm\tl{t}$ for some $e\in\cO'$. Let $(t^{d_{1}},\cdots, t^{d_{N}})$ be the elementary divisors of $\ad(\g)$ as an endomorphism of $\frn_{P}\ot\CC\tl{t}$ (as a free module over $\CC\tl{t}$). Then $\dim\ker(\ad(e): \frn_{P}\to \frn_{P})$ is the number of $i$ such that $d_{i}\ge1$, and $\val(\det(\ad(\g)|\frn_{P}\ot \CC\lr{t}))$ is $\sum_{i}d_{i}$.   Therefore we get
\begin{equation*}
d_{\cO}-d_{\cO'}\le \d_{[w]}-\d_{[w]_{M}}.
\end{equation*}
Or equivalently
\begin{equation*}
\d_{[w]_{M}}-d_{\cO'}\le \d_{[w]}-d_{\cO}=0.
\end{equation*}
By Lemma \ref{l:ineq}, we always have $\d_{[w]_{M}}\ge d_{\cO'}$. Therefore we must have $\d_{[w]_{M}}=d_{\cO'}$. By Proposition \ref{p:char KL}, $[w]_{M}=\KL_{M}(\cO')$. 

(2) Assume $LT_{w}\subset LM$, and $\g\in (L^{\hs}\frt_{w})^{sh}$ (shallow for $G$). Since Theorem \ref{th:WtoN}\eqref{Uw} and \ref{c:trans} hold for $(M,[w]_{M}, \cO')$, $\Gr_{M,\g,\cO'}$ is non-empty, discrete and transitive under $LT_{w}$. Since $\Gr_{M,\g,\cO'}\subset \Gr_{G,\g,\cO}$, the latter is also non-empty. By Lemma \ref{l:discrete}, $\Gr_{\g,\cO}\subset\cX^{w}$. Since $\cX^{w}=\cX^{w}_{M}$ by Lemma \ref{l:sk Levi}, we have $\Gr_{\g,\cO}\subset \cX^{w}_{M}\subset \Gr_{M,\g}$. This implies
\begin{equation}\label{GrGM}
\Gr_{\g,\cO}=\coprod_{\cO''\subset\cO\cap \cN_{M}}\Gr_{M,\g,\cO''}
\end{equation}
where $\cO''$ runs over nilpotent orbits of $M$ contained in $\cO$. For $\cO''\subset \cO\cap \cN_{M}$, we show that $\Gr_{M,\g,\cO''}=\vn$ unless $\cO''=\cO'$, which would finish the proof for then $\Gr_{\g,\cO}=\Gr_{M,\g,\cO'}$ is transitive under $LT_{w}$.

Suppose $\Gr_{M,\g,\cO''}\ne\vn$ for some $\cO''\in \cO\cap \cN_{M}$. Since $\RT^{M}_{\min}(\g)=\{\cO'\}$, $\cO''\ge \cO'$. If $\cO''\ne \cO'$ then $d_{\cO''}<d_{\cO'}$. Let $e''\in \cO'', e'\in \cO'$, then $\dim C_{M}(e'')<\dim C_{M}(e')$. Let $P$ be a parabolic subgroup of 
$G$ containing $M$ as a Levi subgroup, and let $\frn_{P}$ be its nilpotent radical. Then we have
\begin{eqnarray*}
\dim C_{G}(e'')=\dim C_{M}(e'')+\dim\ker(\ad(e''): \frn_{P}\to \frn_{P}),\\
\dim C_{G}(e')=\dim C_{M}(e')+\dim\ker(\ad(e'): \frn_{P}\to \frn_{P}).
\end{eqnarray*} 
Since $\cO''>\cO'$, $\dim \ker(\ad(e''): \frn_{P}\to \frn_{P})\le \dim\ker(\ad(e'): \frn_{P}\to \frn_{P})$. But we also have $\dim C_{M}(e'')<\dim C_{M}(e')$. Together they imply $\dim C_{G}(e'')<\dim C_{G}(e')$ by the above equalities. However, this contradicts the assumption that $e'$ and $e''$ are both in the same $G$-orbit $\cO$. This shows $\Gr_{M,\g,\cO''}=\vn$. 
\end{proof}

\section{Proofs for type $A, C$ and $G_{2}$}\label{s:AC}

\subsection{What remains to be shown}\label{ss:re}
By Lemma \ref{l:isog}, to prove the theorems stated in \S\ref{s:intro},  we only need to consider the case $G$ is almost simple, and only need to consider one isomorphism class of $G$ among each isogeny class.  It remains to show the following:
\begin{itemize}

\item[(i)] For all $G$ and $[w]$ elliptic non-Coxeter, $\RT_{\min}([w])$ (as defined in Lemma \ref{l:open}) is a singleton and is equal to $\Phi(\cO)$. This implies Theorem \ref{th:PhiPsi}\eqref{Phi} and the full statement of Theorem \ref{th:WtoN} by Lemma \ref{l:Uw} and Corollary \ref{c:KL}. (Note that the case $[w]$ is the Coxeter class is already proved in Proposition \ref{p:Cox}; $\Phi(\Cox)$ is the regular orbit by \cite[8.8]{St-reg}.)

\item[(ii)] Conjecture \ref{c:min red type} for type $A$ and $C$ and $[w]$ elliptic non-Coxeter. This implies the full statement of Theorem \ref{th:AC}, by Corollary \ref{c:reduce to Levi} and Proposition \ref{p:Cox}.

\item[(iii)] Conjectures \ref{c:min red shallow} and \ref{c:trans} for types $A,B,C,D$ and $G_{2}$, and elliptic non-Coxeter $[w]$. This implies the full statement of Theorem \ref{th:trans} by Proposition \ref{p:trans Levi}, Corollary \ref{c:reduce to Levi} and Proposition \ref{p:Cox}.

\item[(iv)] Theorem \ref{th:PhiPsi}\eqref{Psi} for all types and all $[w]$.
\end{itemize}

In this section and the next one we prove the above statements involving  classical groups and $G_{2}$.  Statements (i)(ii)(iii) for type $A$ and $C$ and statements (i)(iii) for type $G_{2}$ will be proved in this section; the rest of the statements will be proved in the next section. 

The strategy is to describe the skeleton $\cX^{w}$ explicitly.  In types $A$ and $C$, we will see that the skeleta are transitive under $LT_{w}$, which allows us to use Lemma \ref{l:TOfixedpt} and Lemma \ref{l:Xw trans} to conclude.  The cases of type $B$ and $D$ are more complicated, and are treated in \S\ref{s:BD}.  

\subsection{Type $A$}\label{ss:A}
Since the Coxeter class is the only elliptic class in type $A$, the statements (i)(ii)(iii) above are vacuous. It only remains to prove Theorem \ref{th:PhiPsi}\eqref{Psi}. However, $\Phi$ is a bijection in this case with inverse $\Psi$. Since $\RT_{\min}=\Phi$,  and $\KL$ is the inverse of $\RT_{\min}$, we have $\KL=\Psi$.

Concretely,  for a conjugacy class $[w]\in S_{n}$ with cycle type $\l$ (a partition of $n$), $\RT_{\min}([w])=\Phi([w])$ is the nilpotent orbit with Jordan type $\l$. 

\subsection{Skeleta in type $C_{n}$}\label{ss:Cn} Let $G=\Sp(V_{0})$ for a symplectic space $V_{0}$ over $\CC$ of dimension $2n$. In this case we check that the condition in Lemma \ref{l:TOfixedpt} holds for $[w]$ elliptic. An elliptic maximal torus $T_{w}$ can be described as follows. Let  $\prod_{i\in I}F_{i}$ be a product of extensions of $F=\CC\lr{t}$ of total degree $n$ (where $I$ is a finite index set) . For each $F_{i}$ let $E_{i}/F_{i}$ be the quadratic extension, then $T_{w}(F)\cong \prod_{i\in I}E_{i}^{\Nm=1}$ (here $\Nm=1$ means the kernel of $\Nm_{E_{i}/F_{i}}$). Under this description, we may identify $V:=V_{0}\ot_{\CC}F$ with
\begin{equation*}
V=\bigoplus_{i\in I}E_{i}
\end{equation*}
Here $E_{i}$ is equipped with the $F$-valued symplectic form
\begin{equation*}
\j{x,y}_{i}=\Tr_{F_{i}/F}\left(d_{i}\frac{x\s_{i} (y)-y\s_{i}(x)}{\pi_{i}}\right)
\end{equation*}
where $\s_{i}\in \Gal(E_{i}/F_{i})$ is the nontrivial element, $\pi_{i}$ is a uniformizer in $\cO^{\s_{i}=-1}_{E_{i}}$, and $d_{i}$ is a generator of the different ideal $\frd_{F_{i}/F}$. Under the symplectic form $\j{\cdot,\cdot}_{i}$ on $E_{i}$,   $\cO_{E_{i}}$ is a self-dual lattice.  We have
\begin{equation*}
L^{+}T_{w}=\prod_{i\in I} (\cO^{1}_{E_{i}})^{\Nm=1}
\end{equation*}
where $\cO^{1}_{E_{i}}=1+\pi_{i}\cO_{E_{i}}$.

The fixed points $\Gr_{G}^{L^{+}T_{w},\red}$ are those self-dual $\cO_{F}$-lattices in $V$ that are stable under the scalar multiplication of $ (\cO^{1}_{E_{i}})^{\Nm=1}$ on the $i$-th factor, for all $i\in I$. Let $\L\subset V$ be such a self-dual lattice. Note that $(\cO^{1}_{E_{i}})^{\Nm=1}$ contains an element of the form $1+\pi'_{i}$ where $\pi'_{i}$ is a uniformizer of $E_{i}$. Therefore,  if $(v_{i})_{i\in I}\in \L\subset V$, then $(v'_{j})_{j\in I}\in \L$ where $v'_{j}=v_{j}$ for $j\ne i$ and $v'_{i}=(1+\pi'_{i})v_{i}$.  Hence $(0,\cdots, \pi'_{i}v_{i},\cdots, 0)\in \L$. This implies that $(0,\cdots, \pi_{i}\cO_{E_{i}} v_{i}, \cdots, 0)\in \L$ for all $i\in I$.

Let $\L\cap E_{i}=\pi_{i}^{a_{i}}\cO_{E_{i}}$. Since $\L$ is self-dual under the symplectic form, and $\cO_{E_{i}}$ is self-dual in $E_{i}$, we have $\pr_{i}(\L)=\pi_{i}^{-a_{i}}\cO_{E_{i}}$ ($\pr_{i}$ is the projection $V\to E_{i}$). In particular, $a_{i}\ge0$. The argument in the previous paragraph implies that $\pi_{i}^{-a_{i}+1}\cO_{E_{i}}\subset \L$. Therefore, we have $\pi_{i}^{-a_{i}+1}\cO_{E_{i}}\subset \pi_{i}^{a_{i}}\cO_{E_{i}}$, which means $-a_{i}+1\ge a_{i}$, or $a_{i}\le1/2$. Therefore we must have $a_{i}=0$ for all $i$, which forces $\L=\oplus_{i\in I}\cO_{E_{i}}$. This shows that for $G=\Sp_{2n}$ and $[w]\in \un W$ elliptic, $\cX^{w}$ is a singleton.

\subsection{Proofs of (i)(ii)(iii) for type $C$}
For elliptic $[w]$, the above discussion shows that $\cX^{w}$ is a singleton, and in particular transitive under $LT_{w}$. Therefore, by Lemma \ref{l:TOfixedpt} and Corollary \ref{l:Xw trans}, we see that $\RT_{\min}([w])$ is a singleton and Conjectures \ref{c:min red type} and \ref{c:trans} hold. 

It remains to show that $\RT_{\min}([w])=\Phi([w])$ for elliptic non-Coxeter $[w]$. For this we need to compute the Jordan type of any $\g\in (L^{\hs}\frt_{w})^{sh}$ on $\L/t\L$, where $\L=\oplus_{i\in I}\cO_{E_{i}}$ is the unique point in $\cX^{w}$. Writing $\g=(\g_{i})_{i\in I}$ with $\g_{i}\in \cO_{E_{i}}^{\s_{i}=-1}$. Since $\g$ is shallow, $\g_{i}$ is a uniformizer of $E_{i}$, hence $\cO_{E_{i}}/t\cO_{E_{i}}$ is single Jordan block under $\g$ of length $2n_{i}=[E_{i}:F]$. Therefore,  if $(2n_{i})$ is the partition of $2n$ given by the cycle type of $[w]$ viewed as an element in $S_{2n}$, $\RT_{\min}(\g)$ consists of the nilpotent class with Jordan type given by the same partition  $(2n_{i})_{i\in I}$. Comparing with \cite[3.7]{Lfromto}, we see that $\RT_{\min}$ coincides with Lusztig's map $\Phi$ on elliptic elements in type $C$.

Using Corollary \ref{c:reduce to Levi}(2), we see that for general conjugacy class $[w]\in \un W$ with cycle type $\l$ (a partition of $2n$), $\RT_{\min}([w])$ is the nilpotent class with Jordan type $\l$.

\subsection{Skeleta in type  $G_{2}$} 
Besides the Coxeter class, there are two elliptic classes in $\un W$ for $G=G_{2}$, namely the class of $\Cox^{2}$ and $\Cox^{3}$. We denote the four nonzero nilpotent orbits of $G_{2}$ by $\cO_{reg}, \cO_{sub}, \cO_{sh}$ and $\cO_{long}$, which have dimension $12,10,8$ and $6$ respectively.

The statement (i) can be checked easily by the relation $\d_{[w]}=d_{\cO}$ if $[w]=\KL(\cO)$. (ii) is vacuous because $\un\cN$ is totally ordered in this case.  

Below we describe $\cX^{w}$ for $w=\Cox^{2}$ and $\Cox^{3}$ and sketch a proof of (iii) in each case. 

\sss{} For $w=\Cox^{2}$ we have $\d_{[w]}=1$ and $[w]=\KL(\cO_{sub})$. For $\g\in (L^{\hs}\frt_{w})^{sh}$, $\Gr_{\g}$ is a union of $\PP^{1}$'s in the affine $\wt E_{6}$-configuration (see \cite[Proposition 7.7(d)]{KL}); it contains one copy of the subregular Springer fiber as the embedding of the $D_{4}$ configuration in the $\wt E_{6}$ configuration.  From this we see that $\Gr_{\g,\cO_{sub}}$ is a singleton, and $\Gr_{\g,\cO_{reg}}$ is the union of three $\AA^{1}$'s. Since $\Gr_{\g,\cO_{reg}}$ is acted transitively by $LT_{w}$, the action of $L^{+}T_{w}$ on each $\AA^{1}$ is transitive. This forces $\cX^{w}\subset \Gr_{\g, \cO_{sub}}$, which is a singleton.

\sss{} For $w=\Cox^{3}=w_{0}$, we have $\d_{[w]}=2$ and $[w]=\KL(\cO_{sh})$. For a $\g\in (L^{\hs}\frt_{w})^{sh}$ (with all root valuations equal to $1/2$), $\Gr_{\g,\cO_{sub}}\cong \PP^{1}-\Gr_{\g,\cO_{sh}}$ where $\Gr_{\g,\cO_{sh}}$ consists of $4$ points.  Since $L^{+}T_{w}$ acts on $\Gr_{\g,\cO_{sub}}$ preserving the $4$ points in $\Gr_{\g,\cO_{sh}}$, the action must be trivial. We conclude that $\Gr_{G}^{L^{+}T_{w},\red}\cong \PP^{1}$. The $4$ points in $\Gr_{\g,\cO_{sh}}$ form a torsor under $\pi_{0}(LT_{w})\ot_{\ZZ}\ZZ/2\cong (\ZZ/2\ZZ)^{2}$.

%
%

\section{Proofs for type $B$ and $D$}\label{s:BD}
\subsection{Skeleta in type $B_{n}$}\label{ss:Bn} Let $G=\SO(V_{0})$ for $\dim_{\CC}V_{0}=2n+1$. Let $LT_{w}$ be an elliptic loop torus in $LG$. In this case the condition in Lemma \ref{l:TOfixedpt} usually does not hold. However the failure of that condition is mild and can be explicitly analyzed.

The elliptic loop tori $LT_{w}$ have the same description as in type $C_{n}$: $LT_{w}\cong\prod_{i\in I}E_{i}^{\Nm=1}$ for quadratic extensions $E_{i}/F_{i}$, where $\prod_{i\in I}F_{i}$ is a separable $F$-algebra of degree $n$.  We will use the notations $\pi_{i}, d_{i},\cdots$ from \S\ref{ss:Cn}.

We may identify the quadratic space $V=V_{0}\ot F$ with
\begin{equation*}
V=F\oplus\bigoplus_{i\in I}E_{i}
\end{equation*}
where $E_{i}$ is equipped with the $F$-valued quadratic form
\begin{equation*}
q_{i}(x)=\Tr_{F_{i}/F}(d_{i}\Nm_{E_{i}/F_{i}}(x))
\end{equation*}
where $d_{i}\in F_{i}$ is a generator of $\frd_{F_{i}/F}$.  Under this form $\cO_{E_{i}}$ is not self-dual but rather dual to $\pi_{i}^{-1}\cO_{E_{i}}$.  To make sure $V$ has a self-dual lattice (i.e., the discriminant of the quadratic form on $V$ has even valuation), the direct summand $F$ in $V$ is equipped with the quadratic form $q_{0}$ so that if $r$ is even, then $\cO_{F}$ is self-dual under $q_{0}$; if $r$ is odd, then $\cO_{F}$ is dual to $t^{-1}\cO_{F}$ under $q_{0}$. The skeleton $\cX^{w}$ is the space of self-dual lattices $\L\subset V$ that are stable under $(\cO^{1}_{E_{i}})^{\Nm=1}$ for all $i\in I$.

We first discuss the case $|I|$ is even. Let $\L\in \cX^{w}$. The same argument as in the type $C$ case shows that $\L$ contains $\pi_{i}\pr_{i}(\L)$  (where $\pr_{i}: V\to E_{i}$ is the projection), viewed as a lattice contained in $E_{i}$. This implies that $\oplus_{i}\cO_{E_{i}}\subset \L \subset F\op(\op_{i}\pi_{i}^{-1}\cO_{E_{i}})$.  Now suppose  $\L\cap F=t^{a_{0}}\cO_{F}$, then $\pr_{0}(\L)=t^{-a_{0}}\cO_{F}$ (where $\pr_{0}: V\to F$ is the projection) since $\cO_{F}$ is self-dual. Suppose $(t^{-a_{0}},\{v_{i}\}_{i\in  I})\in \L$ (so that $v_{i}\in \pi_{i}^{-1}\cO_{E_{i}}$), then $(t^{1-a_{0}},\{tv_{i}\}_{i\in I})\in \L$. Since $tv_{i}\in \cO_{E_{i}}\subset \L$, we conclude $(t^{1-a_{0}},0,\cdots, 0)\in \L$. This implies $1-a_{0}\ge a_{0}$, hence $a_{0}=0$. We conclude that
\begin{equation}\label{range Lam}
\cO_{F}\oplus(\oplus_{i\in I}\cO_{E_{i}})=:\L_{0}\subset \L\subset \L^{\bot}_{0}=\cO_{F}\oplus(\oplus_{i\in I}\pi^{-1}_{i}\cO_{E_{i}}).
\end{equation} 
Conversely, any self-dual lattice $\L$ such that $\L_{0}\subset \L\subset \L_{0}^{\bot}$ is stable under $(\cO^{1}_{E_{i}})^{\Nm=1}$ for all $i\in I$. Therefore $\cX^{w}$ can be identified with the orthogonal Lagrangian Grassmannian $\Lag(\L^{\bot}_{0}/\L_{0})$ for the quadratic space $\L^{\bot}_{0}/\L_{0}$. Note that $\dim\L^{\bot}_{0}/\L_{0}=|I|$ is even, hence $\Lag(\L^{\bot}_{0}/\L_{0})$ has two components, each lying in one component of $\Gr_{G}$.

Now consider the case $|I|$ is odd. The argument is almost identical to the $r$ even case except now $\L_{0}^{\bot}$ has changed slightly
\begin{equation}\label{L0 odd}
\cO_{F}\oplus(\oplus_{i\in I}\cO_{E_{i}})=:\L_{0}\subset \L\subset \L^{\bot}_{0}=t^{-1}\cO_{F}\oplus(\oplus_{i\in I}\pi^{-1}_{i}\cO_{E_{i}}).
\end{equation}
Therefore $\cX^{w}$ can be identified with the orthogonal Lagrangian Grassmannian $\Lag(\L^{\bot}_{0}/\L_{0})$ for the quadratic space $\L^{\bot}_{0}/\L_{0}$ of dimension $|I|+1$.

Our next goal is to calculate $\RT_{\min}([w])$ and show it is the same as $\Phi([w])$. We record here Lusztig's calculation of $\Phi([w])$ for $[w]$ elliptic.

\begin{theorem}[Lusztig \cite{Lfromto}]\label{th:LB} Let $[w]\in \un W(B_{n})$ be an elliptic conjugacy class with negative cycles $n_{1}\ge n_{2}\ge \cdots \ge n_{r}$ (so $r=|I|$ and $[E_{i}:F]=2n_{i}$ in previous notation). Set $n_{0}=\infty$ and $n_{r+1}=n_{r+2}=0$. Then $\Phi([w])$ is the nilpotent orbit in $\so_{2n+1}$ with Jordan blocks of size $\{2n_{i}+\e_{i}\}_{1\le i\le r+1}$ where $\e_{i}\in \{1,0,-1\}$ is defined as
\begin{itemize}
\item $\e_{i}=1$ if $i$ is odd and $n_{i}<n_{i-1}$.
\item $\e_{i}=-1$ if $i$ is even and $n_{i}>n_{i+1}$.
\item $\e_{i}=0$ in all the other cases.
\end{itemize}
\end{theorem}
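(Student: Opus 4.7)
The plan is to recover Lusztig's formula through the paper's own framework. First I would invoke Theorem~\ref{th:PhiPsi}\eqref{Phi} (the main result being established for type $B$) to reduce the claim to the computation of $\RT_{\min}([w])$ and the verification that it equals the prescribed orbit. By Lemma~\ref{l:GrO Xw}, every orbit in $\RT_{\min}([w])$ is attained at some $\L \in \cX^w$, and by the analysis of \S\ref{ss:Bn} we have $\cX^w \cong \Lag(\L_0^\perp/\L_0)$.

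Next, fixing a shallow $\g \in (L^{\hs}\frt_w)^{sh}$, the main calculation is to describe the Jordan type of $\g$ on $\L/t\L$ as a function of the Lagrangian $\ov\L \subset \L_0^\perp/\L_0$. Since $\g_i$ acts by a single Jordan block of size $2n_i$ on both $\cO_{E_i}/t\cO_{E_i}$ and $\pi_i^{-1}\cO_{E_i}/t\pi_i^{-1}\cO_{E_i}$, any intermediate self-dual $\L$ will produce Jordan blocks whose sizes differ from $2n_i$ by corrections that record how $\ov\L$ meets the image of $\pi_i^{-1}\cO_{E_i}$ in $\L_0^\perp/\L_0$. I would then minimize over all Lagrangians in the dominance order to extract $\RT_{\min}([w])$.

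I expect the minimizing Lagrangian to pair consecutive indices (in the ordering $n_1 \ge n_2 \ge \cdots$) into two-dimensional hyperbolic summands of $\L_0^\perp/\L_0$. On each pair $(E_i, E_{i+1})$ with $n_i > n_{i+1}$, the Lagrangian should force exactly one block to grow by $1$ and the other to shrink by $1$, yielding the alternating sign pattern $\e_i, \e_{i+1} \in \{+1,-1\}$. When $n_i = n_{i+1}$, the extra symmetry allows the Lagrangian to be chosen ``diagonally,'' so that both corrections vanish, giving $\e_i = \e_{i+1} = 0$. The index $r+1$ together with the stray $F$-summand accounts for the trailing $\e_{r+1}$.

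The main obstacle will be controlling the induced quadratic form on $\L_0^\perp/\L_0$ finely enough to justify this hyperbolic pairing picture, in particular the cross-terms between distinct $E_i$'s and the parity issue when $|I|$ is odd (where by \eqref{L0 odd} an additional hyperbolic summand from $t^{-1}\cO_F/\cO_F$ appears). A more combinatorial alternative would be to follow Lusztig's original approach: choose a minimal-length signed-permutation representative $w \in [w]$, describe the Bruhat cell $BwB \subset G$ explicitly, and identify the unique minimal unipotent orbit meeting it by direct calculation inside $G$.
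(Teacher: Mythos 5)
This proposal is circular. Theorem~\ref{th:LB} is not proved in this paper; it is a recollection of Lusztig's explicit computation of $\Phi$ in type $B$, cited directly from \cite{Lfromto}. More importantly, it is an \emph{input} to the paper's proof of Theorem~\ref{th:PhiPsi}\eqref{Phi} in type $B$: in \S\ref{ss:B shallow} the paper first computes the minimal achievable Jordan type on $\L'/t\L'$ via Lemmas~\ref{l:B ineq} and~\ref{l:rk ineq}, and then \emph{identifies} the answer with the orbit given by Theorem~\ref{th:LB} in order to conclude $\RT_{\min}([w]) = \Phi([w])$. (Note also that Lemma~\ref{l:Uw} requires the hypothesis $\RT_{\min}([w]) = \Phi([w])$ as an assumption, not a conclusion.) So invoking Theorem~\ref{th:PhiPsi}\eqref{Phi} in type $B$ to derive Theorem~\ref{th:LB} assumes the very statement to be proved.

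Your analysis of the skeleton $\cX^w \cong \Lag(\L_0^\perp/\L_0)$ and the sketch of pairing indices into hyperbolic planes is in the right spirit and matches what the paper does to compute $\RT_{\min}([w])$ (though the actual argument tracks the filtration by $U_{\ge j}$ and the pair of quadratic forms $q_j,Q_j$ more carefully than a simple hyperbolic pairing; see Lemmas~\ref{l:B ineq}, \ref{l:rk ineq}, and the inequality~\eqref{min rk}). But even a complete version of that calculation only computes $\RT_{\min}([w])$, not $\Phi([w])$, and without an independent bridge between $\RT_{\min}$ and $\Phi$ it cannot establish Lusztig's formula. The only non-circular route you mention is the last one: follow Lusztig's original argument inside $G$ itself, choosing a minimal-length representative of $[w]$ and analyzing the unipotent orbits meeting $BwB$, as in \cite{Lfromto}. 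That is indeed the proof, and it lives entirely outside the affine Springer fiber framework of this paper.
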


\subsection{Calculation of $\RT_{\min}([w])$ for $|I|$ even}\label{ss:cal PhiBIeven}
Take a shallow element $\g=(\g_{i})_{i\in I}\in (L^{\hs}\frt_{w})^{sh}$, where $\g_{i}\in \cO^{\s_{i}=-1}_{E_{i}}$ is a uniformizer of $\cO_{E_{i}}$. Let $[E_{i}:F]=2n_{i}$ for $i\in I$.

Consider first the case $|I|$ is even. Let $\L'_{0}=\op \pi_{i}^{-1}\cO_{E_{i}}, \L'_{1}=\op \cO_{E_{i}}$. By \eqref{range Lam}, we have $\L=\L'\op \cO_{F}$, where $\L'_{1}\subset \L'\subset \L'_{0}$. Since $\L/t\L=\L'/t\L'\op \CC$ with $\g$ preserving the two summands, and acting by zero on the $\CC$-factor, it is enough to calculate the Jordan type of $\g$ on $\L'/t\L'$, and adding a length one Jordan block gives the Jordan type of $\ev_{\g}(\L)$.

Let $U=\L'_{0}/\L'_{1}$. This is an $r$-dimensional quadratic space equipped with an orthogonal splitting into lines 
\begin{equation*}
U=\bigoplus_{i\in I}\e_{i}
\end{equation*}
where $\e_{i}=\pi_{i}^{-1}\cO_{E_{i}}/\cO_{E_{i}}$, equipped with a nonzero quadratic form $q=\op_{i\in I} q_{i}$ induced from that on $E_{i}$. We have seen that $L:=\L'/\L'_{1}$ is a Lagrangian in $U$. We have the inclusions
\begin{equation*}
t\L'\subset t\cL'_{0}\subset \L'_{1}\subset \L'
\end{equation*}
such that $\g\L'\subset \L'_{1}, \g(t\L'_{0})\in t\L'$. This gives a three-step filtration on $\L'/t\L'$ with associated graded (from sub to quotient)
\begin{equation}\label{3step}
U/L=L^{*}, \quad \L'_{1}/t\L'_{0}=\bigoplus_{i\in I}\cO_{E_{i}}/t\pi_{i}^{-1}\cO_{E_{i}}, \quad L.
\end{equation}
Here the isomorphism $U/L\cong L^{*}$ uses the quadratic form on $U$. From the shape of $\g$, it has a single Jordan block on each summand $\cO_{E_{i}}/t\pi_{i}^{-1}\cO_{E_{i}}$ of $\L'_{1}/t\L'_{0}$. Therefore the Jordan type of the action of $\g $ on $\L'_{1}/t\L'_{0}$ is the following partition of $2n-r$:
\begin{equation}\label{L'}
(2n_{i}-1)_{i\in I}.
\end{equation}
From the filtration, we see that the Jordan type of $\g$ on $\L'/t\L'$ is obtained from the partition \eqref{L'} by adding $0,1$ or $2$ to each part. To compute the Jordan type exactly, we introduce more notation.

For each $j\in \ZZ$, let $I_{j}=\{i\in I|n_{i}=j\}$. Similarly we define subsets $I_{>j}, I_{\ge j}, I_{<j}, I_{\le j}$. Let  $U_{j}$ be the direct sum of $\e_{i}$ such that $i\in I_{j}$ (i.e., $n_{i}=j$). Then $U_{j}$ is nonzero only for $j$ equal to one of  $n_{1},\cdots, n_{r}$, and $U=\op_{j\in \ZZ}U_{j}$. Similarly we denote $U_{<j}=\op_{j'<j}U_{j'}=\op_{i\in I_{<j}}\e_{i}$, $U_{\ge j}=\op_{j'\ge j}U_{j'}=\op_{i\in I_{\ge j}}\e_{i}$, etc. We also let $q_{j}, q_{\ge j}$ be the restriction of $q$ to $U_{j}, U_{\ge j}$, etc.

Consider the filtration on $L$
\begin{equation*}
L_{\le j}=U_{\le j}\cap L.
\end{equation*}
Then $\Gr_{j}L=L_{\le j}/L_{<j}$ is a subspace of $U_{j}$. Let $\ell_{j}=\dim\Gr_{j}L$.

Let $\dim \Gr_{j}L=\ell_{j}$. Consider the endomorphism $\l_{j}$ of $U_{j}$ given by
\begin{equation*}
\l_{j}:=\g^{2j}: U_{j}=\op_{n_{i}=j}\pi_{i}^{-1}\cO_{E_{i}}/\cO_{E_{i}}\to \op_{n_{i}=j}\pi_{i}^{2j-1}\cO_{E_{i}}/\pi_{i}^{2j}\cO_{E_{i}}=\op_{n_{i}=j}t\pi_{i}^{-1}\cO_{E_{i}}/t\cO_{E_{i}}\cong U_{j}.
\end{equation*}
Then $\l_{j}$ is self-adjoint endomorphism of $U_{j}$. Consider the new quadratic form $Q_{j}$ on $U_{j}$ defined by $x\mapsto (x,\l_{j}x)$ where $(\cdot,\cdot)$ is the symmetric bilinear form associated to $q_{j}=q|_{U_{j}}$. Let $r_{j}$ be the rank of $Q_{j}|_{\Gr_{j}L}$.

\begin{lemma}\label{l:q gen}
The element $\g$ is shallow if and only if for each $j\ge1$, the quadratic form $Q_{j}$ on $U_{j}$ is nondegenerate and is in generic position with the original quadratic form $q_{j}$ (i.e., each $\l_{j}\in \End(U_{j})$ is invertible and has distinct eigenvalues).
\end{lemma}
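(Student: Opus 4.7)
My plan is to exploit the root-valuation criterion implicit in the proof of \eqref{Dmin}: $\g\in L^{\hs}\frt_w$ is shallow iff $\val(\a(\g))$ attains its generic minimum for every root $\a$ of $\SO_{2n+1}$. Since each $E_i$ is totally ramified of degree $2n_i$ over $F=\CC\lr t$ (the residue field $\CC$ is algebraically closed), I will write $\g_i=\a_i\pi_i+O(\pi_i^3)$ with $\a_i\in\CC$ (only odd powers in $\pi_i=t^{1/(2n_i)}$, by $\s_i(\g_i)=-\g_i$). The eigenvalues of $\g$ on $V\otimes_F\ov F$ are the Galois conjugates $\tau\g_i$ for $\tau\colon E_i\hookrightarrow\ov F$, $i\in I$, together with $0$; to leading order $\tau\g_i=\a_i\z_\tau t^{1/(2n_i)}$ for some $\z_\tau\in\mu_{2n_i}$.

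The key step will be to identify $\l_j$ explicitly. For $i\in I_j$, the element $\g_i^{2j}$ lies in $F_i$ and has $F$-valuation $2j/(2n_i)=1$, so I define $\bar v_i:=(\g_i^{2j}/t)\bmod\pi_{F_i}\in\CC$, which equals $\a_i^{2j}$. Via the isomorphism $t\pi_i^{-1}\cO_{E_i}/t\cO_{E_i}\isom\pi_i^{-1}\cO_{E_i}/\cO_{E_i}$ given by division by $t$, the map $\l_j$ acts on each summand $\e_i\subset U_j$ as multiplication by $\bar v_i$. Since the $\e_i$ for $i\in I_j$ are mutually $q$-orthogonal (they come from different $E_i$'s), both $q_j$ and $\l_j$ are diagonal in the basis $\{\e_i\}_{i\in I_j}$; hence $Q_j(x)=(x,\l_jx)$ is nondegenerate iff all $\bar v_i\ne 0$, and is in generic position with $q_j$ iff the $\bar v_i$ are pairwise distinct.

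To conclude the equivalence I will classify roots and match conditions: the short roots $\pm e_a$ and the within-$E_i$ long roots force each $\a_i\ne 0$, giving $\bar v_i\ne 0$; long roots mixing $E_i,E_{i'}$ with $n_i=n_{i'}=j$ force $\a_i^{2j}\ne\a_{i'}^{2j}$ (i.e.\ $\a_i/\a_{i'}\notin\mu_{2j}$), giving $\bar v_i\ne\bar v_{i'}$, because otherwise some Galois conjugate of $\g_i$ would share a leading term with some $\pm$-Galois conjugate of $\g_{i'}$ and the corresponding $e_a\pm e_b$ root would acquire higher-than-minimum valuation; and long roots mixing $E_i,E_{i'}$ with $n_i\ne n_{i'}$ have eigenvalue leading terms of distinct valuations, so their valuation is automatically $\min(1/(2n_i),1/(2n_{i'}))$ once the $\g_i$ are uniformizers. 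The main obstacle is the careful bookkeeping: verifying that the generic minimum $j_\a/m$ from \eqref{Dmin} for each root $\a$ agrees with the eigenvalue computation above, so that the equivalence of the lemma follows without any missing root conditions.
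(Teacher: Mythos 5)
Your proposal is correct and follows the same essential route as the paper's proof: both compute the eigenvalues of $\g$ over $\ov F$, identify the root valuations as valuations of pairwise differences (and differences with $0$), derive that shallowness is equivalent to each $\g_i$ being a uniformizer together with a ``distinct leading coefficient'' condition for $E_i,E_{i'}$ of equal degree, and then translate into nondegeneracy of $Q_j$ and simplicity of the spectrum of $\l_j$. The only genuine addition in your write-up is the explicit identification of $\l_j$ as the diagonal operator $\bar v_i=\a_i^{2j}$ on each line $\e_i$ (which the paper asserts without calculation); the ``bookkeeping'' you flag as the main obstacle is in fact already done in your case split by root type, so the argument is complete.
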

\begin{proof} Over an algebraic closure of $F$, the eigenvalues of $\g$ is the multiset $R=\{0\}\cup(\cup_{i\in I}\{\s(\g_{i})|\s\in \Gal(E_{i}/F)\})$. The root valuations of $\g$ are the valuations of $\xi-\xi'$ where $\xi$ and $\xi'$ are two distinct elements from $R$. The root valuations achieve minimum if and only if
\begin{enumerate}
\item Each $\g_{i}$ is a uniformizer of $E_{i}$.
\item If $[E_{i}:F]=[E_{j}:F]$, then after identifying $E_{i}$ and $E_{j}$, then $\s(\g_{i})-\g_{j}$ is again a uniformizer of $E_{i}$ for all $\s\in\Gal(E_{i}/F)$. In other words, $\g_{i}^{[E_{i}:F]}$ and $\g_{j}^{[E_{j}:F]}$ (both are of the form $at+\mbox{higher valuation terms}$, for $a\in \CC^{\times}$) have distinct coefficients in front of $t$.
\end{enumerate}
The first condition above is equivalent to saying that each $Q_{j}$ is non-degenerate. The second condition above is equivalent to that for each $j$, $\l_{j}$ have distinct eigenvalues.
\end{proof}

\begin{lemma}\label{l:B ineq} We have the following inequalities:
\begin{enumerate}
\item For any $j\ge1$, we have 
\begin{equation}\label{ell ineq}
\sum_{j'\ge j}\ell_{j'}\ge \frac{1}{2}\dim U_{\ge j}.
\end{equation}
\item If equality in \eqref{ell ineq} holds for some $j\ge1$, and $j'$ is the largest $j'<j$ such that $\ell_{j'}>0$, then $r_{j'}>0$.
\end{enumerate}
\end{lemma}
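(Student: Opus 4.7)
For (1), the plan is to exploit the orthogonal decomposition $U = U_{<j}\oplus U_{\ge j}$ (coming from the orthogonality of $U = \bigoplus_{i\in I}\e_i$) with projection $p:U\to U_{\ge j}$. The kernel of $p|_L$ is $L\cap U_{<j} = L_{<j}$, so $L_\ge := p(L)$ has dimension $\sum_{j'\ge j}\ell_{j'}$. The key observation is that $L_\ge$ is coisotropic in $U_{\ge j}$: any $x\in U_{\ge j}$ perpendicular to $L_\ge$ is in fact perpendicular to all of $L$ (since $x$ pairs trivially with $U_{<j}$), hence $x\in L^\perp = L$ and therefore $x\in L\cap U_{\ge j}\subseteq L_\ge$. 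This yields $\dim L_\ge \ge \frac{1}{2}\dim U_{\ge j}$, which is the desired inequality.

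For (2), the first step is to extract from equality in (1) the fact that $M := \Gr_{j'}L$ is coisotropic in $(U_{j'}, q_{j'})$. Equality forces $L_{<j}$ to be Lagrangian in $U_{<j}$. By the definition of $j'$, $L_{<j} = L_{\le j'}\subseteq U_{\le j'}$, so the orthogonal complement $U_{>j',<j}$ of $U_{\le j'}$ in $U_{<j}$ is contained in $L_{<j}^\perp = L_{<j}\subseteq U_{\le j'}$; this forces $U_{>j',<j} = 0$, and hence $L_{\le j'}$ is Lagrangian in $U_{\le j'}$. Applying the coisotropy argument of (1) to the orthogonal splitting $U_{\le j'} = U_{<j'}\oplus U_{j'}$ then shows $M^\perp\subseteq M$.

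The main obstacle is to conclude $r_{j'} > 0$ from this coisotropy, and here shallowness is essential. I would argue by contradiction: if $Q_{j'}|_M \equiv 0$, then polarizing and using that $\l_{j'} = \g^{2j'}|_{U_{j'}}$ is $q_{j'}$-self-adjoint (since $\g_i\in \cO_{E_i}^{\s_i = -1}$ acts skew-adjointly on $E_i$, so $\g_i^{2j'}$ is self-adjoint) gives $q_{j'}(\l_{j'}x, y) = 0$ for all $x, y\in M$, i.e., $\l_{j'}M\subseteq M^\perp\subseteq M$. Hence $M$ is $\l_{j'}$-stable. By Lemma \ref{l:q gen}, shallowness implies $\l_{j'}$ is invertible with distinct eigenvalues, so the eigenspaces of $\l_{j'}$ on $U_{j'}$ are one-dimensional and mutually $q_{j'}$-orthogonal, with $q_{j'}(e_k, e_k)\ne 0$ on each eigenvector $e_k$. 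Then $M$, being $\l_{j'}$-stable, must be the span of some subset $\{e_k\}_{k\in S}$; but for any such $e_k$, $Q_{j'}(e_k) = \mu_k q_{j'}(e_k, e_k)\ne 0$ (where $\mu_k\ne 0$ is the eigenvalue), contradicting $Q_{j'}|_M\equiv 0$ provided $S\ne\emptyset$, which holds since $\ell_{j'} = \dim M > 0$.
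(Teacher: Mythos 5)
Your proof is correct and follows essentially the same path as the paper's: in part (1) both argue that the image of the Lagrangian $L$ under the orthogonal projection to $U_{\ge j}$ is coisotropic, and in part (2) both deduce from equality that $U_{>j',<j}=0$, that $\Gr_{j'}L$ is coisotropic in $U_{j'}$, and then appeal to shallowness. The one difference is that where the paper splits part (2) into two cases according to whether $\dim\Gr_{j'}L$ equals or exceeds $\tfrac{1}{2}\dim U_{j'}$ (invoking ``generic position'' without elaboration in the equality case), your unified eigenspace argument --- $Q_{j'}$-isotropy plus coisotropy forces $\l_{j'}$-stability, hence a span of $\l_{j'}$-eigenvectors on each of which $Q_{j'}$ is nonzero --- handles both cases at once and makes the genericity step fully explicit.
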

\begin{proof}
(1) Since $L\subset U$ is a Lagrangian, its image in $U_{\ge j}$ is co-isotropic, hence the inequality \eqref{ell ineq} follows.

(2) If \eqref{ell ineq} holds for $j$, then $L$ has a splitting $L_{<j}\op L_{\ge j}$, with $L_{<j}$ a Lagrangian in $U_{<j}$ and $L_{\ge j}$ a Lagrangian in $U_{\ge j}$. Suppose $L_{<j}\ne0$, then $L_{<j}=L_{\le j'}$ by the definition of $j'$. Then $\Gr_{j'}L\subset L_{j'}$ is coisotropic.    If $\ell_{j'}>\frac{1}{2}\dim U_{j'}$, then $Q_{j'}$ cannot be identically zero on $\Gr_{j'}L$, hence $r_{j'}>0$. If $\ell_{j'}=\frac{1}{2}\dim U_{j'}$, then $\Gr_{j'}L$ is a Lagrangian for $q_{j'}$. Suppose in the contrary that $r_{j'}=0$, then  $\Gr_{i'}L$ is also a Lagrangian for $Q_{j'}$. But this is impossible as $Q_{j'}$ and $q_{j'}$ are in generic position by Lemma \ref{l:q gen}.
\end{proof}

\begin{lemma}\label{l:rk ineq} For  any $N\ge0$, we have
\begin{equation*}
\rk(\g^{N}|_{\L'/t\L'})=\sum_{j\in\ZZ, 2j-1\ge N}\left(2\ell_{j}+(2j-1-N)\dim U_{j}\right)+\begin{cases}r_{N/2}, & N \mbox{ even}\\ 0, & N\mbox{ odd.}\end{cases}
\end{equation*}

\end{lemma}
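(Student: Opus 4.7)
The plan is to compute $\rk(\g^N|_M)$ directly using the three-step filtration $0\subset F_0\subset F_1\subset M$ from \S\ref{ss:cal PhiBIeven}, where $F_0 = t\L'_0/t\L' \cong L^*$, $F_1 = \L'_1/t\L'$, and $F_1/F_0 \cong \bigoplus_i \cO_{E_i}/\pi_i^{2n_i-1}\cO_{E_i}$. Choose a splitting $M = V_0\oplus V_1\oplus V_2$ compatible with the filtration. Since $\g(t\L'_0)\subset t\L'_1\subset t\L'$ and $\g(\L')\subset \L'_1$, we have $\g(F_0)=0$ and $\g(M)\subseteq F_1$, giving the block form
\begin{equation*}
\g = \begin{pmatrix} 0 & \g_{01} & \g_{02} \\ 0 & \g_{11} & \g_{12} \\ 0 & 0 & 0 \end{pmatrix},
\end{equation*}
with $\g_{11}$ a direct sum of Jordan blocks of size $2n_i-1$. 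The nondegenerate form on $V$ descends to one on $M$ with $F_0^\perp = F_1$, giving a nondegenerate form $Q$ on $V_1$; anti-self-adjointness of $\g\in\so(V)$ forces $\g_{11}$ anti-self-adjoint for $Q$ and $\g_{01} = -\g_{12}^T Q$.

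Induction on $N$ gives, for $N\geq 2$,
\begin{equation*}
\g^N(x_0, x_1, x_2) = (\g_{01}\g_{11}^{N-2}y,\ \g_{11}^{N-1}y,\ 0),\qquad y:=\g_{11}x_1+\g_{12}x_2\in Y:=\g_{11}V_1+\g_{12}V_2,
\end{equation*}
so $\rk(\g^N) = \dim Y - \dim K_N$ with $K_N=\{y\in Y: \g_{11}^{N-1}y=0,\ y\perp_Q \g_{11}^{N-2}\g_{12}L\}$ (the second condition from $\g_{01}=-\g_{12}^T Q$). Grading $V_1 = \bigoplus_k V_1^{(k)}$ by $\pi$-degree, the degree-$0$ piece $V_1^{(0)}$ identifies with $U$ via $\pi_i^0\leftrightarrow c_{i,0}^{-1}\pi_i^{-1}$ (where $u_i=c_{i,0}+c_{i,1}\pi_i+\cdots$), making $\g_{12}L\bmod \g_{11}V_1$ correspond to $L\subset U$; hence $Y = \g_{11}V_1 \oplus L$. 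The kernel condition $\g_{11}^{N-1}y=0$ then cuts $Y$ down to $L_{\leq\lfloor N/2\rfloor}$ (dimension $\sum_{j\leq\lfloor N/2\rfloor}\ell_j$) in degree $0$ and to the top of each Jordan block in positive degree, contributing $\sum_{2j-1>N}(2j-1-N)|I_j|$ by standard Jordan-block analysis.

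Finally, $y\perp_Q \g_{11}^{N-2}\g_{12}L$ is handled using the anti-diagonal structure of $Q_i$ (forced by symmetry and anti-self-adjointness of $\g_{11}$): a direct calculation shows $(\g_{11}^{N-2}\g_{12}v)_i = a_i u_i^{N-1}\pi_i^{N-2}\pmod{\pi_i^{2n_i-1}}$, which vanishes for $n_i<\lceil N/2\rceil$; for $y$ in $Y\cap\ker\g_{11}^{N-1}$, the pairing picks out the degree-$(2n_i-N)$ coefficient of $(y)_i$ times $a_ic_{i,0}^{N-1}$, with higher-order corrections in $u_i$ only pairing into degrees excluded by the kernel condition. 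For $n_i>N/2$ the relevant degree is $\geq 1$ and the Lagrangian identity for $L\subset U$ yields the factor $2\ell_j$ in the formula. In the critical case $N=2m$, $n_i=m$, the pairing reaches the degree-$0$ part of $y$ (which lies in $L_{\leq m}$); the induced bilinear form on $L_{\leq m}$ factors through $\Gr_m L$, and using $\pi_i^{2n_i}=\textup{unit}\cdot t$ together with the definition of $\l_m$ identifies it, up to nonzero scaling, with the form $Q_m(x,y)=q_m(x,\l_m y)$ restricted to $\Gr_m L$, whose rank is $r_m$; this supplies the extra $+r_m$ term for even $N$. The cases $N=0,1$ are verified directly. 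The main obstacle is this last identification: one must show that higher-order corrections in $u_i$ shift into degrees exceeding $2n_i-2$ in $V_1$ (where they vanish), isolating the leading coefficient $c_{i,0}^{2m-1}$, which is precisely the scalar defining $\l_m$ on $\e_i$; this uses anti-self-adjointness of $\g_{11}$ together with the kernel condition $\g_{11}^{N-1}y=0$.
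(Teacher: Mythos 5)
The paper does not actually supply a proof of this lemma --- it states only that ``the proof is a tedious induction argument'' and omits it --- so there is nothing to compare your argument against line by line. Your direct computation is a legitimate substitute, and I checked its key steps: the block form of $\g$ relative to the filtration $t\L'_0/t\L'\subset\L'_1/t\L'\subset\L'/t\L'$ is correct, the identity $\g^N(x)=(\g_{01}\g_{11}^{N-2}y,\g_{11}^{N-1}y,0)$ with $y=\g_{11}x_1+\g_{12}x_2$ holds for $N\ge2$, and the resulting bookkeeping does reproduce the stated formula: one copy of $\sum_{2j-1\ge N}\ell_j$ comes from the constraint that the degree-zero part of an element of $Y\cap\ker\g_{11}^{N-1}$ must lie in $L_{\le\lfloor N/2\rfloor}$, the other copy from the perfect pairing between the free degree-$(2n_i-N)$ coefficients and the image of $L$ in $U_{>N/2}$, the term $\sum(2j-1-N)\dim U_j$ from the Jordan-block count in positive degrees, and the term $r_{N/2}$ from the residual pairing on the blocks with $2n_i=N$, which I verified is block-by-block equal to $c_{i,0}^{2n_i}\,\Tr_{F_i/F}(d_i\varpi_i^{n_i-1})\,a_ib_i$ and hence agrees with $Q_{N/2}$ up to a single global scalar, so the rank on $\Gr_{N/2}L$ is indeed $r_{N/2}$. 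Three points should be made explicit in a full write-up: (a) the identity $\g_{01}=-\g_{12}^{T}Q$ requires choosing the splitting with $V_2\subset V_1^{\perp}$ (possible since $F_0^{\perp}=F_1$ and $Q$ is nondegenerate on $V_1$); (b) you must justify that, as $y$ ranges over $Y\cap\ker\g_{11}^{N-1}$, the degree-$(2n_i-N)$ coefficients for $n_i>N/2$ and the degree-zero datum $w\in L_{\le\lfloor N/2\rfloor}$ vary freely and independently (true, because $\g_{11}V_1$ fills all positive degrees of every block); and (c) the radical of the pairing must be computed on the $v\in L$ side, where the vanishing forced by the blocks with $n_i>N/2$ first puts $v$ into $L_{\le\lfloor N/2\rfloor}$, so that only $Q_{N/2}$ restricted to $\Gr_{N/2}L$ (and not to the full projection of $L$ to $U_{N/2}$) enters. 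With these spelled out, the argument is complete and, unlike the advertised induction on $N$, it is a closed-form computation.
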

The proof is a tedious induction argument. We omit it here.

\subsection{Proof of (i) for type $B$, $[w]$ elliptic and $|I|$ even}\label{ss:B shallow}

Let $\g\in(L^{\hs}\frt_{w})^{sh}$. From Lemma \ref{l:B ineq} and Lemma \ref{l:rk ineq},  we have for any $N\ge0$
\begin{equation}\label{min rk}
\rk(\g^{N}|_{\L'/t\L'})\ge \sum_{j\in\ZZ, 2j-1\ge N}(2j-N)\dim U_{j} +\begin{cases}1 & \mbox{if $\dim U_{N/2}\ne0$,} \\ 0 & \mbox{otherwise.}\end{cases}
\end{equation}
The equalities hold for all $N$ only for the following choice of $(\ell_{j}, r_{j})$:
\begin{eqnarray}\label{min ell}
\sum_{j'\ge j}\ell_{j'}=\lceil\frac{1}{2}\dim U_{\ge j}\rceil,\\
\label{min r} 
r_{j}=\begin{cases}1 & \mbox{if $U_{j}\ne0$ and $\dim U_{>j}$ is even}\\ 0 & \mbox{otherwise.}\end{cases}
\end{eqnarray}
in which case $\g|_{\L'/t\L'}$ is in the orbit  $\Phi([w])$ in Theorem \ref{th:LB}. The inequality \eqref{min rk} shows that the orbit of $\g|_{\L'/t\L'}$ always contains $\Phi([w])$ in its closure. 

It remains to show that  choices of $(\ell_{j}, r_{j})$ as in \eqref{min ell} and \eqref{min r} can be achieved for some choice of the Lagrangian $L\subset U$. We will
prove a more precise result  below in Lemma \ref{l:Rg}, which in particular implies such $L$ exists.  \qed

\subsection{Proof of (iii) for type $B$, $[w]$ elliptic and $|I|$ even: preparation}
Let $R_{\g}\subset \Lag(U)$ be the space of Lagrangians satisfying the conditions \eqref{min ell} and \eqref{min r}.  Note that if $R_{\g}\ne\vn$, then $\RT_{\min}(\g)=\{\cO\}$ where $\cO=\Phi([w])$, and $R_{\g}=\Gr_{\g,\cO}$. Clearly $\pi_{0}(LT_{w})\cong \{\pm1\}^{I}$ acts on $U$ by acting on each line $\e_{i}$ by a sign, and preserving the forms $q$ and $Q_{i}$. 

Let $j_{1}>j_{2}>\cdots>j_{s}$ be the set of $j$ such that $\dim U_{j}\ne0$. There is a unique partition of $J=\{j_{1},\cdots, j_{s}\}$ into parts $J(1)\coprod J(2)\coprod\cdots J(a)$ such that
\begin{itemize}
\item Each $J(\a)$ consists of consecutive numbers in $J$;
\item $\sum_{j\in J(\a)}\dim U_{j}$ is even for all $1\le \a\le a$;
\item The partition cannot be further refined and still satisfies the above two conditions.
\end{itemize}
Let $U(\a)=\op_{j\in J(\a)}U_{j}$. Consider the embedding 
\begin{equation*}
\D_{a}:\{\pm1\}^{a}\incl \{\pm1\}^{I}
\end{equation*} 
given by $(\s_{1},\cdots,\s_{a})\mapsto (\d_{i})_{i\in I}$ with $\d_{i}=\s_{\a(i)}$, where $1\le \a(i)\le a$ is such that $\e_{i}\subset U(\a
(i))$ (i.e., $n_{i}\in J(\a(i))$).

\begin{lemma}\label{l:Rg}
Let $\g\in L^{+}\frt_{w}$ be a shallow element. Under the above notations,  the action of $\{\pm1\}^{I}$ on $R_{\g}$ is transitive with stabilizer equal to the image of the embedding $\D_{a}$. 
\end{lemma}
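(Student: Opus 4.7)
The plan is to reduce to a single block, parametrize $R_\g$ within a block by sign sequences, and then verify the stabilizer is diagonal.

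\textbf{Stage 1: Block decomposition.} Since $U=\bigoplus_i\e_i$ is $q$-orthogonal, $U=U_{<j}\oplus U_{\ge j}$ orthogonally for every $j$. For the projection $\pi_j\colon L\to U_{\ge j}$ of a Lagrangian $L\subset U$, the standard identity $\pi_j(L)^{\perp_{U_{\ge j}}}=L\cap U_{\ge j}$ shows that $\dim\pi_j(L)=\tfrac12\dim U_{\ge j}$ forces $L\cap U_{\ge j}$ to be a Lagrangian of $U_{\ge j}$, and then orthogonality of the ambient decomposition gives $L=(L\cap U_{<j})\oplus(L\cap U_{\ge j})$. By \eqref{min ell} this equality happens at each block boundary (where $\dim U_{\ge j}$ is even), so iterating yields $L=\bigoplus_\a L(\a)$ with each $L(\a)\subset U(\a)$ a Lagrangian satisfying the block-local versions of \eqref{min ell}, \eqref{min r}. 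Since $\{\pm1\}^I=\prod_\a\{\pm1\}^{I(\a)}$ acts block-wise, it suffices to treat one block and show that $\{\pm1\}^{I(\a)}$ acts transitively on $R_\g(\a)$ with stabilizer the overall sign $\{\pm1\}\subset\{\pm1\}^{I(\a)}$.

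\textbf{Stage 2: Parametrization and counting within a block.} Fix a block and drop the block index. Non-splittability gives $m_1,m_d$ odd and $m_2,\ldots,m_{d-1}$ even. Unpacking \eqref{min ell}, \eqref{min r} gives explicit dimensions $\ell_{j_k}$ together with rank conditions on $Q_{j_k}|_{\Gr_{j_k}L}$. Construct $L$ top-down: at $j_1$, $\Gr_{j_1}L$ is an $(m_1{+}1)/2$-dimensional subspace of $U_{j_1}$ on which $Q_{j_1}$ has rank $1$; at each middle $j_k$, $\Gr_{j_k}L$ lies in a $Q_{j_k}$-isotropic subspace of $U_{j_k}$; at $j_d$, $\Gr_{j_d}L$ is $(m_d{-}1)/2$-dimensional. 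The $q$-isotropy of the whole $L$ imposes quadratic constraints tying consecutive levels together and, using the distinct-eigenvalue hypothesis of Lemma \ref{l:q gen}, can be solved level-by-level, introducing one binary ($\pm$) choice per new coordinate. The resulting explicit bijection between $R_\g$ and $\{\pm1\}^I/\{\pm1\}_{\mathrm{diag}}$ gives $|R_\g|=2^{|I|-1}$; in particular $R_\g\ne\vn$, which also completes the existence needed for statement (i) of \S\ref{ss:re}.

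\textbf{Stage 3: Transitivity and stabilizer.} The diagonal $-\id$ stabilizes every $L$. Conversely, if $(s_i)\in\{\pm1\}^I$ stabilizes $L$, write $U=U^+\oplus U^-$ for the sign eigenspaces; invariance gives $L=(L\cap U^+)\oplus(L\cap U^-)$, and isotropy plus dimension counting forces $L\cap U^\pm$ to be a Lagrangian in $U^\pm$. A parity analysis of the induced level-dimensions on $U^\pm$, using that non-splittability of the original block makes $\dim U_{\ge j_k}$ odd for each $k<d$ (so the required sum $\lceil\tfrac12\dim U^+_{\ge j_k}\rceil+\lceil\tfrac12\dim U^-_{\ge j_k}\rceil=\lceil\tfrac12\dim U_{\ge j_k}\rceil$ has no room for both parts to be odd), combined with the distinct-eigenvalue hypothesis of Lemma \ref{l:q gen} at each level, forces $U^+$ or $U^-$ to exhaust $U$, i.e., $(s_i)=\pm(1,\ldots,1)$. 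Thus the stabilizer is exactly $\{\pm1\}$, so each orbit has size $2^{|I|-1}$, and transitivity is automatic from the count in Stage 2. The main obstacle is the explicit counting in Stage 2: showing that the coupled quadratic conditions on lifts between consecutive levels produce exactly $2^{|I|-1}$ solutions requires a sustained matrix calculation exploiting the eigenvector structure of the pencils $q_{j_k}-tQ_{j_k}$.
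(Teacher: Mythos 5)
Your overall architecture matches the paper's: first split $L$ into blocks $L(\alpha)\subset U(\alpha)$ using \eqref{min ell} at block boundaries, then analyze a single non-splittable block level by level using the genericity of the pencil $(q_j,Q_j)$ guaranteed by Lemma \ref{l:q gen}. Stage 1 is essentially the paper's first reduction and is fine.

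The genuine gap is Stage 2, and you have named it yourself: you assert that the coupled isotropy conditions "can be solved level-by-level, introducing one binary choice per new coordinate," yielding $|R_\g|=2^{|I|-1}$, but you then concede that verifying this "requires a sustained matrix calculation" which you do not perform. That calculation is the entire content of the lemma: without it you have neither nonemptiness of $R_\g$ (which is also what statement (i) needs), nor the orbit count, nor — since your Stage 3 deduces transitivity by dividing $|R_\g|$ by the stabilizer order — transitivity. The paper closes this gap by two devices. First, an auxiliary fact about a generic pencil of quadrics on a space $U$ of dimension $2m$ (resp. $2m+1$): the set of Lagrangians $L$ for $q_0$ with $\operatorname{rk}(q_1|_L)=1$ (resp. of common maximal isotropics) is a torsor under $\Aut(U,q_0,q_1)/\{\pm1\}\cong\{\pm1\}^{\dim U}/\{\pm1\}$. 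This is proved by showing every such $L$ is cyclic, $L=\operatorname{Span}\{v,\l v,\dots,\l^{m-1}v\}$ for $\l$ the self-adjoint operator with $q_1(x)=(x,\l x)$, and that $v$ is cut out by the diagonal quadrics $(v,\l^i v)=0$; this is where the distinct-eigenvalue hypothesis is actually used quantitatively. Second, an induction over levels in which, given the data at level $j_{\sigma+1}$ and the solution on $U_{\ge j_\sigma}$, the remaining ambiguity in $L$ is exactly an isotropic line in a $2$-dimensional quadratic space $(L'+L'^{\bot})/L'\oplus L''/L''^{\bot}$ — hence exactly two choices, swapped by the diagonal $-1$ of that level. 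Your "one binary choice per new coordinate" is the right heuristic, but stated at the level of coordinates rather than levels it is not even clearly the correct bookkeeping, and in any case it is unproved. Your Stage 3 stabilizer argument is plausible (and correctly notes that genericity, not just parity, is needed to rule out a common isotropic subspace for $q_j$ and $Q_j$), but as written it cannot rescue the proof because transitivity still rests on the missing count.
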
 
\begin{proof}
Let $L(\a)=L\cap U(\a)$.  The condition \eqref{min ell} implies that $\sum_{j\in J(\a)} \ell_{j}=\frac{1}{2}\dim U(\a)$ for all $i$. Therefore, $L=\op_{1\le \a\le a}L(\a)$, and each $L(\a)$ is a Lagrangian in $U(\a)$ satisfying the relevant subset of conditions \eqref{min ell} and \eqref{min r}.   This allows us to reduce the argument to the case $U=U(1)$. In the sequel we assume $U=U(1)=U_{j_{1}}\op\cdots\op U_{j_{s}}$. We will show that the action of $\{\pm1\}^{I}/\D\{\pm1\}$ is simple-transitive on $R_{\g}$.

There are two cases: 
\begin{enumerate}
\item $s=1$, and $\dim U=\dim U_{j_{1}}$ is even (and positive). 
\item $s\ge2$, and $\dim U_{j_{1}}$ and $\dim U_{j_{s}}$ are odd, $\dim U_{j_{2}},\dots, \dim U_{j_{s-1}}$ are even.
\end{enumerate}
Consider case (1) first. We denote $j=j_{1}$. In this case all $E_{i}$ have the same degree $2j$ over $F$. Condition \eqref{min r} implies that $L$ (a Lagrangian in $U$) is such that $Q|_{L}$ has rank $1$.  Since $\l$ is shallow, $Q$ and the original quadratic form $q$ on $U$ are in generic position. By Lemma \ref{l:q}(1), the choices of such $L$ is non-empty, and form a torsor under $\Aut(U,q, Q)/\pm1$.

Consider case (2). Let $u_{j}=\dim U_{j}$. Condition \eqref{min ell} implies that $\ell_{j_{1}}=(u_{j_{1}}+1)/2$, $\ell_{j_{2}}=u_{j_{2}}/2,\cdots, \ell_{j_{s-1}}=u_{j_{s-1}}/2$ and $\ell_{j_{s}}=(u_{j_{s}}-1)/2$. 

For $1\le \s\le s$, considering the space $U_{\ge j_{\s}}$ instead of $U$, and  let $X_{\s}$ be the space of minimal co-isotropic subspaces $L\subset U_{\ge j_{\s}}$ such that the analogs of the two conditions \eqref{min ell} and \eqref{min r} hold. In other word, $L\in X_{\s}$ if and only if $\Im(L\to U_{\ge j_{\s'}})$ is minimal co-isotropic for all $1\le \s'\le \s$ and that, setting $L_{j_{\s'}}=(L\cap U_{\le j_{\s}})/(L\cap U_{< j_{\s'}})$,  $Q_{j_{1}}|_{L_{j_{1}}}$ has rank $1$ and $Q_{j_{\s'}}|_{L_{j_{\s'}}}$ is zero for $1<\s'\le \s$. We prove by induction on $\s$ that $\{\pm1\}^{I_{\ge j_{\s}}}/\D\{\pm1\}$ acts simply transitively on $X_{\s}$. 

For $\s=1$, $X_{1}$ is the space of $L\subset U_{j_{1}}$ that is minimal coisotropic for both $q_{j_{1}}$ and $Q_{j_{1}}$. The map $L\mapsto L^{\bot}$ sets a bijection between $X_{1}$ and $Y(q_{j_{1}},Q_{j_{1}})$ in Lemma \ref{l:q}(2). Therefore $X_{1}$ is a torsor under $\{\pm1\}^{I_{ j_{1}}}/\D\{\pm1\}$.

Suppose the statement is proven for $\s\ge1, \s<s-1$, we show that it is also true for $\s+1$. Note by our assumption $\dim U_{j_{\s+1}}$ is even. Let $L\in X_{\s+1}$, and let $L'=L\cap U_{j_{\s+1}}$ and $L''=L/L'\subset U_{\ge j_{\s}}$. By assumption, $\dim L'=\frac{1}{2}u_{j_{\s+1}}$, and $L'$ is a Lagrangian for $Q_{j_{\s+1}}$. On the other hand, $L''$ is minimal coisotropic for $q_{\ge j_{\s}}$ and $L$ is minimal coisotropic for $q_{\ge j_{\s+1}}$, therefore, $q_{j_{\s+1}}|_{L'}$ has rank $\le 1$. Since there does not exist a nonzero subspace of $U_{j_{\s+1}}$ that is isotropic for both $q_{j_{\s+1}}$ and $Q_{j_{\s+1}}$ (because they are in generic position), $q_{j_{\s+1}}|_{L'}$ has rank $1$. Therefore $L'\in X(q_{j_{\s+1}},Q_{j_{\s+1}})$ (see notation of Lemma \ref{l:q}(1)). By  Lemma \ref{l:q}(1), $X(q_{j_{\s+1}},Q_{j_{\s+1}})$ is a torsor under $\{\pm1\}^{I_{j_{\s+1}}}/\D\{\pm1\}$.  Now $L''\in X_{\s}$, and by inductive hypothesis, $X_{\s}$ is a torsor under $\{\pm1\}^{I_{\ge j_{\s}}}/\D\{\pm1\}$. It remains to show that given $L'\in X(q_{j_{\s+1}},Q_{j_{\s+1}})$ and $L''\in X_{\s}$, there are exactly two $L\in X_{\s+1}$ such that $L\cap U_{j_{\s+1}}=L'$ and $\Im(L\to U_{\ge j_{\s}})=L''$ that are interchanged by the diagonal element $\D(-1)\in \{\pm1\}^{I_{j_{\s+1}}}$. Indeed, any such $L$ sits between
\begin{equation*}
L'\op (L'')^{\bot}\subset L\subset (L'+L'^{\bot})\op L''
\end{equation*}
Here the orthogonal complements are taken with respect to $q$. Since $q_{j_{\s+1}}|_{L'}$ has rank $1$, $(L'+L'^{\bot})/L'$ is a $1$-dimensional quadratic space, and so is $L''/L''^{\bot}$. Therefore the choices for $L$ is the same as choices of an isotropic line in the $2$-dimensional quadratic space $(L'+L'^{\bot})/L'\op L''/L''^{\bot}$. From this we see that there are exactly two choices of $L$ that are interchanges by $\D(-1)\in \{\pm1\}^{I_{ j_{\s+1}}}$.

By induction we have proved the statement for $\s=s-1$. Finally we show that the statement holds for $\s=s$. The arugment is almost identical to the previous paragraph. This time we have $L'=L\cap U_{j_{s}}\in Y(q_{j_{s}}, Q_{j_{s}})$ is a torsor under $\{\pm1\}^{I_{j_{s}}}/\D\{\pm1\}$ by Lemma \ref{l:q}(2), $L''\in X_{s-1}$ is a torsor under $\{\pm1\}^{I_{\ge j_{s-1}}}/\D\{\pm1\}$ by what has already been proven by induction. Then 
\begin{equation*}
L'\op (L'')^{\bot}\subset L\subset L'^{\bot}\op L''
\end{equation*}
from which we see there are exactly two choices of $L$ that are interchanges by $\D(-1)\in \{\pm1\}^{I_{ j_{s}}}$. We can now conclude that $X_{s}$ is a torsor under $\{\pm1\}^{I}/\D\{\pm1\}$.
\end{proof}

We have used the following well-known facts about quadrics.
\begin{lemma}\label{l:q} Let $U$ be a finite-dimensional vector space over $\CC$ with two nondegenerate quadratic forms $q_{0}$ and $q_{1}$ in generic position, so that $\Aut(U,q_{0},q_{1})$ is isomorphic to $\{\pm1\}^{\dim U}$.
\begin{enumerate}
\item Suppose $\dim U=2m$. Let $X(q_{0},q_{1})$ be the space of subspaces $L\subset U$ which is a Lagrangian for $q_{0}$ and $q_{1}|_{L}$ has rank $1$. Then $X(q_{0},q_{1})$ is a torsor under $\Aut(U,q_{0},q_{1})/\pm1$. 
\item Suppose $\dim U=2m+1$. Let $Y(q_{0},q_{1})$ be the space of subspaces $M\subset U$ that are maximal isotropic for both $q_{0}$ and $q_{1}$. Then $Y(q_{0},q_{1})$ is a torsor under $\Aut(U,q_{0},q_{1})/\pm1$. 
\end{enumerate}
\end{lemma}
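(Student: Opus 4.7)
The plan is to reduce both parts to a Vandermonde count via simultaneous diagonalization. Since $q_{0}$ and $q_{1}$ are in generic position, I first fix a common orthogonal basis $e_{1},\ldots,e_{n}$ with $n=\dim U$, writing $q_{0}(v)=\sum a_{i}x_{i}^{2}$ and $q_{1}(v)=\sum a_{i}\lambda_{i}x_{i}^{2}$ for $v=\sum x_{i}e_{i}$, where $a_{i}\neq 0$ and the $\lambda_{i}$ are pairwise distinct. Let $A=\mathrm{diag}(\lambda_{1},\ldots,\lambda_{n})$, so that $q_{1}(u,v)=q_{0}(Au,v)$, and define $q_{k}(v):=q_{0}(A^{k}v,v)=\sum a_{i}\lambda_{i}^{k}x_{i}^{2}$. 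The group $\Aut(U,q_{0},q_{1})\cong\{\pm 1\}^{n}$ acts by sign changes of coordinates, and since the global $-1$ fixes every linear subspace, the induced action on $X$ and $Y$ factors through a quotient of order $2^{n-1}$. The goal is to exhibit $X$ (resp.\ $Y$) as a single free orbit.

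The crux is the following \emph{cyclic description}: every $L\in X$ (in part (1), with $n=2m$) equals $\Span(v,Av,\ldots,A^{m-1}v)$ for a unique $[v]\in\PP(U)$ whose coordinates are all nonzero, and similarly every $M\in Y$ (in part (2), with $n=2m+1$). To prove this I consider the filtration $L^{(k)}:=L\cap A^{-1}L\cap\cdots\cap A^{-k}L$ (and analogously for $M$). The operator $A$ sends $L^{(k)}$ into $L^{(k-1)}$, and composing with the projection to $L^{(k-1)}/L^{(k)}$ yields a map whose kernel is $L^{(k+1)}$. Inductively $L^{(k-1)}/L^{(k)}$ is one-dimensional, and this map must be surjective: otherwise $AL^{(k)}\subset L^{(k)}$ would force $L^{(k)}$ to be a sum of eigenlines $\bigoplus_{i\in S}\CC e_{i}$, which is incompatible with $L^{(k)}\subset L$ being $q_{0}$-isotropic (it would force $a_{i}=0$ for $i\in S$). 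Hence $\dim L^{(k+1)}=\dim L^{(k)}-1$ at every stage, so $\dim L^{(m-1)}=1$, and any generator $v$ reconstructs $L$ as its $A$-cyclic span.

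Granted the cyclic description, the rest is pure linear algebra. For $v\in L^{(m-1)}$ the identities $q_{0}(A^{i}v,A^{j}v)=q_{i+j}(v)$ and $q_{1}(A^{i}v,A^{j}v)=q_{i+j+1}(v)$ turn the isotropy conditions into the linear system $q_{k}(v)=0$ for $0\le k\le 2m-2$ in part (1), or $0\le k\le 2m-1$ in part (2). Substituting $y_{i}=a_{i}x_{i}^{2}$ turns this into a linear system with Vandermonde coefficient matrix $(\lambda_{i}^{k})$; since the $\lambda_{i}$ are distinct the matrix has maximal rank, so the solution space is exactly one-dimensional, and every $y_{i}$ on this line is nonzero (a vanishing $y_{j}$ would force the remaining coordinates to vanish via the invertible $(n-1)\times(n-1)$ Vandermonde submatrix). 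In part (1) the rank-exactly-$1$ requirement on $q_{1}|_{L}$ reduces to $q_{2m-1}(v)\neq 0$, which holds automatically because the full $(2m)\times 2m$ Vandermonde matrix is invertible. Extracting $x_{i}=\pm\sqrt{y_{i}/a_{i}}$ then gives exactly $2^{n-1}$ projective solutions.

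Finally, the quotient $\Aut(U,q_{0},q_{1})/\pm 1$ has order $2^{n-1}$ and acts simply transitively on the set of sign patterns modulo the global sign; since each diagonal element $\mathrm{diag}(\epsilon_{i})$ commutes with $A$, this action carries cyclic spans to cyclic spans, so $X$ and $Y$ are torsors under $\Aut(U,q_{0},q_{1})/\pm 1$. The principal obstacle is the cyclic description in the second paragraph: the $q_{0}$-isotropy of $L$ (resp.\ $M$), combined with the distinctness of the eigenvalues of $A$, is precisely what prevents the filtration $L^{(k)}$ from stabilizing prematurely and forces the whole subspace to be $A$-cyclic.
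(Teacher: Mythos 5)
Your proof is correct and follows essentially the same route as the paper's: the filtration $L^{(k)}=L\cap A^{-1}L\cap\cdots\cap A^{-k}L$, the cyclic-span description $L=\Span\{v,Av,\dots,A^{m-1}v\}$ obtained by ruling out premature stabilization via eigenlines, and the reduction to a Vandermonde system in the eigenbasis. The only point worth making explicit is the base case of your filtration induction, namely $\dim L^{(0)}/L^{(1)}=1$: in part (1) this is precisely the hypothesis that $q_{1}|_{L}$ has rank $1$ (since $L^{(1)}=L\cap A^{-1}L$ is the radical of $q_{1}|_{L}$ once one uses $L^{\perp_{q_{0}}}=L$), while in part (2) it follows from $AM\subset M^{\perp_{q_{0}}}$ together with $\dim M^{\perp_{q_{0}}}/M=1$.
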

\begin{proof}
(1) Let $(\cdot,\cdot)$ be the symmetric bilinear form associated with $q_{0}$. We may write $q_{1}(x)=(x,\l x)$ for a unique $\l\in\Aut(U)$, self-adjoint with respect to $q_{0}$. Then $q_{0}$ and $q_{1}$ is in generic position if and only if $\l$ is regular semisimple.  Let $L^{i}=L\cap \l^{-1}L\cap\cdots \cap \l^{-i}L$ for $i=0,1\cdots, m$. Clearly $L^{i}\supset L^{i+1}$. Since $q_{1}|_{L}$ has rank $1$, $L^{1}=L\cap \l^{-1}L$ has codimesion $1$ in $L$, i.e., $\dim L^{1}=m-1$. We have $L^{i-1}/L^{i}\subset \l^{-i+1}L/\l^{-i+1}L\cap \l^{-i}L\isom \l L/\l L\cap  L$ (applying $\l^{i}$ in the last step) which has dimension $1$, i.e., $\dim L^{i-1}-\dim L^{i}$ is $1$ or $0$. If $L^{i-1}=L^{i}$ for some $i\ge1$, then $\l(L^{i})=\l L\cap L\cap \cdots\cap\l^{-i+1}L\subset L\cap\cdots\cap \l^{-i+1}L=L^{i-1}=L^{i}$, i.e, $L^{i}$ is stable under $\l$, hence a sum of eigenlines of $\l$. Since $\l$ is regular semisimple, no eigenline of $\l$ is isotropic under $q_{0}$, hence $L^{i}=L^{i-1}$ implies $L^{i-1}=0$. To summarize, we have a full flag of $L$
\begin{equation*}
L=L^{0}\supset L^{1}\supset L^{2}\supset\cdots\supset L^{m-1}\supset L^{m}=0.
\end{equation*}  
Moreover, $L$ is recovered from the line $L^{m-1}=\Span\{v\}$ by
\begin{equation*}
L=\Span\{v,\l v, \cdots, \l^{m-1}v\}.
\end{equation*}
Any nonzero vector $v\in L^{m-1}$ must satisfy $2m-1$ quadratic equations
\begin{equation*}
(v,\l^{i}v)=0, \quad 0\le i\le 2m-2,
\end{equation*}
i.e., $X(q_{0},q_{1})$ is the intersection of the $2m-1$ quadrics in $U$ defined above.  Choosing an eigenbasis $\{e_{1},\cdots, e_{2m}\}$ of $U$ under $\l$, we may write the above equations as diagonal quadratic forms. Note also that $\Aut(U,q_{0},q_{1})\cong\{\pm1\}^{2m}$ with the $i$th factor of $\pm1$ acting on $e_{i}$ by a sign. The conclusion easily follows.

(2) The argument is similar to (1). One shows that $M\in Y(q_{0},q_{1})$ corresponds bijectively to  lines $[v]\in \PP(U)$ satisfying the equations $(v,\l^{i}v)=0$ for $0\le i\le 2m-1$, so that $M=\Span\{v,\l v,\cdots, \l^{m-1}v\}$. 
\end{proof}

\subsection{Proof of (iii) for type $B$, $[w]$ elliptic and $|I|$ even}\label{ss:B trans even} Let $\g\in L^{+}\frt_{w}$ be a shallow element. When  restricted to elliptic classes, $\RT_{\min}$ is injective by Lusztig's description of $\Phi$. Therefore $[w]=\KL(\cO)$ for $\cO=\RT_{\min}([w])$. By \eqref{dg} we see that $\dim\Gr_{\g,\cO}=0$. Since $LT_{w}$ acts on the discrete set $\Gr_{\g,\cO}$, each point of $\Gr_{\g,\cO}$ must be fixed by the connected subgroup $L^{+}T_{w}$. In particular, $\Gr_{\g,\cO}\subset \cX^{w}$.  Now we have $\Gr_{\g,\cO}=R_{\g}$ on which $\pi_{0}(LT_{w})\cong \{\pm1\}^{I}$ acts transitively by Lemma \ref{l:Rg}. \qed

\subsection{Proofs of (i)(iii) for type $B$, $[w]$ elliptic and $|I|$ odd}
Since the argument is similar to the case where $|I|$ is even, we only indicate the difference. Let $U=\L_{0}^{\bot}/\L_{0}$ using notation of \eqref{L0 odd}, then we have a decomposition
\begin{equation*}
U=\e_{0}\op (\bigoplus_{i\in I}\e_{i})
\end{equation*}
where $\e_{0}=t^{-1}\cO_{F}/\cO_{F}$, and $\e_{i}=\pi_{i}^{-1}\cO_{E_{i}}/\cO_{E_{i}}$, equipped with a standard quadratic form $q=\op_{i\in \{0\}\coprod I} q_{i}$. Now $\cX^{w}=\Lag(U)$. If $\L\in \cX^{w}$ corresponds to $L\in \Lag(U)$, then $\L/t\L$ has a three-step filtration with sub to quotient
\begin{equation*}
U/L=L^{*}, \quad \bigoplus_{i\in I}\cO_{E_{i}}/t\pi_{i}^{-1}\cO_{E_{i}}, \quad L.
\end{equation*}
We write $U=\e_{0}\op(\op_{j\in \ZZ}U_{j})$ as in \S\ref{ss:cal PhiBIeven}, and let $U_{0}=\e_{0}$. Then introduce the notations $U_{\le j}, U_{\ge j}, L_{\le j}=L\cap U_{\le j}, \Gr_{j}L$ as in \S\ref{ss:cal PhiBIeven}.  Lemma \ref{l:B ineq} and \ref{l:rk ineq} still hold, with $\L'/t\L'$ replaced by $\L/t\L$ and setting $r_{0}=0$.  The proof of (i) is the same as in the $|I|$ even case, with the same conditions \eqref{min ell} and \eqref{min r}. The analysis of $R_{\g}$ in Lemma \ref{l:Rg} also works exactly the same way in the case $|I|$ is odd, and it shows that $R_{\g}$ is a torsor under $\{\pm1\}^{I}/\Im(\D_{a})$. The rest of the proof of (iii) is identical  to the $|I|$ even case.  \qed

\subsection{Skeleta for type $D_{n}$} Let $G=\SO_{2n}$. An elliptic loop torus $LT_{w}\subset LG$ has the same description as in the type $B_{n}$ case: $LT_{w}\cong \prod_{i\in I}(E_{i})^{\Nm=1}$ for $[E_{i}:F]=2n_{i}$. Moreover, $|I|$ must be even in this case. We will use the same notation as in type $B_{n}$. Then $\cX^{w}$ is the same as in type $B_{n}$ for the same $w$, i.e., $\cX^{w}\cong \Lag(U)$ where $U=\op_{i\in I}\pi_{i}^{-1}\cO_{E_{i}}/\cO_{E_{i}}$ has dimension $|I|$. 

We record Lusztig's calculation of $\Phi([w])$ in type $D_{n}$.
\begin{theorem}[Lusztig \cite{Lfromto}]\label{th:LD} Let $[w]\in \un W(D_{n})$ be an elliptic conjugacy class with negative cycles $n_{1}\ge n_{2}\ge \cdots \ge n_{r}$. Then $r$ is even. Let $n_{0}=\infty$ and $n_{r+1}=0$. Then $\Phi([w])$ is the nilpotent orbit in $\so_{2n}$ with Jordan blocks of size $\{2n_{i}+\e_{i}\}_{1\le i\le r}$ where $\e_{i}\in \{1,0,-1\}$ is defined in the same way as in Theorem \ref{th:LB}.
\end{theorem}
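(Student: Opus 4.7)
\medskip

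The statement is Lusztig's, proved originally in \cite{Lfromto}. Within the framework of this paper, however, the natural route is to compute $\RT_{\min}([w])$ directly from the skeleton description and then invoke (or independently verify) the equality $\RT_{\min}=\Phi$ on elliptic classes listed as statement (i) of \S\ref{ss:re}. The plan is to mirror the treatment in \S\ref{ss:cal PhiBIeven}--\S\ref{ss:B trans even} for type $B_n$ with the simplifications that appear in type $D$.

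The first step is to fix a shallow element $\g\in(L^{\hs}\frt_{w})^{sh}$, pick a Lagrangian $L\in\Lag(U)$, and let $\L\in\cX^{w}$ be the corresponding self-dual lattice. Unlike the type $B$ situation there is no split summand $F$, so $\L/t\L$ already has the three-step filtration with sub $L^{*}$, middle $\oplus_{i\in I}\cO_{E_{i}}/t\pi_{i}^{-1}\cO_{E_{i}}$, and quotient $L$. The $\g$-action on the middle piece is a direct sum of single Jordan blocks of length $2n_{i}-1$, giving the ``skeleton partition'' $(2n_{i}-1)_{i\in I}$ that we then inflate via the outer pieces.

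The second step is to transplant the combinatorial machinery of \S\ref{ss:cal PhiBIeven} verbatim: define $U_{j}=\bigoplus_{n_{i}=j}\e_{i}$, $L_{\le j}=L\cap U_{\le j}$, the self-adjoint endomorphism $\l_{j}=\g^{2j}\colon U_{j}\to U_{j}$, and the auxiliary quadratic form $Q_{j}(x)=(x,\l_{j}x)$. Lemma \ref{l:q gen} ensures $Q_{j}$ is nondegenerate and in generic position with $q_{j}$; Lemma \ref{l:B ineq} provides the Lagrangian inequalities on $\ell_{j}=\dim\Gr_{j}L$ and $r_{j}=\rk(Q_{j}|_{\Gr_{j}L})$; and Lemma \ref{l:rk ineq} gives the exact rank formula for $\g^{N}$ on $\L/t\L$. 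Minimizing $\rk(\g^{N}|_{\L/t\L})$ across $L\in\Lag(U)$ saturates the bounds precisely when \eqref{min ell} and \eqref{min r} hold, and an analogue of Lemma \ref{l:Rg} (whose proof uses only the even-dimensionality of $U$, which holds automatically since $|I|=r$ is even in type $D$) shows such $L$ exist. Reading off the resulting Jordan partition gives $(2n_{i}+\e_{i})_{1\le i\le r}$.

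The main obstacle is the combinatorial verification that the minimizing Jordan type matches Lusztig's $(2n_{i}+\e_{i})$ with the exact case distinctions in Theorem \ref{th:LD}. The bookkeeping differs from type $B$ in two ways: (a) there is no phantom trivial Jordan block arising from an $F$-summand, so the partition has exactly $r$ parts, and (b) the parity $r$ even forces $n_{r+1}=0$ to act as a genuine boundary, producing $\e_{r}=-1$ when $n_{r}>0$, in agreement with the formula. Once this matching is checked, the identity $\RT_{\min}([w])=\Phi([w])$ reproves Theorem \ref{th:LD}; the argument for Conjecture \ref{c:trans} in type $D$ with $[w]$ elliptic then runs exactly as in \S\ref{ss:B trans even}, using that $\pi_{0}(LT_{w})\cong\{\pm1\}^{I}$ acts transitively on $R_{\g}$ modulo the stabilizer identified in the type-$D$ analogue of Lemma \ref{l:Rg}.
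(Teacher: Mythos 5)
This statement is not proved in the paper at all: it is quoted as Lusztig's theorem, and the map $\Phi$ it describes is defined purely inside the finite-dimensional group (via minimal length elements of $[w]$ and unipotent classes meeting $BwB$), with no reference to loop groups or affine Springer fibers. The paper's ``proof'' is the citation to \cite{Lfromto}, and the theorem is then used as an \emph{input}: in the paragraph ``Proofs of (i)(iii) in type $D$'' the skeleton computation of the minimal Jordan type is \emph{compared against} Theorem \ref{th:LD} in order to conclude $\RT_{\min}([w])=\Phi([w])$.

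This makes your proposed route circular. Your skeleton computation (which is essentially the paper's, transplanted from \S\ref{ss:cal PhiBIeven}--\S\ref{ss:B trans even} and correct as a computation of $\RT_{\min}([w])$) shows that the minimal reduction type of a shallow $\g$ of type $[w]$ is the orbit with Jordan blocks $(2n_{i}+\e_{i})$. To convert that into a statement about $\Phi([w])$ you invoke statement (i) of \S\ref{ss:re}, i.e.\ $\RT_{\min}=\Phi$ on elliptic classes. But the paper's only proof of (i) in type $D$ is precisely the comparison of the skeleton answer with the formula of Theorem \ref{th:LD}; the paper states explicitly that its identification of the two pairs of maps relies on Lusztig's explicit calculation of $\Phi$. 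So you would be assuming the theorem in order to prove it. To make your argument non-circular you would need either (a) to reproduce Lusztig's original argument in \cite{Lfromto} (minimal length elements, Bruhat cells, unipotent classes), or (b) to supply an independent, table-free proof that $\RT_{\min}=\Phi$ on elliptic classes of $D_{n}$ — neither of which is available in the paper. A correct answer here is simply: this is an external result, cited rather than proved.
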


\subsection{Proofs of (i)(iii) in type $D$} The same argument as in \S\ref{ss:B shallow} proves (i) for type $D_{n}$ and elliptic $[w]$. Indeed, if $\l$ is the partition of $2n+1$ given by the Jordan type of $\Phi^{\SO_{2n+1}}_{\RT}([w])$, then the minimal Jordan type of $\g|_{\L/t\L}$ ($\g\in L^{+}\frt_{w}$ is shallow and $\L\in \cX^{w}$) is given by $\l'$ (partition of $2n$) obtained by removing a $1$ from $\l$. Note that the largest part of $\l'$ is odd (by Lusztig's description of $\Phi$ in type $B_{n}$), hence the Jordan type $\l'$ determines a unique the nilpotent orbit in type $D_{n}$. Comparing with Lusztig's description of $\Phi$ in Theorem \ref{th:LD}, we see that $\RT_{\min}([w])=\Phi([w])$.

The same argument as in \S\ref{ss:B trans even} proves (iii) for type $D_{n}$ and elliptic $[w]$. Indeed Lemma \ref{l:Rg} applies in this situation without any change. \qed

\subsection{Proof of Theorem \ref{th:PhiPsi}\eqref{Psi} for classical groups}\label{ss:Psi cl}
Let $\cO\in\un\cN$, $[w]=\Psi(\cO)$ . Since $\Phi=\RT_{\min}$, we have $\RT_{\min}([w])=\cO$. By Proposition \ref{p:char KL}(1), $\KL(\cO)$ is characterized as the maximal element in $\RT^{-1}_{\min}(\cO)=\Phi^{-1}(\cO)$ under $\preceq$.  On the other hand, $\Psi(\cO)$ is characterized as the most elliptic element in $\Phi^{-1}(\cO)$. Therefore, to show that $\KL(\cO)=\Psi(\cO)$, it suffices to show that for any $[w]\in \Phi^{-1}(\cO)$, $[w]\preceq\Psi(\cO)$. We check this for each classical type. 

For type $A$, $\Phi$ is a bijection so there is nothing to prove.

For types $B_{n},C_{n}$ and $D_{n}$, to each conjugacy class of $[w]\in \un W$ we attach a bipartitions $(\a,\b)$ of $n$, where $\a$ is the cycle type of positive cycles and $\b$ is the cycle type for negative cycles in $[w]$ as a permutation of $\{\pm1,\cdots,\pm n\}$. In type $D_{n}$ the bipartition does not completely determine $[w]$: there are two classes attached to each $(\a,\vn)$ when $\a$ has only even parts. This happens precisely when $\cO=\Phi([w])$ consists only of even Jordan blocks. However, in this case $\Phi^{-1}(\cO)$ only has one element, and the statement we want to prove is vacuous. Therefore below we assume that in type $D$, the Jordan blocks of $\cO$ are not all even, so we can represent conjugacy classes $[w],[w']$ by bipartitions.

For $[w]=(\a,\b)$, let $\l(\a,\b)$ be the Jordan type of $\Phi([w])$, which is a partition of $2n$ when $G$ is of type $C_{n}, D_{n}$ and a partition of $2n+1$ when $G$ is of type $B_{n}$. An {\em elementary move} of bipartitions $[w]=(\a,\b)\to (\a',\b')=[w']$ is one in which we remove a part $\a_{i}$ of $\a$ and add two parts $\lceil\frac{\a_{i}}{2} \rceil \lfloor\frac{\a_{i}}{2}\rfloor$ to $\b$ (one part if $\a_{i}=1$). Moreover if $G=C_{n}$, we only allow even $\a_{i}$. 
From the definition of $\Phi$ in \cite{Lfromto}, one sees that for any $[w]\in\Phi^{-1}(\cO)$, there is a sequence of elementary moves  $[w]\to [w_{1}]\to \cdots [w_{t}]=\Psi(\cO)$. Therefore it suffices to show that for any elementary move $[w]\to [w']$ we have $[w]\preceq [w']$. 

Let $[w]\to [w']$ be an elementary move that removes $\a_{i}$ from $\a$. When $\a_{i}>1$, there is a subgroup $H<G$ of isogeny type
\begin{equation*} 
H\sim H_{1}\times H_{2}\sim \begin{cases} D_{\a_{i}}\times B_{n-\a_{i}}, & G\sim B_{n};\\
C_{\a_{i}}\times C_{n-\a_{i}}, & G\sim C_{n};\\
D_{\a_{i}}\times D_{n-\a_{i}}, & G\sim D_{n}. \end{cases}
\end{equation*}
such that $[w]$ and $[w']$ intersect $W_{H}$, and $[w]_{H}=((\a_{i}, \vn)_{H_{1}},  [u]_{H_{2}})$ and $[w']_{H}=( (\vn, \lceil\frac{\a_{i}}{2}\rceil \lfloor\frac{\a_{i}}{2})\rfloor)_{H_{1}}, [u]_{H_{2}})$, where $[u]_{H_{2}}\in\un W_{H_{2}}$. By Lemma \ref{l:order pres}, it suffices to show $[w]_{H}\preceq [w']_{H}$, or show $(\a_{i},\vn)_{H_{1}}\preceq  (\vn, \lceil\frac{\a_{i}}{2}\rceil \lfloor\frac{\a_{i}}{2})\rfloor)_{H_{1}}$ inside $W_{H_{1}}$.

When $G=C_{n}$, only even $\a_{i}$ is allowed. In this case we reduce to show $[v]=(\a_{i},\vn)\preceq  (\vn, \frac{\a_{i}}{2}\frac{\a_{i}}{2})=[v']$ in $W_{H_{1}}=W(C_{\a_{i}})$. Now $\cO_{1}=\Phi_{H_{1}}([v])=\Phi_{H_{1}}([v'])$ has Jordan type $\a_{i}\a_{i}$. Since $[v']$ is elliptic, $\d_{[v']}=d_{\cO_{1}}$ by Corollary \ref{c:d ell}, hence $[v']=\KL_{H_{1}}(\cO_{1})$ by Proposition \ref{p:char KL}(2), and $[v]\preceq [v']$ by Proposition \ref{p:char KL}(1).

When $G=B_{n}$ or $D_{n}$ and $\a_{i}$ is even, we reduce to show that $[v]=(\a_{i},\vn)\preceq  (\vn, \frac{\a_{i}}{2}\frac{\a_{i}}{2})=[v']$ in $W_{H_{1}}=W(D_{\a_{i}})$. Now $\Phi_{H_{1}}([v])=\cO_{1}$ has Jordan type $\a_{i}\a_{i}$, and $\Phi_{H_{1}}([v'])=\cO'_{1}$ has Jordan type $(\a_{i}+1)(\a_{i}-1)$. Therefore $\cO_{1}< \cO'_{1}$ \footnote{This does not contradict the assumption that $\Phi([w])=\Phi([w'])$, see Remark \ref{r:nonLevi red}.}. Since $[v']$ is elliptic we see that $[v']=\KL_{H_{1}}(\cO'_{1})$ (same argument as in type $C$). Now $[v]\sqsubset \cO_{1}<\cO'_{1}$, which implies $ [v]\preceq \KL_{H_{1}}(\cO'_{1})=[v']$ by Proposition \ref{p:char KL}(1).

When $G=B_{n}$ and  $\a_{i}=1$ ($\a_{i}=1$ does not occur in type $D_{n}$), we may find a subgroup  $H<G$ of type $H_{1}\times H_{2}\sim D_{\b_{1}+1}\times B_{n-\b_{1}-1}$, such that $[w]_{H}=((1, \b_{1})_{H_{1}},  [u]_{H_{2}})$ and $[w']_{H}=((\vn, \b_{1}1)_{H_{1}},  [u]_{H_{2}})$. Again we reduce to showing $[v]=(1,\b_{1})\preceq (\vn,\b_{1}1)=[v']$ in $W_{H_{1}}=W(D_{\b_{1}+1})$. Now  $\Phi_{H_{1}}([v])=\Phi_{H_{1}}([v'])\in \un\cN_{H_{1}}$ has Jordan type $(2\b_{1}+1)11$. Since $[v']$ is elliptic, the same argument as in the above cases show that $[v]\preceq [v']$. This finishes the proof of Theorem \ref{th:PhiPsi}\eqref{Psi} for classical groups.


\section{Proofs for exceptional types}\label{s:ex}

Recall $\Phi=\Phi_{G}:\un W\to \un \cN$ is Lusztig's map from \cite{Lfromto}.

\begin{lemma}\label{l:Phi d} Let $G$ be almost simple and of exceptional type. Let $[w], [w']\in \un W$ be two distinct conjugacy classes such that $\Phi([w])=\Phi([w'])$. Assume $[w]$ is elliptic, then $\d_{[w]}<\d_{[w']}$.
\end{lemma}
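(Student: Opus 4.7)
The plan is to reduce to an elliptic computation inside a proper Levi of $G$ and then compare with the elliptic case in $G$.

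First I would observe that $[w']$ must be non-elliptic.  By \cite[Theorem 0.2]{Lfromto2}, $\Psi(\cO)$ is characterized as the unique element of $\Phi^{-1}(\cO)$ minimizing $\dim\frt^{w}$; every elliptic class attains the absolute minimum $\dim\frt^{w}=\dim Z_G$, so any elliptic preimage of $\cO$ coincides with $\Psi(\cO)$.  Hence $[w]=\Psi(\cO)$ is the unique elliptic preimage of $\cO$, and $[w']\ne[w]$ forces $\dim\frt^{w'}>\dim Z_G$.  Corollary \ref{c:d ell} gives the easy half $\d_{[w]}=d_\cO$.

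For the non-elliptic $[w']$ I would take $M=Z_G(\frt^{w'})$, a proper Levi containing $T$ with $[w']_M$ elliptic in $W_M$.  Fix a parabolic $P\supset M$ with unipotent radical $\frn_P$ and set $\cO_M=\Phi_M([w']_M)$, so $\io_{M,G}(\cO_M)=\cO$ by \cite[1.1]{Lfromto}.  Applying Corollary \ref{c:d ell} inside $M$ to the elliptic $[w']_M$ gives $\d^M_{[w']_M}=d_{\cO_M}$.  Choose $\g\in(L^{\hs}\frg)^{sh}_{[w']}$ lying (after $LG$-conjugacy) in $L\fm$; it is automatically shallow in $M$ because $R_M\subset R_G$.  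Subtracting formula \eqref{dim Sp} for $G$ and $M$ (which share the same $\dim\frt^{w'}$) yields
\[
\d_{[w']}-\d^M_{[w']_M}=\tfrac{1}{2}\bigl(\val\D_G(\g)-\val\D_M(\g)\bigr)=\sum_{\a\in R(\frn_P)}\val(\a(\g)),
\]
while Steinberg's formula (compare the proof of Proposition \ref{p:trans Levi}) gives $d_\cO-d_{\cO_M}=\dim\ker(\ad(e)|\frn_P)$ for any $e\in\cO_M$.  Subtracting, the lemma reduces to the strict inequality
\[
\sum_{\a\in R(\frn_P)}\val(\a(\g))\;>\;\dim\ker(\ad(e)|\frn_P).
\]

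The hard part will be establishing this strict inequality.  For shallow $\g$, $\val(\a(\g))=j_\a/m'$, where $m'$ is the order of $w'$ and $j_\a\ge 1$ is the smallest index for which $\a$ is nonzero on the $\z^{j_\a}$-eigenspace of $w'$ on $\frt$; the sum $\sum j_\a/m'$ is therefore a combinatorial invariant of the $w'$-action on $R(\frn_P)$, while $\dim\ker(\ad(e)|\frn_P)$ is governed by the inclusion $\cO_M\subset\cO$.  Since $\Phi$ restricted to elliptic classes is already injective by the first paragraph, the cases to check are limited to the finitely many orbits $\cO$ per exceptional type for which $\Phi^{-1}(\cO)$ contains both an elliptic class and at least one non-elliptic class, and for each such pair the two sides can be extracted from Lusztig's explicit tables in \cite{Lfromto} and compared directly.
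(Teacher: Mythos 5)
Your reduction is sound as far as it goes: the observation that $[w']$ must be non-elliptic (via the uniqueness of the minimizer of $\dim\frt^{w}$ on $\Phi^{-1}(\cO)$) is correct, $M=Z_{G}(\frt^{w'})$ is indeed a proper Levi in which $[w']_{M}$ is elliptic, a $G$-shallow element of type $[w']$ lying in $L\fm$ is automatically $M$-shallow, and the bookkeeping with \eqref{dim Sp}, Corollary \ref{c:d ell} applied in $M$, and Steinberg's formula correctly converts the claim into the strict inequality $\sum_{\a\in R(\frn_{P})}\val(\a(\g))>\dim\ker(\ad(e)|\frn_{P})$. The problem is that this inequality \emph{is} the lemma, and you do not verify it in a single instance: the last paragraph only asserts that the finitely many relevant pairs ``can be extracted from Lusztig's tables and compared directly.'' Lusztig's tables record $\Phi([w'])$; they do not record the eigenspace indices $j_{\a}$ for the roots in $\frn_{P}$, nor the decomposition of $\frn_{P}$ as an $M$-module and the Jordan data of $e$ on it, so each case still requires a genuine computation that is not carried out. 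In $E_{8}$ alone there are dozens of non-elliptic classes $[w']$ with $\Phi([w'])$ basic, so this is a substantial unexecuted verification, i.e., a real gap sitting exactly where you flag ``the hard part.''

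For comparison, the paper's proof avoids doing your computation in bulk by first using a softer criterion: in most cases one reads off from the Carter labels that $[w']\prec[w]$ in Spaltenstein's order (typically because $[w]$ is the Coxeter class of a reflection subgroup containing a representative of $[w']$), and then Lemma \ref{l:dw ineq} gives $\d_{[w']}>\d_{[w]}$ with no valuation estimates at all. Only eight pairs escape this, and for two or three of them the paper runs essentially the estimate you propose, bounding $\val\det(\ad(\g)|\frn_{P})$ below by $\lceil \dim V/m'\rceil$ on each irreducible $M$-summand $V$ of $\frn_{P}$; the rest are settled by passing to a $D_{8}$ subgroup and again comparing under $\preceq$. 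To complete your argument you would need either to carry out your inequality case by case, or to first shrink the case list with the $\preceq$ argument and reserve the valuation estimate for the few pairs where the order relation is not visible.
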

\begin{proof}
We check the tables in \cite{Lfromto} for the calculation of $\Phi$ in exceptional types. In most cases, from the Carter notation for the conjugacy classes in $W$ we can see $[w']\prec [w]$. For example, if $[w]$ is labelled $4A_{2}$ and $[w']$ is labelled $3A_{2}+A_{1}$, then $[w]$ is the Coxeter class in a Weyl subgroup $W'<W$ isomorphic to $S_{3}^{4}$, and $w'$ can also be chosen from $W'$. Since the Coxeter element is the maximal element under the partial order on $\un W'$, we conclude $[w']\prec [w]$. By Lemma \ref{l:dw ineq}, we have $\d_{[w']}>\d_{[w]}$.

There are a few remaining cases where it is not immediately clear that $[w']\prec [w]$. 
\begin{enumerate}
\item $G=F_{4}$, $([w], [w'])=(A_{3}+\wt A_{1}, B_{2}+A_{1})$.
\item $G=F_{4}$, $([w],[w'])=(4A_{1}, 2A_{1}+\wt A_{1})$.

\item $G=E_{8}$, $([w],[w'])=(A_{7}+A_{1}, D_{5}+A_{2})$.

\item $G=E_{8}$, $([w],[w'])=(2D_{4}(a_{1}), D_{4}(a_{1})+A_{3})$ and  $([w],[w'])=(2D_{4}(a_{1}), 2A''_{3})$.
\item $G=E_{8}$, $([w],[w'])=(2D_{4}, D_{6}(a_{2})+A_{1})$.
\item $G=E_{8}$, $([w],[w'])=(D_{8}(a_{3}), A''_{7})$.
\item $G=E_{8}$, $([w],[w'])=(D_{8}(a_{2}), D_{7}(a_{1}))$.
\item $G=E_{8}$, $([w],[w'])=(D_{8}(a_{1}), D_{7})$.
\end{enumerate}

Case (1). By Corollary \ref{c:d ell}, $\d_{[w]}=d_{\cO}=5$ (here $\cO=\Phi([w])$). We need to show $\d_{[w']}>5$. Let $M<G$ be a Levi subgroup  isogenous to $\Gm\times \Sp_{6}$. Then we may choose $w'\in W_{M}=W(C_{3})$ with cycle types $\ov 2 \ov 1$ (both are negative cycles). Let $LT_{w'}\subset LM$ be a loop torus of type $w'$. We have $\d_{[w']_{M}}=1$. Let $P$ be a parabolic subgroup containing $M$ as a Levi subgroup, then $\frn_{P}\cong V_{14}\op V_{1}$ as a representation of $M$ (with $\dim V_{i}=i$). Since the root valuations of any $\g\in (L^{\hs}\frt_{w'})^{sh}$ are at least $1/4$, we have $\val\det(\ad(\g)|V_{14})\ge \lceil 14/4\rceil=4$, and $\val\det(\ad(\g)|V_{1})\ge1$. Adding up we see that $\d_{[w']}=\d_{[w']_{M}}+\val\det(\ad(\g)|V_{14})+\val\det(\ad(\g)|V_{1})\ge 1+4+1=6>5=\d_{[w]}$.

Case (2). We have $w=w_{0}$ acts by $-1$ on $\frt$, hence $\g\in(L^{\hs}\frt_{w})^{sh}$ has all root valuations equal to $1/2$. From this we see that $\d_{[w]}=(48/2-4)/2=10$. Now $w'$ also has order $2$ hence all root valuations of $\g'\in(L^{\hs}\frt_{w'})^{sh}$ are at least $1/2$ (but not all equal to $1/2$). Moreover $\dim\frt^{w'}=1$. This implies $\d_{[w']}\ge\lceil (48/2-3)/2 \rceil=11>10=\d_{[w]}$.

Case (3). By Corollary \ref{c:d ell}, $\d_{[w]}=d_{\cO}=13$ (here $\cO=\Phi([w])$). We only need to show that $\d_{[w']}>13$. Note that $[w']$ is the Coxeter class in $W_{M}<W$ where $M$ is the Levi subgroup of $G$ by removing one simple root (third from the long end). Let $P$ be the standard parabolic containing $M$. Then as a representation of $M$ (which is isogenous to $\Spin_{10}\times \GL_{3}$),  $\frn_{P}\cong V_{94}\op\wedge^{2}(V_{3})$. Here $\dim V_{94}=94$, and $V_{3}$ means the standard representation of $\GL_{3}$. The root valuations of $\g'\in (L^{\hs}\frt_{w'})^{sh}$ are at least $1/8$. Hence $\val\det(\ad(\g)|V_{94})\ge \lceil 94/8\rceil=12$. On the other hand, $\det(\g'|\wedge^{2}(V_{3}))$ has valuation $2$. Adding up we see that $\d_{[w']}=\val\det(\ad(\g)|V_{94})+
\val\det(\ad(\g)|\wedge^{2}(V_{3}))\ge12+2=14>13=\d_{[w]}$.

For the last 5 cases there is a reductive  subgroup $H<G$ of type $D_{8}$ such that  $[w]$ and $[w']$ both intersect $W_{H}$. By Lemma \ref{l:order pres}, it suffices to show $[w']_{H}\prec [w]_{H}$ in these cases. We use a bipartition $(\a,\b)$ of $8$ to denote a conjugacy class of $H$ (see \S\ref{ss:Psi cl}). We have
\begin{enumerate}
\item[(3)] $[w]_{H}=(\vn,2222)$, $[w']_{H}=(11,222)$ or $(11,2211)$.
\item[(4)] $[w]_{H}=(\vn,3311)$, $[w']_{H}=(2,33)$
\item[(6)] $[w]_{H}=(\vn,44)$, $[w']_{H}$ is one of the classes corresponding to $(8,\vn)$.
\item[(7)] $[w]_{H}=(\vn,53)$, $[w']_{H}=(1,52)$.
\item[(8)] $[w]_{H}=(\vn,62)$, $[w']_{H}=(1,61)$.
\end{enumerate}
In each case,  we find that $\cO'=\Phi_{H}([w']_{H})<\cO=\Phi_{H}([w]_{H})$. Since $[w]_{H}$ is elliptic, $[w]_{H}=\KL_{H}(\cO)$. Now $[w']_{H}\sqsubset \cO'<\cO$, hence $[w']_{H}\prec \KL_{H}(\cO)=[w]_{H}$ by Proposition \ref{p:char KL}(1).
\end{proof}

Recall $\un W_{ell}\subset \un W$ is the set of elliptic classes. Let $\un\cN_{bas}\subset \un \cN$ be the image of $\un W_{ell}$ under $\Phi$ (the subscript $bas$ stands for ``basic'', see \cite{Lfromto}).  By \cite{Lfromto}, $\Phi$ restricts to a bijection $\un W_{ell}\bij \un \cN_{bas}$.

\subsection{Proof of (i) for exceptional $G$} Here (i) refers to the first statement in the beginning of \S\ref{s:AC}. We have already checked that the statement holds for classical types.  By induction on rank of $G$ (assumed exceptional), we assume that Theorem \ref{th:WtoN}\eqref{Uw} and Theorem \ref{th:PhiPsi}\eqref{Phi} hold for proper Levi subgroups of $G$, hence these statements hold for all non-elliptic conjugacy classes in $\un W$ by Corollary \ref{c:reduce to Levi}(2).

For $d\in\ZZ$, let $\un W^{d}$ be the set of conjugacy classes $[w]$ such that $\d_{[w]}=d$.  Let $\un\cN^{d}$ be the set of $\cO\in\un\cN$ such that $d_{\cO}=d$. Let $\un W_{ell}^{d}=\un W^{d}\cap \un W_{ell}$ and  $\un\cN^{d}_{bas}=\un\cN^{d}\cap \un\cN_{bas}$. By Corollary \ref{c:d ell}, $\Phi$ restricts to a bijection $\un W^{d}_{ell}\bij \un \cN^{d}_{bas}$ for all $d$. In particular, $|\un W^{d}_{ell}|=| \un \cN^{d}_{bas}|$.

\begin{claim} For $\cO\in \un\cN_{bas}$, $\KL(\cO)\in \un W_{ell}$.
\end{claim}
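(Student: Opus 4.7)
The plan is to argue by contradiction, leveraging the inductive hypothesis (that Theorems \ref{th:WtoN}\eqref{Uw} and \ref{th:PhiPsi}\eqref{Phi} hold for proper Levi subgroups of $G$) together with Lemma \ref{l:Phi d}. Fix $\cO\in\un\cN_{bas}$ and let $[w]\in \un W_{ell}$ be the unique elliptic class with $\Phi([w])=\cO$, which exists by the bijection $\Phi:\un W_{ell}\bij\un\cN_{bas}$. Set $[w']:=\KL(\cO)$ and suppose for contradiction that $[w']$ is not elliptic.

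First I would compare the two $\d$-invariants. By \cite[Proposition 8.2]{KL} (recorded at the start of Section \ref{s:KL}), $\d_{[w']}=d_{\cO}$; by Corollary \ref{c:d ell}, $\d_{[w]}=d_{\cO}$ as well (since $[w]$ is elliptic and $\Phi([w])=\cO$). Hence
\begin{equation*}
\d_{[w]}=\d_{[w']}=d_{\cO}.
\end{equation*}

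Next I would show $\Phi([w'])=\cO$ using the inductive hypothesis. Since $[w']$ is non-elliptic, some representative lies in the Weyl group of a proper Levi subgroup $M\subset G$ containing $T$. By the inductive hypothesis applied to $M$ combined with Corollary \ref{c:reduce to Levi}(2), Theorem \ref{th:WtoN}\eqref{Uw} and Theorem \ref{th:PhiPsi}\eqref{Phi} hold for $(G,[w'])$. In particular $\RT_{\min}([w'])$ is a singleton equal to $\{\Phi([w'])\}$. On the other hand, Corollary \ref{c:KL} gives $\cO\in\RT_{\min}([w'])$, so $\Phi([w'])=\cO=\Phi([w])$. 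Since $[w]\in\un W_{ell}$ and $[w']\notin\un W_{ell}$, the two classes are distinct.

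Finally, applying Lemma \ref{l:Phi d} to the elliptic class $[w]$ and the distinct class $[w']$ with $\Phi([w])=\Phi([w'])$ yields $\d_{[w]}<\d_{[w']}$, contradicting the equality $\d_{[w]}=\d_{[w']}$ established above. Therefore $[w']=\KL(\cO)$ must be elliptic. The main obstacle is really bundled into Lemma \ref{l:Phi d}, which in exceptional types requires case-by-case inspection of Lusztig's tables; granted that lemma, the argument above is a short logical assembly of facts already in place.
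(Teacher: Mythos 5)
Your proof is correct and follows essentially the same route as the paper: assume $\KL(\cO)=[w']$ is non-elliptic, use the inductive hypothesis plus Corollary \ref{c:reduce to Levi}(2) and Corollary \ref{c:KL} to get $\Phi([w'])=\cO=\Phi([w])$ with $[w]$ the elliptic preimage, and derive a contradiction from Lemma \ref{l:Phi d} against $\d_{[w]}=\d_{[w']}=d_{\cO}$. The only difference is that you spell out the step $\cO\in\RT_{\min}([w'])$ via Corollary \ref{c:KL}, which the paper leaves implicit.
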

\begin{proof}[Proof of Claim]
If not, let $[w']=\KL(\cO)$ which is not elliptic. By induction hypothesis, $\RT_{\min}([w'])=\{\cO\}$, and $\Phi([w'])=\cO$. Since $\cO$ is basic, there exists an elliptic conjugacy class $[w]$ such that $\Phi([w])=\cO$. By Corollary \ref{c:d ell}, $\d_{[w]}=d_{\cO}$. However we also have $\d_{[w']}=d_{\cO}$ since $[w']=\KL(\cO)$. This contradicts Lemma \ref{l:Phi d}.
\end{proof}

By the Claim, the Kazhdan-Lusztig map restricts to a map $\KL^{d}_{ell}: \un\cN^{d}_{bas}\to \un W^{d}_{ell}$. For each $d\in \ZZ$ such that $\un W^{d}_{ell}\ne\vn$, there are two cases.

\begin{enumerate}
\item $\un W^{d}_{ell}$  is a singleton $[w]$. In this case $\un \cN^{d}_{bas}$ is also a singleton $\cO=\Phi([w])$, and we must have $\KL(\cO)=[w]$. This shows that $\cO\in\RT_{\min}([w])$ by Corollary \ref{c:KL}. We show that $\cO$ is the only element in $\RT_{\min}([w])$. For this we have two subcases:
\begin{itemize}
\item If $\cO$ is distinguished, which is equivalent to saying that $\Phi^{-1}(\cO)=\{[w]\}$. If $\cO'\in \RT_{\min}([w])$ and $\cO'\ne \cO$, then $d_{\cO'}\le \d_{[w]}=d_{\cO}=d$. If $d_{\cO'}=d$ we know that $\cO'$ is not basic (since $\un\cN^{d}_{bas}$is a singleton $\cO$) and $[w]=\KL(\cO')$ since $d_{\cO'}=\d_{[w]}$. Since $\cO'$ is not basic, there is a non-elliptic $[w']$ such that $\Phi([w'])=\cO'$, and by induction hypothesis $\RT_{\min}([w'])=\{\cO'\}$. By Corollary \ref{c:two w one O} we have $\RT_{\min}([w])=\{\cO'\}$, contradiction. This forces $d_{\cO'}<d$. By examining the partial order diagram of $G$, we find that all $\cO'$ with $d_{\cO'}<d$ satisfies $\cO'\ge \cO$. This contradicts the definition of $\RT_{\min}([w])$. We conclude that $\RT_{\min}([w])=\{\cO\}$.
\item If $\cO$ is not distinguished, which is equivalent to saying that there exists a non-elliptic $[w']$ such that $\Phi([w'])=\cO$. By induction hypothesis, $\RT_{\min}([w'])=\{\cO\}$. We conclude by applying Corollary \ref{c:two w one O}.
\end{itemize}

\item When $|\un W^{d}_{ell}|=|\un \cN^{d}_{bas}|>1$, the cardinality must be $2$ by examining the tables of nilpotent orbits.  This happens only for  $d=3$ in case $G=F_{4}$ and for $d=4,7,8,9,18,19$ in case $G=E_{8}$. Write $\un W^{d}_{ell}=\{[w_{1}],[w_{2}]\}$, and correspondingly $\un \cN^{d}_{ell}=\{\cO_{1},\cO_{2}\}$ so that $\Phi([w_{i}])=\cO_{i}$ for $i=1,2$. By examining the tables of nilpotent orbits,  we find that at most  one of $\cO_{1}$ and $\cO_{2}$ is distinguished. Let us assume $\cO_{1}$ is {\em not} distinguished. Let $[w_{1}']$ be a non-elliptic class such that $\Phi([w'_{1}])=\cO_{1}$. By examining tables, we see that $[w'_{1}]\preceq[w_{1}]$. By induction hypothesis, $\RT_{\min}([w'_{1}])=\{\cO_{1}\}$. By Corollary \ref{c:two w one O}, $\KL(\cO_{1})$ satisfies $\RT_{\min}(\KL(\cO_{1}))=\{\cO_{1}\}$. This excludes the possibility that $\KL(\cO_{1})=\KL(\cO_{2})$ (for otherwise $\RT_{\min}(\KL(\cO_{1}))$ would contain both $\cO_{1}$ and $\cO_{2}$). We have two possibilities:
\begin{itemize}
\item $\KL(\cO_{1})=[w_{2}], \KL(\cO_{2})=[w_{1}]$. In this case, $\cO_{2}\in\RT_{\min}([w_{1}])$. Since $[w'_{1}]\preceq[w_{1}]\sqsubset\cO_{2}$, hence $[w'_{1}]\sqsubset \cO_{2}$ by Lemma \ref{l:trans}, $\RT_{\min}([w'_{1}])$ has to contain an element $\le \cO_{2}$. However, $\RT_{\min}([w'_{1}])=\{\cO_{1}\}$ and $\cO_{1}$ and $\cO_{2}$ are incomparable. Therefore this is impossible.

\item $\KL(\cO_{1})=[w_{1}], \KL(\cO_{2})=[w_{2}]$. In this case, Corollary \ref{c:two w one O} implies $\RT_{\min}([w_{1}])=\{\cO_{1}\}$. If $\cO_{2}$ is also not distinguished, then the same argument shows that $\RT_{\min}([w_{2}])=\{\cO_{2}\}$. If $\cO_{2}$ is distinguished (this happens only for $d=4,7,8$ in case $G=E_{8}$), we take $\cO'\in \RT_{\min}([w_{2}])$ and assume $\cO'\ne\cO_{2}$. Then $d_{\cO'}\le d$. By examining tables, we see either $\cO'=\cO_{1}$ which would imply $\KL(\cO_{1})=[w_{2}]$ so impossible, or $\cO'>\cO_{2}$ which is also impossible. We conclude that $\RT_{\min}([w_{2}])=\{\cO_{2}\}$ in any case.
\end{itemize}
\end{enumerate}

\subsection{Proof of Theorem \ref{th:PhiPsi}\eqref{Psi} for exceptional groups}\label{ss:Psi ex} 
By Proposition \ref{p:char KL}(2), $\KL(\cO)$ is characterized as the element in $\RT_{\min}^{-1}(\cO)=\Phi^{-1}(\cO)$ with the maximal $\d$-value.  Therefore, to show that $\KL(\cO)=\Psi(\cO)$, it suffices to check that for $[w]\in \Phi^{-1}(\cO)$, $\d_{[w]}$ achieves minimum if and only if $\dim\frt^{w}$ achieves minimum.

For exceptional types we have already checked the above statement when $\Phi^{-1}(\cO)$ contains an elliptic element in Lemma \ref{l:Phi d}. For the remaining cases (i.e., non-basic $\cO$) we go over the tables in \cite[2.2-2.6]{Lfromto2}, and find in most cases when $\Phi^{-1}(\cO)$ contains more than one element, the most elliptic element there is also the largest under the partial order $\preceq$ (using only their names in Carter's notation), hence $\d$-achieves the minimum at the most elliptic element. The only cases that need separate argument are the following, in which we can argue that $\Psi(\cO)$ is the maximal element in $\Phi^{-1}(\cO)$ (which also implies $\Psi(\cO)=\KL(\cO)$ by Proposition \ref{p:char KL}(1)):
\begin{enumerate}
\item $G=F_{4}$, $\Phi^{-1}(\cO)=\{2A_{1}, \wt A_{1}\}$. The classes $[w]=2A_{1}$ (viewed as $D_{2}$) and $[w']=\wt A_{1}$ both intersect the parabolic subgroup $W_{M}<W$ where $M$ is isogenous to $\Gm^{2}\times \SO_{5}$. Using bipartitions, $[w]_{M}$ and $[w']_{M}$ correspond to $(\vn,11)$ and $(1,1)$. Now $\Phi_{M}([w]_{M})=\Phi_{M}([w']_{M})=\cO_{M}$ with Jordan type $311$, and $[w]_{M}$ is elliptic, therefore $[w]_{M}=\KL_{M}(\cO_{M})$ and $[w']_{M}\preceq [w]_{M}$ by Proposition \ref{p:char KL}(1). By Lemma \ref{l:order pres} we have $[w']\preceq [w]$, hence $[w]=\KL(\cO)$ again by Proposition \ref{p:char KL}(1). 

\item $G=E_{7}$, $\Phi^{-1}(\cO)=\{D_{4}(a_{1})+2A_{1}, A_{3}+A_{2}\}$. The classes $[w]=D_{4}(a_{1})+2A_{1}$ (viewing $2A_{1}$ as $D_{2}$) and $[w']=A_{3}+A_{2}$ (viewing $A_{3}$ as $D_{3}$)  both intersect the parabolic subgroup $W_{M}\subset W$ where $M$ is isogenous to $\Gm\times \SO_{12}$. Using bipartitions, $[w]_{M}$ and $[w']_{M}$ correspond to $(\vn,2211)$ and $(3,21)$. Now $\Phi_{M}([w]_{M})=\Phi_{M}([w']_{M})=\cO_{M}$ with Jordan type $5331$, and $[w]_{M}$ is elliptic. The rest of the argument is the same as in case (1).
\end{enumerate}
\qed

\section{Comments}

Our main results have the following consequences on the partial order $\preceq$ on $\un W$.
\begin{cor}\label{c:Phi order} 
\begin{enumerate}
\item Lusztig's map $\Phi$ (or the map $\RT_{\min}$) is order preserving for $(\un W, \preceq)$ and $(\un\cN, \le)$.
\item The Kazhdan-Lusztig map $\KL$ (or Lusztig's map $\Psi$) induces an isomorphism of posets $(\un\cN, \le)\isom (\Im(\KL), \preceq)$.
\end{enumerate}
\end{cor}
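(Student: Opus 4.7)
The plan is to deduce both parts almost mechanically from results already established in the paper, treating (1) as a short consequence of Lemma~\ref{l:trans} together with the singleton property of $\RT_{\min}([w])$, and (2) as a combination of Theorem~\ref{th:Ow} (for one direction) with Proposition~\ref{p:char KL}(1) and Corollary~\ref{c:KL} (for the other).

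For part (1), since Theorem~\ref{th:PhiPsi}(1) identifies $\Phi$ with $\RT_{\min}$, I will work with $\RT_{\min}$. For each $[v]\in \un W$, set $S_{[v]}:=\{\cO\in\un\cN : [v]\sqsubset \cO\}$. The two clauses of Lemma~\ref{l:trans} tell me respectively that $[w']\preceq [w]$ forces $S_{[w]}\subset S_{[w']}$, and that each $S_{[v]}$ is upward closed in $(\un\cN,\le)$. By Theorem~\ref{th:WtoN}(1), $\RT_{\min}([v])$ is a singleton, and upward closure of $S_{[v]}$ inside the finite poset $\un\cN$ promotes this lone minimal element to a genuine minimum of $S_{[v]}$. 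Applying the minimum property inside $S_{[w']}$ to the element $\RT_{\min}([w])\in S_{[w]}\subset S_{[w']}$ then yields $\RT_{\min}([w'])\le \RT_{\min}([w])$, which is what is required.

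For part (2), I first invoke Theorem~\ref{th:WtoN}(2) to see that $\KL$ is injective, so $\KL:\un\cN\to \Im(\KL)$ is already a bijection of sets; the content reduces to the equivalence $\cO\le\cO'\Longleftrightarrow \KL(\cO)\preceq\KL(\cO')$. For the forward implication, the inclusion $\ov\cO\subset\ov{\cO'}$ gives $(L^{\hs}\frc)_{\ov\cO}\subset (L^{\hs}\frc)_{\ov{\cO'}}$; applying Theorem~\ref{th:Ow} to both sides rewrites this as $\ov{(L^{\hs}\frc)^{sh}_{\KL(\cO)}}\cap L^{\hs}\frc\subset\ov{(L^{\hs}\frc)^{sh}_{\KL(\cO')}}\cap L^{\hs}\frc$, which is exactly the statement $\KL(\cO)\preceq\KL(\cO')$ by definition. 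For the reverse implication, Proposition~\ref{p:char KL}(1) applied to $\cO'$ with $[w']=\KL(\cO)$ converts $\KL(\cO)\preceq\KL(\cO')$ into $\KL(\cO)\sqsubset \cO'$, so $\cO'\in S_{\KL(\cO)}$. Corollary~\ref{c:KL} places $\cO$ in $\RT_{\min}(\KL(\cO))$, which is the singleton $\{\cO\}$ by Theorem~\ref{th:WtoN}(1), and the minimum-of-an-upward-closed-set observation from (1) then gives $\cO\le\cO'$.

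I do not foresee a genuine obstacle: every nontrivial ingredient—the identification $\Phi=\RT_{\min}$, the singleton property of $\RT_{\min}([w])$, Theorem~\ref{th:Ow} on the closure of $(L^{\hs}\frc)^{sh}_{\KL(\cO)}$, and Proposition~\ref{p:char KL}(1) characterizing when $[w']\sqsubset\cO$ in terms of $\KL(\cO)$—has already been proved in earlier sections. The only small point worth flagging in the write-up is that in a finite poset an upward-closed subset with a unique minimal element has that element as a true minimum, which is what lets both parts pass from ``$\cO\in\RT_{\min}([v])$'' to ``$\cO\le\cO''$ for every $\cO''\in S_{[v]}$.''
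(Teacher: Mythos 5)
Your proposal is correct and follows essentially the same route as the paper: part (1) is the transitivity of $\sqsubset$ from Lemma \ref{l:trans} combined with the singleton/minimum property of $\RT_{\min}([w])$, and part (2) rests on Corollary \ref{c:KL}, Proposition \ref{p:char KL}(1) (equivalently Theorem \ref{th:Ow}, which is what that proposition encodes), and injectivity of $\KL$. Your explicit verification of the reverse implication $\KL(\cO)\preceq\KL(\cO')\Rightarrow\cO\le\cO'$ is a useful elaboration of a step the paper leaves implicit in the phrase ``since $\KL$ is injective, it induces an isomorphism of posets.''
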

\begin{proof}
(1) If $[w_{1}]\preceq [w_{2}]$, letting $\cO_{i}=\Phi([w_{i}])$, we have $[w_{1}]\preceq [w_{2}]\sqsubset \cO_{2}$, hence $[w_{1}]\sqsubset \cO_{2}$. Therefore there is an element in $\RT_{\min}([w_{1}])$ which is $\le\cO_{2}$. Since $\RT_{\min}([w_{1}])=\{\cO_{1}\}$, we have $\cO_{1}\le \cO_{2}$.

(2) If $[w_{i}]=\KL(\cO_{i})$ for $i=1,2$ and $\cO_{1}\le \cO_{2}$, then $[w_{1}]\sqsubset \cO_{1}$ hence $[w_{1}]\sqsubset \cO_{2}$. By Proposition \ref{p:char KL}, this implies $[w_{1}]\preceq [w_{2}]$. This shows that $\KL$ is order-preserving. Since $\KL$ is injective, it  induces an isomorphism of posets $(\un\cN, \le)\isom (\Im(\KL), \preceq)$.
\end{proof}

\subsection{Comparison with another order on $\un W$} In \cite{He}, Xuhua He introduces a partial order $\le_{He}$ as follows:  $[w]\le _{He} [w']$ if and only if there exists minimal length elements $w\in [w]$ and $w'\in [w']$ such that $w\le w'$ under the Bruhat order. In \cite{AHN}, the authors prove that Lusztig's bijection $\Phi: \un W_{ell}\isom \un \cN_{bas}$ is {\em order-reversing} under $\le_{B}$ and the closure order $\le$ on $\un\cN_{bas}$. Combined with Corollary \ref{c:Phi order},  we conclude that the partial orders $\le _{He}$ and $\preceq$ on $\un W_{ell}$ are opposite to each other. 

\subsection{Mininal reduction types for nilpotent orbits} Consider the finite-dimensional analogue of the minimal reduction type. Let $P\subset G$ be a parabolic subgroup with Levi $L$.  For each $\cO\in \un\cN$ we may assign a subset $\RT_{P, \min}(\cO)\subset \un \cN_{L}$ as follows. Let $e\in \cO$, then $\RT_{P,\min}(\cO)$ is the set of minimal nilpotent orbits in the image of the evaluation map $\ev_{P,e}: \cP_{e}\to [\cN_{L}/L]$, where $\cP_{e}=\{gP\in G/P|\Ad(g^{-1})e\in \Lie P\}$ is the partial Springer fiber, and $\ev_{P,e}$ sends $gP$ to the image of $\Ad(g^{-1})e$ in $\Lie L$. However, $\RT_{P,\min}(\cO)$ is not necessarily a  singleton, as the following example shows. 

Consider the case $G=\SL_{4}$, and $P$ corresponds to block sizes $(2,2)$, and $e$ has Jordan type $(3,1)$ (subregular). We know that the Springer fiber $\cB_{e}$ is a chain of three $\PP^{1}$'s in the $A_{3}$-configuration.  Under the natural projection $\cB_{e}\to \cP_{e}$, the two $\PP^{1}$'s at the ends contract to two points $x_{1}, x_{3}\in \cP_{e}\cong \PP^{1}$, while the middle $\PP^{1}$ maps isomorphically onto $\cP_{e}$. Then $\ev_{P,e}(x_{1})$ and $\ev_{P,e}(x_{3})$ are different nilpotent orbits in $L=S(\GL_{2}\times \GL_{2})$ but are both minimal in the image of $\ev_{P,e}$.

\subsection{Affine analogue of special nilpotent orbits}
We have the following analogy:

\begin{center}
\begin{tabular}{|c|c|}
$\un\cN$  & $\un W$\\
\hline
$d_{\cO}\ge a_{\cO}$ & $\d_{[w]}\ge d_{\Phi([w])}$\\
\hline
special nilpotent orbits & image of $\KL$ \\
\hline
special piece & $\Phi^{-1}(\cO)$ or $(L^{\hs}\frg)_{\cO}$
\end{tabular}
\end{center}

We explain how the analogy works row by row.

First, $\un  W$ parametrizes types of topologically nilpotent regular semisimple elements in $L\frg$. It was proposed in \cite{LAffW} that $\un W$ should be thought of as an affine analog of $\un\cN$.

For $\cO\in\un\cN$, Springer correspondence \cite{Spr} gives an irreducible representation $E_{\cO}$ of $W$
\begin{equation*}
E_{\cO}=\cohog{2d_{\cO}}{\cB_{e}}^{A_{e}}
\end{equation*}
where $e\in \cO$ and $A_{e}$ is the component group of the centralizer $C_{G}(e)$. For any irreducible representation $E$ of $W$, Lusztig has attached two functions $a_{E}$ and $b_{E}$ to $E$ \cite[\S4.1]{Lbook}: $a_{E}$ is the lowest power of $q$ in  the formal degree of the $q$-deformation of $E$, and $b_{E}$ is the fake degree, the lowest symmetric power of the reflection representation that contains $E$ as a summand. Lusztig \cite{Lclass} proves the inequality
\begin{equation}\label{ba}
b_{E}\ge a_{E}, \quad \forall E\in \Irr(W).
\end{equation}
For $E=E_{\cO}$, we have $b_{E_{\cO}}=d_{\cO}$. Let $a_{\cO}:=a_{E_{\cO}}$,  then from \eqref{ba} we get
\begin{equation}\label{da}
d_{\cO}\ge a_{\cO}, \quad\forall \cO\in\un\cN.
\end{equation}
We view \eqref{ineq} as an affine analogue of \eqref{da}.

By definition, equality in \eqref{da} holds if and only if $\cO$ is a {\em special nilpotent orbit}. By the analogy between $\un\cN$ and $\un W$,  those $[w]$ such that the equality in \eqref{ineq} holds play the role of special nilpotent orbits. By Proposition \ref{p:char KL}(2), these conjugacy classes in $W$ are exactly those in the image of $\KL: \un\cN\to \un W$. We propose to call the image of $\KL$ {\em $G$-special conjugacy classes} in $W$.  \footnote{The terminology {\em special conjugacy classes in $W$} is defined in \cite{Lfromto2}, and they are in natural bijection with special representations of $W$.}  The Weyl group $W$ of $G$ is at the same time the Weyl group of the Langlands dual group $G^{\vee}$. Therefore, replacing $G$ by $G^{\vee}$ we get the notion of $G^{\vee}$-special conjugacy classes in $W$ as those in the image of $\KL_{G^{\vee}}$.

It is now clear why $\Phi^{-1}(\cO)$ should be thought of as an affine analog of a special piece (a union of nilpotent orbits). We offer a more geometric version below.

\subsection{Affine analogue of special pieces} For a fixed $\cO\in \un\cN$,  we define \footnote{Warning: $(L^{\hs}\frc)_{\cO}$ is not the same as the image of $(\cO+tL^{+}\frg)\cap L^{\hs}\frg$ under $L^{+}\chi:L^{+}\frg\to L^{+}\frc$.}
\begin{equation*}
(L^{\hs}\frc)_{\cO}=(L^{\hs}\frc)_{\ov\cO}-\bigcup_{\cO'< \cO}(L^{\hs}\frc)_{\ov{\cO'}}.
\end{equation*}
By Theorem \ref{th:Ow}, $(L^{\hs}\frc)_{\cO}$ is a fp locally closed subset of $L^{\hs}\frc$.

Clearly the union of $(L^{\hs}\frc)_{\cO}$ for all $\cO\in\un\cN$ is $L^{\hs}\frc$. We make the following observation whose proof is immediate from the definitions.
\begin{lemma}
\begin{enumerate}
\item Let $\g\in L^{\hs}\frg$. Then $\chi(\g)\in (L^{\hs}\frc)_{\cO}$ if and only if $\cO\in \RT_{\min}(\g)$.
\item Conjecture \ref{c:min red type} is equivalent to saying that the $(L^{\hs}\frc)_{\cO}$ are disjoint for various $\cO\in\un\cN$ (in other words, $ \{(L^{\hs}\frc)_{\cO}\}_{\cO\in\un\cN}$ give a partition of $L^{\hs}\frc$). 
\end{enumerate}
\end{lemma}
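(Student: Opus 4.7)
The plan is to unpack the definitions, treating parts (1) and (2) in sequence; since the paper itself advertises the proof as immediate, the real content is in tracing the chain of equivalences carefully.

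For part (1), I would first rewrite membership in $(L^{\hs}\frc)_{\cO}$ as the conjunction of $\chi(\g)\in (L^{\hs}\frc)_{\ov\cO}$ and $\chi(\g)\notin (L^{\hs}\frc)_{\ov{\cO'}}$ for every $\cO'<\cO$. The key translation is that for a fixed element $\g$, $\chi(\g)\in (L^{\hs}\frc)_{\ov{\cO''}}$ is equivalent to the existence of some $\cO^{\flat}\in\RT(\g)$ with $\cO^{\flat}\le\cO''$ --- this is exactly the content of the (2)$\Leftrightarrow$(4) part of Lemma \ref{l:ineq}, applied pointwise rather than to all $\g$ of type $[w]$ at once (the proof there already works with a single $\g$). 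Since $\RT_{\min}(\g)$ consists of the minimal elements of $\RT(\g)$, the condition simplifies further to: there exists $\cO^{\flat}\in\RT_{\min}(\g)$ with $\cO^{\flat}\le\cO''$. Plugging this in, $\chi(\g)\in (L^{\hs}\frc)_{\cO}$ becomes: some element of $\RT_{\min}(\g)$ lies below $\cO$, but no element of $\RT_{\min}(\g)$ lies strictly below $\cO$ (equivalently, below any $\cO'<\cO$). By the very definition of minimality in $\RT_{\min}(\g)$, this happens if and only if $\cO\in\RT_{\min}(\g)$ itself.

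For part (2), I would use part (1) as a black box. Conjecture \ref{c:min red type} asserts that $|\RT_{\min}(\g)|=1$ for every $\g\in L^{\hs}\frg$. By part (1), the set of $\cO$ such that $\chi(\g)\in (L^{\hs}\frc)_{\cO}$ coincides with $\RT_{\min}(\g)$. Thus the conjecture is equivalent to saying that for each $\g$ there is a unique $\cO$ with $\chi(\g)\in (L^{\hs}\frc)_{\cO}$. Since $\RT_{\min}(\g)$ is always non-empty (it has minimal elements in a nonempty poset --- $\cO_{\reg}$ is always in $\RT(\g)$), the $(L^{\hs}\frc)_{\cO}$ cover $\chi(L^{\hs}\frg)=L^{\hs}\frc$; disjointness is therefore equivalent to each fiber containing at most one, which is equivalent to the singleton condition. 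The one subtlety is that the statement is about $\chi(\g)$, but since $L^{\hs}\frg=\chi^{-1}(L^{\hs}\frc)$ and $\RT_{\min}(\g)$ depends on $\g$ only through its $LG$-conjugacy class (which in turn is determined up to finitely many choices by $\chi(\g)$, all sharing the same reduction types), this presents no obstacle.

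No step here is a serious obstacle; the only thing I would be careful about is the pointwise use of Lemma \ref{l:ineq}, making sure that the implication ``$\chi(\g)\in (L^{\hs}\frc)_{\ov{\cO''}}$ $\Rightarrow$ $\exists\cO^\flat\in\RT(\g)$, $\cO^\flat\le\cO''$'' really holds for an individual $\g$ (which it does, via the $LG$-conjugation argument in the proof of (2)$\Rightarrow$(4) of that lemma, applied to $\g$ alone).
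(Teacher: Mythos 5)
Your argument is correct and matches what the paper intends: the paper omits the proof entirely (``immediate from the definitions''), and your unpacking --- the pointwise form of the equivalence (2)$\Leftrightarrow$(4) in Lemma \ref{l:ineq} applied to a single $\g$, followed by the elementary poset argument identifying $\{\cO:\chi(\g)\in(L^{\hs}\frc)_{\cO}\}$ with $\RT_{\min}(\g)$ --- is exactly the intended chain of equivalences. No gaps.
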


Therefore, if Conjecture \ref{c:min red type} holds, $(L^{\hs}\frc)_{\cO}$ should be thought of as an affine analog of a special piece in $\cN$.


\subsection{Relating Lusztig's maps and $\RT_{\min}/\KL$ directly?}\label{ss:final} It remains a mystery why the two pairs of maps $(\Phi,\Psi)$ and $(\RT_{\min}, \KL)$ that are defined so differently end up being the same. It is likely that their coincidence can be explained by a wildly ramified Simpson's correspondence, with $(\Phi,\Psi)$ on the Betti side of the picture and $(\RT_{\min}, \KL)$ on the Dolbeault side.  In Minh-Tam Trinh's thesis, he has made deep conjectures relating cohomological invariants of affine Springer fibers on one hand and wild character varieties on the other.  The equality $\Phi=\RT_{\min}$ would be a consequence of special cases of his conjectures.



\subsection*{Acknowledgement} 
At various stages of my career I have been inspired by Springer's work on Springer representations, regular elements in the Weyl group and the geometry of symmetric spaces.  It is my honor to dedicate this paper to the memory of T.A.Springer. 

I have benefitted a lot from conversations with R.Bezrukavnikov, Xuhua He and G.Lusztig on the topics studied in this paper. I would also like to thank Jiu-Kang Yu and the referee for useful suggestions. I'd like to thank Anlong Chua for pointing out a gap in the published version, which lead to the current revision.

\end{document}